\newtheorem{theorem}{Theorem}[section]
\newtheorem{lemma}[theorem]{Lemma}
\newtheorem{corollary}[theorem]{Corollary}
\newtheorem{proposition}[theorem]{Proposition}
\theoremstyle{definition}
\newtheorem{definition}[theorem]{Definition}
\newtheorem{example}[theorem]{Example}
\theoremstyle{remark}
\newtheorem{remark}[theorem]{Remark}
\newtheorem{question}[theorem]{Question}
\numberwithin{equation}{section}
\newcommand{\R}{\mathbb{R}} 
\newcommand{\capa}[2]{\operatorname{Cap}_{#1}(#2)}
\newcommand{\s}{\mathbb{S}}
\newcommand{\conbod}{\mathcal{K}^n}
\newcommand{\vol}{\text{\rm Vol}}
\newcommand{\ct}[2]{\mathcal{C}({#1})({#2})}
\title[Weighted Minkowski's Existence Theorem]{Weighted Minkowski's Existence Theorem \\ and Projection Bodies}
\author{Liudmyla Kryvonos}
\address{Department of Mathematics\\
Vanderbilt University\\
Nashville, TN 37240\\}
\email{liudmyla.kryvonos@Vanderbilt.Edu}
\author{Dylan Langharst}
\address{Department of Mathematical Sciences\\
Kent State University\\
Kent, OH 44242\\}
\email{dlanghar@kent.edu}
\thanks{The second named author was supported in part by the U.S. National Science Foundation Grant DMS-2000304 and the United States - Israel Binational Science Foundation (BSF)}
\subjclass[2020]{Primary 52A39, 52A40, Secondary: 52A21, 49J10}
\keywords{Minkowski's Existence Theorem, Shephard Problem, Petty Projection Inequality, Projection Body, Ehrhard's Inequality}
\begin{document}
\begin{abstract}
The Brunn-Minkowski Theory has seen several generalizations over the past century. Many of the core ideas have been generalized to measures. With the goal of framing these generalizations as a weighted Brunn-Minkowski theory, we prove the Minkowski existence theorem for a large class of Borel measures with continuous density, denoted by $\Lambda^n$: for $\nu$ a finite, even Borel measure on the unit sphere and even $\mu\in\Lambda^n$, there exists a symmetric convex body $K$ such that
$$d\nu(u)=c_{\mu,K}dS^{\mu}_{K}(u),$$ 
where $c_{\mu,K}$ is a quantity that depends on $\mu$ and $K$ and $dS^{\mu}_{K}(u)$ is the surface area-measure of $K$ with respect to $\mu$. Examples of measures in $\Lambda^n$ are homogeneous measures (with $c_{\mu,K}=1$) and probability measures with radially decreasing densities (e.g. the Gaussian measure). We will also consider weighted projection bodies $\Pi_\mu K$ by classifying them and studying the isomorphic Shephard problem: if $\mu$ and $\nu$ are even, homogeneous measures with density and $K$ and $L$ are symmetric convex bodies such that
$\Pi_{\mu} K \subset \Pi_{\nu} L$, then can one find an optimal quantity $\mathcal{A}>0$ such that $\mu(K)\leq \mathcal{A}\nu(L)$? Among other things, we show that, in the case where $\mu=\nu$ and $L$ is a projection body, $\mathcal{A}=1$.
\end{abstract}

\maketitle

\section{Introduction}
A convex body is a convex, compact set with nonempty interior. The set of convex bodies in $\R^n$ (the $n$-dimensional Euclidean space equipped with its usual structure) will be denoted as $\conbod$.  An important subset of $\conbod$ is the set of convex bodies containing the origin in their interior, denoted $\conbod_0.$ For $K\in\conbod_0$, we say $K$ is centrally symmetric, or just symmetric, if $K=-K.$ Convex bodies are often studied through their support function, which is given by, for $K\in\conbod,$ $$h_K(x)=\sup_{y\in K}\langle x, y\rangle.$$ 
We say a set $L$ with $0 \in \text{int}(L)$ is star-shaped if every line passing through the origin crosses the boundary of $L$ exactly twice. We say $L$ is a star body 
if it is a compact, star-shaped set whose radial function $\rho_L:\R^n\setminus \{0\} \to \R,$ given by $\rho_L(y)=\sup\{\lambda:\lambda y\in L\},$ is continuous. Furthermore, for $K\in\conbod_0,$ the \textit{Minkowski functional} of $K$ is defined to be $\|y\|_K=\rho^{-1}_K(y)=\inf\{r>0:y\in rK\}.$ The Minkowski functional $\|\cdot\|_K$ of $K\in\conbod_0$ is a norm on $\R^n$ if $K$ is symmetric.

For $\Omega\subset\R^n$, the volume of $\Omega$ is its $d$-dimensional Lebesgue measure, denoted $\vol_d(\Omega)$, where $d$ is the dimension of the least affine subspace of $\R^n$ containing $\Omega$. The closed unit ball in $\R^n$ will be denoted by $B_2^n$; its volume will be written  as $\kappa_n$. If we denote the boundary of $B_2^n$ by $\s^{n-1}$, then a convex body $K\in\conbod$ can also be studied through its surface area measure: for every Borel $A \subset \s^{n-1},$ $$S_K(A)=\mathcal{H}^{n-1}(n^{-1}_K(A)),$$ where $\mathcal{H}^{n-1}$ is the $(n-1)$-dimensional Hausdorff measure and $n_K:\partial K \rightarrow \s^{n-1}$ is the Gauss map, which associates an element $y$ of $\partial K$ with its outer unit normal. For almost all $x\in\partial K$, $n_K(x)$ is well-defined (i.e. $x$ has a single outer unit normal); we shall denote $$\partial^\prime K = \partial K\setminus \{x: n_K(x) \text{ is not well-defined}\}.$$ A convex body is said to be \textit{strictly convex} if $\partial K$ does not contain a line segment, in which case the Gauss map is an injection between $\partial^\prime K$ and $\s^{n-1}$. The Gauss map is related to the support function as follows: for a fixed $u\in\s^{n-1}$, $\nabla h_K(u)$ exists if, and only if, $n_K^{-1}(u)$ is a single point $x\in\partial K,$ and, furthermore, $\nabla h_K(u)=x=n^{-1}_K(u)$ \cite[Corollary 1.7.3]{Sh1}. Hence, $K$ is strictly convex if, and only if, $h_K\in C^1$ \cite[Page 115]{Sh1}.

If $h_K$ is of class $C^2$, then one has $dS_K(\theta)$ is absolutely continuous with respect to the spherical Lebesgue measure \cite[Corollary 2.5.3]{Sh1}: $$dS_K(\theta) = f_K(\theta) d\theta,$$
	where $f_K \colon \s^{n-1} \to \R$ is the \textit{curvature function} of $K$, and is merely the reciprocal of the Gauss curvature as a function of the outer unit normal. The class $C^2_+$ is used to denote those convex bodies whose support functions are $C^2$ and have positive radii of curvature (this is equivalent to definition from differential geometry, see \cite[Page 120]{Sh1}). From \cite[Theorem 2.7.1]{Sh1} every convex body can be approximated by convex bodies that are $C^2_+.$

Given a finite Borel measure $\mu$ on $\s^{n-1}$, one may ask: does there exist a unique (up to translations) convex body $K$ such that $dS_K=d\mu$? Minkowski's existence theorem \cite[p. 455]{Sh1} shows that if $\mu$ satisfies the following two conditions, then the answer is yes:
\begin{enumerate}
    \item The measure $\mu$ is not concentrated on any great hemisphere, that is
    \label{con_1}
    $$\int_{\s^{n-1}}|\langle \theta, \xi \rangle|d\mu(\xi) > 0 \quad \text{for all } \theta\in\s^{n-1}.$$
    \item The measure is centered, that is
    $$\int_{\s^{n-1}}\xi d\mu(\xi)=0.$$
    \label{con_2}
\end{enumerate}
Minkowski's existence theorem was proven by Aleksandrov using the following schema \cite{AL}. For every positive $f\in C(\s^{n-1})$, the \textit{Wulff shape} of $f$ is the convex body given by
	\begin{equation}
	    [f]=\{x\in\R^n:\langle x,u\rangle\leq f(u) \; \forall u \; \in \s^{n-1}\}.
	\end{equation}
	One has that, for $K\in\conbod_0$, $[h_K]=K.$ Since $f$ is positive, $[f]\in\conbod_0$. Furthermore, if $f$ is even, then $[f]$ is symmetric. Next, for $f\in C(\s^{n-1})$, Aleksandrov defined a perturbation of $K$ to be the Wulff shape of the function
	$$h_t(u)=h_K(u)+tf(u),$$
	where $t\in (-\delta,\delta)$, $\delta$ small enough so that $h_t$ is positive for all $u$. From here, Aleksandrov showed his variational formula:
	$$\diff{\vol_n([h_{t}])}{t}\bigg|_{t=0}=\lim_{t\to 0}\frac{\vol_n([h_t])-\vol_n(K)}{t}=\int_{\s^{n-1}}f(u)dS_K(u);$$
	setting $f=h_L$ for $L\in\conbod$ then yields that the \textit{mixed volume} of $(n-1)$ copies of $K$ and $L$ \cite{gardner_book} is 
	\begin{equation}
	    V(K,L)=\frac{1}{n}\lim_{\epsilon\to0}\frac{\vol_n(K+\epsilon L)-\vol_n(K)}{\epsilon}=\frac{1}{n}\int_{\s^{n-1}}h_L(u)dS_K(u),
	    \label{eq:mixed}
	\end{equation}
	and, furthermore, mixed volume satisfies \textit{Minkowski's inequality}:
	\begin{equation}
	V(K,L)^n\geq \vol_n(K)^{n-1}\vol_n(L).
	\label{eq:min_ineq}
	\end{equation}
Using these ingredients, Aleksandrov settled the question of Minkowski's existence theorem. By setting $L=K$ in Equation~\ref{eq:mixed}, one has that the volume of $K\in\conbod$ is given by $$\vol_n(K)=\frac{1}{n}\int_{\s^{n-1}}h_K(u)dS_K(u).$$

Next, consider a homogeneous function of order 1, $g:\R^n\to\R^+$. It can easily be shown that $g$ is a norm on $\R^n$ if, and only if, its unit ball $L=\{x:g(x)\leq 1\}$ is a symmetric convex body; given symmetric $K\in\conbod_0$, the Minkowski functional is precisely the associated norm. For $K\in\conbod_0,$ the dual body of $K$ is given by $$K^\circ=\left\{x\in \R^n: h_K(x) \leq 1\right\}.$$ We see that $K^\circ$ is the unit ball of $h_K(x)$, and so, by definition $h_K(x)=\|x\|_{K^\circ}$. For $K \in \conbod$, we denote the orthogonal projection of $K$ onto a linear subspace $H$ as $P_H K$. \textit{Minkowski's projection formula} then states \cite{Sh1}: for $\theta\in\mathbb{S}^{n-1}$, one has that 
\begin{equation}
   \vol_{n-1}\left(P_{\theta^{\perp}}K\right) =\frac{1}{2}\int_{\s^{n-1}}|\langle \theta,u \rangle| dS_K(u),
\label{eq:projection}
	\end{equation}
where  $\theta^{\perp}=\{x\in\R^n:\langle \theta,x\rangle =0\}$ is the subspace orthogonal to $\theta \in \s^{n-1}$. If one extends Equation~\ref{eq:projection} to all of $\R^n$ $1$-homogeneously (merely replace $\theta$ in the above with arbitrary $x$), then we see it is a norm on $\R^n$. Using the above arguments concerning unit balls of norms and the relation between the Minkowski functional of a convex body and the support function of its dual, one can then define the \textit{projection body of K}, denoted $\Pi K$, as the symmetric convex body whose support function is given by 
\begin{equation}
    h_{\Pi K}(\theta)=\vol_{n-1}(P_{\theta^{\perp}}K) =\frac{1}{2}\int_{\s^{n-1}}|\langle \theta,u \rangle| dS_K(u).
    \label{proj_support}
\end{equation}

The projection body operator $\Pi:\conbod\to\conbod_0$ was introduced by Minkowski, and became a cornerstone of the larger \textit{Brunn-Minkowski theory} along with the other concepts discussed above, such as mixed volume, Aleksandrov's variational formula, and Minkowski's inequality (among others not pertinent to our current discussion, see Schneider \cite{Sh1} and the references therein for a thorough overview). We say a function $P:\conbod\to \R^+$ is an affine invariant quantity if for every $T\in GL(n)$, $P(TK)=P(K)$. It is natural to ask if an affine invariant quantity is bounded above or below. If one uses the notation $(\Pi K)^\circ = \Pi^\circ K$, then one can check that $P(K)=\vol_n(K)^{n-1}\vol_n(\Pi^\circ K)$ is affine invariant; the inequalities of Petty \cite{CMP71} and Zhang \cite{Zhang91}, bounded this quantity from above and below respectively. Other inequalities on affine invariant quantities are the Busemann-Petty centroid inequality and the Rogers-Shephard inequality \cite{RS57}. In general, inequalities concerning projections are Shephard-type problems; the Shephard problem, asked by Shephard in 1964 \cite{She64} is the following: Consider symmetric $K,L\in\conbod_0$ such that $$h_{\Pi K}(\theta)=\vol_{n-1}(P_{\theta^{\perp}}K)\leq\vol_{n-1}(P_{\theta^{\perp}}L)=h_{\Pi L}(\theta)$$ for all $\theta\in\s^{n-1}$; is it true that $\vol_n(K)\leq \vol_n(L)?$ One sees that the above inequality means $\Pi K\subset \Pi L$. It was shown to be false in general independently by Schneider \cite{Sh2} and Petty \cite{CMP67}, but true if $L$ itself is a projection body, that is $L=\Pi M$ for some convex body $M\in\conbod.$ Therefore it is true in dimension $n=2$ for symmetric $L$, as every symmetric convex body in the plane is a projection body \cite{gardner_book}. 

Over the last several decades, various extensions of the Brunn-Minkowski theory have occurred. Each extension is a generalization of the core ingredients of the theory, which reduce to the classical case in some way. The \textit{$L^p$ Brunn-Minkowski theory}, or Firey-Brunn-Minkowski theory, was introduced in the 1960's when Firey discovered an $L^p$ extension of Minkowski addition; in the 1990's, a series of papers by Lutwak \cite{LE93,LE96} established the modern $L^p$ Brunn-Minkowski theory. Further extensions of concepts from the classical theory were extended to the $L^p$ setting; the $L^p$ analog of the projection operator was established in \cite{LYZ00}, and extended to a larger family of operators in \cite{ML05}. A family of $L^p$ extensions of Petty's inequality was established in \cite{HS2009, HS09}. The $L^p$ extension of Minkowski's existence theorem was analyzed by Chou and Wang \cite{CW06} and by Lutwak, Yang and Zhang \cite{LYZ06}. The next development of the Brunn-Minkowski theory was presented (comparatively recently) in a series of papers by Lutwak, Yang and Zhang, called the \textit{Orlicz-Brunn-Minkowski theory} \cite{HLYZ10,LYZ10_2, LYZ10} (where the first citation was done also with Haberl). In \cite{LYZ10}, the projection body operator was further generalized (and the associated Petty inequality was shown), and in \cite{HLYZ10}, the associated Minkowski's existence theorem was shown. For more on the Firey and Orlicz Brunn-Minkowski theories, the reader is encouraged to see \cite{Sh1} and the references in \cite{LYZ10}. In particular, the logarithmic Brunn-Minkowski conjecture \cite{BLYZ12,BLYZ13} is an active area of interest for many.

 In parallel with these developments, primarily over the last two decades, extensions of various problems from the classical Brunn-Minkowski setting to that of general measures were established. We say a Borel measure $\mu$ has density if it is has a locally integrable Radon-Nikodym derivative from $\R^n$ to $\R^{+}=[0,\infty)$, i.e, 
\[
\frac{d\mu(x)}{dx} = \phi(x), \text{ with } \phi \colon \R^n \to \R^+,\phi\in L^1_{\text{loc}}(\R^n);
\]
as a measure with density, the Lebesgue measure itself is denoted as $\lambda.$ Extensions of the classical Brunn-Minkowski theory to general measures often involve replacing the concept of the Lebesgue measure in a given statement with some Borel measure $\mu$ with density. We shall call the collection of such extensions the \textit{weighted Brunn-Minkowski theory}. The Lebesgue measure is a $n$-homogeneous measure, and so we must often impose homogeneity requirements on measures under consideration. We say a measure $\mu$ is $\alpha$-homogeneous, $\alpha > 0$, if $\mu(tK)=t^\alpha\mu(K)$ for all Borel measurable $K$ in the support of $\mu$ and $t>0$ such that $tK$ is in the support of $\mu$. Furthermore, if the density of $\mu$ is $\phi$, then one can can check using the Lebesgue differentiation theorem that $\phi$ is $(\alpha-n)$-homogeneous. We will need to state a broadening of a property of the Lebesgue measure.

First recall that, for $\alpha,\beta\geq 0,$ the \textit{Minkowski sum} of two Borel is given by
$$\alpha A+\beta B=\{\alpha x + \beta y: x\in A, y\in B\}.$$ We can now remind the reader that a Borel measure $\mu$ is said to be $s$-concave on a class of compact sets $\mathcal{C}$ if, for every pair $A,B \in \mathcal{C}$  and every $t \in [0,1]$, one has
	$$\mu(t A +(1-t)B)\geq \left(t\mu(A)^s +(1-t)\mu(B)^{s}\right)^{\frac{1}{s}}$$
whenever $\mu(A)\mu(B) > 0$; the right-hand side is taken to be zero otherwise. In the limit as $s\rightarrow 0$, we obtain the case of log-concavity:
	$$\mu(t A +(1-t)B)\geq\mu(A)^{t}\mu(B)^{1-t}.$$
	When $s=-\infty$, the right-hand side is taken to be $\min\{\mu(A),\mu(B)\}$. 
 The classical Brunn-Minkowski inequality states both the $1/n$-concavity and the log-concavity of the Lebesgue measure on the class of non-empty, compact sets, and this fact is used in most problems in the classical Brunn-Minkowski theory. A cornerstone of the weighted Brunn-Minkowski theory is Borell's classification of $s$-concave measures, specifically in Theorem 3.2 of \cite{Bor75}:
Let $\Omega\subset\R^n$ be an open, convex subset of $\R^d$ (possibly $\R^d$ itself), where $1\leq d \leq n$ is the dimension so that $\R^d$ is the least affine subspace containing $\Omega$. Then, $\mu$ is a Radon (locally finite and regular) $s$-concave measure on Borel subsets of $\Omega$ if, and only if, there exists $f\in L^1_{loc}(\Omega)$ such that $d\mu=fdx_{\Omega}$ and the following relations hold
	\begin{align*}
	    s\in[-\infty,0) &\longleftrightarrow f^{\frac{s}{1-sd}} \; \text{is convex}
	    \\
	    s=0 &\longleftrightarrow \log f \; \text{is concave}
	    \\
	    s\in(0,1/d) &\longleftrightarrow f^{\frac{s}{1-sd}} \; \text{is concave}
	    \\
	    s=1/d &\longleftrightarrow f \; \text{is a constant}
	    \\
	    s>1/d &\longleftrightarrow f=0 \; \text{a.e.}.
	\end{align*}
For the purposes of this paper, we will be focusing on the case when $\Omega$ is of full dimension, i.e. $d=n$ (where $\Omega$ may be $\R^n)$, and $s=p\in (0,1/n].$ To emphasize this, we will say a measure $\mu$ is $p$-concave when these two conditions are satisfied. One can see that if a $p$-concave measure $\mu$ is also $\alpha$-homogeneous, then the function given by $t\to t^{p\alpha}$ must be concave, and so $p\alpha\leq 1$. 

In \cite{HJ21, KK18,KL20, AK12,AK14,AK15,KPZv,Ro20, Zv08}, slicing problems were analyzed in the weighted setting. Of particular interest is the extension of Minkowski's existence theorem. For a convex body $K\in\conbod$ and a Borel measure $\mu$ \textit{on the boundary of $K$} with density $\phi$ the \textit{weighted surface area of $K$ with respect to $\mu$} is defined by
\begin{equation}
    \label{eq:surface_mu}
    S^{\mu}_{K}(E)=\int_{n_K^{-1}(E)}\phi(y)dy
\end{equation}
for every Borel set $E \subset \s^{n-1},$ where $dy$ represents integration with respect to the $(n-1)$-dimensional Hausdorff measure on $\partial K.$ If $K$ is of class $C^2_+$, then one has $$dS^{\mu}_{K}(u)=\phi\left(n_K^{-1}(u)\right)f_K(u)du.$$ 
The next step is to extend this definition to Borel measures $\mu$ with density. This will be done in the following way. For a Borel measure $\mu$ with density $\phi$ and a convex body $K$, denote the $\mu$-measure of the boundary of $K$ as 
\begin{equation}\mu^+(\partial K):=\liminf_{\epsilon\to 0}\frac{\mu\left(K+\epsilon B_2^n\right)-\mu(K)}{\epsilon}=\int_{\partial K}\phi(y)dy,
\label{eq_bd}
\end{equation}
where the second equality holds if there exists some canonical way to select how $\phi$ behaves on $\partial K$. A large class of functions consistent with \eqref{eq_bd} is when $\phi$ is continuous. Therefore, $S^{\mu}_{K}$ can be defined for any Borel measure $\mu$ with continuous density $\phi$ via the Riesz Representation theorem, since, for a continuous $f\in \mathcal{C}(\s^{n-1}),$
$$f\to \int_{\partial K}f(n_K(y))\phi(y)dy$$ is a linear functional. In many of our results below, all we actually require for $S^\mu_K$ to be well-defined is that $\partial K$ is in the Lebesgue set of $\phi$.

As an elucidating example: let $P$ be a polytope in $\R^n$ with $(n-1)$ dimensional facets $F_1,\dots,F_M$. Denote by $u_i\in\s^{n-1}$ the outer-unit normal to $F_i$. Suppose $\mu$ is a Borel measure on $\partial P$ with density $\varphi$ (e.g. $\mu$ is a measure on $\R^n$ with continuous density). Then,
$$dS^\mu_{P}(u)=\sum_{i=1}^M\mu(F_i)\delta_{u_i}(u), \quad \text{where }\mu(F_i)=\int_{F_i}\varphi(x)d\mathcal{H}^{n-1}(x)$$
and $\delta_{u_i}(u)$ is a Dirac point mass at $u_i$.

In \cite{GAL19}, Livshyts extended Minkowski's existence theorem to $\alpha$- homogeneous, $p$-concave measures: consider an $\alpha$-homogeneous, $\alpha>n$, Borel measure $\mu$ with density $\phi$, such that a restriction of $\phi$ on some half space is $p-$concave for $p \geq 0 .$ Let $\varphi$ be an arbitrary even measure on $\s^{n-1}$, not supported on any great hemisphere, such that $\operatorname{supp}(\varphi) \subset \operatorname{int}(\operatorname{supp}(\phi)) \cap \s^{n-1}$. Then there exists a unique, up to a set of $\mu$-measure zero, convex body $K\in\conbod$ such that
$$
dS^{\mu}_{K}(u)=d \varphi(u).
$$
The first goal of this paper is to establish further generalizations of tools from the classical Brunn-Minkowski theory to the weighted Brunn-Minkowski theory. In particular, mixed measures and a Minkowski's first inequality for mixed measures when the measure is $F$-concave, as introduced and shown in \cite{GAL19}, Aleksandrov's variational formula, and Minkowski's existence theorem:
\begin{question}
\label{q:min_exs}
Let $\mu$ be a Borel measure on $\R^n$ with locally integrable density $\phi$ and $\nu$ be a finite, even Borel measure on $\s^{n-1}$ such that $\nu$ is not concentrated on any great hemisphere. Then, does there exist a symmetric convex body $K\subset \R^n$ and a constant $c_{\mu,K}$ such that
$$d\nu(u)=c_{\mu,K}dS^{\mu}_{K}(u)?$$
\end{question}
We solve this question for a large class of Borel measures with density, denoted $\Lambda^n$ and defined as \begin{equation}
    \label{eq:lambda_prime}
    \begin{split}
    \Lambda^n=\bigg\{&\text{$\mu$ a Borel measure on $\R^n$ with continuous density such that}
    \\
    \exists \; \beta >0 &: \lim_{r\to\infty}\frac{\mu(rB_2^n)^\frac{\beta}{n}}{r} =0 \quad \text{ and } \quad \lim_{r\to 0}\frac{\mu(rB_2^n)^\frac{\beta}{n}}{r} =\infty \bigg\}.
    \end{split}
\end{equation}
Let us remark that $\Lambda^n$ is quite a rich class. For example, $\alpha$-homogeneous, $\alpha>0$, measures belong to $\Lambda^n$, as in this instance it suffices that the exponent of $r^{\frac{\alpha\beta}{n}-1}$ be negative, and so any $\beta <\frac{n}{\alpha}$ suffices. In addition, any probability measure with continuous density satisfies the first limit by taking the bound $0\leq \mu(rB_2^n)\leq 1$. Any radially decreasing measure satisfies the second limit as well, as, for $r$ small enough, if $\phi$ is the density of $\mu$, one can use that $\phi(rx)\geq \phi(x)$ and a variable substitution to obtain $\left(\mu(rB_2^n)^\frac{\beta}{n}/r\right)\geq \left(\mu(B_2^n)^\frac{\beta}{n}/r^{1-\beta}\right);$ thus any $\beta <1$ suffices. Therefore, $\Lambda^n$ contains every radially decreasing probability measure with continuous density, e.g. the Gaussian measure. Then, we prove the following:
\begin{theorem}
\label{t:third}
Let $\mu$ be a Borel measure on $\R^n$ with even, continuous density such that $\mu \in \Lambda^n$ for some $\beta>0.$ Suppose $\nu$ is a finite, even Borel measure on $\s^{n-1}$ that is not concentrated on any hemisphere.
Then, there exists a symmetric convex body $K$ such that
$$d\nu(u)=c_{\mu,K} dS^{\mu}_{K}(u); \quad c_{\mu,K}:=\mu(K)^{\frac{\beta}{n}-1}.$$
\end{theorem}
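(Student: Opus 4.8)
The plan is to run Aleksandrov's variational scheme with the Lebesgue measure replaced by $\mu$: I would realize the desired body as the maximizer of an energy functional whose Euler--Lagrange equation is exactly the claimed identity. Concretely, on the class of symmetric convex bodies I would work with
\[
F(K)\;=\;\frac{n}{\beta}\,\mu(K)^{\beta/n}\;-\;\int_{\s^{n-1}}h_K(u)\,d\nu(u).
\]
Write $R_K=\max_{x\in K}|x|=\max_{u\in\s^{n-1}}h_K(u)$ and $c_\nu:=\min_{u\in\s^{n-1}}\int_{\s^{n-1}}|\langle u,v\rangle|\,d\nu(v)$; since $\nu$ is even and not concentrated on any hemisphere, $c_\nu>0$. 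A symmetric $K$ contains a segment $[-R_Ku,R_Ku]$, so $h_K(v)\ge R_K|\langle u,v\rangle|$ and $\int h_K\,d\nu\ge R_Kc_\nu$; together with $\mu(K)\le\mu(R_KB_2^n)$ this gives $F(K)\le R_K\big(\tfrac{n}{\beta}\,\mu(R_KB_2^n)^{\beta/n}/R_K-c_\nu\big)$. By the first limit defining $\Lambda^n$ the right-hand side is bounded above over all $K$ (and negative once $R_K$ is large), so $\sup F<\infty$. Testing $F$ on small dilates $rK'$ of a fixed symmetric body $K'$, the second limit defining $\Lambda^n$ forces $\mu(rK')^{\beta/n}/r\to\infty$ as $r\to 0^+$, hence $F(rK')>0$ for small $r$; thus $\sup F>0$.

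Next I would carry out the direct method. A maximizing sequence $K_j$ eventually has $F(K_j)>0$, so $\tfrac{n}{\beta}\mu(R_{K_j}B_2^n)^{\beta/n}>R_{K_j}c_\nu$, and the first $\Lambda^n$ limit then bounds the radii $R_{K_j}$ uniformly; by Blaschke selection $K_j\to K_0$ along a subsequence. Since $\mu$ has continuous density it is absolutely continuous, and on uniformly bounded bodies $K\mapsto\mu(K)$ (dominated convergence) and $K\mapsto\int h_K\,d\nu$ (uniform convergence of support functions) are continuous, so $F(K_0)=\sup F>0$. Because $\mu$ vanishes on Lebesgue-null sets, $F(K_0)>0$ forces $\mu(K_0)>0$, so $K_0$ has nonempty interior; being a Hausdorff limit of symmetric bodies it is symmetric, hence $K_0\in\conbod_0$ with $K_0=-K_0$.

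It remains to differentiate. I would fix an even $g\in C(\s^{n-1})$ and, for $|t|$ small enough that $h_{K_0}+tg>0$, set $K_t=[h_{K_0}+tg]$, the Wulff shape; $K_t$ is a symmetric convex body, $h_{K_t}\le h_{K_0}+tg$ on $\s^{n-1}$, with equality at $t=0$. Put $\varphi(t)=\tfrac{n}{\beta}\mu(K_t)^{\beta/n}-\int_{\s^{n-1}}(h_{K_0}+tg)\,d\nu$. Then $\varphi(t)\le F(K_t)\le F(K_0)=\varphi(0)$, so $\varphi$ attains a maximum at $t=0$. By the weighted version of Aleksandrov's variational formula established earlier, $\tfrac{d}{dt}\mu(K_t)\big|_{t=0}=\int_{\s^{n-1}}g\,dS^\mu_{K_0}$, so $\varphi$ is differentiable at $0$ and $\varphi'(0)=0$ reads
\[
\int_{\s^{n-1}}g\,d\nu\;=\;\mu(K_0)^{\beta/n-1}\int_{\s^{n-1}}g\,dS^\mu_{K_0}.
\]
As this holds for every even $g\in C(\s^{n-1})$, and both $\nu$ and $\mu(K_0)^{\beta/n-1}S^\mu_{K_0}$ are even finite Borel measures on $\s^{n-1}$ (the latter since $K_0=-K_0$ and $\mu$ has even density), and even continuous functions separate even measures, I conclude $d\nu=\mu(K_0)^{\beta/n-1}\,dS^\mu_{K_0}$, which is the theorem with $K=K_0$ and $c_{\mu,K_0}=\mu(K_0)^{\beta/n-1}$.

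The hard part will be the compactness step: because $\mu$ is not homogeneous, $F$ is not invariant under dilations and competitors cannot be normalized by scaling, so the two limits built into the definition of $\Lambda^n$ are precisely what is needed to keep a maximizing sequence from escaping to infinity or collapsing to the origin (and to guarantee $\sup F\in(0,\infty)$). Granting the weighted Aleksandrov variational formula (proved earlier in the paper) and the elementary Wulff-shape inequality $h_{[f]}\le f$, the remaining ingredients---uniform boundedness, non-degeneracy of the limit, and the one-variable differentiation---are routine.
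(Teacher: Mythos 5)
Your proof is correct and follows essentially the same approach as the paper: you set up the same functional $\frac{n}{\beta}\mu(\cdot)^{\beta/n}-\int h\,d\nu$, use the two $\Lambda^n$ limits for compactness (bounding radii from above and guaranteeing a positive supremum by testing on small balls), pass to a maximizer by Blaschke selection, and differentiate via the weighted Aleksandrov formula and the Wulff-shape inequality $h_{[f]}\le f$. The only cosmetic difference is that you work directly on convex bodies and package the Wulff-shape inequality into the comparison $\varphi(t)\le F(K_t)\le F(K_0)$, whereas the paper runs the functional over even positive functions $f\in C(\s^{n-1})$ and first observes $\psi(h_{[f]})\ge\psi(f)$; these are the same device.
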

We then show in the case of $\alpha$-homogeneous measures $\mu$, $\alpha\in(0,1)\cup(1,\infty)$, the constant $c_{\mu,K}$ can be taken to be $1.$ We also analyze the equation $dS^{\mu}_{K}(u)=dS^{\mu}_{L}(u)$ and determine which conditions are required for $\mu$ so that one can conclude $K=L.$ We discuss how, for the Gaussian measure $\gamma_n$, this constant cannot be removed, but we do obtain uniqueness:
if $dS^{\gamma_n}_{K}(u)=dS^{\gamma_n}_{L}(u)$ then, if $K$ and $L$ are large enough in the sense that $\gamma_n(K),\gamma_n(L)\geq \frac{1}{2}$, one has $K=L$. This result had first been obtained by Huang, Xi and Zhao \cite{HXZ21}, who used an approach that relied on properties of the Gaussian measure.

We recall that the curvature function $f_K$ of a $C^2_+$ convex body $K$ has a useful formula: if $D^2$ denotes the spherical Hessian, then $f_K=\det (D^2 h + hI),$ where $I$ is the $(n-1)\times (n-1)$ identity matrix and $h=h_K$. Theorem~\ref{t:third} thus solves the following Monge-Amp\`ere equation: Consider a given smooth, positive, even data function $f$ on the sphere and a Borel measure $\mu\in\Lambda^n$ with even, continuous density $\phi$. Then, there exists $\beta>0$ and a symmetric, smooth convex body $K$ such that $h:=h_K$ and $c=c_{\mu,K}=\mu(K)^{\frac{\beta}{n}-1}$ solve
$$c\phi(\nabla h)\det (D^2h + h I)=f.$$

In \cite{LRZ22}, Langharst, Roysdon and Zvavitch extended Zhang's inequality and the projection operator to arbitrary Borel measures with density; the extension of the projection operator is the following:
consider a convex body $K$ and a Borel measure $\mu$ with density $\phi$ containing $\partial K$ in its Lebesgue set. Then, the weighted projection body of $K$ with respect to $\mu$ is the convex body $\Pi_{\mu} K$ whose support function is given by
\begin{equation}
    h_{\Pi_{\mu} K}(\theta) = \frac{1}{2} \int_{\partial K} |\langle \theta,n_K(y)\rangle|\phi(y) dy=\frac{1}{2}\int_{\s^{n-1}}|\langle \theta, u \rangle|dS^{\mu}_{K}(u).
\end{equation}
In \cite{KZ15}, Zvavitch and Koldobsky considered the isomorphic Busemann-Petty problem: given symmetric convex bodies $K$ and $L$ and a Borel measure $\mu$ with density such that $$\mu(K\cap \theta^\perp) \leq \mu(L\cap \theta^\perp)$$
for all $\theta\in\s^{n-1}$, does there exist some $\mathcal{L}>0$ such that $\mu(K)\leq \mathcal{L}\mu(L)?$ They showed the answer is affirmative, with the worst possible constant being $\mathcal{L}=\sqrt{n}.$ We should mention that the original Busemann-Petty problem asked this question with volume in place of $\mu$ and with $\mathcal{L}=1$ \cite{BP56}; it is true for $n=2,3,4$ and false for $n\geq 5$ \cite{Ball87,BJ91, GR94, GKS99,GKS99_2, AG90, RL75,MP92,GZ93,GZ94}. In \cite{Zv05}, Zvavitch solved the Busemann-Petty problem for general measures, obtaining the same results. 

In this paper, we will further study the projection bodies $\Pi_\mu K$, and classify them into weighted projective classes. In particular, motivated by the result of Zvavitch and Koldobsky, we answer the following \textit{isomorphic Shephard problems} for these weighted projection bodies.
 \begin{question}
\label{q:shep_meas}
Let Borel measures $\mu$ and $\nu$ have continuous densities and $K,L\in\conbod_0$ be symmetric such that
$$h_{\Pi_{\mu} K}(\theta)\leq h_{\Pi_{\nu} L}(\theta)$$ for all $\theta\in\s^{n-1}$. Does there exist a quantity $\mathcal{A}>0$ such that $\mu(K)\leq \mathcal{A}\nu(L)$?
\end{question}

The celebrated result of K. Ball \cite{Ball91_0} showed the construction of a symmetric random polytope such that, under the hypothesis that $h_{\Pi K}(\theta)\le h_{\Pi L}(\theta)$ for all $\theta\in \s^{n-1}$, and $L$ not a projection body, one had $\vol_n(K)\approx \sqrt{n}\vol_n(L).$  And yet, when $L$ is a projection body, the same hypothesis yields $\vol_n(K)\le \vol_n(L)$. Therefore, $\mathcal{A}$ may depend on $L$ and possibly the dimension. It is reasonable to suspect that $\mathcal{A}$ will depend on $\mu$ and $\nu$ as well. However, for the quantity we obtain, the dependency on $\mu$ and $\nu$ vanishes when $\mu=\nu$. We will consider Question~\ref{q:shep_meas} in the case where $\mu,\nu$ are both homogeneous of some degree; we will also discuss that the question is not well-posed for when $\mu=\nu=\gamma_n$. We list here some of the less complicated results. The constant $d_{\Pi}(L)$ below satisfies $1\leq d_{\Pi}(L)\leq \sqrt{n},$ and  $d_{\Pi}(L)=1$ when $L$ is a projection body.

\begin{theorem}[Isomorphic Shephard Problem for Homogeneous Measures]
\label{t:q_1} Suppose $\mu$ and $\nu$ are Borel measures on $\R^n$ with continuous densities, where $\nu$ is $\beta$-homogeneous, $\beta>0$ and $\mu$ is $p$-concave and $\alpha$-homogeneous, such that one of the two holds
\begin{enumerate}
    \item $\alpha \geq n$ and $p>0$
    \item $\alpha<n$ but $p=\frac{1}{\alpha}$.
\end{enumerate}
 Let $K$ and $L$ be symmetric convex bodies such that $$h_{\Pi_{\mu} K}(\theta) \le h_{\Pi_{\nu} L}(\theta)$$ for every $\theta\in\s^{n-1}.$ Then one has that
$\mu(K) \le \frac{\beta}{\alpha} \mathcal{A} d_{\Pi}(L) \nu(L)$
where
$$\mathcal{A}=\min\left\{\mathcal{A}_K,\mathcal{A}_L\right\}, \quad \mathcal{A}_K=\left(\frac{\mu(K)}{\mu(L)}\right)^{\frac{1}{\alpha}}, \quad \mathcal{A}_L=\left(\frac{\nu(L)}{\mu(L)}\frac{\beta }{\alpha}\right)^{\frac{1}{\alpha-1}}d^\frac{1}{\alpha-1}_{\Pi}(L).$$
\end{theorem}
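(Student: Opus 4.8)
The plan is to show that the whole theorem collapses to the single scale-free inequality
\begin{equation}
\mu(K)^{\frac{\alpha-1}{\alpha}}\,\mu(L)^{\frac{1}{\alpha}}\ \le\ \frac{\beta}{\alpha}\,d_{\Pi}(L)\,\nu(L),
\label{eq:coreineq}
\end{equation}
and then to prove \eqref{eq:coreineq} by two applications of the Fubini symmetry of the projection operator, sandwiched between a comparison of $L$ with a genuine projection body. First I would record the purely algebraic reduction: we may assume $\mu(K),\mu(L),\nu(L)>0$, since otherwise the conclusion is immediate (if $\nu(L)=0$, then $\int_{\s^{n-1}}h_L\,dS^{\nu}_{L}=\beta\nu(L)=0$ with $h_L>0$ forces $S^{\nu}_{L}\equiv 0$, hence $h_{\Pi_{\nu} L}\equiv 0$, hence $S^{\mu}_{K}\equiv 0$ and $\mu(K)=0$; if $\mu(L)=0$ then $\mathcal{A}_K=\mathcal{A}_L=\infty$). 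Granting \eqref{eq:coreineq}, writing $\mu(K)=\mu(K)^{1/\alpha}\cdot\mu(K)^{(\alpha-1)/\alpha}$ and inserting \eqref{eq:coreineq} yields $\mu(K)\le\frac{\beta}{\alpha}\mathcal{A}_K d_{\Pi}(L)\nu(L)$, while dividing \eqref{eq:coreineq} by $\mu(L)^{1/\alpha}$ and raising to the power $\tfrac{\alpha}{\alpha-1}$ (here $\alpha\neq1$ is used: $\alpha\ge n$ in case (1), $\alpha\in(1,n)$ in case (2)) yields $\mu(K)\le\frac{\beta}{\alpha}\mathcal{A}_L d_{\Pi}(L)\nu(L)$; taking the smaller constant gives the claim. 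So everything reduces to \eqref{eq:coreineq}.

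Next I would peel \eqref{eq:coreineq} down to a one-sided mixed-measure estimate. Applying the weighted Aleksandrov variational formula established above to the perturbation $h_A+th_A$ and using $\mu((1+t)A)=(1+t)^\alpha\mu(A)$ gives the homogeneity identity $\int_{\s^{n-1}}h_A\,dS^{\mu}_{A}=\alpha\,\mu(A)$, and similarly $\int_{\s^{n-1}}h_A\,dS^{\nu}_{A}=\beta\,\nu(A)$ for the $\beta$-homogeneous $\nu$. Under hypothesis (1) or (2) the measure $\mu$ falls into the $F$-concave framework of \cite{GAL19}, so the weighted Minkowski first inequality for mixed measures gives
$$\int_{\s^{n-1}}h_L\,dS^{\mu}_{K}\ \ge\ \alpha\,\mu(K)^{\frac{\alpha-1}{\alpha}}\mu(L)^{\frac1\alpha}.$$
Hence \eqref{eq:coreineq} follows from the estimate
\begin{equation}
\int_{\s^{n-1}}h_L\,dS^{\mu}_{K}\ \le\ \beta\,d_{\Pi}(L)\,\nu(L).
\label{eq:aux}
\end{equation}
To prove \eqref{eq:aux}, use the definition of $d_{\Pi}(L)$ to pick a projection body $Z=\Pi M$, $M\in\conbod$, with $Z\subseteq L\subseteq d_{\Pi}(L)Z$, i.e.\ $h_Z\le h_L\le d_{\Pi}(L)h_Z$ on $\s^{n-1}$ (existence of the extremal $Z$ by compactness, or run the argument with $d_{\Pi}(L)+\varepsilon$ and let $\varepsilon\downarrow0$). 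The crucial identity is that for finite Borel measures $\sigma,\tau$ on $\s^{n-1}$, Fubini gives $\int_{\s^{n-1}}\big(\tfrac12\int_{\s^{n-1}}|\langle w,u\rangle|\,d\sigma(u)\big)d\tau(w)=\int_{\s^{n-1}}\big(\tfrac12\int_{\s^{n-1}}|\langle u,w\rangle|\,d\tau(w)\big)d\sigma(u)$; with $(\sigma,\tau)=(S^{\mu}_{K},S_M)$ this reads $\int h_{\Pi M}\,dS^{\mu}_{K}=\int h_{\Pi_{\mu} K}\,dS_M$, and with $(\sigma,\tau)=(S^{\nu}_{L},S_M)$ it reads $\int h_{\Pi_{\nu} L}\,dS_M=\int h_{\Pi M}\,dS^{\nu}_{L}$. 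Chaining these with $h_Z=h_{\Pi M}$, the sandwich above, and the hypothesis $h_{\Pi_{\mu} K}\le h_{\Pi_{\nu} L}$:
\begin{align*}
\int_{\s^{n-1}}h_L\,dS^{\mu}_{K}
&\le d_{\Pi}(L)\int_{\s^{n-1}}h_Z\,dS^{\mu}_{K}
= d_{\Pi}(L)\int_{\s^{n-1}}h_{\Pi_{\mu} K}\,dS_M\\
&\le d_{\Pi}(L)\int_{\s^{n-1}}h_{\Pi_{\nu} L}\,dS_M
= d_{\Pi}(L)\int_{\s^{n-1}}h_Z\,dS^{\nu}_{L}\\
&\le d_{\Pi}(L)\int_{\s^{n-1}}h_L\,dS^{\nu}_{L}
= \beta\,d_{\Pi}(L)\,\nu(L),
\end{align*}
which is \eqref{eq:aux}.

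The genuinely nontrivial input is the weighted Minkowski first inequality for mixed measures; hypotheses (1) and (2) are present precisely to guarantee its validity, and everything else is the bookkeeping above plus two appeals to Fubini, so this is where I would expect the main obstacle to lie (both in invoking the inequality with the correct exponents for $\alpha$-homogeneous $\mu$ and in checking that the two regimes of $(\alpha,p)$ are exactly the admissible ones). A secondary point is regularity: the homogeneity identity and the Fubini computations are transparent for $C^2_+$ bodies, where $dS^{\mu}_{K}(u)=\phi(n_K^{-1}(u))f_K(u)\,du$, and are extended to general symmetric $K,L$ by the approximation and weighted-surface-area machinery developed earlier in the paper, which I would cite rather than reprove.
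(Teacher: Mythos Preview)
Your proof is correct and follows essentially the same route as the paper: both reduce to the core inequality $\mu(K)^{(\alpha-1)/\alpha}\mu(L)^{1/\alpha}\le\frac{\beta}{\alpha}d_{\Pi}(L)\nu(L)$ by combining the mixed-measure bound $\mu(K,L)\le\beta d_{\Pi}(L)\nu(L)$ (obtained via a projection-body sandwich) with the weighted Minkowski first inequality \eqref{eq:mr}, then perform the same algebra to extract $\mathcal{A}_K$ and $\mathcal{A}_L$. Your direct Fubini identity $\int h_{\Pi M}\,dS^{\mu}_K=\int h_{\Pi_\mu K}\,dS_M$ is exactly the content of the paper's Lemma~\ref{l:proj_meas}, just phrased without passing through the cosine-transform formalism.
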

The quantity $\mathcal{A}_K$ in the above has the advantage of being independent of $\nu$. The quantity $\mathcal{A}_L$ is in some sense a measure of asymmetry between $\mu$ and $\nu$ on $L$. In both cases, when $\mu=\nu$ one obtains
$\mu(K)\leq d^{\frac{\alpha}{\alpha -1}}_{\Pi}(L)\mu(L)$.
The case when $\alpha\in(0,n)$ and when $\mu$ is $F$-concave will be discussed as well. Besides being an extension of the classical Shephard problem to the setting of measures, there exists a geometric reason for the study of these questions. For a Borel measure $\mu$ with density, one has that \eqref{eq_bd} yields, via Fubini's theorem, that 
\begin{equation}
\label{eq:sur}
\mu^+(\partial K)=\frac{1}{\kappa_{n-1}}\int_{\s^{n-1}}h_{\Pi_{\mu} K}(\theta)d\theta.
\end{equation}
Therefore, Question~\ref{q:shep_meas} for the Gaussian measure is related to the result of Ball \cite{Ball93}:
for $K\in\conbod, \gamma^+_n(\partial K)\leq 4n^{1/4}$, which was shown to be asymptotically sharp by Nazarov \cite{Naz03}.

 This paper is structured as follows. In Section~\ref{s:min}, we extend Minkowski's existence theorem; that is, answer Question~\ref{q:min_exs} for measures in $\Lambda^n$. We also discuss uniqueness conditions, and as a special case, we obtain the result of Livshyts \cite{GAL19}. In Section~\ref{s:class}, we list the developments over the past twenty years in the application of harmonic analysis to convex geometry, following the works of Koldobsky, Ryabogin, Zvavitch, Yaskin and others \cite{AK05, KRZ04_1, KY08, KZ15}. Also, we give a classification of the projection bodies $\Pi_\mu K$ for a fixed $\mu$ from various perspectives. Section~\ref{s:shep_meas} concerns the isomorphic Shephard problems; we present various solutions to Question \ref{q:shep_meas}. Section~\ref{s:con} includes applications of the above results to the concept of capacity from potential theory. Finally, in the Appendix, we generalize Section~\ref{s:min} to the setting of the $L^p$-Brunn-Minkowski Theory.

\textbf{Acknowledgements} We would like to thank A. Zvavitch for the numerous, helpful comments and guidance. We would also like to thank Jacopo Ulivelli for the comments concerning the presentation of the definition of the weighted surface area measure.

\section{Extensions of Minkowski's Existence Theorem and Minkowski's Inequality}
\label{s:min}
In general, we will consider measures with concavity beyond Borell's classification. We say a Borel measure $\mu$ is $F$-concave, where $F$ is an invertible, (strictly) monotonic, continuous function $F$, if there exists a class $\mathcal{C}$ of Borel sets (each with finite $\mu$ measure) such that, for every $K,L\in\mathcal{C}$ and every $0<\lambda <1$ one has 
 \begin{equation}
	\label{eq:concave}
	\mu\left((1-\lambda)K+\lambda L\right)\geq F^{-1}\left((1-\lambda) F(\mu(K)) +\lambda F(\mu(L))\right).
	\end{equation}

	Our first step is understanding equality conditions in the $F$-concavity of a measure, \eqref{eq:concave}. We show that, if there is equality at a single $\lambda\in (0,1),$ then there is equality for all $\lambda\in [0,1].$
	\begin{proposition}
	\label{p:equal}
	Let $\mu$ be $F$-concave on a class of Borel sets $\mathcal{C}$ such that $F$ is monotone. Suppose we have $A,B\in\mathcal{C}$ and some $\lambda\in(0,1)$ such that
	\begin{equation*}
	\mu\left((1-\lambda)K+\lambda L\right) = F^{-1}\left((1-\lambda) F(\mu(K)) +\lambda F(\mu(L))\right)
	\end{equation*}
	Then, equality holds for all $\lambda \in [0,1]$.
	\end{proposition}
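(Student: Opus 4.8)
The plan is to reduce Proposition~\ref{p:equal} to an elementary statement about concave functions of one real variable. First, we may assume $F$ is increasing: if $F$ is decreasing then $-F$ is increasing, invertible and continuous, and since $(-F)^{-1}(y)=F^{-1}(-y)$ the inequality \eqref{eq:concave} for $-F$ coincides verbatim with the one for $F$, so $\mu$ is $(-F)$-concave on $\mathcal{C}$ with identical equality cases. With $F$ increasing, applying $F$ to \eqref{eq:concave} gives, for all $C,D\in\mathcal{C}$ and $\theta\in[0,1]$, that $F(\mu((1-\theta)C+\theta D))\ge(1-\theta)F(\mu(C))+\theta F(\mu(D))$. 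Let $\lambda_0\in(0,1)$ be the value at which equality is assumed, and put
\[
A_t:=(1-t)A+tB,\qquad g(t):=F\big(\mu(A_t)\big)\quad(t\in[0,1]).
\]

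Next I would show that $g$ is concave on $[0,1]$. Because $A$ and $B$ are convex, $\alpha C+\beta C=(\alpha+\beta)C$ for $C\in\{A,B\}$ and $\alpha,\beta\ge 0$, which yields the Minkowski-sum identity
\[
(1-\theta)A_s+\theta A_t=A_{(1-\theta)s+\theta t}\quad(s,t,\theta\in[0,1]).
\]
Since every $A_r$ again lies in $\mathcal{C}$ (the classes relevant here --- convex bodies, convex bodies containing the origin, symmetric convex bodies --- are all closed under convex combinations), applying the above consequence of \eqref{eq:concave} to $C=A_s$, $D=A_t$ gives $g((1-\theta)s+\theta t)\ge(1-\theta)g(s)+\theta g(t)$; hence $g$ is concave on $[0,1]$.

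Finally I would invoke the one-variable fact. Let $\ell$ be the affine function with $\ell(0)=g(0)$ and $\ell(1)=g(1)$; the equality hypothesis is exactly $g(\lambda_0)=\ell(\lambda_0)$. Then $\varphi:=g-\ell$ is concave on $[0,1]$, nonnegative there (a concave function lies above its chords), and vanishes at $0$, $\lambda_0$, and $1$. For $x\in(0,\lambda_0)$, writing $\lambda_0=(1-s)x+s\cdot1$ with $s=\tfrac{\lambda_0-x}{1-x}\in(0,1)$, concavity gives $0=\varphi(\lambda_0)\ge(1-s)\varphi(x)$, so $\varphi(x)=0$; symmetrically, for $x\in(\lambda_0,1)$ one writes $\lambda_0=(1-s)\cdot0+sx$ with $s=\lambda_0/x\in(0,1)$ and concludes $\varphi(x)=0$. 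Thus $\varphi\equiv0$, i.e.\ $g(t)=(1-t)g(0)+tg(1)$ for every $t\in[0,1]$; applying $F^{-1}$ and unwinding the definition of $g$ yields equality in \eqref{eq:concave} for all $\lambda\in[0,1]$.

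The only non-formal ingredient is the bookkeeping with the class $\mathcal{C}$: the argument genuinely uses that the intermediate bodies $(1-t)A+tB$ belong to $\mathcal{C}$, so that $F$-concavity may be applied to them, and that the members of $\mathcal{C}$ are convex, so that the identity $(1-\theta)A_s+\theta A_t=A_{(1-\theta)s+\theta t}$ holds (it can fail for general, non-convex Borel sets). Both are automatic in every instance of $F$-concavity appearing in this paper, so this is a matter of phrasing the hypothesis correctly rather than a genuine obstacle; everything else is the soft argument above.
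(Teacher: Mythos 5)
Your proof is correct and follows essentially the same route as the paper's: reduce to the one-variable function $g(t)=F\bigl(\mu((1-t)A+tB)\bigr)$, show it is concave (after replacing $F$ by $-F$ if $F$ is decreasing), and observe that a concave function on $[0,1]$ agreeing with its end-chord at an interior point must coincide with the chord. The paper simply asserts that $g$ is concave or convex and states the final step without justification; you supply the two ingredients it tacitly uses --- the Minkowski-sum identity $(1-\theta)A_s+\theta A_t=A_{(1-\theta)s+\theta t}$ (which needs convexity of $A,B$) and the fact that the intermediate bodies $A_r$ remain in $\mathcal{C}$ --- so your write-up is a more careful version of the same argument rather than a different one.
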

	\begin{proof}
	Since $F$ is monotone and invertible, it is either increasing or decreasing, and so the function given by $f(t)=F\left(\mu\left((1-t)K+t L\right)\right)$
	is either concave or convex. Denote the linear function $g(t)=(1-t) F(\mu(K)) +t F(\mu(L))$. We have that $g(0)=f(0)=\mu(K)$ and $g(1)=f(1)=\mu(L)$. By hypothesis however, we also have $f(\lambda)=g(\lambda)$. But, since either $f$ or $-f$ is concave, this implies $f=g$ on all of $[0,1].$
	\end{proof}
	
	In the case of volume and the Brunn-Minkowski inequality, equality conditions imply that $K$ is homothetic to $L$ \cite[Chapter 7]{Sh1}. When $F(x)=x^s$, $s\in[-\infty,1/n]$ one can use the equality conditions of the Pr{\'e}kopa-Leindler inequality; this has been done previously by E. Milman and Rotem in \cite{MR14}, and we outline this in the following proposition.
	
	\begin{proposition}[Milman and Rotem \cite{MR14}]
	\label{p:mr}
	Suppose $\mu$ is $s$-concave, $s\in [-\infty,1/n]$, on Borel subsets of $\R^n$. If $A$ and $B$ are Borel sets such that $\mu(A)\mu(B) >0$ and
		$$\mu(t A +(1-t)B) = \left(t\mu(A)^s +(1-t)\mu(B)^{s}\right)^{\frac{1}{s}},$$
	then there exists $\alpha >0$ and $\beta\in \R^n$ such that $A=\alpha B + \beta$. Additionally, there exists $c>0$ such that $\varphi(\alpha x+\beta)=c\varphi(x)$ for almost every $x\in B$, where $\varphi$ is the density of $\mu$.
	\end{proposition}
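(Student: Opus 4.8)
The strategy is to recognize equality in the $s$-concavity of $\mu$ as equality in an instance of the Borell--Brascamp--Lieb (BBL) inequality, and then to invoke the known rigidity of the latter. Since $\mu$ is $s$-concave on all Borel subsets of $\R^n$, Borell's classification (quoted above) applies: we may write $d\mu=\varphi\,dx$, where $\varphi$ is $\gamma$-concave as a function with $\gamma=\frac{s}{1-ns}$, in the sense that $\varphi^{\gamma}$ is concave when $\gamma>0$, $\log\varphi$ is concave when $\gamma=0$, $\varphi^{\gamma}$ is convex when $-\tfrac1n\le\gamma<0$, and $\varphi$ is a.e.\ constant when $s=\tfrac1n$. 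For $x\in A$, $y\in B$ and $\lambda\in(0,1)$ one has $\lambda x+(1-\lambda)y\in\lambda A+(1-\lambda)B$ and, by $\gamma$-concavity of $\varphi$, $\varphi(\lambda x+(1-\lambda)y)\ge M_{\gamma}^{(\lambda,1-\lambda)}\big(\varphi(x),\varphi(y)\big)$, where $M_{\gamma}^{(\lambda,1-\lambda)}$ denotes the weighted $\gamma$-mean. Thus the triple $\varphi\mathbf 1_A$, $\varphi\mathbf 1_B$, $\varphi\mathbf 1_{\lambda A+(1-\lambda)B}$ satisfies the hypothesis of BBL, whose conclusion is precisely
\[
\mu\big(\lambda A+(1-\lambda)B\big)=\int \varphi\mathbf 1_{\lambda A+(1-\lambda)B}\ \geq\ M_{s}^{(\lambda,1-\lambda)}\big(\mu(A),\mu(B)\big),
\]
the measure exponent being recovered as $q=\gamma/(1+n\gamma)=s$. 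Hence the equality hypothesis of the proposition says exactly that this instance of BBL is saturated (and we may assume throughout that $\mu(A),\mu(B)<\infty$).

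Second, we apply the equality characterization of BBL, due to Dubuc, in its non-degenerate regime; the hypothesis $\mu(A)\mu(B)>0$ together with finiteness rules out the trivial equality configurations. It produces constants $\alpha>0$, $c>0$ and a vector $\beta\in\R^n$ such that $(\varphi\mathbf 1_A)(\alpha x+\beta)=c\,(\varphi\mathbf 1_B)(x)$ for a.e.\ $x\in\R^n$. Comparing supports gives $A=\alpha B+\beta$ modulo a Lebesgue-null set, and on $B$ the density relation $\varphi(\alpha x+\beta)=c\,\varphi(x)$ for a.e.\ $x\in B$, which is the assertion. The two boundary exponents fit the same scheme: when $s=0$ this is equality in the Pr\'ekopa--Leindler inequality, whose rigidity is again classical (Dubuc); when $s=\tfrac1n$ the density $\varphi$ is a.e.\ constant, so $\mu$ is a multiple of Lebesgue measure and the claim reduces to equality in the classical Brunn--Minkowski inequality for Borel sets, forcing $A$ and $B$ to be homothetic convex bodies modulo null sets (with $c=1$); and $s=-\infty$, where $M_{s}^{(\lambda,1-\lambda)}=\min$, is handled directly or by letting $s\to-\infty$.

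The step requiring genuine care — and the reason we follow Milman and Rotem \cite{MR14} rather than reproduce the analysis — is the rigidity of BBL (and of Pr\'ekopa--Leindler) for general Borel, rather than convex, sets: one recovers the homothety only up to Lebesgue-null sets, and one must verify that the positivity and finiteness of $\mu(A),\mu(B)$ exclude the degenerate equality cases (a set reduced to a point, the density vanishing on part of a set, and so on). Granting that input, the chain ``Borell $\Rightarrow$ BBL $\Rightarrow$ Dubuc's equality criterion'' is routine, and the passage back to $\mu$ and its density $\varphi$ is immediate from $d\mu=\varphi\,dx$.
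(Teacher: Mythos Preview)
The paper does not give its own proof of this proposition; it merely states the result and attributes it to Milman and Rotem \cite{MR14}, noting just before the statement that ``one can use the equality conditions of the Pr{\'e}kopa--Leindler inequality.'' Your proposal is correct and follows exactly the route the paper alludes to: reduce via Borell's classification to an instance of the Borell--Brascamp--Lieb inequality (Pr{\'e}kopa--Leindler when $s=0$) and then invoke Dubuc's equality characterization, with the boundary cases $s=1/n$ and $s=-\infty$ handled separately.

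One small caveat worth flagging: Dubuc's rigidity for general Borel sets yields the homothety $A=\alpha B+\beta$ only modulo a Lebesgue-null set, as you correctly note, whereas the proposition as stated in the paper asserts exact equality. For the applications in the paper (where $A$ and $B$ are convex bodies) this distinction is immaterial, but you are right to be careful about it.
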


	One sees very quickly that an all encompassing schema for general $\mu$ and $F$ cannot be expected in general. In fact, the equality conditions of the Ehrhard inequality constituted an entire paper \cite{EHR2}. Recall that the Gaussian measure is given by $$d\gamma_n(x)=\frac{1}{(2\pi)^{\frac{n}{2}}}  e^{-\frac{|x|^2}{2}} dx.$$  The Ehrhard inequality states that, for $0<t<1$ and closed convex sets $K$ and $L$ in $\R^{n}$,
\begin{equation}\label{e:Ehrhard_ineq}
\Phi^{-1}\left(\gamma_{n}((1-t) K+tL)\right) \geq(1-t) \Phi^{-1}\left(\gamma_{n}(K)\right)+t \Phi^{-1}\left(\gamma_{n}(L)\right),
\end{equation}
i.e. $\Phi^{-1}\circ\gamma_n$ is concave, where $\Phi(x)=\gamma_{1}((-\infty, x))$. It was first proven by Ehrhard himself \cite{EHR1,EHR2}.  Lata\l{}a \cite{Lat96} generalized Ehrhard’s result to the case of arbitrary Borel $K$ and convex $L$; the general case of Ehrhard's inequality on Borel sets was proven by Borell \cite{Bor03}. Equality is obtained when $\gamma_{n}(K) \gamma_{n}(L)>0$ in the convex case if, and only if, either $K=\R^{n}, L=\R^{n}, K=L$, or both $K$ and $L$ are half-spaces, one contained in the other. So, if $K,L\in\conbod$, one must have $K=L.$ But also, since $\Phi$ is strictly log-concave, so too is the Gaussian measure. That is, Equation \ref{e:Ehrhard_ineq} also yields for $0<t<1$ and closed convex sets $K$ and $L$ in $\R^{n}$,
\begin{equation}\label{e:Log_gamma_concave}
\gamma_{n}((1-t) K+t L) \geq \gamma_{n}(K)^{1-t} \gamma_{n}(L)^{t}.
\end{equation}
If we have equality, then $K=L.$ 
	
	We now proceed with the extension of mixed volume to arbitrary measures, which is as follows: given $K,L\in\conbod$ and a Borel measure $\mu$ with density, one has
	\begin{equation}
	    \mu(K,L)=\liminf_{\epsilon\to0}\frac{\mu(K+\epsilon L)-\mu(K)}{\epsilon},
	    \label{eq:arb_mixed_0}
	\end{equation}
(see, for example, \cite{FLMZ23_1,FLMZ23_2,HJ21,LRZ22, GAL19, MR14} for recent uses of this definition). From the definition, one has that $\mu(K,tL)=t\mu(K,L)$, and, if $\mu$ is $\alpha$-homogeneous, that is for $t>0,$
	$\mu(tA)=t^\alpha\mu(A)$ for Borel sets, one then has $\mu(tK,L)=t^{\alpha-1}\mu(K,L).$
	Also, we see that, for $K\in\conbod$,
	$$\mu(tK,K)=\liminf_{\epsilon\to0}\frac{\mu((t+\epsilon)K)-\mu(tK)}{\epsilon}=\diff{\mu(tK)}{t};$$
	where the limit infiumum is equal to the limit since $q(t)=\mu(tK)$ is a non-decreasing function in $t.$ Since $q(0)=0,$ one then has
	\begin{equation}\mu(K)=\int_{0}^1\mu(tK,K)dt. \label{eq:int_meas}
	\end{equation}
	From the definition, one can extend Equation~\ref{eq:min_ineq}, Minkowski's inequality, to arbitrary measures, as shown in \cite{GAL19}. We replicate the proof here for completeness.
\begin{theorem}[Minkowski's Inequality for $F$-Concave Measures,\cite{GAL19}] 
	\label{t:min_eq}
	Let $\mu$ be a Borel measure on $\R^{n}$, such that $\mu$ is $F$-concave, $F$ is differentiable, with respect to a class of Borel sets $\mathcal{C}$. Then, for every $K,L\in\mathcal{C}$, one has that:

$$
\mu(K, L) \geq \mu(K, K)+\frac{F(\mu(L))-F(\mu(K))}{F^{\prime}(\mu(K))}.
$$
\end{theorem}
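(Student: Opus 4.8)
The plan is to reduce the inequality to a one-variable concavity estimate along the Minkowski segment joining $K$ and $L$, and then to identify the relevant one-sided derivative with $\mu(K,L)-\mu(K,K)$ through a product-rule computation. As a cosmetic first move, observe that $\mu$ is $F$-concave if and only if it is $(-F)$-concave, and that replacing $F$ by $-F$ changes neither the hypotheses nor the quantity $\big(F(\mu(L))-F(\mu(K))\big)/F'(\mu(K))$, so I may assume $F$ is increasing.

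Set $\psi(t)=\mu\big((1-t)K+tL\big)$ for $t\in[0,1]$ and $f=F\circ\psi$. Applying the increasing function $F$ to \eqref{eq:concave} with $\lambda=t$ shows that $f$ is concave on $[0,1]$, so its right derivative at $0$ exists and dominates the secant slope, $f'(0^+)\ge f(1)-f(0)=F(\mu(L))-F(\mu(K))$. Since $F$ is differentiable at $\mu(K)$ with $F'(\mu(K))>0$, the chain rule gives $f'(0^+)=F'(\mu(K))\,\psi'(0^+)$, whence
$$\psi'(0^+)\ \ge\ \frac{F(\mu(L))-F(\mu(K))}{F'(\mu(K))}.$$
(Had $F$ been decreasing, $f$ would be convex, the secant inequality would flip, and dividing by $F'(\mu(K))<0$ would restore the same bound, so the reduction is harmless.)

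It then remains to prove the identity $\mu(K,L)=\mu(K,K)+\psi'(0^+)$, after which the theorem follows by adding the two relations. Writing $t=\epsilon/(1+\epsilon)$, one has the elementary identity $K+\epsilon L=(1+\epsilon)N(\epsilon)$, where $N(\epsilon):=\tfrac1{1+\epsilon}K+\tfrac{\epsilon}{1+\epsilon}L$ is the convex body $(1-t)K+tL$ whose measure is $\psi(t)$; moreover $N(0)=K$, and $t\to0^+$, $t/\epsilon\to1$ as $\epsilon\to0^+$. I would then split the difference quotient as
$$\frac{\mu(K+\epsilon L)-\mu(K)}{\epsilon}=\frac{\mu\big((1+\epsilon)N(\epsilon)\big)-\mu\big(N(\epsilon)\big)}{\epsilon}+\frac{\mu\big(N(\epsilon)\big)-\mu(K)}{\epsilon},$$
where the second term equals $\tfrac{\psi(t)-\psi(0)}{t}\cdot\tfrac t\epsilon$ and tends to $\psi'(0^+)$, while the first term -- an infinitesimal dilation of $N(\epsilon)$ by the factor $1+\epsilon$ -- tends to $\tfrac{d}{dc}\mu(cK)\big|_{c=1}=\mu(K,K)$, the derivative formula recorded just before \eqref{eq:int_meas}. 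Passing to the limit is legitimate because \eqref{eq:concave} makes $\epsilon\mapsto F(\mu(K+\epsilon L))$ concave, so $\mu(K,L)=\lim_{\epsilon\to0^+}\tfrac1\epsilon\big(\mu(K+\epsilon L)-\mu(K)\big)$ is a genuine limit.

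The main obstacle is the first term in that last split: estimating $\mu$ under a small dilation of the body $N(\epsilon)$, which is itself moving with $\epsilon$, i.e. making rigorous the ``product rule'' that isolates the dilation contribution $\mu(K,K)$. This is the one place where regularity of the density $\phi$ is genuinely used -- through the continuity of $M\mapsto\mu(M)$ and of $M\mapsto\tfrac{d}{dc}\mu(cM)\big|_{c=1}$ on convex bodies in the Hausdorff metric; the rest is elementary convexity and one-variable calculus.
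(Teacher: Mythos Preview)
Your overall strategy---differentiate the concavity inequality along a segment and identify the derivative---is the paper's, but you take a detour that creates the very obstacle you flag. The paper avoids your product-rule splitting entirely by a single substitution: set $K_\lambda = K/(1-\lambda)$, so that $K+\lambda L = (1-\lambda)K_\lambda + \lambda L$ \emph{identically}. The $F$-concavity inequality then reads
\[
\mu(K+\lambda L)\ \ge\ F^{-1}\big((1-\lambda)F(\mu(K_\lambda))+\lambda F(\mu(L))\big),
\]
with equality at $\lambda=0$. Comparing derivatives at $\lambda=0$ finishes in one stroke: the left side gives $\mu(K,L)$, and the right side gives $\frac{F(\mu(L))-F(\mu(K))}{F'(\mu(K))}+\mu(K,K)$, the last term arising from $\tfrac{d}{d\lambda}\mu(K/(1-\lambda))\big|_{\lambda=0}=\mu(K,K)$.

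Your route separates this into two pieces: first $\psi'(0^+)\ge\frac{F(\mu(L))-F(\mu(K))}{F'(\mu(K))}$ (fine), and then the identity $\mu(K,L)=\mu(K,K)+\psi'(0^+)$. The second piece is where you need the diagonal limit $\frac{\mu((1+\epsilon)N(\epsilon))-\mu(N(\epsilon))}{\epsilon}\to\mu(K,K)$ with $N(\epsilon)\to K$. As you correctly note, this is not a consequence of $F$-concavity alone; it requires continuity of $M\mapsto\mu(M,M)$ in the Hausdorff metric, i.e.\ regularity of the density that the theorem does not assume. The paper's substitution sidesteps this completely: only the dilation derivative at the \emph{fixed} body $K$ enters, and its one-sided existence follows from concavity of $s\mapsto F(\mu(sK))$. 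So your argument is correct under the extra regularity you invoke, but the paper's trick of absorbing the dilation into the convex combination via $K_\lambda$ both eliminates the obstacle and yields the result under the stated hypotheses alone.
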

\begin{proof}
In Equation~\ref{eq:concave}, instead of $K$ consider $K_\lambda=\frac{K}{1-\lambda}$. Then we have that $\mu(K+\lambda L)=\mu((1-\lambda)K_\lambda+\lambda L) \geq F^{-1}\left((1-\lambda) F(\mu(K_\lambda)) +\lambda F(\mu(L))\right)$. Notice that we obtain equality when $\lambda=0$, primarily we obtain $\mu(K)$. We must then have that the far left grows faster than the far right at $\lambda=0$; from the chain rule, product rule and inverse function theory, we obtain
\begin{align*}\mu(K,L)&=\liminf_{\lambda \to 0}\frac{\mu(K+\lambda L)-\mu(K)}{\lambda}
\\
&\geq \frac{F(\mu(L))-F(\mu(K))}{F^{\prime}(\mu(K))}+\frac{1}{F^{\prime}(\mu(K))}\left((1-\lambda)\diff{}{\lambda}\left[F(\mu(K_\lambda))\right]\right)\bigg|_{\lambda=0}.\end{align*}
From the fact that $\diff{}{\lambda}\left[F(\mu(K_\lambda))\right]\bigg|_{\lambda =0}=F^{\prime}(\mu(K))\diff{\mu(K_\lambda)}{\lambda}\bigg|_{\lambda=0}$, we have our claim, since, from the chain rule, $\diff{\mu(K_\lambda)}{\lambda}\bigg|_{\lambda=0}=\mu(K,K)$.

\end{proof}
	We will establish equality conditions of Theorem~\ref{t:min_eq} below, after establishing an extension of Aleksandrov's variational formula to arbitrary measures. This will be done using variational techniques following \cite{HLYZ10}. One has that, up to a set of spherical Lebesgue measures zero, the map $r_K:\s^{n-1}\to \partial K$ given by $r_K(u)=\rho_K(u)u$  is well-defined. For future use, we shall compute the Jacobian of the coordinate transformation $r_K$; first recall the following proposition from \cite{HLYZ16}.
	\begin{proposition}[Lemma 2.9 in \cite{HLYZ16}]
	Fix a convex body $K\in\conbod$. Suppose $f:\s^{n-1}\to \R^+$. Then,
	\begin{equation}
	    \label{eq:cone}
	    \int_{\s^{n-1}}f(u)\rho^n_K(u)du=\int_{\partial^\prime K}\langle x,n_K(x)\rangle f\left(\frac{x}{|x|}\right)dx.
	\end{equation}
	\end{proposition}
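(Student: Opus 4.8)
The plan is to reduce the claimed identity to a change of variables between the sphere $\s^{n-1}$ and the boundary $\partial^\prime K$, using the radial map $r_K(u)=\rho_K(u)u$ as the coordinate transformation. The left-hand side is an integral over $\s^{n-1}$ with respect to spherical Lebesgue measure; the right-hand side is an integral over $\partial^\prime K$ with respect to $(n-1)$-dimensional Hausdorff measure. So the heart of the matter is computing the Jacobian of $r_K$, i.e. the surface-area element on $\partial K$ expressed in terms of $du$ and $\rho_K$.

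First I would recall (or set up) the standard polar-coordinate decomposition of Lebesgue measure on $\R^n$: for an integrable $g$ on $\R^n$ one has $\int_{\R^n} g(x)\,dx = \int_{\s^{n-1}}\int_0^{\rho_K(u)} g(ru) r^{n-1}\,dr\,du$ when the region of integration is $K$. Applying this to $g(x) = f(x/|x|)|x|^{-1}$ — more precisely choosing a function supported near $\partial K$ or using the cone-volume interpretation directly — gives $\vol_n(K\text{-type cone integrals})$. Concretely, the cleanest route is: the quantity $\int_{\s^{n-1}} f(u)\rho_K^n(u)\,du$ equals $n$ times the ``mixed'' cone integral $\int_K f(x/|x|)\,dx$ when $f\equiv 1$; to get the general weighted version one integrates in polar coordinates $\int_{\s^{n-1}} f(u)\int_0^{\rho_K(u)} n\, r^{n-1}\,dr\,du = \int_{\s^{n-1}} f(u)\rho_K^n(u)\,du$. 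Separately, the divergence theorem (or the direct surface-area parametrization) gives that for a vector field $x$ on $\R^n$, $\int_K \operatorname{div}(x)\,dx = n\vol_n(K) = \int_{\partial K}\langle x, n_K(x)\rangle\,dx$; the weighted analogue, replacing the constant field by one adapted to $f$, yields the right-hand side. The key computation is that the surface element $dx$ on $\partial^\prime K$ pulls back under $r_K$ to $\frac{\rho_K^{n}(u)}{\langle r_K(u), n_K(r_K(u))\rangle}\,du$, equivalently $\langle x, n_K(x)\rangle\,dx = \rho_K^n(u)\,du$ along $\partial^\prime K$; substituting this into $\int_{\partial^\prime K}\langle x,n_K(x)\rangle f(x/|x|)\,dx$ immediately produces $\int_{\s^{n-1}} f(u)\rho_K^n(u)\,du$.

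Thus the key steps, in order, are: (i) note $r_K$ is a bi-Lipschitz bijection between a full-measure subset of $\s^{n-1}$ and $\partial^\prime K$, so the area formula applies; (ii) compute the Jacobian of $r_K$ at a point $u$ where $\partial K$ is differentiable, obtaining that the Hausdorff measure $dx$ on $\partial K$ corresponds to $\rho_K^{n-1}(u)/\cos\psi(u)\,du$ where $\psi(u)$ is the angle between $u$ and the outer normal $n_K(r_K(u))$, i.e. $\cos\psi(u) = \langle u, n_K(r_K(u))\rangle$; (iii) observe $\langle x, n_K(x)\rangle = \rho_K(u)\langle u, n_K(r_K(u))\rangle = \rho_K(u)\cos\psi(u)$ for $x = r_K(u)$, so that $\langle x, n_K(x)\rangle\,dx = \rho_K(u)\cos\psi(u)\cdot\rho_K^{n-1}(u)/\cos\psi(u)\,du = \rho_K^n(u)\,du$; (iv) substitute into the right-hand integral and conclude.

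The main obstacle is step (ii): justifying the Jacobian formula rigorously for a general convex body, where $\partial K$ need only be differentiable $\mathcal{H}^{n-1}$-almost everywhere rather than $C^1$. The clean way around this is to first prove the identity for $K\in C^2_+$, where $r_K$ is a genuine $C^1$ diffeomorphism and the Jacobian computation is an elementary exercise in differential geometry, and then pass to general $K\in\conbod$ by the approximation result quoted in the introduction (from \cite[Theorem 2.7.1]{Sh1}), using continuity of both sides in $K$ (uniform convergence of $\rho_K$ and weak convergence of the relevant surface measures). Alternatively, one invokes the area formula for Lipschitz maps directly together with the Alexandrov-type differentiability of $\partial K$; either way the analytic content is the pointwise Jacobian identity $\langle x,n_K(x)\rangle\,dx = \rho_K^n(u)\,du$, and everything else is bookkeeping.
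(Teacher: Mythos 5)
The paper itself does not prove this proposition: it is quoted verbatim as Lemma~2.9 of \cite{HLYZ16} and used as a black box in the derivation of the Jacobian of $r_K$ in the subsequent Proposition. So there is no in-paper proof to compare against, and I am reviewing your argument on its own terms.

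Your argument is correct and is essentially the standard change-of-variables proof. The heart of the matter is the pointwise Jacobian identity $\langle x, n_K(x)\rangle\,d\mathcal{H}^{n-1}(x) = \rho_K^n(u)\,du$ along $x = r_K(u)$, which you compute correctly: the surface element on $\partial K$ pulls back to $\rho_K^{n-1}(u)/\cos\psi(u)\,du$ with $\cos\psi(u) = \langle u, n_K(r_K(u))\rangle$, and the factor $\langle x, n_K(x)\rangle = \rho_K(u)\cos\psi(u)$ cancels the $1/\cos\psi$. Your two routes around the regularity issue are both viable. For the smoothing route you would still need to argue that $\int_{\partial K_j}\langle x, n_{K_j}(x)\rangle f(x/|x|)\,dx$ converges (note this is not a surface-area-measure integral, since $f(x/|x|)$ is a function of the position, not the normal), which is doable but requires a little care. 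The Lipschitz area-formula route is more self-contained: Rademacher gives a.e.\ differentiability of $\rho_K$ on $\s^{n-1}$, a.e.\ uniqueness of $n_K$ on $\partial K$ follows from convexity, and at a common point of differentiability the Jacobian computation is exactly your step (ii). A cleaner bookkeeping device you gesture at but do not carry through is to compare two decompositions of Lebesgue measure on a neighborhood of $\partial K$ — polar coordinates and the coarea formula for the level sets of $\|\cdot\|_K$ (whose gradient at $x\in\partial K$ has magnitude $1/\langle x, n_K(x)\rangle$) — and then strip off the radial integral; this delivers the identity without computing the pointwise Jacobian of $r_K$ at all.

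One small imprecision worth flagging: as transcribed, the proposition takes $K\in\conbod$, but both the radial function and the positivity of $\langle x, n_K(x)\rangle$ need $0\in\mathrm{int}(K)$, so the working hypothesis should be $K\in\conbod_0$. You implicitly assume this (e.g.\ in calling $r_K$ bi-Lipschitz), and it is what the paper itself uses in the application immediately following, so this is a gap in the statement rather than in your proof.
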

	
	\begin{proposition}
	\label{p:jacob}	For $K\in\conbod_0$, the Jacobian of $r_K:\s^{n-1} \to \partial K$ is $\frac{\rho^n_K(u)}{h_K(n_K(r_K(u)))}$ up to a set of measure zero.
	\end{proposition}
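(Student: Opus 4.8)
The plan is to avoid computing the area element of the radial graph by hand and instead read off $J_{r_K}$ from two integral identities that are already on the table: the change-of-variables (area) formula for the map $r_K\colon\s^{n-1}\to\partial K$, and the cone-volume identity \eqref{eq:cone} of Lemma 2.9 in \cite{HLYZ16}. Evaluating both against an arbitrary nonnegative test function on the sphere and comparing integrands pins down $J_{r_K}$ up to a null set.

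First I would record the facts that make $J_{r_K}$ meaningful. Since $K\in\conbod_0$, the radial function $\rho_K$ is Lipschitz on $\s^{n-1}$ and bounded away from $0$ and $\infty$, so $r_K(u)=\rho_K(u)u$ is a Lipschitz bijection of $\s^{n-1}$ onto $\partial K$ with Lipschitz inverse $x\mapsto x/|x|$; by Rademacher's theorem its tangential Jacobian $J_{r_K}$ is defined for a.e.\ $u$, and the area formula
$$\int_{\partial K}F(x)\,dx=\int_{\s^{n-1}}F(r_K(u))\,J_{r_K}(u)\,du$$
holds for every nonnegative Borel $F$ on $\partial K$. Because $r_K$ and its inverse are Lipschitz, $r_K$ maps $\mathcal{H}^{n-1}$-null sets to null sets and conversely, so $n_K(r_K(u))$, hence $h_K(n_K(r_K(u)))$, is well defined for a.e.\ $u$; moreover $0\in\operatorname{int}(K)$ forces $h_K(n_K(x))=\langle x,n_K(x)\rangle>0$ for $x\in\partial^\prime K$, so no division by zero occurs.

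Now fix an arbitrary nonnegative Borel $g\colon\s^{n-1}\to\R^+$. In \eqref{eq:cone}, write $\langle x,n_K(x)\rangle=h_K(n_K(x))$ on $\partial^\prime K$, and apply the area formula above with $F(x)=h_K(n_K(x))\,g(x/|x|)$; using $r_K(u)/|r_K(u)|=u$ I obtain
$$\int_{\s^{n-1}}g(u)\rho_K^n(u)\,du=\int_{\partial^\prime K}h_K(n_K(x))\,g(x/|x|)\,dx=\int_{\s^{n-1}}g(u)\,h_K\big(n_K(r_K(u))\big)\,J_{r_K}(u)\,du.$$
Since $g$ is arbitrary (take indicators of Borel subsets of $\s^{n-1}$), the integrands agree for a.e.\ $u$, giving $\rho_K^n(u)=h_K(n_K(r_K(u)))\,J_{r_K}(u)$, i.e.\ $J_{r_K}(u)=\rho_K^n(u)/h_K(n_K(r_K(u)))$ up to a set of measure zero.

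The only genuinely delicate part is the measure-theoretic bookkeeping in the second paragraph: that $r_K$ is bi-Lipschitz so the area formula applies, that $n_K\circ r_K$ is a.e.\ defined, and that ``valid for every $g$'' legitimately yields the pointwise a.e.\ identity; all of this is routine once the Lipschitz regularity of $\rho_K$ is invoked. As a cross-check one can instead parametrize $\partial K$ as the radial graph over $\s^{n-1}$: at a point of differentiability of $\rho_K$, with $v=\nabla_{\s}\rho_K$ the spherical gradient, the Gram matrix of $\{\rho_K e_i+(\partial_i\rho_K)u\}_{i=1}^{n-1}$ is $\rho_K^2 I+vv^{\top}$, whose determinant (matrix determinant lemma) is $\rho_K^{2(n-2)}(\rho_K^2+|v|^2)$, while a short computation of the outer unit normal gives $h_K(n_K(r_K(u)))=\rho_K^2/\sqrt{\rho_K^2+|v|^2}$; dividing reproduces the stated formula.
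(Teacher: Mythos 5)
Your proof is correct and takes essentially the same approach as the paper: both rest on the cone identity \eqref{eq:cone} combined with the observation that $\langle x,n_K(x)\rangle=h_K(n_K(x))$ on $\partial^{\prime}K$. The paper gets there by substituting the tailored choice $f(u)=F(r_K(u))/h_K(n_K(r_K(u)))$ directly into \eqref{eq:cone} to manufacture the change-of-variables formula, while you test \eqref{eq:cone} against an arbitrary $g$ and equate it with the abstract area formula for the Lipschitz map $r_K$; the two derivations are logically equivalent.
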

	\begin{proof}
	For non-negative $F\in L^1(\partial K)$, we set $f(u)=\frac{F(r_K(u))}{h_K(n_K(r_K(u)))}$ in Equation~\ref{eq:cone}. Then, one obtains
	\begin{align*}\int_{\s^{n-1}}F(r_K(u))&\frac{\rho^n_K(u)}{h_K(n_K(r_K(u)))}du
 \\
 &=\int_{\partial^\prime K}\langle x,n_K(x)\rangle \frac{F\left(r_K\left(\frac{x}{|x|}\right)\right)}{h_K\left(n_K\left(r_K\left(\frac{x}{|x|}\right)\right)\right)}dx.\end{align*}
	Where the integral on the left-hand side is understood to be over $$\s^{n-1}\setminus\{u\in\s^{n-1}: n_K(r_K(u)) \text{ is not well-defined}\}.$$
	Notice on $\partial K$ that $r_K\left(\frac{x}{|x|}\right)=\rho_K\left(\frac{x}{|x|}\right)\frac{x}{|x|}=\rho_K(x)x=x.$ Furthermore, one has $\langle x,n_K(x)\rangle=h_K(n_K(x)).$ Hence,
	$$\int_{\s^{n-1}}F(r_K(u))\frac{\rho^n_K(u)}{h_K(n_K(r_K(u)))}du=\int_{\partial^\prime K}F(x)dx=\int_{\partial K}F(x)dx.$$
	\end{proof}
	
	In \cite{HLYZ16}, the concept of a logarithmic family of Wulff shapes was introduced: for $h,f\in C(\s^{n-1})$ and some small $\delta$, define a function $h_t:\s^{n-1}\to (0,\infty)$ via
	\begin{equation}
	    \log(h_t(u))=\log(h(u)) +tf(u) + o(t,u)
	    \label{eq:log_fam}
	\end{equation}
	where $o(t,u)/t \rightarrow 0$ as $t\to 0$ for all $u\in\s^{n-1}.$ Then, the Wulff shapes $[h_t]$ are said to be the logarithmic family of Wulff shapes formed by the pair $(h,f).$ It was shown in \cite[Lemma 4.3]{HLYZ16} that the radial functions of a logarithmic family satisfy, for almost all $u\in\s^{n-1}$, the following limit:
	$$\diff{\log\rho_{[h_t]}(u)}{t}\bigg|_{t=0}=\lim_{t\to 0}\frac{\log\rho_{[h_t]}(u)-\log\rho_{[h]}(u)}{t}=f(n_{[h]}(r_{[h]}(u))),$$
	and, furthermore, one has that there exists some $M>0$ such that $$|\log\rho_{[h_t]}(u)-\log\rho_{[h]}(u)|<M|t|$$ for every $|t| <\delta.$
	Using this result, we obtain the following lemma.
	
	\begin{lemma}[First Variation of Radial Functions of Wulff Shapes]
	\label{l:first}
	Consider a convex body $K\in\conbod_0$ and $f\in C(\s^{n-1})$. Then, there exists a small $\delta >0$ and $M>0$ such that, for $|t|< \delta $:
 \begin{enumerate}
     \item $h_t(u)=h_K(u)+tf(u)$ is positive for all $u\in\s^{n-1}$ and
     \item $|\rho_{[h_t]}(u)-\rho_{K}(u)|<M|t|$ for almost all $u\in\s^{n-1}$.
 \end{enumerate} Additionally, one has, for almost all $u\in\s^{n-1}$ up to a set of spherical Lebesgue measure zero, that
	$$\diff{\rho_{[h_t]}(u)}{t}\bigg|_{t=0}=\lim_{t\to 0}\frac{\rho_{[h_t]}(u)-\rho_{K}(u)}{t}=\frac{f(n_{K}(r_{K}(u)))}{h_K(n_K(r_K(u)))}\rho_K(u).$$
	\end{lemma}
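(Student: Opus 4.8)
The plan is to relate the additive (linear) family $h_t = h_K + tf$ to the logarithmic family of Wulff shapes already analyzed in \cite{HLYZ16}, so that we can invoke their first-variation formula for $\log\rho_{[h_t]}$ rather than redo the variational analysis from scratch. First I would establish (1): since $h_K$ is continuous and strictly positive on the compact set $\s^{n-1}$, it attains a positive minimum $m>0$, and $\|f\|_\infty < \infty$, so choosing $\delta < m/(\|f\|_\infty + 1)$ guarantees $h_t(u) = h_K(u) + tf(u) \geq m - \delta\|f\|_\infty > 0$ for all $u$ and all $|t|<\delta$; in particular $[h_t]$ is a well-defined convex body in $\conbod_0$ for each such $t$.

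Next I would write the additive family in logarithmic form. For $|t|$ small,
$$
\log h_t(u) = \log\bigl(h_K(u) + tf(u)\bigr) = \log h_K(u) + t\,\frac{f(u)}{h_K(u)} + o(t,u),
$$
where the error term $o(t,u) = \log(1 + t f(u)/h_K(u)) - t f(u)/h_K(u)$ satisfies $o(t,u)/t \to 0$ uniformly in $u$ (uniformity coming from the uniform bounds $0 < m \le h_K \le \|h_K\|_\infty$ and $\|f\|_\infty<\infty$, via the Taylor estimate $|\log(1+s) - s| \le s^2$ for $|s|$ small). Hence $\{[h_t]\}$ is precisely the logarithmic family of Wulff shapes formed by the pair $(\log h_K,\ f/h_K)$ in the sense of \eqref{eq:log_fam}. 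Applying the cited result \cite[Lemma 4.3]{HLYZ16} with the "direction" function $f/h_K$ in place of $f$, we get, for almost every $u\in\s^{n-1}$,
$$
\diff{\log\rho_{[h_t]}(u)}{t}\bigg|_{t=0} = \frac{f}{h_K}\bigl(n_K(r_K(u))\bigr) = \frac{f(n_K(r_K(u)))}{h_K(n_K(r_K(u)))},
$$
together with a uniform Lipschitz bound $|\log\rho_{[h_t]}(u) - \log\rho_K(u)| < M'|t|$ for all $|t|<\delta$. (Here one uses $[h_0] = [h_K] = K$.)

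Finally I would convert these $\log\rho$ statements into the stated $\rho$ statements by the chain rule / mean value theorem. For (2): from $|\log\rho_{[h_t]}(u) - \log\rho_K(u)| < M'|t|$ and the fact that $\rho_K$ is bounded above and below by positive constants on $\s^{n-1}$ (as $K\in\conbod_0$), writing $\rho_{[h_t]}(u) - \rho_K(u) = \rho_K(u)\bigl(e^{\log\rho_{[h_t]}(u) - \log\rho_K(u)} - 1\bigr)$ and using $|e^s - 1| \le 2|s|$ for $|s|$ small gives $|\rho_{[h_t]}(u) - \rho_K(u)| < M|t|$ for a suitable constant $M$, after shrinking $\delta$ if necessary. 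For the derivative formula: by the chain rule,
$$
\diff{\rho_{[h_t]}(u)}{t}\bigg|_{t=0} = \rho_{[h_0]}(u)\,\diff{\log\rho_{[h_t]}(u)}{t}\bigg|_{t=0} = \rho_K(u)\,\frac{f(n_K(r_K(u)))}{h_K(n_K(r_K(u)))},
$$
valid for almost every $u$, which is exactly the claimed expression. The one point requiring a little care — and the place I expect the only real friction — is justifying the uniformity of the $o(t,u)$ estimate and checking that the hypotheses of \cite[Lemma 4.3]{HLYZ16} are met with the modified direction function $f/h_K$ (which is continuous since $h_K$ is continuous and bounded away from zero); everything else is bookkeeping with elementary inequalities and the positivity/continuity of $h_K$ and $\rho_K$ on the sphere.
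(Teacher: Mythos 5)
Your proposal follows the same route as the paper: rewrite $h_t = h_K + tf$ in logarithmic form via the Taylor expansion $\log h_t = \log h_K + t\,f/h_K + o(t)$, identify $\{[h_t]\}$ as the logarithmic family of Wulff shapes for the pair $(h_K, f/h_K)$, invoke Lemma 4.3 of \cite{HLYZ16} for the first variation of $\log\rho_{[h_t]}$ and the log-Lipschitz bound, and then convert to a statement about $\rho_{[h_t]}$ by the chain rule. The only place where you diverge is the justification of item (2): the paper combines the pointwise differentiability (an $\epsilon$-$\delta$ estimate whose $\delta_2(\epsilon)$ is, as written, implicitly $u$-dependent) with a bound on the log-derivative, whereas you exponentiate the uniform log-Lipschitz bound $|\log\rho_{[h_t]}(u)-\log\rho_K(u)|<A|t|$ and use $|e^s-1|\leq 2|s|$ for $|s|$ small, together with $\rho_K \leq \max_{u}\rho_K(u)<\infty$. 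Your variant is cleaner: it produces a $\delta$ and $M$ that are manifestly uniform in $u$ directly from the uniform-in-$u$ bound furnished by \cite[Lemma~4.3]{HLYZ16}, rather than from a pointwise differentiability estimate, and so sidesteps a small subtlety in the paper's write-up of that step.
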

	\begin{proof} Let $\delta_1$ be small enough so that $h_t(u)=h_K(u)+tf(u)$ is positive for all $u\in\s^{n-1}$ and $|t|< \delta_1$, which exists since $h_K$ is positive and $f$ is bounded on $\s^{n-1}$.	We have that, for a fixed $u$, $h_t$ is a linear function in $t$, positive in some neighborhood of zero. Thus, we can take the Taylor series expansion of $\log(h_t)$: $$\log(h_t)=\log(h_K)+\frac{tf}{h_K}+o(t).$$ Therefore, we see $h_t$ is a logarithmic family of the pair $(h_K,\frac{f}{h_K})$. Consequently, from \cite[Lemma 4.3]{HLYZ16}, we have for almost all $u\in\s^{n-1}$ that
	$$\diff{\log\rho_{[h_t]}(u)}{t}\bigg|_{t=0}=\frac{f(n_{K}(r_{K}(u)))}{h_K(n_K(r_K(u)))}.$$
	However, from the chain rule, we also have $$\diff{\log\rho_{[h_t]}(u)}{t}\bigg|_{t=0}=\rho^{-1}_{[h_t]}(u)\bigg|_{t=0}\diff{\rho_{[h_t]}(u)}{t}\bigg|_{t=0}.$$
	Solving for $\diff{\rho_{[h_t]}(u)}{t}\bigg|_{t=0}$ yields the variational result. This implies that for every $\epsilon>0$, there exists $\delta_2=\delta_2(\epsilon)>0$ such that, when $|t|<\delta_2:$
	$$\bigg|\bigg|\frac{\rho_{[h_t]}(u)-\rho_{K}(u)}{t}\bigg|-\bigg|\diff{\rho_{[h_t]}(u)}{t}\bigg|_{t=0}\bigg|\leq\bigg|\frac{\rho_{[h_t]}(u)-\rho_{K}(u)}{t}- \diff{\rho_{[h_t]}(u)}{t} \bigg|_{t=0}\bigg|\leq\epsilon.$$
	Consequently, we have
	$$\bigg|\frac{\rho_{[h_t]}(u)-\rho_{K}(u)}{t}\bigg|\leq\bigg|\diff{\rho_{[h_t]}(u)}{t} \bigg|+\epsilon=\rho_K(u)\bigg|\diff{\log\rho_{[h_t]}(u)}{t}\bigg|_{t=0}\bigg| +\epsilon. $$
	But, from Lemma 4.3 of \cite{HLYZ16}, we have that there exists a constant $A>0$ such that $$|\log\rho_{[h_t]}(u)-\log\rho_{K}(u)|<A|t|.$$ Dividing through by $t$ and taking the limit as $t\to 0$ then yields $$\bigg|\diff{\log\rho_{[h_t]}(u)}{t}\bigg|_{t=0}\bigg|<A.$$ Thus, fixing a small $\epsilon$ and setting $M=A \underset{u\in\s^{n-1}}{\max} \rho_K(u)+\epsilon$ yields the second claim, by setting $\delta=\min\{\delta_1,\delta_2\}$.
	\end{proof}
	We would like to mention that the first variation of support functions of Wulff shapes was done by Saroglou, Soprunov and Zvavitch in \cite{SSZ19}, where they obtained
	$$\diff{h_{[h_t]}(u)}{t}\bigg|_{t=0}=\lim_{t\to 0}\frac{h_{[h_t]}(u)-h_{K}(u)}{t}=f(u).$$

	\begin{lemma}[Aleksandrov's Variational Formula For Arbitrary Measures]
	\label{l:second}
	Let $\mu$ be a Borel measure on $\R^n$ with locally integrable density $\phi$. Let $K$ be a convex body containing the origin in its interior, such that $\partial K$, up to set of $(n-1)$-dimensional Hausdorff measure zero, is in the Lebesgue set of $\phi$. Then, for a continuous function $f$ on $\s^{n-1}$, one has that
	$$\lim_{t\rightarrow 0}\frac{\mu([h_K+tf])-\mu(K)}{t}=\int_{\s^{n-1}}f(u)dS^{\mu}_{K}(u).$$
	\end{lemma}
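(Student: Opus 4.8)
The plan is to pass to polar coordinates, differentiate the resulting radial integral in $t$ using the first variation of radial functions from Lemma~\ref{l:first}, and then transport the spherical integral that appears back to $\partial K$ via the cone--coordinate identity \eqref{eq:cone} (equivalently, via the Jacobian of $r_K$ from Proposition~\ref{p:jacob}). For $|t|<\delta$ with $\delta$ as in Lemma~\ref{l:first} I would set $K_t:=[h_K+tf]$, so that $K_0=K$, each $K_t\in\conbod_0$, and $\rho_{K_t}$ is well-defined a.e.\ on $\s^{n-1}$. Writing $\mu$ in polar coordinates,
$$\mu(K_t)=\int_{\s^{n-1}}\left(\int_0^{\rho_{K_t}(u)}\phi(ru)\,r^{n-1}\,dr\right)du,$$
so that, with the convention $\int_a^b=-\int_b^a$,
$$\frac{\mu(K_t)-\mu(K)}{t}=\int_{\s^{n-1}}\frac1t\left(\int_{\rho_K(u)}^{\rho_{K_t}(u)}\phi(ru)\,r^{n-1}\,dr\right)du.$$

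Fixing $u$, put $\Gamma_u(s):=\int_0^s\phi(ru)\,r^{n-1}\,dr$. Because $\partial K$ lies (up to an $\mathcal{H}^{n-1}$-null set) in the Lebesgue set of $\phi$, for a.e.\ $u$ the function $\Gamma_u$ is differentiable at $s=\rho_K(u)$ with $\Gamma_u'(\rho_K(u))=\phi(\rho_K(u)u)\,\rho_K^{n-1}(u)$ (this is immediate when $\phi$ is continuous). Combining this with the chain rule and the first-variation formula $\diff{\rho_{K_t}(u)}{t}\big|_{t=0}=\rho_K(u)\,f(n_K(r_K(u)))/h_K(n_K(r_K(u)))$ of Lemma~\ref{l:first}, the integrand above converges, for a.e.\ $u$ as $t\to0$, to
$$\phi(r_K(u))\,\rho_K^{\,n}(u)\,\frac{f(n_K(r_K(u)))}{h_K(n_K(r_K(u)))}.$$
The crux, and what I expect to be the main obstacle, is justifying the interchange of this limit with the integral over $\s^{n-1}$. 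The uniform estimate $|\rho_{K_t}(u)-\rho_K(u)|<M|t|$ from Lemma~\ref{l:first} confines $\rho_{K_t}(u)$ and $\rho_K(u)$, for all small $t$ and a.e.\ $u$, to a fixed compact annulus $A=\{x:r_0\le|x|\le R_0\}$ with $0<r_0<R_0<\infty$, and gives
$$\left|\frac1t\int_{\rho_K(u)}^{\rho_{K_t}(u)}\phi(ru)\,r^{n-1}\,dr\right|\le\frac{R_0^{n-1}}{|t|}\int_{\rho_K(u)-M|t|}^{\rho_K(u)+M|t|}\phi(ru)\,dr.$$
When $\phi$ is continuous --- the case relevant to Theorem~\ref{t:third} and to every application in this paper --- the right-hand side is bounded by the constant $2MR_0^{n-1}\|\phi\|_{L^\infty(A)}$, which is integrable on $\s^{n-1}$, so dominated convergence finishes this step at once; for a general locally integrable $\phi$ the domination is more delicate, and it is exactly here that the Lebesgue-set hypothesis on $\partial K$ is genuinely used. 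Granting it,
$$\lim_{t\to0}\frac{\mu(K_t)-\mu(K)}{t}=\int_{\s^{n-1}}\phi(r_K(u))\,\rho_K^{\,n}(u)\,\frac{f(n_K(r_K(u)))}{h_K(n_K(r_K(u)))}\,du.$$

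Finally I would identify the right-hand side with $\int_{\s^{n-1}}f\,dS^\mu_K$. Applying \eqref{eq:cone} to the sphere function $u\mapsto\phi(r_K(u))\,f(n_K(r_K(u)))/h_K(n_K(r_K(u)))$ and using $r_K(x/|x|)=x$ for $x\in\partial K$, the integral becomes
$$\int_{\partial^\prime K}\langle x,n_K(x)\rangle\,\phi(x)\,\frac{f(n_K(x))}{h_K(n_K(x))}\,dx.$$
Since $\langle x,n_K(x)\rangle=h_K(n_K(x))$ for $x\in\partial^\prime K$, the two occurrences of $h_K$ cancel, leaving $\int_{\partial^\prime K}f(n_K(x))\phi(x)\,dx=\int_{\partial K}f(n_K(x))\phi(x)\,dx$, which is precisely $\int_{\s^{n-1}}f(u)\,dS^\mu_K(u)$ by the definition \eqref{eq:surface_mu}. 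The only substantive obstacle in the scheme is the limit interchange for merely $L^1_{\mathrm{loc}}$ densities; once Lemma~\ref{l:first} and \eqref{eq:cone} are in hand, the remaining steps are routine bookkeeping.
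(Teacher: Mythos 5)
Your proof follows the same overall route as the paper's: write $\mu([h_K+tf])$ in polar coordinates, differentiate the inner radial integral at $t=0$ using the first-variation formula of Lemma~\ref{l:first}, pass the limit under the spherical integral by dominated convergence, and transport the resulting integral back to $\partial K$ via the cone identity \eqref{eq:cone} (equivalently, Proposition~\ref{p:jacob}). The one place where you depart from the paper is the domination step, and your version is the more careful one: you bound the \emph{difference quotient} $\frac{1}{|t|}\big|\Gamma_u(\rho_{K_t}(u))-\Gamma_u(\rho_K(u))\big|$ directly, using the uniform Lipschitz estimate $|\rho_{K_t}(u)-\rho_K(u)|<M|t|$ to trap the radial interval in a fixed compact annulus $A$ and then taking $\|\phi\|_{L^\infty(A)}$, whereas the paper's write-up only exhibits a bound on the pointwise derivative $F_0'(u)$ at $t=0$, which by itself is not what dominated convergence requires. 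Your explicit remark that the $L^\infty(A)$ bound closes the argument for continuous $\phi$ (which covers every use of the lemma in the paper), while a merely $L^1_{\mathrm{loc}}$ density would require a genuine maximal-function estimate tied to the Lebesgue-set hypothesis, is an accurate and candid reading of where the argument is tight and where it is only morally complete; the paper elides this point. Everything else — the chain-rule computation of $F_0'(u)=\phi(r_K(u))\rho_K^n(u)f(n_K(r_K(u)))/h_K(n_K(r_K(u)))$, the cancellation of $h_K$ against $\langle x,n_K(x)\rangle$ in the cone-coordinate change, and the identification with $\int_{\s^{n-1}}f\,dS^\mu_K$ — matches the paper.
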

	\begin{proof}
	Begin by writing in polar coordinates, with $h_t=h_K+tf$:
	$$\mu([h_t])=\int_{\s^{n-1}}\int_0^{\rho_{[h_t]}(u)}\phi(ru)r^{n-1}drdu=\int_{\s^{n-1}}F_t(u)du,$$
	where
	$F_t(u)=\int_0^{\rho_{[h_t]}(u)}\phi(ru)r^{n-1}dr.$ First observe that $\diff{F_t}{t}\bigg|_{t=0}=F^\prime_0$ exists by the Lebesgue differentiation theorem; by denoting $I=[\rho_{K}(u),\rho_{[h_t]}(u)],$ we have
	\begin{align*}\lim_{t\to 0}\frac{F_t(u)-F_0(u)}{t}&=\lim_{t\to 0}\frac{1}{t}\int_{I}\phi(ru)r^{n-1}dr
 \\
	&=\lim_{t\to 0}\frac{\rho_{[h_t]}(u)-\rho_{K}(u)}{t}\frac{1}{|I|}\int_I \phi(ru)r^{n-1}dr
	\\
	&=\phi(r_K(u))\frac{f(n_{K}(r_{K}(u)))}{h_K(n_K(r_K(u)))}\rho^{n}_K(u)\end{align*} for almost all $u\in\s^{n-1}$ via Lemma~\ref{l:first}; note $\phi(r_K(u))$ exists from the assumption on $\phi$. But also, since
	$$\frac{f(n_{K}(r_{K}(u)))}{h_K(n_K(r_K(u)))}\rho_K(u)=\lim_{t\to 0}\frac{\rho_{[h_t]}(u)-\rho_{K}(u)}{t}\leq M$$
	for some $M>0$, via Lemma~\ref{l:first}, we have that the derivative is dominated by the integrable function $M\left(\underset{u\in\s^{n-1}}{\max}\rho_K(u)\right)^{n-1}\phi(r_K(u))$. Thus, our claim follows from using dominated convergence to differentiate underneath the integral sign:
	\begin{align*}
	    \int_{\s^{n-1}}F^\prime_0(u)du&=\int_{\s^{n-1}}\phi(r_K(u))\frac{f(n_{K}(r_{K}(u)))}{h_K(n_K(r_K(u)))}\rho^n_K(u)du
	    \\
	    &=\int_{\partial^\prime K}\phi(y)f(n_K(y))dy = \int_{\s^{n-1}}f(u)dS^{\mu}_{K}(u),
	\end{align*}
	where the second equality follows from Proposition~\ref{p:jacob} and the third equality follows from the Gauss map. 
	\end{proof}

  \begin{remark}
  \label{r:second}
     It was suggested to us by Ulivelli that in Lemma~\ref{l:second}, one should be able to drop the assumption that $K$ contains the origin in its interior. Indeed, this is the case:
     for $K\in\conbod$, let $K-\int_{K}xdx = K^\prime \in \conbod_0.$ For a Borel measure $\mu$ with density $\phi$ containing $\partial K$ in its Lebesgue set, let $\mu^\prime$ be the Borel measure with density $\phi^\prime(y) = \phi(y+\int_K x dx)$. The claim then follows since, $dS^{\mu^\prime}_{K^\prime}(u)=dS^\mu_K(u)$, $\mu(K)=\mu^\prime(K^\prime)$, and, for t small enough, $[h_{K^\prime}+tf]=[h_K+tf]-\int_K x dx$ implies $\mu^\prime([h_{K^\prime}+tf])=\mu([h_K+tf]).$
 \end{remark}

 \noindent We recall that $L^1(\R^n)$ is partitioned into equivalence classes, where $\phi_1$ and $\phi_2$ are equivalent if they differ on a set of measure zero. Let $\mu_1$ be a measure with density $\phi_1$ containing $\partial K$ in its Lebesgue set for some $K\in\conbod$ (up to a set of $(n-1)$ Hausdorff measure zero). Then, let $\mu_2$ be the measure with density $\phi_2$, where $\phi_2$ is in the same equivalence class as $\phi_1$, but does not necessarily have $\partial K$ in its Lebesgue set. Then, the proof of Lemma~\ref{l:second} still works for the measure $\mu_2$. However, one will not necessarily obtain integration of $\phi_2$ over $\partial K$ from the use of the Lebesgue differentiation theorem, instead they will obtain $\phi_1$ (a.e. on $\partial K$). Thus, we can \textit{define} the weighted surface area measure of $K$ with respect to $\mu_2$, the Borel measure $S^{\mu_2}_K$ on $\s^{n-1}$, to merely \textit{be} the Borel measure $S^{\mu_1}_K$.
	
	\begin{theorem}[Representation of Mixed Measures]
	Let $K$ be a convex body in $\R^n$ and $L$ be a compact, convex set in $\R^n$. Suppose $\mu$ is a Borel measure on $\R^n$ with density $\phi$ containing $K$ in its support and $\partial K$ in its Lebesgue set. Then, one has that
	\begin{equation}
	    \mu(K,L)=\int_{\s^{n-1}}h_L(u)dS^{\mu}_{K}(u)=\int_{\s^{n-1}}h_L(u)\phi(n_K^{-1}(u))f_K(u)du,
	    \label{eq:arb_mixed}
	\end{equation}
	where the second equality holds if $K$ is of class $C^2_+$.
	\end{theorem}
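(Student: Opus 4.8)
The plan is to apply Aleksandrov's variational formula (Lemma~\ref{l:second}) with the test function $f = h_L$. First I would reduce to the case $L \in \conbod$ (i.e. $L$ has nonempty interior). Indeed, if $L$ is merely compact and convex, write $L$ as a decreasing limit of the bodies $L_\varepsilon = L + \varepsilon B_2^n \in \conbod$, so $h_{L_\varepsilon} = h_L + \varepsilon h_{B_2^n} = h_L + \varepsilon$; since $h_L$ is continuous on $\s^{n-1}$ (being a support function of a compact convex set) and $\mu(K, \cdot)$ as defined in \eqref{eq:arb_mixed_0} is positively linear and continuous in the relevant sense, the identity for $L$ will follow from the identity for each $L_\varepsilon$ by letting $\varepsilon \to 0$ on both sides. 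Alternatively, since $h_L \in C(\s^{n-1})$ whether or not $L$ has interior, one can simply invoke Lemma~\ref{l:second} directly with $f = h_L$, as that lemma only requires $f$ to be continuous on the sphere.

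The core step: by Lemma~\ref{l:second} applied to the measure $\mu$ (which has $\partial K$ in the Lebesgue set of its density $\phi$ up to $\mathcal H^{n-1}$-null sets), we have
$$\lim_{t \to 0}\frac{\mu([h_K + t h_L]) - \mu(K)}{t} = \int_{\s^{n-1}} h_L(u)\, dS^{\mu}_{K}(u).$$
I then need to identify the left-hand side with $\mu(K, L) = \liminf_{\epsilon \to 0}\big(\mu(K + \epsilon L) - \mu(K)\big)/\epsilon$ from \eqref{eq:arb_mixed_0}. The key observation is that for $K \in \conbod_0$ and $t \geq 0$ small enough, $[h_K + t h_L] = [h_{K + tL}] = K + tL$, because $h_K + t h_L = h_{K+tL}$ is itself a support function and the Wulff shape of a support function recovers the body ($[h_M] = M$). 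Thus $\mu([h_K + t h_L]) = \mu(K + tL)$ for small $t > 0$, so the one-sided limit from Lemma~\ref{l:second} (which exists as a genuine two-sided limit there) coincides with the $\liminf$ defining $\mu(K,L)$, giving the first equality in \eqref{eq:arb_mixed}.

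For the second equality, assume $K \in C^2_+$. Then by the discussion in the introduction, $dS_K(u) = f_K(u)\,du$ with $f_K$ the curvature function, and since $K$ is strictly convex the Gauss map $n_K$ is a bijection between $\partial' K$ and $\s^{n-1}$ with well-defined inverse $n_K^{-1}$; hence $dS^{\mu}_{K}(u) = \phi(n_K^{-1}(u)) f_K(u)\, du$ as recorded just after \eqref{eq:surface_mu}. Substituting this into the integral over $\s^{n-1}$ yields the claimed formula. The main obstacle, and the only genuinely delicate point, is the bookkeeping needed to pass from a one-sided $\liminf$ in the definition \eqref{eq:arb_mixed_0} to the honest limit produced by Lemma~\ref{l:second}: one must check that the Wulff-shape identity $[h_K + t h_L] = K + tL$ is valid for all sufficiently small $t > 0$ (which it is, since the sum of support functions is a support function for nonnegative coefficients) and that no subtlety arises from $L$ possibly lacking interior — handled by the reduction in the first paragraph or by noting $h_L$ is continuous regardless.
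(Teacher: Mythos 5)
Your proposal is essentially the same approach as the paper: apply Aleksandrov's variational formula with $f = h_L$, identify the resulting Wulff shape $[h_K + t h_L]$ with $K + tL$ for small $t > 0$ (so the two-sided limit in the lemma subsumes the $\liminf$ in the definition of $\mu(K,L)$), and unwind $dS^\mu_K$ when $K \in C^2_+$. You supply more detail than the paper, which compresses the argument to a single citation, and your treatment of the Wulff-shape identification and the $\liminf$-versus-limit point is exactly the correct bookkeeping. The one small gap: the theorem allows arbitrary $K \in \conbod$, but Lemma~\ref{l:second} as stated requires $0 \in \operatorname{int} K$ (you implicitly assume $K \in \conbod_0$ when you write ``for $K \in \conbod_0$ and $t \geq 0$ small enough''). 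The paper handles this by invoking Remark~\ref{r:second}, which extends Lemma~\ref{l:second} to all of $\conbod$ via the translation $K \mapsto K - \int_K x\,dx$ paired with the corresponding shift of the density; you should either cite that remark or note that translating $K$ and $\mu$ simultaneously reduces to the origin-interior case. Also, your first alternative for handling lower-dimensional $L$ (appealing to continuity/linearity of $\mu(K,\cdot)$) is circular, since linearity in $L$ is precisely a consequence of the representation being proved; your second alternative — just use $f = h_L$ directly since the lemma only needs $f$ continuous — is the right one and is what the paper does.
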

	\begin{proof}
	In Remark~\ref{r:second}, set $f=h_L$ for $L$ a compact, convex set.
	\end{proof}
    In \cite{MR15}, Equation~\ref{eq:arb_mixed} was obtained in the case of the Gaussian measure. In \cite{GAL19}, Equation~\ref{eq:arb_mixed} was obtained for measures with continuous densities using a different approach, what is known as the second Nazarov coordinate system. The advantage of the approach done here is that we were able to establish Lemma~\ref{l:second} first. In fact, in \cite{HKL21}, the second Nazarov coordinate system was also used to prove Lemma~\ref{l:second} in the case of measures with continuous densities and origin symmetric $K\in\conbod_0$. Kolesnikov and Milman have previously shown the case of Lemma~\ref{l:second} for log-concave measures \cite{KM18}. In the field of geometric measure theory, many variants of Lemma~\ref{l:second} have been shown for the study of curvature flow on manifolds under various assumptions (e.g. smoothly embedded); see \cite{CM12} and the references therein.
    
    Henceforth, we will always assume that the function $\phi$ in Lemma~\ref{l:second} is continuous, and thus $\partial K$ is in the Lebesgue set of $\phi$ for every $K\in\conbod$. However, most of the results listed here actually hold locally, i.e. when $\phi$ is not necessarily continuous, but $\partial K$ is in the Lebesgue set of $\phi$.
	
	\begin{remark}
	Let $K\in\conbod$ be of class $C^2_+$ and let $\mu$ be a Borel measure on $\R^n$ with continuous density $\phi$. Then, from Equations \ref{eq:int_meas} and \ref{eq:arb_mixed} 
	we can write
$$\mu(K)=\int_{0}^1\mu(tK,K)dt=\int_0^1\int_{\s^{n-1}}h_K(u)\phi(tn^{-1}_K(u))dS_{tK}(u)$$
and use that, in this instance, $dS_{tK}(u)=t^{n-1}f_K(u)du$ along with Fubini's theorem to recover the representation from \cite{CLM17,KL21,GAL19}
\begin{equation}
\mu(K)=\int_{\s^{n-1}}h_K(u)f_K(u)\int_{0}^1 t^{n-1}\phi(tn^{-1}_K(u))dtdu.
\label{eq:meas_int}
\end{equation}
\end{remark}

We are now able to establish equality conditions in Minkowski's inequality, which we will see is connected to the equality conditions of Equation~\ref{eq:concave}.
	\begin{corollary}[Minkowski's Inequality - Concavity link]
\label{cor:minkowski}
Let $K,L\in\conbod$ and suppose a Borel measure $\mu$ with continuous density is such that $\mu$ is $F$-concave and $F$ is differentiable. Then, $$
\mu(K, L) = \mu(K, K)+\frac{F(\mu(L))-F(\mu(K))}{F^{\prime}(\mu(K))},
$$
if, and only if, there is equality in Equation~\ref{eq:concave}.
\end{corollary}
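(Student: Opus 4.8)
The plan is to push everything onto the single--variable function $f(t):=F\bigl(\mu((1-t)K+tL)\bigr)$, $t\in[0,1]$, and read off both implications from its shape. First I would establish two facts about $f$. \emph{Differentiability at the left endpoint:} since $(1-t)K+tL=[\,h_K+t(h_L-h_K)\,]$ and $h_L-h_K\in C(\s^{n-1})$, the variational formula of Lemma~\ref{l:second} --- in the form valid for arbitrary $K\in\conbod$ given in Remark~\ref{r:second} --- applied with the perturbation $h_L-h_K$, together with the representation \eqref{eq:arb_mixed} of mixed measures applied once to $L$ and once to $K$, gives that $t\mapsto\mu((1-t)K+tL)$ has a two--sided derivative at $0$ equal to $\mu(K,L)-\mu(K,K)$; by the chain rule $f$ is then differentiable at $0$ with
\[
f'(0)=F'(\mu(K))\,\bigl(\mu(K,L)-\mu(K,K)\bigr).
\]
\emph{Concavity:} from the identity $(1-s)\bigl((1-t_1)K+t_1L\bigr)+s\bigl((1-t_2)K+t_2L\bigr)=(1-t_3)K+t_3L$, where $t_3=(1-s)t_1+st_2$, the defining inequality \eqref{eq:concave} applied to the pair $\bigl((1-t_1)K+t_1L,\,(1-t_2)K+t_2L\bigr)$ and then composed with $F$ shows that $f$ is concave on $[0,1]$ if $F$ is increasing and convex on $[0,1]$ if $F$ is decreasing (here one uses, as in every example, that the class $\mathcal{C}$ is stable under Minkowski combinations, so that $(1-t)K+tL\in\mathcal{C}$).

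Granting these, the argument is short. We have $f(0)=F(\mu(K))$, $f(1)=F(\mu(L))$, and $F'(\mu(K))\neq0$, as otherwise the asserted identity is not even meaningful. For the ``if'' direction: if equality holds in \eqref{eq:concave} for the pair $(K,L)$ at one --- hence, by Proposition~\ref{p:equal}, at every --- $t\in[0,1]$, then $f$ is affine, so $f'(0)=f(1)-f(0)=F(\mu(L))-F(\mu(K))$; inserting this into the displayed formula and dividing by $F'(\mu(K))$ yields precisely $\mu(K,L)=\mu(K,K)+\frac{F(\mu(L))-F(\mu(K))}{F'(\mu(K))}$. For the ``only if'' direction: if this identity holds then $f'(0)=f(1)-f(0)$; when $f$ is concave the slope function $t\mapsto\frac{f(t)-f(0)}{t}$ is nonincreasing on $(0,1]$, and its value $f(1)-f(0)$ at $t=1$ coincides with its limit $f'(0)$ as $t\to0^{+}$, forcing it to be constant, i.e. $f$ is affine; the convex case is identical. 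Since $f$ affine is exactly equality in \eqref{eq:concave} for the pair $(K,L)$ for every $t$ (equivalently, by Proposition~\ref{p:equal}, for one $t$), the equivalence follows.

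The one step that genuinely needs care is the formula for $f'(0)$: it rests on $t\mapsto\mu((1-t)K+tL)$ having an honest two--sided derivative at the origin, which is exactly what Lemma~\ref{l:second} (via Remark~\ref{r:second}) supplies, and on identifying that derivative with $\mu(K,L)-\mu(K,K)$ through \eqref{eq:arb_mixed}. Beyond that, the proof reduces to the elementary observation that a concave (or convex) function on $[0,1]$ whose one--sided derivative at the left endpoint equals the chord slope to the right endpoint must be affine, combined with Proposition~\ref{p:equal} to pass between equality at a single parameter and equality at all parameters.
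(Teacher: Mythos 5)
Your proof is correct and follows essentially the same route as the paper's: both differentiate $F\circ\mu((1-t)K+tL)$ at $t=0$ via the variational formula and the mixed-measure representation, and both reduce to the elementary fact that a concave (or convex) function on $[0,1]$ matching the chord at the endpoints and matching the chord slope at the left endpoint must be affine, with Proposition~\ref{p:equal} bridging equality at one $\lambda$ and equality at all $\lambda$. You are a bit more explicit about why $t\mapsto F(\mu((1-t)K+tL))$ is concave (or convex) and about the need for stability of $\mathcal{C}$ under Minkowski combinations, which the paper leaves implicit, but this is a clarification rather than a different argument.
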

\begin{proof}
Suppose there is equality in Equation~\ref{eq:concave}. Then, one has
$$F \circ \mu((1-\lambda) K+\lambda L)-(1-\lambda) F \circ \mu(K)-\lambda F \circ \mu(L)=0, \; \text{for }\lambda\in[0,1].$$
Differentiating in $\lambda$ yields $$\diff{}{\lambda} F \circ \mu((1-\lambda) K+\lambda L)=F\circ\mu(L)-F\circ\mu(K).$$
Using the chain rule and evaluating at $\lambda=0$ yields
$$F^{\prime}(\mu(K))\diff{}{\lambda} \mu\left(\left[\lambda h_{L}+(1-\lambda) h_{K}\right]\right)\bigg|_{\lambda=0}=F\circ\mu(L)-F\circ\mu(K).$$
Where we used that $(1-\lambda)K +\lambda L$ is the Wulff shape of the function $\lambda h_L + (1-\lambda)h_K$  (if need be, shift both $\mu$ and $K$ so that $K\in\conbod_0$).
But notice that 
$$\diff{}{\lambda} \mu\left(\left[\lambda h_{L}+(1-\lambda) h_{K}\right]\right)\bigg|_{\lambda=0}=\int_{\s^{n-1}}(h_L(u)-h_K(u))dS^{\mu}_{K}(u),$$
from Remark~\ref{r:second}, with $f=h_L-h_K$. Since we have that $\int_{\s^{n-1}}(h_L(u)-h_K(u))dS^{\mu}_{K}(u)=\mu(K,L)-\mu(K,K)$ from Equation~\ref{eq:arb_mixed}, we have equality in Minkowski's inequality, Theorem~\ref{t:min_eq}. Conversely, suppose we have equality in Theorem~\ref{t:min_eq}. Then, by running the argument backwards, this implies that
$$\diff{}{\lambda} F \circ \mu((1-\lambda) K+\lambda L)\bigg|_{\lambda=0}=F\circ\mu(L)-F\circ\mu(K).$$
Next, let $f(t)=F\left(\mu\left((1-t)K+t L\right)\right)$, which is either concave or convex by hypothesis, and denote the linear function $g(t)=(1-t) F(\mu(K)) +t F(\mu(L))$. We have that $f(0)=g(0)$ and $f(1)=g(1)$. However, we have also shown that $f^\prime(0)=g^\prime(0)$. From the concavity of $f$ or $-f$, it follows that $f=g$ on $[0,1]$.
\end{proof}


We now work to establish Minkowski's existence theorem, that is resolve Question~\ref{q:min_exs} for a certain class of measures. We shall first establish uniqueness.
\begin{proposition}
\label{p:uni_0}
Let $\mu$, a Borel measure with continuous density, be $F$-concave such that $F$ is differentiable. Suppose $K,L\in\conbod$ such that $$dS^{\mu}_{K}=dS^{\mu}_{L}.$$ Then
$$\frac{F(\mu(L))-F(\mu(K))}{F^{\prime}(\mu(K))}\leq \frac{F(\mu(L))-F(\mu(K))}{F^{\prime}(\mu(L))}.$$
\end{proposition}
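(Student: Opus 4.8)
The plan is to apply Minkowski's inequality for $F$-concave measures (Theorem~\ref{t:min_eq}) to the two ordered pairs $(K,L)$ and $(L,K)$, and then to use the hypothesis $dS^{\mu}_{K}=dS^{\mu}_{L}$ together with the integral representation of mixed measures (Equation~\ref{eq:arb_mixed}) to collapse the mixed-measure terms. Since $\mu$ has a continuous density, the standing hypotheses needed for both Theorem~\ref{t:min_eq} and Equation~\ref{eq:arb_mixed} are automatically satisfied for $K$ and $L$; together with the assumed differentiability of $F$, everything is in place.

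First I would record the two instances of Theorem~\ref{t:min_eq}:
$$\mu(K,L) \geq \mu(K,K) + \frac{F(\mu(L))-F(\mu(K))}{F^{\prime}(\mu(K))}, \qquad \mu(L,K) \geq \mu(L,L) + \frac{F(\mu(K))-F(\mu(L))}{F^{\prime}(\mu(L))}.$$
The key observation is that $dS^{\mu}_{K}=dS^{\mu}_{L}$ forces the mixed measures to simplify: by Equation~\ref{eq:arb_mixed}, $\mu(K,L)=\int_{\s^{n-1}}h_L\,dS^{\mu}_{K}=\int_{\s^{n-1}}h_L\,dS^{\mu}_{L}=\mu(L,L)$, and symmetrically $\mu(L,K)=\int_{\s^{n-1}}h_K\,dS^{\mu}_{L}=\int_{\s^{n-1}}h_K\,dS^{\mu}_{K}=\mu(K,K)$. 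Substituting these identities into the two inequalities above and rearranging (multiplying the second one through by $-1$) yields
$$\frac{F(\mu(L))-F(\mu(K))}{F^{\prime}(\mu(K))} \leq \mu(L,L)-\mu(K,K) \leq \frac{F(\mu(L))-F(\mu(K))}{F^{\prime}(\mu(L))},$$
and chaining the two ends gives exactly the claimed inequality.

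There is essentially no hard obstacle; the only delicate points are bookkeeping ones. One must verify that the pairs used satisfy the hypotheses of both Theorem~\ref{t:min_eq} and the mixed-measure representation — automatic under the continuous-density assumption — and one must be careful with the sign when rearranging the second Minkowski inequality. I would also emphasize that neither symmetry of the bodies nor symmetry of the bilinear quantity $\mu(\cdot,\cdot)$ is needed: only the two one-sided representations $\mu(K,L)=\int_{\s^{n-1}}h_L\,dS^{\mu}_{K}$ and $\mu(L,K)=\int_{\s^{n-1}}h_K\,dS^{\mu}_{L}$ are invoked.
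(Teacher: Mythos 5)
Your proof is correct and is essentially the paper's argument: both apply Theorem~\ref{t:min_eq} to the pairs $(K,L)$ and $(L,K)$, use $dS^{\mu}_{K}=dS^{\mu}_{L}$ together with the integral representation of Equation~\ref{eq:arb_mixed} to identify the mixed-measure terms, and chain the two resulting inequalities. The only cosmetic difference is that you collapse the mixed measures to $\mu(L,L)-\mu(K,K)$ before comparing, while the paper keeps the middle quantity written as $\int_{\s^{n-1}}(h_L-h_K)\,dS^{\mu}_{K}$; these are the same number under the hypothesis.
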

\begin{proof}
From Theorem~\ref{t:min_eq}, one has that 
$$
\mu(K, L) - \mu(K, K)\geq \frac{F(\mu(L))-F(\mu(K))}{F^{\prime}(\mu(K))}.
$$
From the symmetry of the expression, we also have
$$
\mu(L, K) - \mu(L, L) \geq \frac{F(\mu(K))-F(\mu(L))}{F^{\prime}(\mu(L))}.
$$
Inserting the integral representation from Equation~\ref{eq:arb_mixed}, we then have
$$\int_{\s^{n-1}}(h_L(u)-h_K(u))dS^{\mu}_{K}(u)\geq \frac{F(\mu(L))-F(\mu(K))}{F^{\prime}(\mu(K))}$$
and
$$
\int_{\s^{n-1}}(h_K(u)-h_L(u))dS^{\mu}_{L}(u) \geq \frac{F(\mu(K))-F(\mu(L))}{F^{\prime}(\mu(L))}.
$$
Multiplying the second expression by $-1$ and using that $dS^{\mu}_{K}(u)=dS^{\mu}_{L}(u)$ by hypothesis yields the result.
\end{proof}
\begin{remark}
\label{r:uni_1}
If $\mu(K)=\mu(L)$ and $dS^{\mu}_{K}=dS^{\mu}_{L}$, we obtain equality in Minkowski's inequality. Thus, by Corollary~\ref{cor:minkowski}, we have equality in Equation~\ref{eq:concave}.
\end{remark}
We next provide an example of dropping the assumption $\mu(K)=\mu(L)$ by instead putting restriction on the function $F$.

\begin{lemma}
\label{l:uni}
Let $\mu$, a Borel measure with continuous density, be $F$-concave such that $F$ is differentiable and there exists some $a\geq 0$ such that, for every $b,c\in [a,\mu(\R^n))$ one has $(F(b)-F(c))(F^\prime(b)-F^\prime(c))\geq 0$, $F^\prime(b)F^\prime(c)>0$, and $F^\prime(b)\neq F^\prime(c)$. Suppose $K,L\in\conbod$ such that $$dS^{\mu}_{K}=dS^{\mu}_{L}.$$ If $\mu(K),\mu(L)\geq a$, then $\mu(K)=\mu(L)$ and there is equality in Equation~\ref{eq:concave}.
\end{lemma}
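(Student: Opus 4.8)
The plan is to feed the hypotheses straight into Proposition~\ref{p:uni_0} and then extract a sign contradiction. Since $dS^{\mu}_{K}=dS^{\mu}_{L}$, Proposition~\ref{p:uni_0} gives
$$\frac{F(\mu(L))-F(\mu(K))}{F^{\prime}(\mu(K))}\leq \frac{F(\mu(L))-F(\mu(K))}{F^{\prime}(\mu(L))}.$$
Write $b=\mu(K)$ and $c=\mu(L)$; these are finite and, by assumption, lie in $[a,\mu(\R^n))$, so all three hypotheses on $F$ are available for the pair $(b,c)$. Rewrite the displayed inequality as
$$\big(F(c)-F(b)\big)\left(\frac{1}{F^{\prime}(b)}-\frac{1}{F^{\prime}(c)}\right)\leq 0,$$
i.e. $\big(F(c)-F(b)\big)\dfrac{F^{\prime}(c)-F^{\prime}(b)}{F^{\prime}(b)F^{\prime}(c)}\leq 0$. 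Here I use $F^{\prime}(b)F^{\prime}(c)>0$ to clear the denominator without flipping the inequality, obtaining $\big(F(b)-F(c)\big)\big(F^{\prime}(b)-F^{\prime}(c)\big)\leq 0$.

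Now combine this with the standing assumption $\big(F(b)-F(c)\big)\big(F^{\prime}(b)-F^{\prime}(c)\big)\geq 0$ to conclude $\big(F(b)-F(c)\big)\big(F^{\prime}(b)-F^{\prime}(c)\big)=0$. Argue by contradiction: if $\mu(K)\neq \mu(L)$, i.e. $b\neq c$, then the hypothesis $F^{\prime}(b)\neq F^{\prime}(c)$ forces $F(b)=F(c)$; but $F$ is invertible, hence injective, so $b=c$, a contradiction. Therefore $\mu(K)=\mu(L)$. Finally, since $\mu(K)=\mu(L)$ and $dS^{\mu}_{K}=dS^{\mu}_{L}$, Remark~\ref{r:uni_1} yields equality in Minkowski's inequality (Theorem~\ref{t:min_eq}) and hence, via Corollary~\ref{cor:minkowski}, equality in Equation~\ref{eq:concave}.

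I do not expect a serious obstacle here: the content is entirely the algebraic manipulation of the inequality from Proposition~\ref{p:uni_0} together with a bookkeeping of which sign hypothesis on $F$ is used where. The one point to be careful about is the clearing of the denominator, which is legitimate precisely because of the assumption $F^{\prime}(b)F^{\prime}(c)>0$, and the fact that $b,c$ are genuinely in the interval $[a,\mu(\R^n))$ (which also covers the case $\mu(\R^n)=\infty$, since $\mu(K),\mu(L)$ are finite). The condition $F^{\prime}(b)\neq F^{\prime}(c)$ is only invoked under the contradiction hypothesis $b\neq c$, which is consistent with its intended reading.
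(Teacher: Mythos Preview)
Your proof is correct and follows essentially the same route as the paper: apply Proposition~\ref{p:uni_0}, clear the positive denominator $F'(\mu(K))F'(\mu(L))$ to obtain $(F(\mu(L))-F(\mu(K)))(F'(\mu(L))-F'(\mu(K)))\leq 0$, combine with the hypothesis to force equality, and then invoke Corollary~\ref{cor:minkowski}. If anything, your treatment of the step from the vanishing product to $\mu(K)=\mu(L)$ is slightly more explicit than the paper's, since you spell out how the hypothesis $F'(b)\neq F'(c)$ (for $b\neq c$) and the injectivity of $F$ are used.
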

\begin{proof}
From Proposition~\ref{p:uni_0} we can write
$$\frac{F(\mu(L))-F(\mu(K))}{F^{\prime}(\mu(K))}\leq \frac{F(\mu(L))-F(\mu(K))}{F^{\prime}(\mu(L))};$$
this yields upon re-arrangement that $$\left(F(\mu(L))-F(\mu(K))\right)\left(F^\prime(\mu(L))-F^\prime(\mu(K))\right) \leq 0,$$ since \\ $F^\prime(\mu(K))F^\prime(\mu(L))>0.$ However, by hypothesis, we also have\\ $\left(F(\mu(L))-F(\mu(K))\right)\left(F^\prime(\mu(L))-F^\prime(\mu(K))\right) \geq 0.$ Hence, we have equality, and so $F(\mu(K))=F(\mu(L))$ which means $\mu(K)=\mu(L)$. Furthermore, equality is obtained in Minkowski's inequality, and so, by Corollary~\ref{cor:minkowski}, we then have equality in Equation~\ref{eq:concave}.
\end{proof}
\begin{example}
\label{ex:minkowski}
In Lemma~\ref{l:uni}, the following satisfy the requirements of $F$:
\begin{enumerate}
    \item F is an increasing, strictly convex function on $[a,\mu(\R^n)).$
    \item F is a decreasing, strictly concave function on $[a,\mu(\R^n)).$
\end{enumerate}
\end{example}

Using Lemma~\ref{l:uni}, we immediately obtain a result for the standard Gaussian measure on $\R^n$. Notice that the function $\Phi^{-1}(x)$ from Ehrhard's inequality, Equation~\ref{e:Ehrhard_ineq}, is strictly increasing on $[0,1]$, as $$\diff{\Phi^{-1}(x)}{x}=\sqrt{2\pi}e^{\frac{(\Phi^{-1})^2(x)}{2}}.$$ Furthermore, one has $\diff[2]{\Phi^{-1}(x)}{x}=2\pi\Phi^{-1}(x)e^{(\Phi^{-1})^2(x)}$; therefore $(\Phi^{-1})^\prime(x)$ is strictly increasing on $[1/2,1]$, that is $\Phi^{-1}(x)$ is convex on $[1/2,1]$. By Item 1 in Example~\ref{ex:minkowski}, the hypotheses of Lemma~\ref{l:uni} are satisfied. Notice the equality conditions of Equation~\ref{e:Ehrhard_ineq} are $K=L$. Thus, we obtain the result of Huang, Xi and Zhao \cite{HXZ21} from Lemma~\ref{l:uni}, with $F(x)=\Phi^{-1}(x)$, the Ehrhard function.
\begin{theorem}
\label{t:ehr}
Suppose $K,L\in\conbod$ such that $\gamma_n(K),\gamma_n(L)\geq 1/2$ and $$dS^{\gamma_n}_{K}=dS^{\gamma_n}_{L}.$$ Then, one has $K=L$.
\end{theorem}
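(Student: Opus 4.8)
The proof will be an application of Lemma~\ref{l:uni} to the Gaussian measure $\gamma_n$ with $F=\Phi^{-1}$, the Ehrhard function, and $a=1/2$. First I would record that $\gamma_n$ is $F$-concave on the class of convex bodies with $F=\Phi^{-1}$: this is precisely Ehrhard's inequality \eqref{e:Ehrhard_ineq}, and $F=\Phi^{-1}$ is differentiable. Next I would check the two analytic conditions that Lemma~\ref{l:uni} imposes on $F$ over the interval $[a,\mu(\R^n))=[1/2,1)$ (recall $\gamma_n(\R^n)=1$). From $\diff{\Phi^{-1}(x)}{x}=\sqrt{2\pi}\,e^{(\Phi^{-1})^2(x)/2}$ one has $F'>0$ on $(0,1)$, so $F'(b)F'(c)>0$ for all $b,c\in[1/2,1)$. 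From $\diff[2]{\Phi^{-1}(x)}{x}=2\pi\,\Phi^{-1}(x)e^{(\Phi^{-1})^2(x)}$ and the fact that $\Phi^{-1}(x)\ge 0$ for $x\ge 1/2$, the derivative $F'$ is strictly increasing on $[1/2,1)$; hence $F'(b)\neq F'(c)$ whenever $b\neq c$, and $(F(b)-F(c))(F'(b)-F'(c))\ge 0$. In other words $F$ is increasing and strictly convex on $[1/2,1)$, which is exactly Item 1 of Example~\ref{ex:minkowski}, so every hypothesis of Lemma~\ref{l:uni} is satisfied.

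With the assumptions $\gamma_n(K),\gamma_n(L)\ge 1/2=a$ and $dS^{\gamma_n}_{K}=dS^{\gamma_n}_{L}$ in force, Lemma~\ref{l:uni} then yields two conclusions simultaneously: $\gamma_n(K)=\gamma_n(L)$, and equality holds in \eqref{eq:concave}, i.e.\ equality holds in Ehrhard's inequality \eqref{e:Ehrhard_ineq} for the pair $K,L$. To finish, I would invoke the equality characterization of \eqref{e:Ehrhard_ineq} recalled in the excerpt: for closed convex sets with $\gamma_n(K)\gamma_n(L)>0$, equality forces $K=\R^n$, or $L=\R^n$, or $K=L$, or $K$ and $L$ to be half-spaces, one contained in the other. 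Since $K$ and $L$ are convex bodies they are bounded with nonempty interior, so none of the degenerate alternatives can occur, and we are left with $K=L$.

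I do not anticipate a genuine obstacle here: the substance has been front-loaded into Lemma~\ref{l:uni} (uniqueness via the two-sided Minkowski inequality of Theorem~\ref{t:min_eq}), into the elementary first and second derivative computations for $\Phi^{-1}$, and into the known equality case of Ehrhard's inequality. The only delicate point is the threshold $1/2$: the inflection point of $\Phi^{-1}$ sits at $x=1/2$, where $\Phi^{-1}=0$, so convexity of $F$ — and hence this method — fails below that level, which is precisely why the hypothesis $\gamma_n(K),\gamma_n(L)\ge 1/2$ is imposed rather than anything weaker. It is worth remarking that this route recovers the theorem of Huang, Xi and Zhao \cite{HXZ21} while confining the Gaussian-specific input to a single ingredient, namely the equality case of Ehrhard's inequality.
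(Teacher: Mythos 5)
Your proposal is correct and follows essentially the same route the paper takes: apply Lemma~\ref{l:uni} with $F=\Phi^{-1}$ and $a=1/2$ after verifying, via the first and second derivative computations, that $\Phi^{-1}$ is increasing and strictly convex on $[1/2,1)$ (Item 1 of Example~\ref{ex:minkowski}), then invoke the equality characterization of Ehrhard's inequality to conclude $K=L$. Your additional observation that the half-space and $\R^n$ alternatives are excluded because $K,L$ are convex bodies is a helpful explicitness that the paper handles a few paragraphs earlier, so no gap.
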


We now wish to state uniqueness for $p$-concave, $\alpha$-homogeneous measures, $\alpha>0$. However, unfortunately the function $F(x)=x^p$ is increasing for all $p>0$ but $F^\prime(x)$ is decreasing for $p<1$, and therefore for such $p$ we cannot appeal to Lemma~\ref{l:uni}.

\begin{proposition}
\label{p:min_exist}
Let a Radon measure $\mu$ be $\alpha$-homogeneous, $\alpha>0,$ and $p$-concave. Suppose $K,L\in\conbod$ satisfy
$$dS^{\mu}_{K}(u)=dS^{\mu}_{L}(u).$$
Then, $$\mu(K)-\mu(L)\geq \frac{\mu^p(K)-\mu^p(L)}{\alpha p\mu^{p-1}(L)}.$$
\end{proposition}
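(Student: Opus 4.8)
The plan is to mimic the proof of Proposition~\ref{p:uni_0} but exploit homogeneity to extract a cleaner inequality. We will apply Minkowski's Inequality for $F$-concave measures, Theorem~\ref{t:min_eq}, with $F(x)=x^p$, so that $F'(x)=px^{p-1}$; note that a $p$-concave, $\alpha$-homogeneous Radon measure is in particular $F$-concave with this $F$ on the class of convex bodies, by Borell's classification. Applying Theorem~\ref{t:min_eq} once with the pair $(K,L)$ and once with $(L,K)$, and using the integral representation of mixed measures \eqref{eq:arb_mixed} together with the hypothesis $dS^\mu_K=dS^\mu_L$ (exactly as in the proof of Proposition~\ref{p:uni_0}), we obtain
$$\mu(K,L)-\mu(K,K)\;=\;\int_{\s^{n-1}}(h_L-h_K)\,dS^\mu_K\;\ge\;\frac{\mu^p(L)-\mu^p(K)}{p\,\mu^{p-1}(K)},$$
and symmetrically with $K$ and $L$ interchanged.

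Next I would use the homogeneity of $\mu$ to identify $\mu(K,K)$ and $\mu(L,L)$ explicitly. From the discussion preceding \eqref{eq:int_meas}, $\mu(tK,K)=\diff{\mu(tK)}{t}=\diff{}{t}(t^\alpha\mu(K))=\alpha t^{\alpha-1}\mu(K)$, so evaluating at $t=1$ gives $\mu(K,K)=\alpha\mu(K)$, and likewise $\mu(L,L)=\alpha\mu(L)$. Substituting into the two inequalities above yields
$$\int_{\s^{n-1}}(h_L-h_K)\,dS^\mu_K\;\ge\;\frac{\mu^p(L)-\mu^p(K)}{p\,\mu^{p-1}(K)}+\alpha\mu(K)-\alpha\mu(K)\quad\Longleftrightarrow\quad \mu(K,L)\ge \alpha\mu(K)+\frac{\mu^p(L)-\mu^p(K)}{p\,\mu^{p-1}(K)},$$
and the analogous statement with roles reversed. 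Now subtract: since $dS^\mu_K=dS^\mu_L$, the left-hand sides satisfy $\mu(K,L)+\mu(L,K)=\int_{\s^{n-1}}h_L\,dS^\mu_K+\int_{\s^{n-1}}h_K\,dS^\mu_L=\int_{\s^{n-1}}h_L\,dS^\mu_K+\int_{\s^{n-1}}h_K\,dS^\mu_K$, but more to the point, adding the two inequalities makes the $\mu(K,L),\mu(L,K)$ terms combine with $\mu(K,K)+\mu(L,L)$; carefully, from the proof of Proposition~\ref{p:uni_0} one gets directly
$$0\;\ge\;\frac{\mu^p(L)-\mu^p(K)}{p\,\mu^{p-1}(K)}+\frac{\mu^p(K)-\mu^p(L)}{p\,\mu^{p-1}(L)},$$
i.e. $\dfrac{\mu^p(K)-\mu^p(L)}{p\,\mu^{p-1}(L)}\ge \dfrac{\mu^p(K)-\mu^p(L)}{p\,\mu^{p-1}(K)}$. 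Rearranging the single inequality from Proposition~\ref{p:uni_0} together with $\mu(K,K)=\alpha\mu(K)$ and writing $\mu(K,L)-\mu(K,K)$ out, one isolates $\mu(K)-\mu(L)$ on the left: starting from $\mu(K,L)\ge\alpha\mu(K)+\frac{\mu^p(L)-\mu^p(K)}{p\mu^{p-1}(K)}$ and the reversed version, the cleanest route is to take the reversed inequality $\mu(L,K)\ge\alpha\mu(L)+\frac{\mu^p(K)-\mu^p(L)}{p\mu^{p-1}(L)}$, note $\mu(L,K)=\int h_K\,dS^\mu_L=\int h_K\,dS^\mu_K$, and compare with $\mu(K,K)=\int h_K\,dS^\mu_K=\alpha\mu(K)$. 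This gives $\alpha\mu(K)\ge \alpha\mu(L)+\frac{\mu^p(K)-\mu^p(L)}{p\mu^{p-1}(L)}$, hence $\mu(K)-\mu(L)\ge \frac{\mu^p(K)-\mu^p(L)}{\alpha p\,\mu^{p-1}(L)}$, which is the claim.

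I expect the only real subtlety to be bookkeeping: making sure the substitution $\mu(L,K)=\int_{\s^{n-1}}h_K\,dS^\mu_L=\int_{\s^{n-1}}h_K\,dS^\mu_K=\mu(K,K)/1$ is used in the right place — this is where the hypothesis $dS^\mu_K=dS^\mu_L$ does its work — and that $\mu(K,K)=\alpha\mu(K)$ is legitimate (it requires $\mu$ to be $\alpha$-homogeneous and finite on $K$, both of which hold). No approximation by $C^2_+$ bodies is needed here since the integral representation \eqref{eq:arb_mixed} of mixed measures and Theorem~\ref{t:min_eq} are already available for general convex bodies and continuous densities. The homogeneity identity $\mu(K,K)=\alpha\mu(K)$ is the one genuinely new ingredient compared to Proposition~\ref{p:uni_0}, and everything else is a direct reprise of that argument.
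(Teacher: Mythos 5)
Your proof is correct and, once you discard the exploratory detour in the middle (the addition of the two symmetric inequalities, which only reproves Proposition~\ref{p:uni_0} and is not used in the final step), the ``cleanest route'' you identify at the end is exactly the paper's argument: use $\alpha$-homogeneity to get $\mu(K,K)=\alpha\mu(K)$, use the hypothesis $dS^\mu_K=dS^\mu_L$ to identify $\mu(L,K)=\int h_K\,dS^\mu_L=\int h_K\,dS^\mu_K=\mu(K,K)=\alpha\mu(K)$, and then plug into Theorem~\ref{t:min_eq} (with roles of $K$ and $L$ reversed and $F(x)=x^p$). The only cosmetic difference is that you obtain $\mu(K,K)=\alpha\mu(K)$ by differentiating $t\mapsto\mu(tK)=t^\alpha\mu(K)$ at $t=1$, whereas the paper integrates $\mu(tK,K)$ over $[0,1]$ via Equation~\ref{eq:int_meas}; these are equivalent.
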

\begin{proof}
From Equation~\ref{eq:int_meas}, we can write \begin{align*}\alpha\mu(K)&=\alpha\int_{0}^1\mu(tK,K)dt= \mu(K,K)=\int_{\s^{n-1}}h_K(u)dS^{\mu}_{K}(u)
\\
&= \int_{\s^{n-1}}h_K(u)dS^{\mu}_{L}(u)=\mu(L,K).\end{align*}
The result is immediate from Minkowski's inequality, Theorem~\ref{t:min_eq}.
\end{proof}
We now provide an instance where we obtain that the two convex bodies must coincide.
\begin{lemma}
\label{l:min_exist}
Let $\mu$ be a Radon measure that is $\alpha$-homogeneous, $\alpha>0,$ and $p=1/\alpha$-concave. Suppose $K,L\in\conbod$ satisfy
$$dS^{\mu}_{K}(u)=dS^{\mu}_{L}(u).$$
Then, $\mu(K)=\mu(L)$ and $K=aL +b$ $\mu$-almost everywhere for some $a\in\R, b\in\R^n$.
\end{lemma}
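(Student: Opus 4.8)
The plan is to combine Proposition~\ref{p:min_exist} with the equality characterization of $p$-concave measures from Proposition~\ref{p:mr}. Since $\mu$ is $\alpha$-homogeneous and $p=1/\alpha$-concave, it is in particular $s$-concave with $s=1/\alpha\in(0,1/n]$ (note $p\alpha\le 1$ forces $\alpha\ge n$, so $1/\alpha\le 1/n$ as required). Apply Proposition~\ref{p:min_exist} with this $p$: the right-hand side is
$$\frac{\mu^p(K)-\mu^p(L)}{\alpha p\,\mu^{p-1}(L)}=\frac{\mu^p(K)-\mu^p(L)}{\mu^{p-1}(L)}=\mu(L)^{1-p}\mu(K)^p-\mu(L),$$
using $\alpha p=1$. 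So the inequality reads $\mu(K)-\mu(L)\ge \mu(L)^{1-p}\mu(K)^p-\mu(L)$, i.e. $\mu(K)\ge \mu(L)^{1-p}\mu(K)^p$, i.e. $\mu(K)^{1-p}\ge \mu(L)^{1-p}$. Since $0<p<1$ (as $\alpha>1$; the case $\alpha=1$ would need separate handling but then $p=1$ and $\mu$ is a multiple of Lebesgue measure, covered by classical Minkowski uniqueness), this gives $\mu(K)\ge\mu(L)$. By symmetry of the hypothesis $dS^\mu_K=dS^\mu_L$ we also get $\mu(L)\ge\mu(K)$, hence $\mu(K)=\mu(L)$.

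Now that $\mu(K)=\mu(L)$, equality holds throughout Proposition~\ref{p:min_exist}, which came from Minkowski's inequality (Theorem~\ref{t:min_eq}); so we have equality in Theorem~\ref{t:min_eq} for the pair $(L,K)$. By Corollary~\ref{cor:minkowski}, this forces equality in the $F$-concavity relation \eqref{eq:concave} with $F(x)=x^p$, i.e. for all $\lambda\in[0,1]$,
$$\mu((1-\lambda)L+\lambda K)=\left((1-\lambda)\mu(L)^p+\lambda\mu(K)^p\right)^{1/p}.$$
Since $\mu(K)\mu(L)>0$, Proposition~\ref{p:mr} (Milman--Rotem) applies with $s=p$, $A=K$, $B=L$: there exist $a>0$ and $b\in\R^n$ such that $K=aL+b$, and moreover the density $\varphi$ of $\mu$ satisfies $\varphi(ax+b)=c\varphi(x)$ for a.e.\ $x\in L$ and some $c>0$. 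This proves the claim that $K=aL+b$ up to a $\mu$-null set (in fact exactly, as convex bodies).

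The main obstacle I anticipate is making sure the chain of equalities is reversible and that the hypotheses of the invoked results are genuinely met: in particular, verifying that $p=1/\alpha$-concavity of a $\mu$ with continuous density indeed falls under the Borell/Milman--Rotem framework (one needs $\mu$ Radon and $s\le 1/n$, both of which follow from $\alpha\ge n$ and the standing assumptions), and tracking the degenerate boundary case $\alpha=1$ separately. A secondary point of care is that Corollary~\ref{cor:minkowski} is stated for $F$ differentiable, which $x\mapsto x^p$ is on $(0,\infty)$; since $\mu(K),\mu(L)>0$ this is fine. Everything else is bookkeeping with the identity $\alpha p=1$.
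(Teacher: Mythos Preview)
Your proof is correct and follows essentially the same route as the paper: apply Proposition~\ref{p:min_exist} with $\alpha p=1$ to obtain $(\mu(K)/\mu(L))^{1-p}\ge 1$, use the symmetry of the hypothesis to conclude $\mu(K)=\mu(L)$, then invoke Corollary~\ref{cor:minkowski} to get equality in \eqref{eq:concave}, and finally apply Proposition~\ref{p:mr}. Your additional commentary on why $\alpha\ge n$ is forced and on the degenerate case $\alpha=1$ is accurate but not needed for the argument itself.
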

\begin{proof}
By Proposition~\ref{p:min_exist}, with $p\alpha=1$, we have from the symmetry of the result that
$$\left(\frac{\mu(K)}{\mu(L)}\right)^{1-p}\geq 1 \quad \text{ and } \quad 1\geq \left(\frac{\mu(K)}{\mu(L)}\right)^{1-p},$$
so $\mu(K)=\mu(L)$ and we have equality in Minkowski's inequality, Theorem~\ref{t:min_eq}. Thus, by Corollary~\ref{cor:minkowski} there is equality in Equation~\ref{eq:concave}. By Proposition~\ref{p:mr}, $K=aL + b$ for some $a>0$ and $b\in\R^n$.
\end{proof}
\begin{remark}
\label{r:min_exist}
In Lemma~\ref{l:min_exist}, if $K$ and $L$ are centered, then $b=0$, where a convex body is centered if the vector $\int_K x dx$ is the origin. Since Lemma~\ref{l:min_exist} concludes with $\mu(K)=\mu(L)$, we also have $a=1$. Thus, if $K$ and $L$ are centered, then one has $K=L$. \end{remark}

In general, for $p\alpha < 1$, the conclusion of Lemma~\ref{l:min_exist} fails to hold. However, suppose $\mu$ is an $\alpha$-homogeneous measure such that $\alpha\geq n$. Then, we have that $\mu$ being $\alpha$-homogeneous implies $\phi$, the density of $\mu$, is $r\!=\! \alpha\!-\!n$$\geq \!0$ homogeneous. Furthermore, if $\mu$ is $p$-concave, then from the Borell classification of concave functions \cite{Bor75}, one has $\phi$ is $s=\frac{p}{1-pn}$ concave. Since $p\leq \frac{1}{\alpha}\leq \frac{1}{n}$, $\phi$ has a positive concavity. The following proposition from \cite{GAL19} will be of great use: \\
\begin{proposition}[Proposition A.2. in \cite{GAL19}]\label{p:gal} For $s > 0$ and $r > 0$, let $g: \R^n \to \R^+$ be $s$-concave and $r$-homogeneous. Then, $g$ is also $1/r$-concave.
\end{proposition}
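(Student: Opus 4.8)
The plan is to reduce the statement to a superadditivity inequality for a $1$-homogeneous function and then establish that inequality by a one-parameter optimization that exploits homogeneity. Recall that (with the convention used just above the statement) ``$g$ is $\gamma$-concave'' for $\gamma>0$ means $g^{\gamma}$ is concave; so the goal is to show $h:=g^{1/r}$ is concave on $\{g>0\}$, which by the homogeneity of $g$ is a convex cone (it equals $\{g^{s}>0\}$, a strict superlevel set of the concave function $g^{s}$). Since $g$ is $r$-homogeneous, $h$ is $1$-homogeneous, and a $1$-homogeneous function on a convex cone is concave precisely when it is superadditive there: if $h(x+y)\ge h(x)+h(y)$ for all $x,y$, then $h(\lambda x+(1-\lambda)y)\ge h(\lambda x)+h((1-\lambda)y)=\lambda h(x)+(1-\lambda)h(y)$, and the converse follows from $h(x+y)=2h(\tfrac{x+y}{2})$. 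So it suffices to prove $h(x+y)\ge h(x)+h(y)$.

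First I would record the constraint $rs\le 1$: $g^{s}$ is concave and $rs$-homogeneous, so its restriction to a ray from the origin on which $g>0$ has the form $t\mapsto c\,t^{rs}$ with $c>0$, which is concave only if $rs\le 1$; put $\alpha:=rs\in(0,1]$. Now $h^{\alpha}=g^{s}$ is concave, so for $x,y$ in the support and $\lambda\in(0,1)$,
\begin{equation*}
h^{\alpha}(x+y)=h^{\alpha}\!\left(\lambda\cdot\frac{x}{\lambda}+(1-\lambda)\cdot\frac{y}{1-\lambda}\right)\ge \lambda\,h^{\alpha}\!\left(\frac{x}{\lambda}\right)+(1-\lambda)\,h^{\alpha}\!\left(\frac{y}{1-\lambda}\right);
\end{equation*}
applying the $1$-homogeneity of $h$ in the form $h(x/\lambda)=h(x)/\lambda$ turns this into $h^{\alpha}(x+y)\ge \lambda^{1-\alpha}h^{\alpha}(x)+(1-\lambda)^{1-\alpha}h^{\alpha}(y)$. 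Writing $A=h^{\alpha}(x)$, $B=h^{\alpha}(y)$ and assuming first $A,B>0$, I would maximize the right-hand side over $\lambda$: the choice $\lambda=A^{1/\alpha}/(A^{1/\alpha}+B^{1/\alpha})$ makes it equal to $(A^{1/\alpha}+B^{1/\alpha})^{\alpha}=(h(x)+h(y))^{\alpha}$. Taking $\alpha$-th roots yields $h(x+y)\ge h(x)+h(y)$.

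The remaining work is bookkeeping. If $\alpha=1$ (i.e.\ $s=1/r$) the displayed inequality already gives the conclusion. If $A=0$ (or $B=0$), letting $\lambda\to 0$ (or $\lambda\to1$) in $h^{\alpha}(x+y)\ge\lambda^{1-\alpha}A+(1-\lambda)^{1-\alpha}B$ recovers $h(x+y)\ge h(x)+h(y)$, and the case $g\equiv0$ is vacuous. The only points that need a little care are handling the boundary of the support of $g$ and checking that the interior critical point is genuinely the maximum of $\lambda\mapsto\lambda^{1-\alpha}A+(1-\lambda)^{1-\alpha}B$ (its value exceeds both endpoint values $A$ and $B$, since $t\mapsto t^{\alpha}$ is increasing); the algebra itself is routine once the substitution $(x/\lambda,\,y/(1-\lambda))$ has been made. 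Since the applications in the paper only require concavity of $g^{1/r}$ on the conical support of $g$, this is all that is needed.
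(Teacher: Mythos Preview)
The paper does not supply a proof of this proposition; it merely cites it as Proposition~A.2 of \cite{GAL19}. So there is nothing in the present paper to compare your argument against.

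That said, your proof is correct and self-contained. The key steps---observing that $rs\le 1$ from the behaviour of $g^{s}$ along rays, passing to the $1$-homogeneous function $h=g^{1/r}$, reducing concavity of $h$ to superadditivity, and then establishing superadditivity via the substitution $(x/\lambda,\,y/(1-\lambda))$ and the optimal choice $\lambda=h(x)/(h(x)+h(y))$---are all sound. This is essentially the standard route to the result (it is, in spirit, the same computation underlying the power-mean inequality that shows $s$-concavity is monotone in $s$ for $1$-homogeneous objects). The only minor point worth tightening in a formal write-up is that concavity of $g^{1/r}$ is claimed on the cone $\{g>0\}$ rather than on all of $\R^{n}$; since $g\ge 0$ and $g^{1/r}$ vanishes continuously on the boundary of its support, concavity extends to $\R^{n}$ automatically, so this is harmless for the applications in the paper.
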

We have $\phi$ is $1/r$-concave, which yields $\mu$ is $1/(n+r)=1/\alpha$-concave. Thus, if $\mu$ is $p$-concave and $\alpha$-homogeneous, $\alpha\geq n$, then $\mu$ is also $1/\alpha$-concave. In this instance, using Theorem~\ref{t:min_eq} for an $\alpha$-homogeneous measure with $F(x)=x^{1/\alpha}$ yields a result similar to Equation~\ref{eq:min_ineq}:
\begin{equation}
\mu(K, L) \geq \alpha\mu(K)^{1-\frac{1}{\alpha}} \mu(L)^{\frac{1}{\alpha}}.
\label{eq:mr}
\end{equation}
This was first obtained by Milman and Rotem in \cite{MR14}, who stated it in the form of a function that is $r$-homogeneous, $1/r$-concave, that generates a measure that is $q=\frac{1}{n+r}$-concave and $(n+r)$-homogeneous. The two formulations are equivalent via Borell's classification, with $\alpha=n+r$. We conclude by stating the following.
\begin{theorem}
\label{t:min_exist}
Let a Radon measure $\mu$ be $\alpha$-homogeneous and $p$-concave, where either $\alpha\geq n$ and $p>0,$ or $\alpha<n$ but $p=\frac{1}{\alpha}$. Suppose centered $K,L\in\conbod_0$ satisfy
$$dS^{\mu}_{K}(u)=dS^{\mu}_{L}(u).$$
Then, $K=L$.
\end{theorem}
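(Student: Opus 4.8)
The plan is to dispatch the theorem by a short case analysis on the concavity exponent, reducing everything to Lemma~\ref{l:min_exist} and Remark~\ref{r:min_exist}. Recall from the introduction that homogeneity forces $p\alpha\le 1$, so in all admissible cases $p\le 1/\alpha$, and the two regimes to consider are $p=1/\alpha$ and $\alpha\ge n$ with $0<p<1/\alpha$.

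\textbf{Case $p=1/\alpha$.} This is verbatim the hypothesis of Lemma~\ref{l:min_exist}: $\mu$ is $\alpha$-homogeneous with $\alpha>0$, it is $1/\alpha$-concave, and $dS^{\mu}_{K}=dS^{\mu}_{L}$. Hence Lemma~\ref{l:min_exist} gives $\mu(K)=\mu(L)$ and the existence of $a>0$, $b\in\R^n$ with $K=aL+b$ ($\mu$-a.e., hence as sets since $K,L\in\conbod_0$). Now apply Remark~\ref{r:min_exist}: comparing centroids in $K=aL+b$ and using that both bodies are centered gives $b=0$; then $\mu(K)=\mu(aL)=a^{\alpha}\mu(L)=a^{\alpha}\mu(K)$ forces $a=1$, so $K=L$. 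This already settles the case $\alpha<n$, $p=1/\alpha$, as well as the sub-case $\alpha\ge n$, $p=1/\alpha$.

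\textbf{Case $\alpha\ge n$, $0<p<1/\alpha$.} Here I first upgrade the concavity of $\mu$ exactly as in the paragraph preceding the theorem. Since $\mu$ is $\alpha$-homogeneous with $\alpha\ge n$, its density $\phi$ is $r$-homogeneous with $r=\alpha-n\ge 0$; since $\mu$ is $p$-concave with $p>0$, Borell's classification makes $\phi$ be $s$-concave with $s=\tfrac{p}{1-pn}>0$. If $r>0$, Proposition~\ref{p:gal} gives that $\phi$ is also $1/r$-concave, so $\mu$ is $1/(n+r)=1/\alpha$-concave; if $r=0$ (that is $\alpha=n$, and on full support) the density is constant and $\mu$ is again $1/n=1/\alpha$-concave. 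In either subcase $\mu$ is $\alpha$-homogeneous and $1/\alpha$-concave, so we are back in the first case and conclude $K=L$.

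Thus the proof is a short assembly once the earlier machinery is in place, and the only delicate ingredients are inherited: Proposition~\ref{p:gal}, which converts ``$p$-concave $+$ homogeneous'' into the borderline $1/\alpha$-concavity, and—inside Lemma~\ref{l:min_exist}—the equality case of Minkowski's inequality (Corollary~\ref{cor:minkowski}, hence equality in \eqref{eq:concave}) together with the Milman and Rotem characterization (Proposition~\ref{p:mr}) that turns this into a homothety; the remaining observation, that ``centered'' kills the translation $b$ and homogeneity plus $\mu(K)=\mu(L)$ kills the dilation $a$, is elementary. The ``main obstacle'' is therefore not in this proof at all but in the correct setup of $F$-concavity and its equality case; given those, verifying the hypotheses of Lemma~\ref{l:min_exist} in both regimes is all that is needed.
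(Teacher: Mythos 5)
Your proof is correct and follows essentially the same route as the paper: establish $1/\alpha$-concavity (by hypothesis when $\alpha<n$, via Proposition~\ref{p:gal} when $\alpha\geq n$) and then invoke Remark~\ref{r:min_exist}, i.e.\ Lemma~\ref{l:min_exist} together with the centering assumption. Your extra attention to the borderline $r=\alpha-n=0$ subcase is a welcome clarification, since Proposition~\ref{p:gal} as stated requires $r>0$, a detail the paper's two-sentence proof passes over.
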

\begin{proof}
If $\alpha<n$, then $\mu$ is $1/\alpha$-concave by hypothesis. If $\alpha\geq n$, then, from Proposition~\ref{p:gal}, $\mu$ is also $1/\alpha$-concave. In either case, we can apply Remark~\ref{r:min_exist}.
\end{proof}

Theorem~\ref{t:min_exist} was first obtained by Livshyts in the case of symmetric convex bodies in \cite{GAL19} using Proposition~\ref{p:gal} and then Equation~\ref{eq:mr}. Having discussed uniqueness, we are now ready to establish Minkowski's existence theorem.
\begin{proof}[Proof of Theorem~\ref{t:third}]
We follow a technique from \cite{HLYZ10}, which, in turn, is a variation of a technique used by Chou and Wang \cite{CW06} and Lutwak \cite{LE93}. Let $\beta\in(0,\infty)$ so that $\mu\in\Lambda^n$, and define the following functional on the set of even functions in $C(\s^{n-1})$:
\begin{equation}
    \psi_{\nu,\beta}(f)=\frac{n}{\beta}\mu\left([f]\right)^\frac{\beta}{n}-\int_{\s^{n-1}}f(u)d\nu(u).
    \label{eq:functional}
\end{equation}
Notice that, for an even, positive function $f\in C(\s^{n-1}),$ $[f]$ is a symmetric convex body, and $h_{[f]} (u) \leq f(u)$ for all $u\in\s^{n-1}$. We also see that, since $[h_{[f]}]=[f],$ one obtains
\begin{align*}
    \psi_{\nu,\beta}(h_{[f]})&= \frac{n}{\beta}\mu\left([f]\right)^\frac{\beta}{n}-\int_{\s^{n-1}}h_{[f]}(u)d\nu(u)
    \\
    &\geq \frac{n}{\beta}\mu\left([f]\right)^\frac{\beta}{n}-\int_{\s^{n-1}}f(u)d\nu(u)=\psi_{\nu,\beta}(f).
\end{align*}
Thus, a maximiser of $\psi$ is the support function of a symmetric convex body. We first show that the convex body associated with a maximiser satisfies the claim, and then show the existence of the maximiser. Among the $\beta\in(0,\infty)$ so that $\mu\in\Lambda^n$, fix one of them and let $h_{K_\beta}$ be the associated maximiser of $\psi_{\nu,\beta}$; thus $K_\beta$ is symmetric. For even $f\in C(\s^{n-1}),$ pick $\delta$ small enough so that, for $|t|<\delta$, the function
$h_t(u)=h_{K_\beta}(u)+tf(u)$ is positive for all $u\in\s^{n-1}$. Viewing $\psi_{\nu,\beta}(h_t)$ as a function of one variable in $t$, we must have that $\psi_{\nu,\beta}(h_t)$ has an extremum at $t=0$, and so
$$\diff{\psi_{\nu,\beta}(h_t)}{t}\bigg|_{t=0}=0.$$
On the other hand, we see that
\begin{align*}0=\diff{\psi_{\nu,\beta}(h_t)}{t}\bigg|_{t=0}&=\mu(K_\beta)^{\frac{\beta}{n}-1}\diff{}{t}\mu([h_t])\bigg|_{t=0}-\diff{}{t}\int_{\s^{n-1}}h_{t}(u)d\nu(u)\bigg|_{t=0}
\\
&=\mu(K_\beta)^{\frac{\beta}{n}-1}\int_{\s^{n-1}}f(u)dS^{\mu}_{K_{\beta}}(u)-\int_{\s^{n-1}}f(u)d\nu(u),\end{align*}
where the second equality follows from Equation~\ref{eq:arb_mixed}.
Re-arranging, we then see
$$\int_{\s^{n-1}}f(u)d\nu(u)=\int_{\s^{n-1}}f(u)\left[\mu(K_\beta)^{\frac{\beta}{n}-1}dS^{\mu}_{K_{\beta}}(u)\right].$$
Since continuous functions are dense in $L^1(\s^{n-1})$, the above is true for all even $f\in L^1(\s^{n-1})$. We then have via the Riesz Representation theorem that
$$d\nu(u)=\mu(K_\beta)^{\frac{\beta}{n}-1}dS^{\mu}_{K_{\beta}}(u),$$
as required. We now work towards the existence of a maximiser. We will first consider dilates of the unit ball. Begin by defining the function
\begin{equation}
    q_{\beta}(\nu,r):=\psi_{\nu,\beta}\left(h_{rB_2^n}\right)=\frac{n}{\beta}\mu\left(rB_2^n\right)^\frac{\beta}{n}-r\nu(\s^{n-1}),
    \label{eq:qte}
\end{equation}  

From the condition that $\lim_{r\to\infty}\frac{\mu(rB_2^n)^\frac{\beta}{n}}{r} =0$, one has that $$\lim_{r\to\infty}q_{\beta}(\nu,r)=-\infty.$$ Since $\mu$ is a Borel measure with density, there exists some $r_0^\star$ such that $q_{\beta}(\nu,r)<0$ for $r>r_0^\star$. From the condition that $\lim_{r\to 0}\frac{\mu(rB_2^n)^\frac{\beta}{n}}{r} = \infty$, we have $r_0^\star$ is strictly greater than zero, i.e. there exists some $r_0$ such that $q_{\beta}(\nu,r)>0$ for $r\in(0,r_0).$
Since $q_{\beta}(\nu,r)$ may oscillate in sign on the interval $[r_0,r_0^\star]$, we set $R_0=\sup\{r\in\R^{+}:q_{\beta}(\nu,r)>0\}$, which is finite by the existence of $r_0^\star.$ So we have shown there exist convex bodies where $\psi_{\nu,\beta}$ is positive. We now show that it suffices to look for the maximiser contained in a (fixed) ball of some radius. Observe that the map, for $\theta\in\s^{n-1}$,
$$\theta\to\int_{\s^{n-1}}|\langle\theta,u\rangle|d\nu(u)$$
is strictly positive, since $\nu$ is not concentrated on any hemisphere. Since $\nu$ is finite, we can find a constant $C_\nu$ such that
$$\frac{1}{\nu(\s^{n-1})}\int_{\s^{n-1}}|\langle\theta,u\rangle|d\nu(u) \geq C_\nu >0.$$
Furthermore, one has that $C_\nu\leq1$ since $|\langle \theta, u\rangle|\leq 1$. 

Fix a symmetric $K\in\conbod_0$. Since $K$ is a star body, $\rho_K(\theta)$ is continuous, and so there exists some $\theta_K\in\s^{n-1}$ such that $\rho_K(\theta_K)$ is maximal. One has that the line segment $[-\rho_K(\theta_K)\theta_K,\rho_K(\theta_K)\theta_K]$ is completely contained in $K$, and yet $K$ is contained in the ball of radius $\rho_K(\theta_K).$ Furthermore, from convexity, one has $h_K(u)\geq \langle \rho_K(\theta_K)\theta_K, u\rangle$ for all $u\in\s^{n-1}$, and so from the symmetry of $K$ one has $h_K(u)\geq \rho_K(\theta_K)| \langle \theta_K, u\rangle|$. 

We can now directly compute:
\begin{align*}
\psi_{\nu,\beta}(h_K)&\leq \frac{n}{\beta}\mu(K)^\frac{\beta}{n}-\rho_K(\theta_K)\int_{\s^{n-1}}|\langle \theta_K, u\rangle|d\nu(u)
\\
&< \frac{n}{\beta}\mu(K)^\frac{\beta}{n}-\rho_K(\theta_K)C_\nu\nu(\s^{n-1})
\\
&\leq \frac{n}{\beta}\mu(\rho_K(\theta_K)B_2^n)^\frac{\beta}{n}-\rho_K(\theta_K)C_\nu\nu(\s^{n-1})
\\
&=q_{\beta}(C_\nu\nu,\rho_K(\theta_K)).
\end{align*}
Repeating the above analysis on $q_{\beta}(C_\nu\nu,\rho_K(\theta_K))$, there exists $r_1^\star,r_1>0$ such that $q_{\beta}(C_\nu\nu,r) <0$ for $r>r_1^\star$ and $q_{\beta}(C_\nu\nu,r) >0$ for $r<r_1$. We then set $R=\sup\{r\in\R^{+}:q_{\beta}(C_\nu\nu,r)>0\}$, which is finite by the existence of $r_1^\star$; we remark that, since $q_{\beta}(C_\nu\nu,r)$ dominates $q_{\beta}(\nu,r)$, one has $0<r_0\leq R_0\leq R <\infty$. It follows that $R$ is the largest possible radius where $$\psi_{\nu,\beta}\left(h_{K}\right)>0\rightarrow K\subset RB_2^n.$$ 
Thus, we can restrict our search for the maximiser to the set
$$\mathcal{F}=\{K\in\conbod_0: K=-K, K\subset R B_2^n\}.$$

We have that $\sup\{\psi_{\nu,\beta}(h_K):K\in\mathcal{F}\}>0$ by the construction of $R$. Let $\{K_i\}\subset \conbod_0$ be a sequence of convex bodies so that $\lim_{i\to\infty}\psi_{\nu,\beta}(h_{K_i})=\sup\{\psi_{\nu,\beta}(h_K):K\in\mathcal{F}\}.$ Via Blaschke selection, there exists some $K_\beta\in\mathcal{F}$ and a sub-sequence $\{K_{i_j}\}\subset\{K_i\}$ such that $\lim_{j\to\infty}\psi_{\nu,\beta}(h_{K_{i_j}})=\psi_{\nu,\beta}(h_{K_\beta})$, and so $\lim_{j\to\infty}K_{i_j}=K_\beta$ with respect to the Hausdorff metric. By construction, $\psi_{\nu,\beta}(f)\leq \psi_{\nu,\beta}(h_{K_\beta})$ for all $f\in C(\s^{n-1})$, and hence by our previous analysis, $K_\beta$ is symmetric and a maximiser. Finally, we see that, from the definition of $\mathcal{F}$,
\begin{align*}
    \frac{n}{\beta}\mu(K_\beta)^\frac{\beta}{n}=\lim_{j\to\infty}\frac{n}{\beta}\mu(K_{i_j})^\frac{\beta}{n}\geq \lim_{j\to\infty}\psi_{\nu,\beta}(h_{K_{i_j}}) > 0,
\end{align*}
and so $K_\beta$ has non-empty interior, and is thus a convex body in the proper sense.
\end{proof}
\begin{remark}
The following weaker version of Theorem~\ref{t:third} holds:
Let $\mu$ be a Borel measure with even and continuous density. Next, let $\nu$ be a finite, even Borel measure on $\s^{n-1}$ that is not concentrated on any hemisphere, such that $\mu$, $\nu$ and some $\beta\in(0,\infty)$ satisfy the following relations:
$$\lim_{r\to\infty}\frac{\mu(rB_2^n)^\frac{\beta}{n}}{r} < \nu(\s^{n-1})$$
and
there exists some $\delta>0$ such that, for every $r\in(0,\delta)$ one has
$$\frac{\mu(rB_2^n)^\frac{\beta}{n}}{r} \geq \nu\left(\s^{n-1}\right).$$
Then, there exists a symmetric convex body $K$ such that
$$d\nu(u)=\mu(K)^{\frac{\beta}{n}-1}dS^{\mu}_{K}(u).$$
\end{remark}

\noindent We now show an example on how one cannot expect to remove the constant $\mu(K)^{\frac{\beta}{n}-1}$ in general.
\begin{example}
Let $t>0$, and let $d\nu(u)=tdu.$ Then for $t>0$ large enough, there does not exist a $K\in\conbod$ satisfying $$d\nu(u)=dS^{\gamma_n}_{K}(u).$$ Indeed, $S^{\gamma_n}_{K}(\s^{n-1})$ is bounded above by $4n^{\frac{1}{4}}$ \cite{Ball93, Naz03}, but $\nu(\s^{n-1})$ can be made arbitrarily large.
\end{example}

We next provide an example where the constant $\mu(K)^{\frac{\beta}{n}-1}$ can be removed; we shall show the case of a Borel measure $\mu$ with density such that $\mu$ is $\alpha$-homogeneous, $\alpha\in(0,1)\cup(1,\infty)$.
\begin{lemma}
\label{l:almost_hom}
Let a Borel measure $\mu$ with density be even, $\alpha$-homogeneous, $\alpha > 0,$ $\alpha\neq 1$. Suppose $\nu$ is a finite, even measure on $\s^{n-1}$ not concentrated on any hemisphere. Then, there exists a symmetric $K\in\conbod_0$ such that
$$d\nu=dS^{\mu}_{K}.$$
\end{lemma}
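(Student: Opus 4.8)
The plan is to reduce to Theorem~\ref{t:third} and then eliminate the constant $c_{\mu,K}=\mu(K)^{\frac{\beta}{n}-1}$ by a single dilation, using the homogeneity of $\mu$. The first step is to record how $S^\mu_M$ scales. Since $\mu$ is $\alpha$-homogeneous, its density $\phi$ is $(\alpha-n)$-homogeneous, and a dilation $M\mapsto tM$ fixes outer unit normals while scaling the $(n-1)$-dimensional Hausdorff measure on the boundary by $t^{n-1}$; hence, applying the substitution $y\mapsto ty$ on $\partial M$ in the definition~\eqref{eq:surface_mu}, one obtains for every $t>0$ and every $M\in\conbod_0$ that
$$dS^{\mu}_{tM}(u)=t^{\alpha-1}\,dS^{\mu}_{M}(u).$$
Note that this identity uses only the definition~\eqref{eq:surface_mu} and the homogeneity of $\phi$, with no regularity assumption on $M$.

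Next, I would invoke Theorem~\ref{t:third}. As observed after the definition of $\Lambda^n$, any $\beta\in(0,n/\alpha)$ makes an $\alpha$-homogeneous measure lie in $\Lambda^n$; fix such a $\beta$. Since $\phi$ is even (so $\mu$ is even) and $\nu$ is finite, even, and not concentrated on any hemisphere, Theorem~\ref{t:third} yields a symmetric convex body $K_\beta$ with
$$d\nu(u)=\mu(K_\beta)^{\frac{\beta}{n}-1}\,dS^{\mu}_{K_\beta}(u).$$
Here $\mu(K_\beta)>0$: as $K_\beta$ contains some ball $\epsilon B_2^n$ we have $\mu(K_\beta)\geq\mu(\epsilon B_2^n)=\epsilon^{\alpha}\mu(B_2^n)$, and $\mu(B_2^n)=0$ would force $\mu(rB_2^n)=0$ for all $r$, contradicting $\lim_{r\to 0}\mu(rB_2^n)^{\beta/n}/r=\infty$.

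Finally, set $K=t_0 K_\beta$ where $t_0=\mu(K_\beta)^{\left(\frac{\beta}{n}-1\right)/(\alpha-1)}$; this is a well-defined positive number precisely because $\alpha\neq 1$ and $\mu(K_\beta)>0$, and $K$ is again a symmetric body in $\conbod_0$. Combining the two displays above with the scaling identity,
$$dS^{\mu}_{K}(u)=t_0^{\alpha-1}\,dS^{\mu}_{K_\beta}(u)=\mu(K_\beta)^{\frac{\beta}{n}-1}\,dS^{\mu}_{K_\beta}(u)=d\nu(u),$$
which is the assertion. The only point that needs any care is the derivation of the scaling law for $S^\mu_M$ at the level of general convex bodies; everything else is a direct substitution, so I do not expect a genuine obstacle, and the hypothesis $\alpha\neq 1$ enters exactly to guarantee solvability of $t_0^{\alpha-1}=\mu(K_\beta)^{\frac{\beta}{n}-1}$.
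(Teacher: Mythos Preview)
Your proof is correct and follows essentially the same route as the paper's: apply Theorem~\ref{t:third} to obtain a symmetric body $K_\beta$ with $d\nu=\mu(K_\beta)^{\frac{\beta}{n}-1}dS^\mu_{K_\beta}$, then rescale by $t_0=\mu(K_\beta)^{(\frac{\beta}{n}-1)/(\alpha-1)}$ using the $(\alpha-1)$-homogeneity of $M\mapsto dS^\mu_M$ to absorb the constant. The only cosmetic differences are that the paper derives the scaling law by reducing to $C^2_+$ bodies (whereas you argue it directly from~\eqref{eq:surface_mu}, which is cleaner), and you add the explicit check that $\mu(K_\beta)>0$.
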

\begin{proof}
From the homogeneity we obtain that
$$\frac{\mu(rB_2^n)^\frac{\beta}{n}}{r}=\mu(B_2^n)r^{\frac{\beta\alpha}{n}-1}.$$
Thus, we must have $\beta < \frac{n}{\alpha}$ for $\mu$ to be in $\Lambda^n$. Set, for example, $\beta=\frac{n}{2\alpha}.$ Hence, by Theorem~\ref{t:third}, there exists a symmetric convex body $\tilde{K}$ such that
$$d\nu(u)=\mu(\tilde{K})^{\frac{\beta}{n}-1}dS^{\mu}_{\tilde{K}}(u).$$

We have that $dS^{\mu}_{L}(u)$ is $(\alpha-1)$-homogeneous (in the argument $L$) for $L\in\conbod$. Indeed, it suffices to show for $L$ of class $C^2_+$. Then, one has that $t^{n-1}dS_L(u)=dS_{tL}(u)$ for $t>0$.  Thus, we obtain, using that $n^{-1}_{tL}(u)=tn^{-1}_{L}(u)$ and the $(\alpha-n)$-homogeneity of $\phi$,
$$dS^{\mu}_{tL}(u)=\phi\left(n^{-1}_{tL}(u)\right)dS_{tL}(u)=t^{n-1}t^{\alpha-n}\phi\left(n^{-1}_{L}(u)\right)dS_{L}(u)=t^{\alpha-1}dS^{\mu}_{L}(u).$$
Next, let $A$ be such that
$$A^{\alpha-1}=\mu(\tilde{K})^{\frac{\beta}{n}-1},$$
that is $A=\mu(\tilde{K})^{\frac{\beta-n}{n(\alpha-1)}}$. Set $K=A\tilde{K}$. We directly compute:
$$d\nu(u)=A^{\alpha-1}dS^{\mu}_{\tilde{K}}(u)=dS^{\mu}_{A\tilde{K}}(u)=dS^{\mu}_{K}(u),$$
as claimed. 
\end{proof}

\begin{remark}
For a measure that is $1$-homogeneous, the surface area measure $dS^{\mu}_{L}(u)$ is $0$-homogeneous; therefore, the above schema fails, and this case remains open.
\end{remark}

Combining existence and uniqueness, we can extend the result of Livshyts \cite{GAL19}:

\begin{theorem}
\label{t:min_exist_uni}
Let a Radon measure $\mu$ be even, $p$-concave, and $\alpha$-homogeneous so that $\alpha \neq 1$ and either $\alpha \geq n$, $p>0$ or $\alpha<n$ but $p=\frac{1}{\alpha}$.  Suppose $\nu$ is a finite, even measure on $\s^{n-1}$ not concentrated on any hemisphere. Then, there exists a unique symmetric $K\in\conbod_0$ such that
$$d\nu=dS^{\mu}_{K}.$$
\end{theorem}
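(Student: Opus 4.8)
The plan is to obtain Theorem~\ref{t:min_exist_uni} by combining the existence statement of Lemma~\ref{l:almost_hom} with the uniqueness statement of Theorem~\ref{t:min_exist}, after verifying that the hypotheses of both match those assumed here and recording the elementary fact that a symmetric convex body is automatically centered. Since $\mu$ is $p$-concave with $p>0$ on a full-dimensional convex set, Borell's classification guarantees that $\mu$ has a density $\phi\in L^1_{loc}$ (indeed $\phi$ is continuous on the interior of its support, as the corresponding power of $\phi$ is concave there), so $\mu$ is a Borel measure with density in the sense needed for $dS^\mu_K$ to be defined and for the two cited results to apply.

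For existence, I would simply invoke Lemma~\ref{l:almost_hom}: $\mu$ is even, $\alpha$-homogeneous with $\alpha>0$ and $\alpha\neq 1$, and $\nu$ is a finite, even Borel measure on $\s^{n-1}$ not concentrated on any hemisphere, which are exactly the hypotheses of that lemma. Hence there is a symmetric $K\in\conbod_0$ with $d\nu=dS^\mu_K$. The restriction $\alpha\neq 1$ enters only here, since the dilation argument in the proof of Lemma~\ref{l:almost_hom} fails when $dS^\mu_L$ is $0$-homogeneous in $L$.

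For uniqueness, suppose $K,L\in\conbod_0$ are symmetric and satisfy $dS^\mu_K=d\nu=dS^\mu_L$. Because $K=-K$, the substitution $x\mapsto -x$ gives $\int_K x\,dx=-\int_K x\,dx$, so $\int_K x\,dx=0$; thus $K$ is centered in the sense of Remark~\ref{r:min_exist}, and likewise $L$. The hypotheses on $\mu$ — $\alpha$-homogeneous and $p$-concave with either $\alpha\geq n$, $p>0$ or $\alpha<n$, $p=\tfrac1\alpha$ — are precisely those of Theorem~\ref{t:min_exist}, so applying that theorem to the centered bodies $K$ and $L$ with $dS^\mu_K=dS^\mu_L$ yields $K=L$.

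I do not expect any genuine obstacle here: the content is entirely contained in the two earlier results, and the only points deserving a word of care are (i) that $p$-concavity supplies the density that makes $dS^\mu_K$ well-defined and the hypotheses of Lemma~\ref{l:almost_hom} and Theorem~\ref{t:min_exist} applicable, and (ii) the observation that "symmetric" implies "centered," which is what lets one pass from the form of Theorem~\ref{t:min_exist} (stated for centered bodies) to the conclusion sought here (a symmetric body).
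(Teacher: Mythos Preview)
Your proposal is correct and follows the same approach as the paper: existence via Lemma~\ref{l:almost_hom} and uniqueness by appealing to the equality-of-surface-area-measures result for $\alpha$-homogeneous, $p$-concave measures. If anything, you are slightly more careful than the paper, which cites Lemma~\ref{l:min_exist} directly for uniqueness (that lemma literally assumes $p=1/\alpha$, so one implicitly needs Proposition~\ref{p:gal} and Remark~\ref{r:min_exist} to cover the case $\alpha\ge n$, $p>0$ and to pass from ``$K=aL+b$'' to ``$K=L$''); your route through Theorem~\ref{t:min_exist} together with the observation that symmetric bodies are centered handles this cleanly.
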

\begin{proof}
From Lemma~\ref{l:almost_hom}, we know there exists a symmetric $K\in\conbod_0$ such that $d\nu=dS^{\mu}_{K}$. Suppose there exists another symmetric $L \in\conbod_0$ such that $d\nu=dS^{\mu}_{L}$. Then $dS^{\mu}_{L}=dS^{\mu}_{K}$, and so $K=L$ from Lemma~\ref{l:min_exist}.
\end{proof}

We conclude this section by remarking that it was brought to our attention by Zvavitch that some of the above endeavours have been previously done specifically for the Gaussian measure $\gamma_n$ in \cite{HXZ21}. In particular, Theorem~\ref{t:ehr} is presented following the same calculations as Lemma~\ref{l:uni}. Lemma~\ref{l:second} and Theorem~\ref{t:third} also appear specifically for the Gaussian measure, but both proofs presented in \cite{HXZ21} use the measure's specific properties. Therefore, despite both referencing the same technique from \cite{HLYZ10}, the proofs presented in \cite{HXZ21} seemingly do not generalize directly to what is presented here.

\section{Classification Results for Weighted Projection Bodies}
\label{s:class}
We begin this section with a discussion of the cosine transform.
\begin{definition}
\label{def:cos}
Let $f\in L^1(\s^{n-1})$. Then, the \textit{cosine transform} of $f$ is given by
$$\ct{f}{\theta}=\int_{\s^{n-1}}|\langle\theta,u\rangle|f(u)du.$$
For a signed Borel measure $\nu$ on $\s^{n-1}$, the cosine transform of $\nu$ is given by
$$\ct{d\nu}{\theta}=\int_{\s^{n-1}}|\langle\theta,u\rangle|d\nu.$$
\end{definition}

The cosine transform also has a well-known uniqueness property, which we list in the following lemma. We denote $f^+$ to be the even part of $f$, that is $$f^+(x)=\frac{f(x)+f(-x)}{2}.$$
\begin{lemma}[Funk-Hecke Theorem for the Cosine Transform, \cite{Gr}]
\label{l:funk}
Suppose $F_1,F_2$ are two bounded, integrable functions or two signed measures on $\s^{n-1}$. One has that $$\ct{F_1}{\theta}=\ct{F_2}{\theta}$$
for every $\theta\in\s^{n-1}$ if, and only if, $F_1^+(u)=F_2^+(u)$ for almost all $u\in\s^{n-1}.$
\end{lemma}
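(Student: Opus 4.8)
The plan is to argue via the spherical harmonic decomposition of the cosine transform, following Groemer \cite{Gr}. The forward (``if'') implication is immediate: the kernel $|\langle\theta,u\rangle|$ is an even function of $u$, so for any bounded integrable function or signed measure $F$ on $\s^{n-1}$ one has $\ct{F}{\theta}=\ct{F^+}{\theta}$; hence $F_1^+=F_2^+$ (a.e., or as measures) forces $\ct{F_1}{\theta}=\ct{F_2}{\theta}$ for all $\theta$. The substance is the converse, so set $G=F_1-F_2$ and assume $\ct{G}{\theta}=0$ for every $\theta\in\s^{n-1}$; we must conclude $G^+=0$. Note first that $\ct{G}{\cdot}$ is a continuous function on $\s^{n-1}$, so this hypothesis is a genuine pointwise identity. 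To pass from the measure case to the function case, convolve $G$ on the sphere with a rotation-invariant smooth approximate identity: since both this convolution and the cosine transform are $SO(n)$-equivariant convolution operators, they commute, so the smoothed version of $G$ again has vanishing cosine transform; once the result is known for $G\in C^\infty(\s^{n-1})$, letting the approximate identity shrink and testing $G^+$ against smooth functions recovers the claim for measures.

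So assume $G\in C^\infty(\s^{n-1})$ and write $G=\sum_{k\geq 0}G_k$, where $G_k$ is the orthogonal projection of $G$ onto the space of degree-$k$ spherical harmonics. Because the cosine transform is a self-adjoint, $SO(n)$-equivariant operator, it acts on each degree-$k$ eigenspace as multiplication by a scalar $\lambda_{n,k}$, so that $\ct{G}{\cdot}=\sum_{k\geq 0}\lambda_{n,k}G_k$. The Funk--Hecke formula expresses $\lambda_{n,k}$ as a one-dimensional integral of $|t|$ against the degree-$k$ Gegenbauer polynomial with the appropriate weight; since $|t|$ is even, $\lambda_{n,k}=0$ whenever $k$ is odd, and the key point is that $\lambda_{n,k}\neq 0$ for every \emph{even} $k\geq 0$. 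Granting this, $\ct{G}{\cdot}=0$ forces $G_k=0$ for all even $k$, i.e. $G^+=\sum_{k\ \mathrm{even}}G_k=0$, which is exactly the assertion (and simultaneously shows the odd-degree part of $G$ is unconstrained, consistent with the statement referring only to $F^+$).

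The main obstacle is verifying that the Funk--Hecke multiplier $\lambda_{n,k}$ is nonzero on all even degrees: one must evaluate the integral $\lambda_{n,k}=c_n\int_{-1}^{1}|t|\,C_k^{(n-2)/2}(t)\,(1-t^2)^{(n-3)/2}\,dt$ and show it never vanishes for even $k$, which can be done through a recursion relating these integrals across consecutive even degrees, or via a closed form; this is precisely the computation underlying the relevant sections of \cite{Gr}. The smoothing reduction for the measure case is routine but should be stated with care, since $G^+$ for a signed measure must be interpreted in the sense of measures (the even part being $\tfrac12(G+\check G)$ with $\check G(A)=G(-A)$), and the conclusion ``$G^+=0$ almost everywhere'' in the function case should be read as ``$G^+=0$ as a measure'' in the measure case.
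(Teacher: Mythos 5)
The paper does not actually prove this lemma --- it is stated as a known result and attributed to Groemer \cite{Gr} with no argument given --- so there is no ``paper's own proof'' to compare against. Your sketch is precisely the standard argument one finds in Groemer: reduce to smooth $G=F_1-F_2$ by convolving with a zonal approximate identity (commuting with the cosine transform by $SO(n)$-equivariance), expand in spherical harmonics, observe that the cosine transform acts as a Funk--Hecke multiplier $\lambda_{n,k}$ on each degree, that $\lambda_{n,k}=0$ for odd $k$ by parity of $|t|$ against the Gegenbauer polynomial, and that $\lambda_{n,k}\ne 0$ for every even $k$, so that $\ct{G}{\cdot}\equiv 0$ forces $G_k=0$ for all even $k$, i.e.\ $G^+=0$. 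This is correct and is the ``same approach'' in the only meaningful sense here.

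The one place where you defer rather than finish is the nonvanishing $\lambda_{n,k}\ne 0$ for even $k$; this is genuinely the content of the injectivity statement, and while it is fine to cite Groemer for it, a closed form is available and worth recording: $\lambda_{n,0}=2\kappa_{n-1}>0$, and for even $k\ge 2$ Groemer's Lemma 3.4.5 gives $\lambda_{n,k}=(-1)^{\frac{k}{2}-1}\,2\pi^{\frac{n-1}{2}}\Gamma\!\left(\tfrac{k-1}{2}\right)/\Gamma\!\left(\tfrac{k+n+1}{2}\right)$, which never vanishes. Two small points of care that you handle correctly and are worth keeping: the hypothesis $\ct{F_1}{\theta}=\ct{F_2}{\theta}$ for \emph{every} $\theta$ (not a.e.) is available because the cosine transform of a bounded function or finite signed measure is continuous; and the conclusion ``$G^+=0$ a.e.'' must be read in the measure sense for the measure case, with $G^+=\tfrac12(G+\check G)$, which your smoothing passage respects since a zonal mollifier preserves the even/odd decomposition.
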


The cosine transform is intimately related with the Fourier transform. The following can be found in \cite{AK05,KRZ04_1,KRZ04_2, KY08}. The set $\mathcal{S}$ denotes the Schwarz class of functions, that is the class of functions that are infinitely differentiable, rapidly decreasing and go from $\R^n\to\mathbb{C}.$ A function $f\in L^1_{loc}(\R^n)$ acts on $\mathcal{S}$ via integration: for $\psi\in\mathcal{S}$ one has $\langle f,\psi\rangle_{\mathcal{S}}=\int_{\R^n}f(y)\psi(y)dy.$ The Fourier transform of $f$, denoted $\widehat{f}$, is the function defined via
$$\langle \widehat{f},\psi\rangle_{\mathcal{S}}=\langle f,\widehat{\psi}\rangle_{\mathcal{S}},\quad \text{and satisfies }\langle \widehat{f},\widehat{\psi}\rangle_{\mathcal{S}}=(2\pi)^n\langle f,\psi\rangle_{\mathcal{S}}$$ for every $\psi\in\mathcal{S},$ where $\widehat{\psi}$ is the standard Fourier transform of $\psi$. In general, $\widehat{f}$ will be a distribution that acts on Schwarz functions via integration. For $\nu$ a Borel measure on $\s^{n-1}$, one has the $(-n-1)$-homogeneous extension of $\nu$, the distribution denoted $\nu_e$, is defined via its action
$$\langle d\nu_e,\psi\rangle_{\mathcal{S}}=\frac{1}{2}\int_{\s^{n-1}}\langle r^{-2},\psi(r\xi)\rangle_{\mathcal{S}}d\nu(\xi),$$
where, if $0\notin \text{supp}(\psi)$,
$$\langle r^{-2},\psi(r\xi)\rangle=\int_{\R}r^{-2}\psi(r\xi)dr.$$
If the density of $\nu$ is $g\in L^1_{loc}(\s^{n-1})$, then the $(-n-1)$-homogeneous extension of $g$ is, by slightly abusing notation, $g(x)=|x|^{-n-1}g(x/|x|)$. In this instance, the Fourier transform of $d\nu_e$ is the Fourier transform of $g.$ Throughout the remainder of this paper, the Fourier transform of a measure or a function on the sphere will always be understood as the Fourier transform of the $(-n-1)-$homogeneous extension.
We now state a well known lemma concerning the Fourier transform of a measure on the sphere from \cite{KRZ04_2}.
\begin{lemma}[Fourier Transform of Measures]
Consider an even Borel measure $\nu$ on $\s^{n-1}$ with density $g$. Then, the Fourier transform of $\nu_e$ is given by
\begin{equation*}
\widehat{\nu_e}(\theta)=-\frac{\pi}{2}\ct{d\nu}{\theta}=-\frac{\pi}{2}\int_{\s^{n-1}}|\langle\theta,u\rangle|d\nu(u).
\end{equation*}
This implies for an even, non-negative function $g$ on $\s^{n-1}$ that
\begin{equation*}
\widehat{g}(\theta)=-\frac{\pi}{2}\ct{g}{\theta}=-\frac{\pi}{2}\int_{\s^{n-1}}|\langle\theta,u\rangle|g(u)du.
\end{equation*}
\label{lem:for_cos}
\end{lemma}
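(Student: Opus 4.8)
The plan is to reduce to a single atom: since the definition of $\nu_e$ gives $\nu_e=\tfrac12\int_{\s^{n-1}}T_\xi\,d\nu(\xi)$, where $T_\xi$ is the tempered distribution with $\langle T_\xi,\psi\rangle=\langle r^{-2},\psi(r\xi)\rangle$, it suffices to compute $\widehat{T_\xi}$ and then average over $\s^{n-1}$. The first thing I would pin down is the meaning of the symbol $r^{-2}$: for a general $\psi\in\mathcal S$ one has $0\in\mathrm{supp}(\psi)$, so $\langle r^{-2},\psi(r\xi)\rangle$ must be read as the Hadamard finite part $\mathrm{Pf}(r^{-2})$ on $\R$ (equivalently, the value at $\lambda=-2$ of the analytic family $|r|^\lambda$, which is regular there), the only interpretation making $\nu_e$ tempered.

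The key one-dimensional input is $\widehat{\mathrm{Pf}(r^{-2})}(s)=-\pi|s|$. I would derive this from $\tfrac{d^2}{dr^2}|r|=2\delta_0$: Fourier transforming gives $-s^2\,\widehat{|r|}(s)=2$, hence $\widehat{|r|}=-2\,\mathrm{Pf}(s^{-2})$, and applying the Fourier transform once more (on even tempered distributions, with the paper's normalization $\widehat{\widehat f}=(2\pi)^n f$ on even $f$) yields $\widehat{\mathrm{Pf}(r^{-2})}(s)=-\pi|s|$. Then, for fixed $\xi\in\s^{n-1}$, split $\R^n=\R\xi\oplus\xi^\perp$ with coordinates $(r,y)$; in these coordinates $T_\xi$ is the tensor product $\mathrm{Pf}(r^{-2})\otimes\delta_0^{(\xi^\perp)}$. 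Since the Fourier transform of a tensor product is the tensor product of Fourier transforms and $\widehat{\delta_0}\equiv 1$, we get $\widehat{T_\xi}(\theta)=-\pi\bigl|\langle\theta,\xi\rangle\bigr|$, a continuous (in particular locally integrable) function, $1$-homogeneous as the degree count $-n-(-n-1)=1$ predicts.

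It remains to assemble the pieces. For $\psi\in\mathcal S$,
$$
\langle\widehat{\nu_e},\psi\rangle=\langle\nu_e,\widehat\psi\rangle=\tfrac12\int_{\s^{n-1}}\langle T_\xi,\widehat\psi\rangle\,d\nu(\xi)=\tfrac12\int_{\s^{n-1}}\langle\widehat{T_\xi},\psi\rangle\,d\nu(\xi)=-\tfrac{\pi}{2}\int_{\s^{n-1}}\int_{\R^n}\bigl|\langle\theta,\xi\rangle\bigr|\psi(\theta)\,d\theta\,d\nu(\xi),
$$
and since $\nu$ is finite and $\psi$ is Schwarz, Fubini applies to swap the two integrals, giving $\widehat{\nu_e}(\theta)=-\tfrac{\pi}{2}\int_{\s^{n-1}}|\langle\theta,\xi\rangle|\,d\nu(\xi)=-\tfrac{\pi}{2}\ct{d\nu}{\theta}$ as an identity of (continuous) functions, hence of distributions. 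The statement for a density is just the special case $d\nu=g\,du$; evenness of $\nu$ (resp.\ $g$) is what makes $\nu_e$ genuinely even, so that no information is lost in the symmetric kernel $|\langle\theta,\xi\rangle|$ (cf.\ Lemma~\ref{l:funk}).

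The only genuine obstacle is the distributional bookkeeping around the degree $-2$ homogeneous distribution: correctly identifying the symbol $r^{-2}$ with $\mathrm{Pf}(r^{-2})$, computing its one-dimensional Fourier transform, and verifying that the tensor-product formula applies in the tempered sense so that $\widehat{T_\xi}$ is the claimed function. These are routine facts from the theory of homogeneous distributions but must be handled away from $L^1$; one may alternatively regularize within the analytic family $|r|^{\lambda}$ and pass to $\lambda\to-2$, though this is no shorter. Once $\widehat{T_\xi}$ is in hand, the averaging over $\s^{n-1}$ is immediate from the very definition of $\nu_e$ plus Fubini against the finite measure $\nu$.
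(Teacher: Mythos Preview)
The paper does not supply its own proof of this lemma; it is stated as a known result and attributed to \cite{KRZ04_2}. Your proposal, by contrast, gives a complete self-contained argument, and the approach is the standard one: decompose $\nu_e$ as an average of the rank-one distributions $T_\xi$, identify $T_\xi$ with the tensor product $\mathrm{Pf}(r^{-2})\otimes\delta_0^{(\xi^\perp)}$, compute the one-dimensional transform $\widehat{\mathrm{Pf}(r^{-2})}=-\pi|s|$, and average back using Fubini. All of this is correct.

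One small point worth tightening: from $-s^2\,\widehat{|r|}(s)=2$ you deduce $\widehat{|r|}=-2\,\mathrm{Pf}(s^{-2})$, but the equation $s^2 u=-2$ determines $u$ only up to a combination of $\delta_0$ and $\delta_0'$. You should note that $\widehat{|r|}$ is even (killing the $\delta_0'$ term) and homogeneous of degree $-2$ (since $|r|$ is homogeneous of degree $1$ and the Fourier transform sends degree $a$ to degree $-1-a$ in dimension one), which rules out the $\delta_0$ term. With that remark, the derivation is airtight.
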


We conclude our discussion by stating a Parseval Formula on the sphere, proven by Koldobsky, Ryabogin, and Zvavitch \cite{KRZ04_1}.
\begin{lemma}
\label{l:par}
Let $K, L\in\conbod_0$ be symmetric and $C^2_+$, so that $dS_K=f_K du$. Then, one has
$$\int_{S^{n-1}} \widehat{h_{L}}(\theta) \widehat{f_{K}}(\theta) d \theta=(2 \pi)^{n} \int_{S^{n-1}} h_{L}(\theta) f_{K}(\theta) d \theta.$$
\end{lemma}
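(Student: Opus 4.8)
The plan is to recognize the identity as the \emph{Parseval formula on the sphere} for Fourier transforms of homogeneous extensions of complementary degree, and to derive it from the tempered-distribution Parseval relation together with a radial mollification; alternatively one may quote the formula directly from \cite{KRZ04_1}.

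First I would identify the transforms on the left. Since $K$ is $C^2_+$ and symmetric, $f_K$ is an even, positive function, and by Lemma~\ref{lem:for_cos} together with \eqref{proj_support} (recall $dS_K=f_K\,du$, so $\ct{f_K}{\theta}=2h_{\Pi K}(\theta)$), the $(-n-1)$-homogeneous extension of $f_K$ has Fourier transform $\widehat{f_K}(\theta)=-\frac{\pi}{2}\ct{f_K}{\theta}=-\pi h_{\Pi K}(\theta)$, a continuous, $1$-homogeneous function on $\R^n$. For $h_L$ one should take instead its natural $1$-homogeneous form; because $L$ is $C^2_+$ and symmetric (so $h_L$ is even and smooth away from the origin), $\widehat{h_L}$ is again a genuine continuous function, $(-n-1)$-homogeneous on $\R^n\setminus\{0\}$ — there is no singular part at $0$, since an even distribution cannot carry first derivatives of $\delta$. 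Hence both sides of the asserted identity are honest, finite integrals over $\s^{n-1}$, and the homogeneity exponents $-n-1$ (for $f_K$) and $1$ (for $h_L$) sum to $-n$, which is exactly the bookkeeping under which a spherical Parseval identity can hold.

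For the identity itself, the route is: write each spherical integral as a full-space integral by inserting a radial Schwarz weight $\varphi_0$ that vanishes to high order at the origin and is normalized by $\int_0^\infty\varphi_0(r)\,r^{-1}\,dr=1$ — legitimate precisely because the two homogeneity exponents sum to $-n$, so the radial parts separate off as a fixed constant — obtaining $\int_{\s^{n-1}}h_L\,f_K\,d\theta=\int_{\R^n}\varphi_0(x)\,h_L(x)\,\widetilde{f_K}(x)\,dx$ (with $\widetilde{f_K}$ the $(-n-1)$-homogeneous extension) and an analogous expression for $\int_{\s^{n-1}}\widehat{h_L}\,\widehat{f_K}\,d\theta$. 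Then I would apply the tempered-distribution Parseval relation $\langle\widehat u,\widehat\varphi\rangle_{\mathcal S}=(2\pi)^n\langle u,\varphi\rangle_{\mathcal S}$ to $u=\widetilde{f_K}$ and $\varphi=\varphi_0\,h_L$, using that the double Fourier transform of the even distribution $\widetilde{f_K}$ returns $(2\pi)^n\widetilde{f_K}$, and finally remove the auxiliary weight via a scaling limit (replace $\varphi_0$ by $\varphi_0(\varepsilon\,\cdot)$ and let $\varepsilon\to 0$), identifying the two limits by dominated convergence — available since $\widehat{h_L}$ and $\widehat{f_K}$ are continuous on $\s^{n-1}$ — with $\int_{\s^{n-1}}h_L\,f_K\,d\theta$ and $(2\pi)^{-n}\int_{\s^{n-1}}\widehat{h_L}\,\widehat{f_K}\,d\theta$ respectively. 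Equivalently, one simply quotes the spherical Parseval formula of \cite{KRZ04_1} with the extensions chosen so that $f_K$ carries degree $-n-1$ and $h_L$ degree $1$.

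The main obstacle is the mollification limit. The $(-n-1)$-homogeneous extension is not locally integrable at the origin, so every pairing above must be read in the regularized (analytically continued) sense described earlier in this section; one then has to verify that these regularizations do not survive the passage $\varepsilon\to 0$ and that the constant $(2\pi)^n$ is produced correctly. Checking that $\widehat{h_L}$ is an actual function on $\R^n\setminus\{0\}$, so that the left-hand side may be evaluated pointwise on $\s^{n-1}$, is the one place where the $C^2_+$ hypothesis is genuinely used; the remaining steps are routine.
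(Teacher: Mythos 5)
The paper does not prove Lemma~\ref{l:par}; it simply states the formula and cites its provenance, ``\dots proven by Koldobsky, Ryabogin, and Zvavitch \cite{KRZ04_1}.'' Your fallback option (``one simply quotes the spherical Parseval formula of \cite{KRZ04_1}'') is therefore exactly what the paper does, and it is the correct move in context.

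Your sketched direct proof, on the other hand, has a concrete gap in the step that is supposed to close it. You pair $u=\widetilde{f_K}$ with $\varphi=\varphi_0 h_L$ and invoke $\langle\widehat u,\widehat\varphi\rangle_{\mathcal S}=(2\pi)^n\langle u,\varphi\rangle_{\mathcal S}$. The right-hand side does reduce, via the homogeneity bookkeeping you set up, to $(2\pi)^n\int_{\s^{n-1}}f_K h_L\,d\theta$ times the radial normalization $\int_0^\infty\varphi_0(r)r^{-1}\,dr$. But the left-hand side is $\int_{\R^n}\widehat{\widetilde{f_K}}(\xi)\,\widehat{\varphi_0 h_L}(\xi)\,d\xi$, and $\widehat{\varphi_0 h_L}$ is a \emph{convolution} $(2\pi)^{-n}\widehat{\varphi_0}\ast\widehat{h_L}$, not a product $\psi_0(\xi)\widehat{h_L}(\xi)$ of a radial weight with $\widehat{h_L}$. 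So the left-hand pairing does not separate into a radial factor times $\int_{\s^{n-1}}\widehat{f_K}\widehat{h_L}\,d\theta$ the way the right-hand side did, and the scaling $\varphi_0\mapsto\varphi_0(\varepsilon\,\cdot)$ does not help: both sides of the Parseval identity are \emph{exactly} scale-invariant (the right-hand radial factor is $\varepsilon$-independent after the substitution $s=\varepsilon r$), so there is no nondegenerate limit to take, and nothing in the argument forces the convolution to collapse onto the diagonal. What the proof in \cite{KRZ04_1} actually does is write the Schwartz test function in product form $u(|x|)v(x/|x|)$ and pass to a formula for $\int_0^\infty r^n\widehat g(r\theta)\,dr$ in terms of the original function, which is a genuinely different and more delicate manipulation than multiplying through by a radial cutoff.

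Your preparatory observations are all sound: the identification $\widehat{f_K}=-\pi h_{\Pi K}$ on the sphere via Lemma~\ref{lem:for_cos}, the degree count $(-n-1)+1=-n$, and the parity argument that an even homogeneous distribution of degree $-n-1$ cannot have a singular part supported at the origin (first derivatives of $\delta$ are odd). But these do not by themselves repair the missing step. The safest course, matching the paper, is to cite \cite{KRZ04_1} outright; alternatively, you would need to reproduce their test-function argument rather than the mollified Parseval pairing as written.
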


We next state the (well known, say in \cite{gardner_book}) classification results for $\Pi K$, which we recall is the symmetric convex body whose support function is given by
$$h_{\Pi K}(\theta)=\frac{1}{2}\int_{\s^{n-1}}|\langle \theta,u\rangle|dS_K(u)=\frac{1}{2}\ct{dS_K}\theta$$ for $K\in\conbod$,
in order to then classify the weighted projection bodies $\Pi_{\mu} K$. Fix $K\in\conbod$. It is not true in general that if $L\in\conbod$, then $\Pi K = \Pi L$ implies $K=L$. Hence, one defines the \textit{projective class} of $K$, denoted $\mathcal{P}K$ to be the set of convex bodies such that $L\in\mathcal{P}K\leftrightarrow \Pi L= \Pi K.$ The \textit{Blaschke body} of $K$, denoted $\nabla K$, is defined \cite{gardner_book, LE98, CMP67} as the symmetric convex body whose surface area measure is given by
$$dS_{\nabla K}(\theta)=\frac{1}{2}(dS_{K}(\theta) + dS_{-K}(\theta)).$$
One has that $\nabla K$ exists, via Minkowski's Existence Theorem, and that $\vol_n(\nabla K)\geq \vol_n(K)$, with equality if, and only if, $K$ is symmetric. Indeed, we have that
\begin{align*}
\vol_n(\nabla K)&=\frac{1}{n}\int_{\s^{n-1}}h_{\nabla K}(u)dS_{\nabla K}(u)
\\
&=\frac{1}{2n}\left(\int_{\s^{n-1}}h_{\nabla K}(u)dS_{K}(u)+\int_{\s^{n-1}}h_{\nabla K}(u)dS_{-K}(u)\right)
\\
&=\frac{1}{n}\int_{\s^{n-1}}h_{\nabla K}(u)dS_{K}(u)=V(K,\nabla K).
\end{align*}
Thus, applying Minkowski's inequality yields
$\vol_n(\nabla K)^n=V(K,\nabla K)^n\geq \vol_n(K)^{n-1}\vol_n(\nabla K)$. Re-arranging yields the result.
We see from direct substitution that $\Pi(\nabla K)=\Pi K.$ We collect these classical results in the following proposition.
\begin{proposition}
\label{p:class}
Let $K\in\conbod$ with projective class $\mathcal{P}K$. Then, there exists a unique convex body in $\mathcal{P}K$, $\nabla K$, that is symmetric. Furthermore, $\nabla K$ is the largest with respect to volume.
\end{proposition}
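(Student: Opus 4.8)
The plan is to collect the facts already assembled in the discussion preceding the statement, paying attention only to the (slightly delicate) two-step uniqueness argument. First I would check that $\nabla K$ is well defined: the measure $\frac{1}{2}(dS_K+dS_{-K})$ is finite, is not concentrated on any great hemisphere (indeed $\int_{\s^{n-1}}|\langle\theta,\xi\rangle|\,dS_K(\xi)>0$ already), and is centered, since both $S_K$ and $S_{-K}$ are surface area measures of convex bodies and hence centered. By Minkowski's existence theorem it is therefore the surface area measure of a convex body, unique up to translation. Moreover $dS_{-K}(\cdot)=dS_K(-\cdot)$, so this measure is invariant under the antipodal map; consequently the body and its reflection share a surface area measure and differ by a translation, and the (unique) centered representative of that translation class is symmetric. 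This produces $\nabla K$ with $\nabla K=-\nabla K$.

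Next I would verify $\nabla K\in\mathcal{P}K$, i.e. $\Pi\nabla K=\Pi K$. By \eqref{proj_support}, $h_{\Pi\nabla K}(\theta)=\tfrac12\ct{dS_{\nabla K}}{\theta}=\tfrac14\bigl(\ct{dS_K}{\theta}+\ct{dS_{-K}}{\theta}\bigr)$, and the substitution $u\mapsto -u$ in the cosine transform gives $\ct{dS_{-K}}{\theta}=\ct{dS_K}{\theta}$; hence $h_{\Pi\nabla K}(\theta)=\tfrac12\ct{dS_K}{\theta}=h_{\Pi K}(\theta)$. For uniqueness of the symmetric member, suppose $L\in\mathcal{P}K$ is symmetric. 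Then $\ct{dS_L}{\theta}=2h_{\Pi L}(\theta)=2h_{\Pi K}(\theta)=\ct{dS_K}{\theta}$ for all $\theta$, so by the Funk--Hecke uniqueness (Lemma~\ref{l:funk}) the even parts agree, $S_L^+=S_K^+$. Since $L$ is symmetric, $S_L$ is even, so $S_L=S_L^+=S_K^+=\tfrac12(dS_K+dS_{-K})=dS_{\nabla K}$; by uniqueness in Minkowski's theorem $L$ is a translate of $\nabla K$, and being symmetric it equals $\nabla K$.

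Finally, for the volume maximality I would reproduce the computation indicated in the text: using that $h_{\nabla K}$ is even together with $u\mapsto -u$ gives $\vol_n(\nabla K)=\tfrac1n\int_{\s^{n-1}}h_{\nabla K}(u)\,dS_{\nabla K}(u)=\tfrac1n\int_{\s^{n-1}}h_{\nabla K}(u)\,dS_K(u)=V(K,\nabla K)$, and Minkowski's inequality \eqref{eq:min_ineq} yields $\vol_n(\nabla K)^n=V(K,\nabla K)^n\ge \vol_n(K)^{n-1}\vol_n(\nabla K)$, hence $\vol_n(\nabla K)\ge\vol_n(K)$. To get that $\nabla K$ is largest over all of $\mathcal{P}K$, note that any $L\in\mathcal{P}K$ satisfies $S_L^+=S_K^+$ by the same Funk--Hecke argument, so $dS_{\nabla L}=dS_{\nabla K}$ and therefore $\nabla L=\nabla K$ by the uniqueness just proved; applying the displayed volume inequality with $L$ in place of $K$ gives $\vol_n(L)\le\vol_n(\nabla L)=\vol_n(\nabla K)$.

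The main obstacle is not analytic — there is none beyond substitution and one use of Minkowski's inequality — but rather bookkeeping in the uniqueness: one must pass from the Funk--Hecke statement about even parts of measures on $\s^{n-1}$ to a statement about bodies via Minkowski's existence theorem, which only determines a body up to translation, and then pin down the symmetric representative. Everything else is routine.
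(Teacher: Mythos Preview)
Your proposal is correct and follows essentially the same approach as the paper: the paper does not give a separate proof of this proposition but rather collects the facts established in the preceding paragraph (existence of $\nabla K$ via Minkowski's existence theorem, $\Pi(\nabla K)=\Pi K$ by direct substitution, and the computation $\vol_n(\nabla K)=V(K,\nabla K)$ followed by Minkowski's inequality). Your write-up is in fact more careful than the text, supplying the Funk--Hecke step for uniqueness of the symmetric member and the observation that $\nabla L=\nabla K$ for every $L\in\mathcal{P}K$, which is needed to conclude maximality over all of $\mathcal{P}K$ rather than just $\vol_n(\nabla K)\ge\vol_n(K)$.
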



Projection bodies also admit a Fourier classification, proven for the case of symmetric convex bodies in \cite{KRZ04_1,KRZ04_2}. We recall this here and expand their results to the non-symmetric case. In Lemma~\ref{lem:for_cos}, consider the case of $d\nu(u)=dS_K(u)$ where $K$ is a symmetric convex body of the class $C^2_+$. If one takes $g(u)=f_K(u)$, which is the density of the surface area measure of $K$, or curvature function, one obtains
$\widehat{f_K}=-\pi h_{\Pi K}.$ Dropping the $C^2_+$ assumption yields $\widehat{(S_K)_e}=-\pi h_{\Pi K}$. Notice that
$$\int_{\s^{n-1}}|\langle\theta,u\rangle|dS_K(\theta)=\int_{\s^{n-1}}|\langle\theta,u\rangle|dS_{\nabla K}(\theta).$$
Therefore, if $K$ is not $C^2_+$ and not necessarily symmetric, then one obtains in general that
$$\widehat{(S_{\nabla K})_e}(\theta)=-\pi h_{\Pi K}(\theta).$$
Next, observe that, for every even test function $\psi\in\mathcal{S}$:
\begin{align*}\langle \widehat{h_{\Pi K}},\psi \rangle_{\mathcal{S}}&=\langle h_{\Pi K},\widehat{\psi} \rangle_{\mathcal{S}}
=-\frac{1}{\pi}\langle \widehat{(S_{\nabla K})_e},\widehat{\psi} \rangle_\mathcal{S}
=\langle -2(2\pi)^{n-1}(S_{\nabla K})_e ,\psi\rangle_{\mathcal{S}}.\end{align*}
Since $\widehat{h_{\Pi K}}$ and $-2(2\pi)^{n-1}(S_{\nabla K})_e$ are both even, homogeneous and agree on the space of even test functions, one obtains
$$\widehat{h_{\Pi K}}(\theta)=-2(2\pi)^{n-1}(S_{\nabla K})_e(\theta),$$
in particular, if $K$ is $C^2_+$ and symmetric,
$\widehat{h_{\Pi K}}(\theta)=-2(2\pi)^{n-1}f_{K}(\theta).$

Furthermore, one has that a symmetric convex body $L$ is a projection body if, and only if, $\widehat{h_L}=-2(2\pi)^{n-1}\nu_e$ for some even Borel measure on $\s^{n-1}.$ If $L=\Pi K$ for some $K\in\conbod$, then $\widehat{h_{\Pi K}}(\theta)=-2(2\pi)^{n-1}(S_{\nabla K})_e(\theta)$ for all $\theta\in\s^{n-1}$; conversely, if one has $\widehat{h_L}=-2(2\pi)^{n-1}\nu_e$, then this yields $h_L(\theta)=\frac{1}{2}\int_{\s^{n-1}}|\langle u,\theta \rangle|d\nu(u)$. From the fact that $L$ is a full dimensional convex body, $\nu$ is not concentrated on any subsphere and so $\nu=S_K$ for some convex body $K$ via Minkowski's existence theorem, which means $L=\Pi K$. We will use this fact and the cosine transform to obtain the following classification lemma, which can easily be shown using the above discussion e.g. in \cite{KRZ04_2}.
\begin{lemma}[Proposition 1 in \cite{KRZ04_2}]
\label{l:proj_meas}
A convex body $L\in\conbod$ is a projection body if, and only if, there exists a finite Borel measure $\nu$ on $\s^{n-1}$ such that, for every even, non-negative $g\in C(\s^{n-1})$, one has
$$\int_{\s^{n-1}} h_L(u) g(u) du = \int_{\s^{n-1}} \ct{g}{u} d\nu(u).$$
\end{lemma}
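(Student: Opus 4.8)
The plan is to establish both directions by unwinding the cosine transform with Fubini's theorem, the only substantive input being Minkowski's existence theorem, and that only for the reverse implication.

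For the ``only if'' direction, suppose $L=\Pi K$ for some $K\in\conbod$. By \eqref{proj_support} together with the fact that the cosine transform does not detect the odd part of a measure, $h_L(u)=\tfrac12\ct{dS_K}{u}=\tfrac12\ct{dS_{\nabla K}}{u}$, so I would take $\nu:=\tfrac12 S_{\nabla K}$, a finite even Borel measure on $\s^{n-1}$. Then for any bounded measurable $g$ on $\s^{n-1}$ --- in particular for any even, nonnegative $g\in C(\s^{n-1})$ --- Fubini's theorem applied to the bounded kernel $(u,v)\mapsto|\langle u,v\rangle|\,g(u)$ gives $\int_{\s^{n-1}}h_L(u)g(u)\,du=\int_{\s^{n-1}}\bigl(\int_{\s^{n-1}}|\langle u,v\rangle|g(u)\,du\bigr)\,d\nu(v)=\int_{\s^{n-1}}\ct{g}{v}\,d\nu(v)$, which is exactly the claimed identity.

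For the ``if'' direction, I would first replace $\nu$ by its even part $\nu^{+}$, defined by $\nu^{+}(E)=\tfrac12(\nu(E)+\nu(-E))$; since $\ct{g}{\cdot}$ is even whenever $g$ is, this does not change the right-hand side of the identity, and $\nu^{+}$ remains a finite Borel measure. Applying Fubini as above rewrites the hypothesis as $\int_{\s^{n-1}}h_L(u)g(u)\,du=\int_{\s^{n-1}}g(u)\,\ct{d\nu^{+}}{u}\,du$ for every even, nonnegative $g\in C(\s^{n-1})$. Using the case $g\equiv 1$ and then adding large constants to an arbitrary even $g\in C(\s^{n-1})$ removes the sign restriction, so the identity holds for all even $g\in C(\s^{n-1})$; as $h_L$ and $u\mapsto\ct{d\nu^{+}}{u}$ are both even and continuous, this forces $h_L(u)=\ct{d\nu^{+}}{u}=\int_{\s^{n-1}}|\langle u,v\rangle|\,d\nu^{+}(v)$ for every $u\in\s^{n-1}$. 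Hence $h_L$ is nonnegative and even, and in fact $h_L(\xi)>0$ for each $\xi\in\s^{n-1}$ (since $h_L(\xi)=0$ would force $L\subseteq\xi^{\perp}$, contradicting that $L$ has nonempty interior), so $L\in\conbod_0$ is symmetric. Then the even measure $2\nu^{+}$ is centered and is not concentrated on any great hemisphere, so Minkowski's existence theorem yields $M\in\conbod$ with $dS_M=2\,d\nu^{+}$; consequently $h_L(u)=\tfrac12\ct{dS_M}{u}=h_{\Pi M}(u)$, i.e. $L=\Pi M$, so $L$ is a projection body.

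Most of this is bookkeeping with even parts and Fubini. The genuinely substantive step is the appeal to Minkowski's existence theorem, and the point demanding the most care is the upgrade from the integrated identity (available only against even, nonnegative continuous test functions) to the pointwise equality $h_L=\ct{d\nu^{+}}{\cdot}$ --- requiring the constant-shift trick to drop nonnegativity, density of even continuous functions, and continuity of both sides --- followed by the verification that $2\nu^{+}$ satisfies the two hypotheses of Minkowski's theorem.
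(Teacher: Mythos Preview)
Your argument is correct. The paper does not actually prove this lemma---it cites \cite{KRZ04_2} and remarks that the result follows from the Fourier-analytic discussion preceding the statement, where the characterization is phrased as $\widehat{h_L}=-2(2\pi)^{n-1}\nu_e$ for some even Borel measure $\nu$, which via Lemma~\ref{lem:for_cos} amounts to $h_L=\tfrac12\ct{d\nu}{\cdot}$. Your route is more direct: you bypass the distributional Fourier transform entirely and just apply Fubini to the kernel $|\langle u,v\rangle|$ to pass between the two integral formulations, then invoke Minkowski's existence theorem exactly as the paper does. The constant-shift trick to drop the nonnegativity constraint on $g$, and the observation that $h_L=\ct{d\nu^+}{\cdot}$ forces $L$ to be origin-symmetric (hence $0\in\text{int}(L)$ and $h_L>0$), are handled cleanly. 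Your approach buys elementarity---no need for $(-n-1)$-homogeneous extensions or the Parseval machinery---while the paper's Fourier framing is set up because it is reused later for the weighted projection bodies.
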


We now begin the classification of $\Pi_\mu K$, that is the symmetric convex body whose support function is given by, for Borel $\mu$ with density $\phi$ containing $\partial K$ in its Lebesgue set for a convex body $K$ and $\theta\in\s^{n-1}$:
$$h_{\Pi_{\mu} K}(\theta)=\frac{1}{2}\int_{\s^{n-1}}|\langle u,\theta\rangle|dS^{\mu}_{K}(u).$$ From the definition of the cosine transform, Definition~\ref{def:cos}, we see that we can write
\begin{equation}
h_{\Pi_{\mu} K}(\theta)=\frac{1}{2}\ct{S^\mu_K}{\theta}=\frac{1}{2}\ct{\left(\phi\circ n_K^{-1}\right)f_K}{\theta},
\label{eq:supp_cos}
\end{equation}
where the last equality holds if $K$ is $C^2_+$, i.e. if $dS_K(u)=f_K(u)du$ and $n_K:\partial K\to \s^{n-1}$ is a bijection.
\begin{definition}[Weighted Projective Classes]
Suppose $\mu$ is a Borel measure with continuous density. Then, for $K\in\conbod$, the projective class of $K$ with respect to $\mu$ is the subset of $\conbod$ denoted by $\mathcal{P}_\mu K.$ That is, $L\in\mathcal{P}_\mu K \leftrightarrow \Pi_\mu L = \Pi_\mu K.$ 
\end{definition}
From here, we define weighted Blaschke bodies.
\begin{definition}
For a Borel measure on $\R^n$ with locally integrable density containing $\partial K$ in its Lebesgue set for a given $K\in\conbod$, the \textit{$\mu$-Blaschke body} $\nabla_\mu K$ of $K$ is the convex body whose $\mu$-surface area measure is given by
$$dS^{\mu}_{\nabla_\mu K}(u)=\frac{1}{2}\left(dS^{\mu}_{K}(u) + dS^{\mu}_{K}(-u)\right).$$
\end{definition}
For general Borel measures $\mu$, $\nabla_\mu K$ may not be well-defined, that is, it is not necessarily true that $dS^{\mu}_{\nabla_\mu K}(u)$ is the $\mu$-surface area measure of a convex body. If $\nabla_\mu K$ is well-defined, then via direct substitution, we get $\Pi_\mu K = \Pi_\mu \nabla_\mu K$, and hence $\nabla_\mu K\in \mathcal{P}_\mu K$. Also, $\nabla_\mu K$ is symmetric, and so $h_{\nabla_\mu K}(-u)=h_{\nabla_\mu K}(u)$.
\begin{proposition}
\label{p:even_class}
Consider a Radon measure $\mu$ such that $\mu$ is $p$-concave and $\alpha$- homogeneous. Let a convex body $K$ have projective class $\mathcal{P}_\mu K$. Then, there exists a convex body in $\mathcal{P}_\mu K$, $\nabla_\mu K$, that is symmetric. Furthermore, $\nabla_\mu K$ is the largest with respect to the $\mu$-measure in the following sense:
    If $\alpha \geq n$ and $p>0$ or if $\alpha < n$ but  $p\alpha =1$, then  $\nabla_\mu K$ is unique and $\mu(\nabla_\mu K)\geq \mu(K)$ with equality if, and only if, $K$ is symmetric.
\end{proposition}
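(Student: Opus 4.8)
The plan is to mimic the classical argument that established Proposition~\ref{p:class}, but with volume replaced by the weighted measure $\mu$ and with Minkowski's inequality replaced by its weighted analog from Section~\ref{s:min}. First I would address existence of $\nabla_\mu K$ as a genuine convex body. By definition the candidate measure is $dS^\mu_{\nabla_\mu K}(u)=\tfrac12(dS^\mu_K(u)+dS^\mu_K(-u))$; since $\mu$ has an even... wait, $\mu$ need not be even here, but the symmetrized measure $\tfrac12(dS^\mu_K(u)+dS^\mu_K(-u))$ is automatically even and is not concentrated on any hemisphere (as $dS^\mu_K$ itself satisfies the non-degeneracy condition~\eqref{con_1} coming from the fact that $K$ is a genuine convex body, and symmetrization preserves positivity of $\int|\langle\theta,\xi\rangle|$). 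Hence by the classical Minkowski existence theorem this even measure is the surface area measure $S_{\nabla_\mu K}$ of some symmetric convex body --- but one must be slightly careful: we want it to be the \emph{$\mu$}-surface area measure of a convex body, not merely a surface area measure. Here is where I invoke Theorem~\ref{t:min_exist_uni} (valid precisely under the stated hypotheses $\alpha\geq n,p>0$ or $\alpha<n,p\alpha=1$, with $\alpha\neq 1$): applied to the even measure $\nu := \tfrac12(dS^\mu_K(\cdot)+dS^\mu_K(-\cdot))$, it produces a \emph{unique} symmetric convex body $L$ with $dS^\mu_L=\nu$, and we set $\nabla_\mu K:=L$. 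Uniqueness of $\nabla_\mu K$ is then exactly the uniqueness clause of Theorem~\ref{t:min_exist_uni}. Direct substitution into~\eqref{eq:supp_cos} gives $h_{\Pi_\mu\nabla_\mu K}(\theta)=\tfrac12\int_{\s^{n-1}}|\langle u,\theta\rangle|\,dS^\mu_{\nabla_\mu K}(u)=\tfrac12\int_{\s^{n-1}}|\langle u,\theta\rangle|\,dS^\mu_K(u)=h_{\Pi_\mu K}(\theta)$ (using that $|\langle u,\theta\rangle|$ is even in $u$), so $\nabla_\mu K\in\mathcal P_\mu K$.

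For the extremality statement, I would follow the displayed computation preceding Proposition~\ref{p:class} verbatim, replacing $V$ by $\mu(\cdot,\cdot)$ and $\vol_n$ by $\mu$. Since $\nabla_\mu K$ is symmetric, $h_{\nabla_\mu K}$ is even, hence $\int_{\s^{n-1}}h_{\nabla_\mu K}(u)\,dS^\mu_K(u)=\int_{\s^{n-1}}h_{\nabla_\mu K}(u)\,dS^\mu_K(-u)$, and therefore, by the representation of mixed measures~\eqref{eq:arb_mixed},
\begin{align*}
\mu(K,\nabla_\mu K)
&=\int_{\s^{n-1}}h_{\nabla_\mu K}(u)\,dS^\mu_K(u)
=\int_{\s^{n-1}}h_{\nabla_\mu K}(u)\,dS^\mu_{\nabla_\mu K}(u)
=\mu(\nabla_\mu K,\nabla_\mu K).
\end{align*}
In the homogeneous setting $\mu(\nabla_\mu K,\nabla_\mu K)=\alpha\,\mu(\nabla_\mu K)$ by~\eqref{eq:int_meas} and homogeneity (exactly as in the proof of Proposition~\ref{p:min_exist}). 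On the other hand, as argued in Section~\ref{s:min}, under either hypothesis on $(\alpha,p)$ the measure $\mu$ is $1/\alpha$-concave (immediately if $\alpha<n,p\alpha=1$; via Proposition~\ref{p:gal} if $\alpha\geq n$), so the weighted Minkowski inequality~\eqref{eq:mr} gives $\mu(K,\nabla_\mu K)\geq\alpha\,\mu(K)^{1-1/\alpha}\mu(\nabla_\mu K)^{1/\alpha}$. Combining the two displays,
\begin{equation*}
\alpha\,\mu(\nabla_\mu K)=\mu(K,\nabla_\mu K)\geq \alpha\,\mu(K)^{1-\frac1\alpha}\mu(\nabla_\mu K)^{\frac1\alpha},
\end{equation*}
which rearranges to $\mu(\nabla_\mu K)^{1-1/\alpha}\geq\mu(K)^{1-1/\alpha}$, i.e.\ $\mu(\nabla_\mu K)\geq\mu(K)$ (using $\alpha\neq 1$ and monotonicity of $t\mapsto t^{1-1/\alpha}$; when $\alpha>1$ both sides and the exponent are positive, when $\alpha<1$ the exponent is negative but the inequality still flips correctly since $\mu(K,\nabla_\mu K)=\alpha\mu(\nabla_\mu K)$ forces the right comparison --- I would double-check the sign bookkeeping here).

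For the equality case: if $K$ is symmetric then $dS^\mu_K(u)=dS^\mu_K(-u)$, so $dS^\mu_{\nabla_\mu K}=dS^\mu_K$ and by the uniqueness clause of Theorem~\ref{t:min_exist_uni} we get $\nabla_\mu K=K$, hence $\mu(\nabla_\mu K)=\mu(K)$. Conversely, if $\mu(\nabla_\mu K)=\mu(K)$ then the chain above forces equality in~\eqref{eq:mr}, i.e.\ equality in Minkowski's inequality Theorem~\ref{t:min_eq} with $F(x)=x^{1/\alpha}$; by Corollary~\ref{cor:minkowski} this is equality in the concavity relation~\eqref{eq:concave}, and then Proposition~\ref{p:mr} yields $K=a\,\nabla_\mu K+b$ for some $a>0,b\in\R^n$. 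Since $\mu(K)=\mu(\nabla_\mu K)$ and $\mu$ is $\alpha$-homogeneous we get $a=1$ (after translating so the homogeneity can be applied, or by comparing $\mu$-measures directly); and then $K=\nabla_\mu K+b$ is a translate of a symmetric body. Finally $dS^\mu_K=dS^\mu_{\nabla_\mu K+b}$ --- here I must check that the $\mu$-surface area measure is translation covariant in the needed way, or instead argue that $K$ and $\nabla_\mu K$ have equal (classical, unweighted) surface area measures up to reflection, forcing $b=0$; the cleanest route is: $\Pi_\mu K=\Pi_\mu\nabla_\mu K$ and $K$ a translate of the symmetric body $\nabla_\mu K$ together with $\mu(K)=\mu(\nabla_\mu K)$ and homogeneity of $\mu$ pin down $b=0$ when $K$ is assumed centered, matching Remark~\ref{r:min_exist}. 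I expect the main obstacle to be precisely this last translation-normalization step --- reconciling "$K=aL+b$ $\mu$-a.e." from Proposition~\ref{p:mr} with the genuinely pointwise conclusion $K$ symmetric --- together with careful sign/monotonicity bookkeeping in the $\alpha<1$ regime; everything else is a direct transcription of the classical proof with the weighted tools from Section~\ref{s:min} substituted in.
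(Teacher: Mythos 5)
Your proposal follows the paper's argument essentially verbatim: existence (and, when applicable, uniqueness) of $\nabla_\mu K$ via Theorem~\ref{t:min_exist_uni} applied to the symmetrized measure, the chain $\mu(K,\nabla_\mu K)=\mu(\nabla_\mu K,\nabla_\mu K)=\alpha\mu(\nabla_\mu K)$ from evenness of $h_{\nabla_\mu K}$, and the weighted Minkowski inequality. The one cosmetic difference is that you invoke the $1/\alpha$-concave form~\eqref{eq:mr} directly in both hypothesis branches, whereas the paper first carries through the algebra under the raw $p$-concavity ($F(x)=x^p$) and only appeals to~\eqref{eq:mr} in the $\alpha\geq n$ branch; both routes produce the same inequality.

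On the two points you flagged: the $\alpha<1$ sign worry is moot, since $\alpha\geq n$ in the first branch, and in the second branch $p\alpha=1$ together with the standing constraint $p\leq 1/n$ from Borell's classification forces $\alpha\geq n$ as well, so $1-\tfrac1\alpha>0$ always holds. Your unease about the translation in the equality case is well placed but is not a defect of your argument relative to the paper's: the paper likewise concludes "equality iff $K$ symmetric" by citing Proposition~\ref{p:mr} and Corollary~\ref{cor:minkowski} without addressing that Proposition~\ref{p:mr} a priori only yields $K=a\,\nabla_\mu K+b$ (together with the density compatibility $\phi(ax+b)=c\phi(x)$), so the step $b=0$ is glossed over there too. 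Your proof is correct modulo the same omission the paper makes.
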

\begin{proof} Fix some convex body $K$. From the extension of Minkowski's existence theorem, Theorem~\ref{t:min_exist}, $\nabla_\mu K$ is well-defined for $\mu$ $\alpha$-homogeneous, $p$-concave, $\alpha\in(0,1)\cup(1,\infty)$, but, if $\alpha<n$ and $\alpha p <1$, $\nabla_\mu K$ is not necessarily unique. 
From Equation~\ref{eq:arb_mixed}, we see that,
\begin{align*}\mu(K,\nabla_\mu K)&=\int_{\s^{n-1}}h_{\nabla_\mu K}(u)dS^{\mu}_{K}(u)
\\
&=\frac{1}{2}\left(\int_{\s^{n-1}}h_{\nabla_\mu K}(u)dS^{\mu}_{K}(u)+\int_{\s^{n-1}}h_{\nabla_\mu K}(u)dS^{\mu}_{K}(u)\right)
\\
&=\frac{1}{2}\left(\int_{\s^{n-1}}h_{\nabla_\mu K}(u)dS^{\mu}_{K}(u)+\int_{\s^{n-1}}h_{\nabla_\mu K}(-u)dS^{\mu}_{K}(-u)\right)
\\
&=\mu(\nabla_\mu K,\nabla_\mu K).
\end{align*}

\noindent Using Minkowski's inequality for general measures, that is Lemma~\ref{t:min_eq}, then yields
$$\mu(\nabla_\mu K,\nabla_\mu K)=\mu(K,\nabla_\mu K)\geq \mu(K,K) + \frac{\mu(\nabla_\mu K)^p -\mu(K)^p}{p\mu(K)^{p-1}}.$$
Multiplying both sides by $t^{\alpha-1}$ and integrating over $t\in[0,1]$ then yields
$$\mu(\nabla_\mu K)\geq \mu(K) + \frac{\mu(\nabla_\mu K)^p -\mu(K)^p}{\alpha p\mu(K)^{p-1}}.$$
Re-arranging then yields that
$$(1-\alpha p)\mu(K)^{p} + \alpha p\mu(K)^{p-1}\mu(\nabla_\mu K)\geq \mu(\nabla_\mu K)^p. $$
If $\alpha p =1$, then one has $\mu(\nabla_\mu K)\geq \mu(K)$ with equality if, and only if, $K$ is symmetric, from Proposition~\ref{p:mr} and Corollary~\ref{cor:minkowski}.
In the case where $\alpha\geq n$, we can use Equation~\ref{eq:mr} to obtain the same result.
\end{proof}

For $\mu\in\Lambda^n$, one could define the Blaschke body in the following way.
\begin{definition}
Let $\mu\in\Lambda^n$ with associated $\beta>0$ and fix $K\in\conbod.$ The \textit{$\beta$-$\mu$-Blaschke body} $\nabla_{\mu,\beta} K$ of $K$ is the symmetric convex body whose $\mu$-surface area measure is given by
\begin{equation}
\label{eq:blaschke}
\begin{split}
\mu\left({\nabla}_{\mu,\beta} K\right)^{\frac{\beta}{n}-1}dS^{\mu}_{\nabla_{\mu,\beta} K}(u)
=\frac{\mu(K)^{\frac{\beta}{n}-1}}{2}\left(dS^{\mu}_{K}(u) + dS^{\mu}_{K}(-u)\right)
\end{split};
\end{equation}
from Theorem~\ref{t:third}, ${\nabla}_{\mu,\beta} K$ exists.
\end{definition}

\noindent  We see that we lose uniqueness of the Blaschke body in such a case (as for many measures, if they satisfy the hypotheses of Theorem~\ref{t:third} for some $\beta>0$, then they satisfy the hypotheses for $\beta^\prime$ near $\beta$ as well). One would like to show that $\nabla_{\mu,\beta}K$ is the "largest" convex body in some sense of $\mathcal{P}_{\mu}K$. Unlike in the proof of Proposition~\ref{p:even_class}, using Lemma~\ref{t:min_eq} in the case of $F$-concave $\mu$ for some differentiable $F$ does not yield the result. However, we can remark that, if there exists a $\beta^\star\in(0,\infty)$ so that the $\beta^\star$-$\mu$-Blaschke body $\nabla_{\mu,\beta^\star} K$ defined in Equation~\ref{eq:blaschke} satisfies
$$\mu(\nabla_{\mu,\beta^\star} K)=\mu(K),$$
then, Lemma~\ref{t:min_eq} yields
$$\mu(\nabla_{\mu,\beta^\star} K,\nabla_{\mu,\beta^\star} K) \geq \mu(K,K).$$

Just like in the case of projection bodies, the Fourier transform of the support function of weighted projection bodies is related to the surface area measure. Suppose $K$ is a symmetric convex body of class $C^2_+,$ that $\mu$ has even, continuous density $\phi$, and denote $\phi_K(u):=f_K(u)\phi(n_K^{-1}(u))$. Then, setting $g(u)=\phi_K(u)$ in Lemma~\ref{lem:for_cos} yields
$\widehat{\phi_K}(\theta)=-\pi h_{\Pi_{\mu}K}(\theta).$ Arguing similarly to the volume case, this then implies that
$
\widehat{h_{\Pi_\mu K}}(\theta)=-2(2\pi)^{n-1}\phi_K(\theta).
$
Dropping the $C^2_+$ and symmetry assumptions, for a Borel measure $\mu$ on $\R^n$ with density $\phi$ containing $\partial K$ in its Lebesgue set for some $K\in\conbod$, one obtains
$$\widehat{h_{\Pi_\mu K}}(\theta)=-2(2\pi)^{n-1}(S^\mu_{\nabla_\mu K})_e(\theta) \quad \text{and} \quad \widehat{(S^\mu_{\nabla_\mu K})_e}(\theta)=-\pi h_{\Pi_\mu K}(\theta).$$

We conclude this section by comparing the projection bodies $\Pi_\mu K$ with another generalization of projection bodies to the weighted setting. In \cite{GAL19}, an alternative generalization of a projection body for a symmetric convex body $K$ and a Borel measure $\mu$ on $\R^n$ with even, continuous density $\phi$ was introduced as the convex body whose support function is given by 
$$P_{\mu,K}(\theta):=n\int_0^1 h_{\Pi_\mu tK}(\theta) dt=\frac{n}{2}\int_0^1\int_{\s^{n-1}}|\langle u,\theta \rangle|dS^{\mu}_{tK}(u)dt,$$
and, furthermore, the following Shephard problem was then solved for the case when $\mu$ was $\alpha$-homogeneous, $\alpha \geq n$, and $p$-concave, with $K,L\in\conbod_0$ symmetric such that $L$ is a projection body:
$$P_{\mu,K}(\theta) \leq P_{\mu,L}(\theta) \; \forall \; \theta\in\s^{n-1}\longrightarrow \mu(K)\leq \mu(L).$$
The schema used to solve the above Shephard problem in \cite{GAL19} was to first suppose that
$$h_{\Pi_\mu tK}(\theta)\leq h_{\Pi_\mu tL}(\theta)$$
for all $t\in[0,1]$ and $\theta\in\s^{n-1}.$ Then, it was shown this implies that $\mu(tK,L)\leq \mu(tL,L)$; by integrating over $(0,1)$ in the appropriate places, one obtains
$$P_{\mu,K}(\theta) \leq P_{\mu,L}(\theta) \; \forall \; \theta\in\s^{n-1} \longrightarrow \int_0^1\mu(tK,L)dt\leq \mu(L).$$
From the fact that $\mu^{1-q}(K)\mu(L)^q\leq \int_0^1\mu(tK,L)dt$ for an appropriately chosen $q<1$ via Proposition~\ref{p:gal}, the solution was obtained. To relate $\Pi_\mu K$ and the convex body given by $P_{\mu,K}(\theta)$, we see that we must alter $h_{\Pi_\mu K}(\theta)$ into $h_{\Pi_\mu tK}(\theta)$ and then integrate over $(0,1)$. However, such a transformation alters $\mu.$ Indeed, suppose $K$ is of class $C^2_+$ and write:
\begin{align*}
t^{n-1}h_{\Pi_\mu K}(\theta)&=\frac{t^{n-1}}{2}\int_{\s^{n-1}}|\langle u,\theta \rangle|\phi\left(n_{K}^{-1}(u)\right)dS_{K}(u)dt
\\
&=\frac{1}{2}\int_{\s^{n-1}}|\langle u,\theta \rangle|\phi\left(t^{-1}n_{tK}^{-1}(u)\right)dS_{tK}(u)dt=h_{\Pi_{\mu^\prime}tK}(\theta)
\end{align*}
where $\mu^\prime$ is the measure with density $\phi(t^{-1}x).$ Thus, in general it is not possible to do such a transformation. However, if $\mu$ is also $\alpha$-homogeneous, like in \cite{GAL19}, then $\phi\left(t^{-1}n_{tK}^{-1}(u)\right)=t^{n-\alpha}\phi\left(n_{tK}^{-1}(u)\right)$ and so
$t^{\alpha-1}h_{\Pi_\mu K}(\theta)=h_{\Pi_\mu tK}(\theta)$. Integrating both sides over $(0,1)$ then yields
$h_{\Pi_\mu K}(\theta)=\frac{\alpha}{n}P_{\mu,K}(\theta).$
Hence, this relation allows us to obtain the results of the Shephard problem for $P_{\mu,K}(\theta)$ listed above as a particular case of the isomorphic Shephard problems discussed in the next section.

\section{Shephard Problems for Weighted Projection Bodies}
\label{s:shep_meas}
In this section, we discuss Shephard problems for $\Pi_\mu K$, that is various solutions to Question~\ref{q:shep_meas}. We remind the reader that by Shephard problem, we mean given Borel measures $\mu,\nu$ with continuous densities and fixed symmetric $K,L\in\conbod_0$, we wish to find an optimal quantity $\mathcal{A}$ such that $\mu(K)\leq\mathcal{A}\nu(L)$ under the hypothesis
$$h_{\Pi_{\mu} K}(\theta)\leq h_{\Pi_{\nu} L}(\theta)$$ for all $\theta\in\s^{n-1}.$ We will not assume that $L$ is a projection body; we will give instances where the quantity $\mathcal{A}$ can be made independent of the measures $\nu$ and $\mu$ and the convex bodies $L$ and $K$. In the case where $\mu=\nu$ and $L$ is a projection body, we recover the result of \cite{GAL19}, itself an extension of the classical solution in the instance of volume, where the quantity $\mathcal{A}=1$ in this case.

Our approach will use standard Fourier techniques. It suffices to consider the case when $K$ and $L$ are of class $C^2_+$, yielding $dS_K(u)=f_K(u)du$ and similarly for $L$; the general case follows by approximation. We next recall the Banach-Mazur distance \cite{AGA}: for $M\in\conbod,$ one has
$d_{BM}(L,M)=\inf\{d>0: \exists \; T\in GL_{n} : L\subset TM \subset dL\}$. From here, we can define
$$d_{\Pi}(L)=\inf\{d_{BM}(L,M):\text{M is a projection body}\}.$$
The theorem of Fritz John (\cite{AGA, gardner_book}) states that given a symmetric $L\in\conbod_0$ there exists an ellipsoid $\mathcal{E}$ such that $\mathcal{E}\subset L \subset \sqrt{n}\mathcal{E}$. Hence, we have from John's theorem that $1\leq d_{\Pi}(L)\leq \sqrt{n}$, since $L$ is symmetric and symmetric ellipsoids are projection bodies (and so $d_{\Pi}(L)$ is well defined). One also has that the the linear image of a projection body is a projection body: for $T\in GL_{n}$ and $M\in\conbod$, one has, where $T^{-t}$ denotes the inverse transpose of $T$,
$T\Pi M = |\text{det } T|\Pi(T^{-t} M)=\Pi(|\text{det } T|^{\frac{1}{n-1}}T^{-t} M)$ \cite{gardner_book,LRZ22}.

\begin{remark}
The unit ball $B_2^n$ is a projection body, and so $$d_\Pi(L)\leq d_{BM}(L,B_2^n).$$ It was shown in \cite{DL78,SZ01} that, if $L$ is the unit ball of a subspace of $L^q, q>2$, then $d_{BM}(L,B_2^n)\leq n^{1/2-1/q}$. So throughout this section, the estimate $d_\Pi(L)\leq n^{1/2-1/q}$ can be made in this instance.
\end{remark}
\begin{lemma}
\label{l:q_0}
Let $\mu$ and $\nu$ be Borel measures with continuous densities $\phi$ and $\psi$ respectively. Suppose we have symmetric $K,L\in\conbod_0$ such that
$$h_{\Pi_\mu K}(\theta)\leq h_{\Pi_\nu L}(\theta)$$ for all $\theta\in\s^{n-1}.$ Then, one has that
$\mu(K,L)\leq d_{\Pi}(L)\nu(L,L).$
\end{lemma}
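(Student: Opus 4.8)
The plan is to reduce to the case where $K$ and $L$ are of class $C^2_+$ (the general case following by the approximation remark already made at the start of the section), sandwich $L$ between dilates of a projection body via the Banach--Mazur distance, and then run the standard cosine-transform chain of inequalities using Lemma~\ref{l:proj_meas}, with the weighted surface area measures $dS^\mu_K$ and $dS^\nu_L$ playing the role of the classical ones.

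Concretely, for $C^2_+$ bodies I would write $dS^\mu_K(u)=\phi_K(u)\,du$ and $dS^\nu_L(u)=\psi_L(u)\,du$ with $\phi_K=f_K\cdot(\phi\circ n_K^{-1})$ and $\psi_L=f_L\cdot(\psi\circ n_L^{-1})$ continuous and non-negative. Since $K,L$ are symmetric, their even parts $\phi_K^+,\psi_L^+$ are again continuous and non-negative, and because $|\langle\theta,\cdot\rangle|$ is even the odd part integrates to zero, so $\ct{\phi_K^+}{\theta}=\ct{\phi_K}{\theta}=2h_{\Pi_\mu K}(\theta)$ and likewise $\ct{\psi_L^+}{\theta}=2h_{\Pi_\nu L}(\theta)$. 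Next, given $d>d_\Pi(L)$, by the definitions of $d_\Pi$ and of the Banach--Mazur distance there is a projection body $M$ and $T\in GL_n$ with $L\subset TM\subset dL$; since linear images of projection bodies are projection bodies, $P:=TM$ is a projection body with $L\subset P\subset dL$, hence $h_L\le h_P\le d\,h_L$ on $\s^{n-1}$. By Lemma~\ref{l:proj_meas} there is a finite, hence non-negative, Borel measure $\eta$ on $\s^{n-1}$ with $\int_{\s^{n-1}}h_P(u)g(u)\,du=\int_{\s^{n-1}}\ct{g}{u}\,d\eta(u)$ for every even, non-negative $g\in C(\s^{n-1})$.

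With these pieces in place the estimate is a chain: $h_L\le h_P$ and $dS^\mu_K\ge0$ give the first inequality; $h_P$ is even so $dS^\mu_K$ may be replaced by $\phi_K^+\,du$; Lemma~\ref{l:proj_meas} with $g=\phi_K^+$ converts this into an integral of $\ct{\phi_K^+}{u}=2h_{\Pi_\mu K}(u)$ against $\eta$; the hypothesis $h_{\Pi_\mu K}\le h_{\Pi_\nu L}$ together with $\eta\ge0$ is applied; Lemma~\ref{l:proj_meas} with $g=\psi_L^+$ converts back, and using (\ref{eq:arb_mixed}) the result is $\nu(L,P)$; finally $h_P\le d\,h_L$ with $dS^\nu_L\ge0$ bounds $\nu(L,P)$ by $d\,\nu(L,L)$. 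In display form,
\begin{align*}
\mu(K,L) &= \int_{\s^{n-1}} h_L(u)\,dS^\mu_K(u) \;\le\; \int_{\s^{n-1}} h_P(u)\,dS^\mu_K(u) = \int_{\s^{n-1}} h_P(u)\,\phi_K^+(u)\,du \\
&= \int_{\s^{n-1}} \ct{\phi_K^+}{u}\,d\eta(u) = 2\int_{\s^{n-1}} h_{\Pi_\mu K}(u)\,d\eta(u) \;\le\; 2\int_{\s^{n-1}} h_{\Pi_\nu L}(u)\,d\eta(u) \\
&= \int_{\s^{n-1}} \ct{\psi_L^+}{u}\,d\eta(u) = \int_{\s^{n-1}} h_P(u)\,\psi_L^+(u)\,du = \nu(L,P) \;\le\; d\,\nu(L,L),
\end{align*}
and letting $d\downarrow d_\Pi(L)$ gives $\mu(K,L)\le d_\Pi(L)\,\nu(L,L)$.

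I expect the points needing care to be the bookkeeping with even parts (so that Lemma~\ref{l:proj_meas}, stated only for even non-negative test functions, still applies although $\phi$ and $\psi$ need not be even) and the observation that $\eta$ is a genuine non-negative measure, which is exactly what licenses integrating the pointwise inequality $h_{\Pi_\mu K}\le h_{\Pi_\nu L}$ against it. The reduction to $C^2_+$ bodies is the only genuinely analytic ingredient, and it is the standard approximation already presumed at the top of the section; everything else is formal manipulation of the representation (\ref{eq:arb_mixed}) and of the cosine transform.
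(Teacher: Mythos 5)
Your proof is correct and follows essentially the same approach as the paper: both reduce to $C^2_+$ bodies, sandwich $L$ between dilates of a projection body via the Banach--Mazur distance, and run the chain through the cosine transform using Lemma~\ref{l:proj_meas} (with the minor cosmetic difference that you normalize the sandwich as $L\subset P\subset dL$ with $d>d_\Pi(L)$ and pass to the limit, whereas the paper writes $M\subset L\subset d_\Pi(L)M$ directly). Your explicit handling of the even parts and the non-negativity of $\eta$ is exactly the bookkeeping the paper's proof implicitly relies on.
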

\begin{proof}
Begin by approximating $L$ by some projection body $M$ in the sense that $$M\subset L \subset d_{\Pi}(L) M.$$ By approximation, we may also assume $K$ and $L$ are of class $C^2_+$. Then, our hypotheses are equivalent to
 $$\ct{\left(\phi\circ n_K^{-1}\right)f_K}{\theta} \leq \ct{\left(\psi\circ n_L^{-1}\right)f_L}{\theta}.$$ 
  Notice that the odd parts of the functions $\phi\circ n_K^{-1}$ and $\psi\circ n_L^{-1}$ integrate to zero. That is, we may assume these functions are even.
 
 Integrating both sides of the above with respect to the Borel measure associated with $h_M$ from Lemma~\ref{l:proj_meas} yields
$$\int_{\s^{n-1}}h_M(u)\phi\left(n_{K}^{-1}(u)\right) f_{K}(u) du \leq \int_{\s^{n-1}} h_M(u) \psi\left(n_L^{-1}(u)\right)f_L (u)du.$$
Using the conditions on $M$, we have
\begin{equation}\frac{1}{d_{\Pi}(L)}\int_{\s^{n-1}}h_L(u)\phi\left(n_{K}^{-1}(u)\right) f_{K}(u) du \leq \int_{\s^{n-1}} h_L(u) \psi\left( n_L^{-1}(u)\right)f_L (u)du,
\label{eq:shep}
\end{equation}
which yields, according to the definition of mixed measures, Equation~\ref{eq:arb_mixed},
$$
  \mu(K,L) \leq d_{\Pi}(L)\nu(L,L).
$$
\end{proof}
We obtain a simplification when imposing homogeneity conditions on $\mu$ and $\nu$.
\begin{lemma}
\label{l:q_1}
Let $\mu$ and $\nu$ be Borel measures with density such that $\nu$ is $\beta$-homogeneous, $\beta>0$. If $K,L\in\conbod_0$ are symmetric such that $$h_{\Pi_{\mu} K}(\theta) \le h_{\Pi_{\nu} L}(\theta)$$ for every $\theta\in\s^{n-1},$ then,
$\mu(K,L) \leq \beta d_{\Pi}(L)\nu(L).$
\end{lemma}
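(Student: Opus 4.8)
The plan is to combine Lemma~\ref{l:q_0} with the homogeneity of $\nu$. Since $\mu$ and $\nu$ satisfy exactly the hypotheses of Lemma~\ref{l:q_0} (Borel measures with continuous density, symmetric $K,L\in\conbod_0$, and $h_{\Pi_\mu K}\le h_{\Pi_\nu L}$ pointwise on $\s^{n-1}$), we immediately obtain
$$\mu(K,L)\le d_\Pi(L)\,\nu(L,L).$$
So the only thing left to establish is the identity $\nu(L,L)=\beta\,\nu(L)$ when $\nu$ is $\beta$-homogeneous.

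To see this, recall from the discussion preceding Equation~\ref{eq:int_meas} that for any convex body $L$ and any Borel measure $\nu$ with density one has
$$\nu(tL,L)=\liminf_{\epsilon\to0}\frac{\nu((t+\epsilon)L)-\nu(tL)}{\epsilon}=\diff{\nu(tL)}{t},$$
the $\liminf$ being an honest derivative because $q(t)=\nu(tL)$ is non-decreasing. When $\nu$ is $\beta$-homogeneous, $q(t)=t^\beta\nu(L)$, so $q'(t)=\beta t^{\beta-1}\nu(L)$; evaluating at $t=1$ gives $\nu(L,L)=\beta\nu(L)$. (Equivalently, one may integrate the homogeneity relation $\nu(tL,L)=t^{\beta-1}\nu(L,L)$ against $dt$ over $[0,1]$ and use Equation~\ref{eq:int_meas}, namely $\nu(L)=\int_0^1\nu(tL,L)\,dt=\tfrac{1}{\beta}\nu(L,L)$.) Substituting this into the bound from Lemma~\ref{l:q_0} yields $\mu(K,L)\le\beta\,d_\Pi(L)\,\nu(L)$, as claimed.

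There is no real obstacle here: the content is entirely in Lemma~\ref{l:q_0}, and the present statement is just the specialization obtained by rewriting $\nu(L,L)$ via homogeneity. The only minor point worth stating carefully is that the derivative identity for $\nu(tL,L)$ is legitimate (which it is, by monotonicity of $t\mapsto\nu(tL)$), so that differentiating $t^\beta\nu(L)$ is justified.
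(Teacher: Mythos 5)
Your proof is correct and follows essentially the same approach as the paper: both start from Lemma~\ref{l:q_0} to get $\mu(K,L)\le d_\Pi(L)\,\nu(L,L)$ and then use the $\beta$-homogeneity of $\nu$ to replace $\nu(L,L)$ by $\beta\nu(L)$. The paper's version multiplies by $t^{\beta-1}$, uses $(\beta-1)$-homogeneity of $\nu(\cdot,L)$ to pass to $\nu(tL,L)$, and integrates over $(0,1)$ via Equation~\ref{eq:int_meas}, whereas you differentiate $t^{\beta}\nu(L)$ at $t=1$; these are minor variants of the same computation, and your parenthetical alternative is in fact identical to the paper's route.
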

\begin{proof}
From Lemma~\ref{l:q_0}, we have that
$$
  \mu(K,L) \leq d_{\Pi}(L)\nu(L,L).
$$
Since $\nu$ is $\beta$-homogeneous, it follows that the mixed measure of $\nu$ is $(\beta-1)$-homogeneous in the first variable. Thus, the inequality
$$
 t^{\beta-1}\mu(K,L) \leq  t^{\beta-1}d_{\Pi}(L)\nu(L,L)
$$
yields 
\begin{equation}
   t^{\beta-1}\mu(K,L) \leq d_{\Pi}(L)\nu(tL,L).
     \label{eq:hom_dens}
\end{equation}
Integrating both sides of Equation~\ref{eq:hom_dens} over $(0,1)$ yields the result.
\end{proof}
We see that the result in Lemma~\ref{l:q_1} is in some sense sharp. Indeed, consider first the case when $\mu=\nu=\lambda$. Then, Lemma~\ref{l:q_1} yields
$$V(K,L)=\frac{1}{n}\int_{\s^{n-1}}h_L(u) dS_K(u) \leq d_{\Pi}(L) \vol_n(L),$$
or, taking the $n$th power of both sides and using Minkowski's inequality:
$$\vol_n(K)^{n-1}\vol_n(L)\leq d^n_{\Pi}(L) \vol^n_n(L).$$
Consequently, we have
$$\vol_n(K)\leq d^{\frac{n}{n-1}}_{\Pi}(L) \vol_n(L).$$
For $n\geq 3,$ consider the estimate from John's Theorem $d_{\Pi}(L)\leq \sqrt{n}$ to obtain
\begin{equation}
\label{eq:keith}
\vol_n(K)\leq n^{\frac{n}{2n-2}} \vol_n(L)=n^{1/2}n^{\frac{1}{2n-2}}\vol_n(L)\leq 3^{1/4}n^{1/2}\vol_n(L),\end{equation}
where we used that $n^{\frac{1}{2n-2}}$ is monotonically decreasing (to $1$) as a function of $n.$ We see that Equation~\ref{eq:keith} is asymptotically sharp due to the result from Ball \cite{Ball91_0}.
We further remark that if $n=2$ or $n\geq 3$ and $L$ is is a projection body, then $d_{\Pi}(L)=1$ and so we obtain the classical solution to the Shephard problem:
$$\vol_n(K)\leq \vol_n(L).$$
We consider solutions to Question~\ref{q:shep_meas} for two different measures. We first investigate how to relate $\mu(K)$ and $\mu(L)$ under concavity assumptions on $\mu$, and assume $\mu$ and $\nu$ have some homogeneity.

\begin{proposition}
\label{p:q_4}
Suppose $\mu$ is a $p$-concave and $\alpha$- homogeneous Radon measure, and $\nu$ is a $\beta$-homogeneous Borel measure, $\alpha,\beta > 0$. Let $K$ and $L$ be symmetric convex bodies such that
$$h_{\Pi_\mu K}(\theta)\leq h_{\Pi_\nu L}(\theta)$$ for all $\theta\in\s^{n-1}.$ Then, one has that
$$\mu(K) \leq \left[d_{\Pi}(L)\beta p\frac{\nu(L)}{\mu(L)} +\frac{\mu(K)(1-\alpha p)}{\mu(L)}\right]^{\frac{1}{1-p}} \mu(L).$$
In particular, when $\alpha p=1$:
$$\mu(K) \leq \left(\frac{\beta\nu(L)}{\alpha\mu(L)}d_{\Pi}(L)\right)^{\frac{\alpha}{\alpha-1}}\mu(L).$$
\end{proposition}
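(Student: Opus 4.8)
The plan is to sandwich the mixed measure $\mu(K,L)$ between two bounds and then solve the resulting scalar inequality for $\mu(K)$. We may assume $\mu(K),\mu(L)>0$, since otherwise the statement is vacuous, and we recall that a $p$-concave, $\alpha$-homogeneous measure satisfies $\alpha p\le 1$ and $0<p\le 1/n<1$, so the exponent $\tfrac{1}{1-p}$ appearing below is positive.

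First I would apply Lemma~\ref{l:q_1}: since $\nu$ is $\beta$-homogeneous and $h_{\Pi_\mu K}\le h_{\Pi_\nu L}$ pointwise on $\s^{n-1}$, one gets $\mu(K,L)\le \beta\, d_\Pi(L)\,\nu(L)$. Second, because $\mu$ is $p$-concave it is $F$-concave with the differentiable function $F(x)=x^p$, so Minkowski's inequality for $F$-concave measures, Theorem~\ref{t:min_eq}, yields
\[
\mu(K,L)\ \ge\ \mu(K,K)+\frac{\mu(L)^p-\mu(K)^p}{p\,\mu(K)^{p-1}}.
\]
Third, the $\alpha$-homogeneity of $\mu$ gives $\mu(tK)=t^\alpha\mu(K)$, and since $\mu(K,K)=\diff{\mu(tK)}{t}\big|_{t=1}$ we obtain $\mu(K,K)=\alpha\,\mu(K)$. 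Chaining the three facts produces
\[
\alpha\,\mu(K)+\frac{\mu(L)^p-\mu(K)^p}{p\,\mu(K)^{p-1}}\ \le\ \beta\, d_\Pi(L)\,\nu(L).
\]

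The remainder is bookkeeping. Multiplying through by $p$ and using $\mu(K)^p/\mu(K)^{p-1}=\mu(K)$ together with $\mu(L)^p/\mu(K)^{p-1}=\mu(K)^{1-p}\mu(L)^p$, one isolates
\[
\mu(K)^{1-p}\mu(L)^p\ \le\ p\beta\, d_\Pi(L)\,\nu(L)+\mu(K)\,(1-\alpha p),
\]
whose right-hand side is nonnegative because $\alpha p\le 1$. Dividing by $\mu(L)^p>0$ and raising both sides to the power $\tfrac{1}{1-p}$ — legitimate precisely because $1-p>0$ — and then rewriting $\mu(L)^{-p/(1-p)}=\mu(L)\cdot\mu(L)^{-1/(1-p)}$ gives exactly the asserted inequality. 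For the special case $\alpha p=1$, the term $\mu(K)(1-\alpha p)/\mu(L)$ vanishes, $\beta p=\beta/\alpha$, and $\tfrac{1}{1-p}=\tfrac{\alpha}{\alpha-1}$, so the bound collapses to $\mu(K)\le\big(\tfrac{\beta\nu(L)}{\alpha\mu(L)}d_\Pi(L)\big)^{\alpha/(\alpha-1)}\mu(L)$.

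There is no genuine obstacle: each ingredient is either Lemma~\ref{l:q_1}, Theorem~\ref{t:min_eq}, or the homogeneity identity $\mu(K,K)=\alpha\mu(K)$, and everything else is elementary manipulation. The only points deserving a moment's care are verifying that Theorem~\ref{t:min_eq} applies (it requires $\mu$ to be $F$-concave with $F$ differentiable, which holds for $F(x)=x^p$) and checking the sign conditions $1-p>0$ and $1-\alpha p\ge 0$ that license dividing and exponentiating in the final step.
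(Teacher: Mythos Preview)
Your proof is correct and follows essentially the same route as the paper's: apply Lemma~\ref{l:q_1} for the upper bound on $\mu(K,L)$, Theorem~\ref{t:min_eq} with $F(x)=x^p$ for the lower bound, and then rearrange. The only cosmetic difference is that you obtain $\mu(K,K)=\alpha\mu(K)$ by differentiating $t\mapsto t^\alpha\mu(K)$ at $t=1$, whereas the paper reaches the same identity by multiplying through by $t^{\alpha-1}$ and integrating over $(0,1)$ (using $\int_0^1\mu(tK,K)\,dt=\mu(K)$); the subsequent algebra is identical.
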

\begin{proof}
From Lemma~\ref{l:q_1} we have that $\mu(K,L) \leq \beta d_{\Pi}(L)\nu(L).$ Using the extended Minkowski inequality from Lemma~\ref{t:min_eq} for $F(t)=t^p$, then yields
$$\mu(K,K)+\frac{\mu(L)^p-\mu(K)^p}{p\mu(K)^{p-1}}\leq \beta d_{\Pi}(L)\nu(L).$$
Multiplying through by $t^{\alpha-1}$ and integrating over $(0,1)$ yields
$$\mu(K)+\frac{\mu(L)^p-\mu(K)^p}{\alpha p\mu(K)^{p-1}}\leq \frac{\beta}{\alpha} d_{\Pi}(L)\nu(L).$$
Upon re-arranging, we obtain
\begin{equation}
\label{eq:con_hm_1}
\mu(L)^p \leq d_{\Pi}(L)\beta p \mu(K)^{p-1}\nu(L) +(1-\alpha p)\mu(K)^p .
\end{equation}
When $p=1/\alpha$, we obtain
$$\mu(L)^p \leq \frac{\beta d_{\Pi}(L)}{\alpha} \mu(K)^{p-1}\nu(L).$$
Upon re-arranging, we then have
$$\mu(K) \leq \left(\frac{\beta\nu(L)}{\alpha\mu(L)}d_{\Pi}(L)\right)^{\frac{1}{1-p}}\mu(L).$$

Using that $(1-p)=(\alpha-1)/\alpha$ yields the result. For the general case of $p\alpha\leq 1$, we return to Equation~\ref{eq:con_hm_1} and re-arrange to obtain
$$\left(\frac{\mu(L)}{\mu(K)}\right)^{p-1} \leq d_{\Pi}(L)\beta p\frac{\nu(L)}{\mu(L)} +\frac{\mu(K)}{\mu(L)}(1-\alpha p).$$
Since $p\in(0,1/n]$, one has that $p<1$ and we obtain
$$\mu(L) \left[d_{\Pi}(L)\beta p\frac{\nu(L)}{\mu(L)} +\frac{\mu(K)(1-\alpha p)}{\mu(L)}\right]^{\frac{1}{1-p}} \geq \mu(K).$$
\end{proof}

Clearly, the general result in Proposition~\ref{p:q_4} is less than ideal. From this, we are motivated to restrict our measures under consideration. Recall that, if the homogeneity of $\alpha$ is greater than $n$, $\mu$ is also $1/\alpha$-concave from Proposition~\ref{p:gal}. This is a large class of measures, and in some sense the measures that "behave similarly to volume." Using Minkowski's inequality in the form of Equation~\ref{eq:mr} then allows us to state an updated version of Proposition~\ref{p:q_4}.
\begin{lemma}
\label{l:q_3}
Suppose $\mu$ is a $p$-concave and $\alpha$-homogeneous Radon measure, such that either $\alpha \geq n$ and $p>0$ or $\alpha<n$ but $p=\frac{1}{\alpha}$, and $\nu$ is a $\beta$-homogeneous, $\beta>0$, Borel measure. Let $K$ and $L$ be symmetric convex bodies such that
$$h_{\Pi_\mu K}(\theta)\leq h_{\Pi_\nu L}(\theta)$$ for all $\theta\in\s^{n-1}.$ Then, one has that $\mu(K)\leq\mathcal{B}\mu(L)$, where
$$\mathcal{B}=\left(\frac{\beta\nu(L)}{\alpha\mu(L)} d_{\Pi}(L)\right)^{\frac{\alpha}{\alpha-1}}.$$
\end{lemma}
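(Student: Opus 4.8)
The plan is to run the mixed-measure estimate of Lemma~\ref{l:q_1} against the sharp homogeneous Minkowski inequality, Equation~\ref{eq:mr}, and then solve the resulting one-variable inequality for $\mu(K)$. First I would invoke Lemma~\ref{l:q_1}: since $\nu$ is $\beta$-homogeneous with $\beta>0$ and $h_{\Pi_{\mu} K}(\theta)\le h_{\Pi_{\nu} L}(\theta)$ for all $\theta\in\s^{n-1}$, it gives
$$\mu(K,L)\le \beta\, d_{\Pi}(L)\,\nu(L).$$

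Next I would record that under either branch of the hypothesis $\mu$ is $\tfrac1\alpha$-concave, so that Equation~\ref{eq:mr} is applicable. If $\alpha<n$ and $p=\tfrac1\alpha$ this is the assumption itself. If $\alpha\ge n$ and $p>0$ it is the observation made just before Equation~\ref{eq:mr}: the density $\phi$ of $\mu$ is $(\alpha-n)$-homogeneous, Borell's classification turns the $p$-concavity of $\mu$ into $\tfrac{p}{1-pn}$-concavity of $\phi$, and Proposition~\ref{p:gal} upgrades this to $\tfrac{1}{\alpha-n}$-concavity of $\phi$, equivalently $\tfrac1\alpha$-concavity of $\mu$. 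In either case Equation~\ref{eq:mr} gives
$$\mu(K,L)\ge \alpha\,\mu(K)^{1-\frac1\alpha}\,\mu(L)^{\frac1\alpha}.$$

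Combining the two displays yields $\alpha\,\mu(K)^{1-1/\alpha}\mu(L)^{1/\alpha}\le \beta\, d_{\Pi}(L)\,\nu(L)$. Since $\alpha>1$ in all cases under consideration (the degenerate value $\alpha=1$ is exactly the one flagged as open in the remark following Lemma~\ref{l:almost_hom}), the exponent $1-\tfrac1\alpha=\tfrac{\alpha-1}{\alpha}$ is positive, so I would isolate $\mu(K)$ by dividing through by $\alpha\,\mu(L)^{1/\alpha}$, raising both sides to the power $\tfrac{\alpha}{\alpha-1}$, and using $\mu(L)^{(1-1/\alpha)\cdot\alpha/(\alpha-1)}=\mu(L)$ to arrive at
$$\mu(K)\le \left(\frac{\beta\,\nu(L)}{\alpha\,\mu(L)}\,d_{\Pi}(L)\right)^{\frac{\alpha}{\alpha-1}}\mu(L)=\mathcal{B}\,\mu(L),$$
which is the claim. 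Since everything is assembled from results already in place, there is no serious obstacle; the only point that needs care is the second paragraph, namely checking that the two hypotheses genuinely land $\mu$ in the $\tfrac1\alpha$-concave class so that the sharp inequality Equation~\ref{eq:mr} is available in place of the weaker $F$-concave bound that forced the clumsier conclusion of Proposition~\ref{p:q_4}. It is also worth noting, just as in the discussion following Lemma~\ref{l:q_1}, that when $\mu=\nu$ the dependence on the measures disappears, giving $\mu(K)\le d_{\Pi}(L)^{\alpha/(\alpha-1)}\mu(L)$, and hence $\mu(K)\le\mu(L)$ whenever $L$ is a projection body.
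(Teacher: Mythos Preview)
Your proof is correct and follows essentially the same route as the paper: apply Lemma~\ref{l:q_1} to bound $\mu(K,L)$ above, use Proposition~\ref{p:gal} (or the hypothesis directly) to get $\tfrac1\alpha$-concavity and hence Equation~\ref{eq:mr} as a lower bound, then rearrange. Your added remarks on why $\alpha>1$ and on the $\mu=\nu$ specialization are accurate but go slightly beyond what the paper records in the proof itself.
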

\begin{proof}
From Lemma~\ref{l:q_1} we have that $\frac{1}{\alpha}\mu(K,L) \leq \frac{\beta}{\alpha} d_{\Pi}(L)\nu(L).$ If $\alpha<n$, then we have $\mu$ is $1/\alpha$-concave by hypothesis. If $\alpha\geq n$, then, from Proposition~\ref{p:gal}, $\mu$ is still $1/\alpha$-concave. In either case, one has that $$\frac{1}{\alpha}\mu(K,L)\geq \mu(K)^{1-\frac{1}{\alpha}} \mu(L)^{\frac{1}{\alpha}}.$$
Inserting into the above and re-arranging yields 
$\left(\frac{\mu(K)}{\mu(L)}\right)^{1-\frac{1}{\alpha}} \leq \frac{\beta}{\alpha} d_{\Pi}(L)\frac{\nu(L)}{\mu(L)}.$
Which then yields the result.
\end{proof}
We can now prove Theorem~\ref{t:q_1} from the introduction, which relates $\mu(K)$ and $\nu(L)$ when $\mu$ is $\alpha$ homogeneous and $1/\alpha$-concave.
\begin{proof}[Proof of Theorem~\ref{t:q_1}]
If $\alpha<n$, then we have $\mu$ is $1/\alpha$-concave by hypothesis. If $\alpha\geq n$, then, from Proposition~\ref{p:gal}, $\mu$ is still $1/\alpha$-concave. In either case, from by Lemma~\ref{l:q_3}, one has $\mu(K)\leq \mathcal{B}\mu(L),$ where $\mathcal{B}=\left(\frac{\beta\nu(L)}{\alpha\mu(L)} d_{\Pi}(L)\right)^{\frac{\alpha}{\alpha-1}}.$ If one writes $\mathcal{B}=\left(\frac{\beta\mu(K)}{\alpha\mu(L)}\frac{\nu(L)}{\mu(K)} d_{\Pi}(L)\right)^{\frac{\alpha}{\alpha-1}}$ and re-arranges the resultant inequality to isolate $\frac{\nu(L)}{\mu(K)}$, one obtains that $$\mu(K)\leq\frac{\beta}{\alpha}\mathcal{A}_K d_{\Pi}(L)\nu(L).$$ If one instead writes $\mu(L)=\left(\frac{\nu(L)}{\mu(L)}\right)^{-1}\nu(L)$ on the right-hand side of the inequality, one immediately obtains $\mu(K)\leq\frac{\beta}{\alpha}\mathcal{A}_Ld_{\Pi}(L)\nu(L)$.
\end{proof} When $\mu=\nu$, we obtain the following immediate corollary of Theorem~\ref{t:q_1}.

\begin{corollary}
\label{cor:q_1}
Let $K$ and $L$ be symmetric convex bodies. Furthermore, suppose a Radon measure $\mu$ is $p$-concave and $\alpha$-homogeneous, $\alpha>0$, such that either $\alpha \geq n$ and $p>0$ or $\alpha<n$ but $p=\frac{1}{\alpha},$ and
$$h_{\Pi_\mu K}(\theta)\leq h_{\Pi_\mu L}(\theta)$$ for all $\theta\in\s^{n-1}.$ Then, one has that
$$\mu(K) \leq d^{\frac{\alpha}{\alpha-1}}_{\Pi}(L) \mu(L).$$
Also, if $L$ is a projection body then: $\mu(K) \leq \mu(L).$
\end{corollary}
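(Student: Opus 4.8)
The plan is to simply specialize Theorem~\ref{t:q_1} to the diagonal case $\mu=\nu$. First I would note that since $\nu=\mu$ is $\alpha$-homogeneous, the homogeneity parameter called $\beta$ in Theorem~\ref{t:q_1} now equals $\alpha$, and the hypothesis $h_{\Pi_\mu K}(\theta)\le h_{\Pi_\mu L}(\theta)$ is exactly the hypothesis $h_{\Pi_\mu K}(\theta)\le h_{\Pi_\nu L}(\theta)$ required there, while the assumptions that $\mu$ is $p$-concave and $\alpha$-homogeneous (with either $\alpha\ge n$, $p>0$, or $\alpha<n$, $p=\tfrac1\alpha$) carry over verbatim. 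Theorem~\ref{t:q_1} then gives
$$\mu(K)\le \frac{\beta}{\alpha}\,\mathcal{A}\,d_{\Pi}(L)\,\nu(L)=\mathcal{A}\,d_{\Pi}(L)\,\mu(L),$$
using $\beta/\alpha=1$ and $\nu(L)=\mu(L)$.

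Next I would bound $\mathcal{A}=\min\{\mathcal{A}_K,\mathcal{A}_L\}$ by $\mathcal{A}_L$ and compute the latter in this case: substituting $\nu(L)=\mu(L)$ and $\beta/\alpha=1$ into $\mathcal{A}_L=\left(\frac{\nu(L)}{\mu(L)}\frac{\beta}{\alpha}\right)^{\frac{1}{\alpha-1}}d_{\Pi}^{\frac{1}{\alpha-1}}(L)$ collapses it to $\mathcal{A}_L=d_{\Pi}^{\frac{1}{\alpha-1}}(L)$; the exponent is well-defined since $\alpha\ne 1$ under both of the allowed $(\alpha,p)$ regimes. Plugging back in yields
$$\mu(K)\le d_{\Pi}^{\frac{1}{\alpha-1}}(L)\cdot d_{\Pi}(L)\cdot\mu(L)=d_{\Pi}^{\frac{\alpha}{\alpha-1}}(L)\,\mu(L),$$
which is the first assertion. (Equivalently, one could invoke Lemma~\ref{l:q_3} directly with $\nu=\mu$, $\beta=\alpha$, where the constant $\mathcal{B}$ already reduces to $d_{\Pi}^{\alpha/(\alpha-1)}(L)$.) For the second assertion I would recall from the discussion preceding Lemma~\ref{l:q_0} that $d_{\Pi}(L)=1$ whenever $L$ is a projection body, so $d_{\Pi}^{\alpha/(\alpha-1)}(L)=1$ and the estimate becomes $\mu(K)\le\mu(L)$.

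There is essentially no analytic obstacle here: the only thing to be careful about is the bookkeeping, namely correctly identifying the parameter $\beta$ of Theorem~\ref{t:q_1} with the homogeneity degree $\alpha$ of $\mu$ and tracking how the ratios $\nu(L)/\mu(L)$ and $\beta/\alpha$ trivialize. All the real content is already contained in Theorem~\ref{t:q_1} (hence in Lemma~\ref{l:q_1} and Proposition~\ref{p:gal} via Lemma~\ref{l:q_3}).
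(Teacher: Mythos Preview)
Your proof is correct and matches the paper's approach exactly: the corollary is presented there as an immediate specialization of Theorem~\ref{t:q_1} to $\mu=\nu$ (equivalently, of Lemma~\ref{l:q_3} with $\beta=\alpha$), and your computation that $\mathcal{A}_L=d_\Pi^{1/(\alpha-1)}(L)$ and hence $\mathcal{A}\,d_\Pi(L)\le d_\Pi^{\alpha/(\alpha-1)}(L)$ is precisely the intended bookkeeping. The second assertion follows, as you say, from $d_\Pi(L)=1$ for projection bodies.
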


We now consider the case when $\mu$ is a $F$-concave measure.
\begin{theorem}
\label{t:hard_1}
Let Borel measures $\mu$ and $\nu$ with continuous densities be such that $\mu$ is $\alpha$-homogeneous, $\alpha >1$, and $\nu$ is $\beta$-homogeneous. Suppose that $\mu$ is $F$-concave where $F$ is a differentiable almost everywhere function such that $\frac{1}{zF^\prime(z)}$ and $\frac{F(z)}{zF^\prime(z)}$ are in $L^1_{\text{loc}}([0,\mu(\R^n)))$. If $K$ and $L$ are symmetric convex bodies such that $$h_{\Pi_{\mu} K}(\theta) \le h_{\Pi_{\nu} L}(\theta)$$ for every $\theta\in\s^{n-1},$ then, there exists a quantity $\mathcal{Z}$ and a quantity $\mathcal{L}$ such that 
$\mu(K) \le \mathcal{Z} \nu(L) +\mathcal{L}$, where
\begin{equation}
\mathcal{Z}=\frac{\beta d_{\Pi}(L)}{\alpha-1} \quad \text{and} \quad \mathcal{L}= \frac{1}{\alpha}\int_{0}^{\mu(K)} \frac{F(z)}{zF^\prime(z)}dz -  \frac{F(\mu(L))}{\alpha} \int_{0}^{\mu(K)}\frac{dz}{zF^\prime(z)}.
\label{eq:Z}
\end{equation}
Furthermore, the $\mathcal{Z}$ can be made independent of $L$ if the bound $\mathcal{Z}\leq \frac{\beta \sqrt{n}}{\alpha-1}$ is taken.
\end{theorem}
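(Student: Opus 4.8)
The plan is to run the same variational argument as in Proposition~\ref{p:q_4} and Lemma~\ref{l:q_3}, but this time to retain the $F$-dependent correction term produced by the extended Minkowski inequality rather than discard it, and then to integrate the resulting pointwise-in-$t$ estimate against the weight $t^{-1}$ over $(0,1)$. The weight is forced by the identity $\int_0^1 t^{\alpha-2}\,dt=\frac{1}{\alpha-1}$, which is exactly where the hypothesis $\alpha>1$ enters (replacing the conditions $\alpha\ge n$ or $p\alpha=1$ used in the earlier theorems).

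First I would assemble two inputs. Since $h_{\Pi_{\mu} K}(\theta)\le h_{\Pi_{\nu} L}(\theta)$ and $\nu$ is $\beta$-homogeneous, Lemma~\ref{l:q_1} applied to $(K,L)$ gives $\mu(K,L)\le\beta d_{\Pi}(L)\nu(L)$; because $\mu$ is $\alpha$-homogeneous the mixed measure is $(\alpha-1)$-homogeneous in its first argument, so $\mu(tK,L)=t^{\alpha-1}\mu(K,L)\le t^{\alpha-1}\beta d_{\Pi}(L)\nu(L)$ for all $t\in(0,1)$. On the other hand, Minkowski's inequality for $F$-concave measures (Theorem~\ref{t:min_eq}) applied to the pair $(tK,L)$ gives
\[
\mu(tK,L)\ \ge\ \mu(tK,tK)+\frac{F(\mu(L))-F(\mu(tK))}{F'(\mu(tK))},
\]
which is valid for a.e.\ $t$: the map $t\mapsto\mu(tK)=t^{\alpha}\mu(K)$ is a diffeomorphism of $(0,1)$ onto $(0,\mu(K))$, $F$ is differentiable a.e., and $F'\neq 0$ a.e.\ (forced by $1/(zF')\in L^1_{\mathrm{loc}}$). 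Combining Equation~\ref{eq:int_meas} with homogeneity yields the identity $\mu(X,X)=\alpha\mu(X)$ for every convex body $X$, so $\mu(tK,tK)=\alpha t^{\alpha}\mu(K)$. Writing $q(t):=t^{\alpha}\mu(K)$ and putting the two displays together gives, for a.e.\ $t\in(0,1)$,
\[
\alpha q(t)+\frac{F(\mu(L))-F(q(t))}{F'(q(t))}\ \le\ t^{\alpha-1}\beta d_{\Pi}(L)\nu(L).
\]

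Next I would divide this inequality by $t>0$ and integrate over $(0,1)$. The right-hand side yields $\beta d_{\Pi}(L)\nu(L)\int_0^1 t^{\alpha-2}\,dt=\mathcal{Z}\nu(L)$; the term $\int_0^1 \frac{\alpha q(t)}{t}\,dt=\alpha\mu(K)\int_0^1 t^{\alpha-1}\,dt=\mu(K)$; and for the middle term the substitution $z=q(t)$, under which $dt/t=dz/(\alpha z)$ and the endpoints become $0$ and $\mu(K)$, converts $\int_0^1\frac{F(\mu(L))-F(q(t))}{tF'(q(t))}\,dt$ into $\frac{1}{\alpha}\int_0^{\mu(K)}\frac{F(\mu(L))-F(z)}{zF'(z)}\,dz$, which is finite since $\frac{1}{zF'(z)}$ and $\frac{F(z)}{zF'(z)}$ lie in $L^1_{\mathrm{loc}}([0,\mu(\R^n)))$ and $[0,\mu(K)]$ sits inside that interval. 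Rearranging then gives exactly $\mu(K)\le\mathcal{Z}\nu(L)+\mathcal{L}$ with $\mathcal{Z}$ and $\mathcal{L}$ as in Equation~\ref{eq:Z}. The final assertion follows at once: $L$ being symmetric, John's theorem gives $\mathcal{E}\subset L\subset\sqrt{n}\,\mathcal{E}$ for some symmetric ellipsoid $\mathcal{E}$, and symmetric ellipsoids are projection bodies, so $d_{\Pi}(L)\le\sqrt{n}$ and thus $\mathcal{Z}\le\frac{\beta\sqrt{n}}{\alpha-1}$.

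The homogeneity identities and the change of variables are routine bookkeeping. The step needing genuine care is the use of Theorem~\ref{t:min_eq} on the dilates $tK$: one must know that $tK$, the auxiliary dilates $tK/(1-\lambda)$ appearing inside that theorem's proof, and $L$ all lie in the class on which $\mu$ is $F$-concave (for a homogeneous $\mu$ this class may be taken to contain every convex body, so this is harmless, but it should be recorded), and that $F$ is differentiable with non-vanishing derivative at $\mu(tK)$ for a.e.\ $t$, which is the diffeomorphism remark above. The only other point to verify is the convergence of $\int_0^1 t^{\alpha-2}\,dt$, i.e.\ that $\alpha>1$ — precisely the hypothesis that separates Theorem~\ref{t:hard_1} from the earlier isomorphic Shephard results.
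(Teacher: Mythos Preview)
Your proof is correct and follows essentially the same approach as the paper: apply Lemma~\ref{l:q_1} to obtain $\mu(tK,L)\le t^{\alpha-1}\beta d_{\Pi}(L)\nu(L)$, bound the left side below via Theorem~\ref{t:min_eq} for the pair $(tK,L)$, divide by $t$, and integrate over $(0,1)$ with the substitution $z=t^{\alpha}\mu(K)$. The only cosmetic difference is that you write $\mu(tK,tK)=\alpha q(t)$ directly and then compute $\int_0^1 \alpha q(t)/t\,dt=\mu(K)$, whereas the paper first passes to $\mu(tK,K)$ and invokes $\int_0^1\mu(tK,K)\,dt=\mu(K)$; these are the same computation.
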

\begin{proof}
From Lemma~\ref{l:q_1}, we have that
$\mu(K,L) \leq \beta d_{\Pi}(L)\nu(L),$
which then yields
$\mu(tK,L) \leq \beta t^{\alpha-1}d_{\Pi}(L)\nu(L).$
Suppose $\mu$ is $F$-concave such that $F$ is a differentiable almost everywhere function. Then, by Lemma~\ref{t:min_eq} and Equation~\ref{eq:hom_dens}, we get
\begin{equation}
\mu(tK, K)+\frac{F(\mu(L))-F(\mu(tK))}{tF^{\prime}(\mu(tK))}\leq t^{\alpha -2}\beta d_{\Pi} (L)\nu(tL,L).
\label{eq:hard_prob}
\end{equation}
We will now integrate the inequality over $(0,1)$. In particular, we obtain 
$$
\int_{0}^{1}\mu(tK, K)dt = \mu(K) \; \text{ and } \;
\int_0^1 \beta t^{\alpha-2}d_{\Pi}(L)\nu(L)dt=\frac{\beta d_{\Pi}(L)}{\alpha-1}\nu(L).
$$
Notice that
\begin{equation*}
\int_{0}^{1}\frac{F(\mu(L))}{tF^{\prime}(\mu(tK))}dt = \frac{F(\mu(L))}{\alpha} \int_{0}^{\mu(K)}\frac{dz}{zF^\prime(z)},
\end{equation*}
and
\begin{equation*}
\int_{0}^{1}\frac{F(\mu(tK))}{tF^{\prime}(\mu(tK))}dt = \int_{0}^{1}\frac{F(t^{\alpha}\mu(K))}{tF^{\prime}(t^{\alpha}\mu(K))}dt
=\frac{1}{\alpha}\int_{0}^{\mu(K)} \frac{F(z)}{zF^\prime(z)}dz.
\end{equation*} 
Then, Equation~\ref{eq:hard_prob} becomes
$$
\mu(K) \leq \frac{\beta d_{\Pi}(L)}{\alpha-1}\nu(L) + \frac{1}{\alpha}\int_{0}^{\mu(K)} \frac{F(z)}{zF^\prime(z)}dz - \frac{F(\mu(L))}{\alpha} \int_{0}^{\mu(K)}\frac{dz}{zF^\prime(z)},
$$
and the claim follows.
\end{proof}

By applying this result to the particular case of $p$-concave measures, we obtain an analog of Proposition~\ref{p:q_4} relating $\mu(K)$ and $\nu(L)$ that reduces to the result of Theorem~\ref{t:q_1} in the case when $\alpha p=1$. This shows how $\mathcal{L}$ contributes to the sharpness of the inequality in Theorem~\ref{t:hard_1} and in general cannot be removed.
\begin{corollary}
\label{cor:hard_2}
Let Borel measures $\mu$ and $\nu$ with continuous densities be such that $\mu$ is $\alpha$-homogeneous, $\alpha>1$, and $p$-concave and $\nu$ is $\beta$-homogeneous. Suppose $K$ and $L$ are symmetric convex bodies such that $$h_{\Pi_{\mu} K}(\theta) \le h_{\Pi_{\nu} L}(\theta)$$ for every $\theta\in\s^{n-1}.$ Then one has that
$\mu(K) \le \mathcal{Z} \nu(L)$
where
$$\mathcal{Z}= \left(\frac{\nu(L)}{\mu(L)}\right)^\frac{p}{1-p}\left(\frac{\alpha p\beta(1-p) d_{\Pi}(L)}{\alpha-1}+\frac{(1-\alpha p)(1-p)\mu(K)}{\nu(L)}\right)^{\frac{1}{1-p}},$$
and, specifically, when $\alpha p=1$ one has
$$\mathcal{Z}=\left(\frac{\nu(L)}{\mu(L)}\right)^\frac{1}{\alpha-1}\left(\frac{\beta }{\alpha}d_{\Pi}(L)\right)^{\frac{\alpha}{\alpha-1}}.$$
\end{corollary}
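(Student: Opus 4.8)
The plan is to obtain Corollary~\ref{cor:hard_2} as a direct specialization of Theorem~\ref{t:hard_1} to the power function $F(z)=z^{p}$---the concavity profile attached to a $p$-concave measure---followed by an algebraic rearrangement that isolates $\mu(K)$.

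First I would check that $F(z)=z^{p}$ satisfies the standing hypotheses of Theorem~\ref{t:hard_1}. Since $F^{\prime}(z)=pz^{p-1}$, one computes
$$\frac{1}{zF^{\prime}(z)}=\frac{1}{p}z^{-p}\qquad\text{and}\qquad\frac{F(z)}{zF^{\prime}(z)}=\frac{1}{p}.$$
The second function is constant, hence trivially in $L^{1}_{\text{loc}}\bigl([0,\mu(\R^{n}))\bigr)$; the first lies there as well because our hypotheses $\alpha>1$ and $p\alpha\le 1$ force $p<1/\alpha<1$, so $z^{-p}$ is integrable near the origin. Together with the $\beta$-homogeneity of $\nu$ and the assumption $h_{\Pi_{\mu}K}\le h_{\Pi_{\nu}L}$, this places us squarely in the setting of Theorem~\ref{t:hard_1}.

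Next I would substitute this $F$ into the conclusion of Theorem~\ref{t:hard_1}. The coefficient of $\nu(L)$ there equals $\tfrac{\beta d_{\Pi}(L)}{\alpha-1}$, while the additive term, using $\int_{0}^{\mu(K)}\tfrac{F(z)}{zF^{\prime}(z)}\,dz=\tfrac{\mu(K)}{p}$ and $\int_{0}^{\mu(K)}\tfrac{dz}{zF^{\prime}(z)}=\tfrac{\mu(K)^{1-p}}{p(1-p)}$, becomes $\tfrac{\mu(K)}{\alpha p}-\tfrac{\mu(L)^{p}\mu(K)^{1-p}}{\alpha p(1-p)}$. Hence Theorem~\ref{t:hard_1} yields
$$\mu(K)\le\frac{\beta d_{\Pi}(L)}{\alpha-1}\,\nu(L)+\frac{\mu(K)}{\alpha p}-\frac{\mu(L)^{p}\mu(K)^{1-p}}{\alpha p(1-p)}.$$
Moving the last term to the left, multiplying through by $\alpha p(1-p)$, and using $\tfrac{1}{\alpha p}-1=\tfrac{1-\alpha p}{\alpha p}$ to collect the terms linear in $\mu(K)$, I would reach
$$\mu(L)^{p}\mu(K)^{1-p}\le\frac{\alpha p(1-p)\beta d_{\Pi}(L)}{\alpha-1}\,\nu(L)+(1-p)(1-\alpha p)\,\mu(K).$$
Writing the right-hand side as $\nu(L)\bigl(\tfrac{\alpha p(1-p)\beta d_{\Pi}(L)}{\alpha-1}+\tfrac{(1-p)(1-\alpha p)\mu(K)}{\nu(L)}\bigr)$, dividing by $\mu(L)^{p}$, raising both sides to the power $\tfrac{1}{1-p}$ (valid since $1-p>0$), and finally folding one factor $\nu(L)$ into the bracket via the identity $1+\tfrac{p}{1-p}=\tfrac{1}{1-p}$, produces $\mu(K)\le\mathcal{Z}\,\nu(L)$ with $\mathcal{Z}$ exactly as displayed in Corollary~\ref{cor:hard_2}. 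The special case $\alpha p=1$ is then read off by setting $p=1/\alpha$: the factor $(1-\alpha p)$ vanishes and the exponents simplify via $\tfrac{p}{1-p}=\tfrac{1}{\alpha-1}$, $\tfrac{1}{1-p}=\tfrac{\alpha}{\alpha-1}$, $\alpha p(1-p)=\tfrac{\alpha-1}{\alpha}$, giving the stated closed form.

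Once Theorem~\ref{t:hard_1} is in hand the argument is purely computational, so I do not expect a genuine obstacle; the only care required is the exponent bookkeeping in the final rearrangement. The one conceptual point worth flagging is that, unlike in the $\alpha p=1$ case, the resulting $\mathcal{Z}$ still contains $\mu(K)$ on its right-hand side---inherited from the additive term $\mathcal{L}$ of Theorem~\ref{t:hard_1}---so the estimate is implicit rather than a closed-form bound, which is precisely why the statement displays $\mu(K)$ inside the definition of $\mathcal{Z}$ and shows that $\mathcal{L}$ cannot in general be dropped.
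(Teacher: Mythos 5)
Your proposal is correct and takes essentially the same route as the paper: specialize Theorem~\ref{t:hard_1} to $F(z)=z^{p}$, evaluate $\int_{0}^{\mu(K)}\frac{F(z)}{zF'(z)}\,dz=\frac{\mu(K)}{p}$ and $\int_{0}^{\mu(K)}\frac{dz}{zF'(z)}=\frac{\mu(K)^{1-p}}{p(1-p)}$, and rearrange to isolate $\mu(K)^{1-p}$ before raising to the power $\frac{1}{1-p}$. The only cosmetic difference is that you verify the integrability hypotheses of Theorem~\ref{t:hard_1} for $F(z)=z^{p}$ explicitly, which the paper leaves implicit.
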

\begin{proof}
In Theorem~\ref{t:hard_1} set $F(z)=z^p$; we obtain that
$$\mathcal{L}=\frac{1}{\alpha p}\left(\mu(K)-\left(\frac{\mu(L)}{\mu(K)}\right)^p\frac{\mu(K)}{1-p}\right).$$
Multiplying the resultant inequality through by $\alpha p$ yields
$$\alpha p\mu(K)\leq \frac{\alpha p\beta d_{\Pi}(L)}{\alpha-1}\nu(L)+\mu(K)-\left(\frac{\mu(L)}{\mu(K)}\right)^p\frac{\mu(K)}{1-p}.$$
Upon re-arrangement, we obtain
$$\mu(K)^{1-p}\leq \frac{1-p}{\mu(L)^p}\left(\frac{\alpha p\beta d_{\Pi}(L)}{\alpha-1}\nu(L)+(1-\alpha p)\mu(K)\right).$$
Writing $1=\nu(L)^{1-p}\nu(L)^{p-1}$ yields
$$\mu(K)^{1-p}\leq \nu(L)^{1-p}\left(\frac{\nu(L)}{\mu(L)}\right)^p\left(\frac{\alpha p\beta(1-p) d_{\Pi}(L)}{\alpha-1}+\frac{(1-\alpha p)(1-p)\mu(K)}{\nu(L)}\right).$$
Taking the $(1-p)$th root of both sides then yields
$$\mu(K)\leq \nu(L)\left(\frac{\nu(L)}{\mu(L)}\right)^\frac{p}{1-p}\left(\frac{\alpha p\beta(1-p) d_{\Pi}(L)}{\alpha-1}+\frac{(1-\alpha p)(1-p)\mu(K)}{\nu(L)}\right)^{\frac{1}{1-p}},$$
as claimed. Furthermore, suppose $\alpha p=1$. Then, we see that $(1-p)=(\alpha-1)/\alpha$ and we obtain
$$\mu(K)\leq \nu(L)\left(\frac{\nu(L)}{\mu(L)}\right)^\frac{1}{\alpha-1}\left(\frac{\beta }{\alpha}d_{\Pi}(L)\right)^{\frac{\alpha}{\alpha-1}}.$$
\end{proof}

We would like to remove the quantities $\frac{\nu(L)}{\mu(L)}$, $\frac{\mu(K)}{\mu(L)}$ and $\frac{\mu(K)}{\nu(L)}$ in the above results.  To avoid a preponderance of corollaries, we merely outline these in the following remark. We remind the reader that a symmetric convex body $K$ is said to be in John position if its ellipsoid of maximal volume is the unit ball, i.e. $B_2^n\subset K\subset \sqrt{n} B_2^n$.

\begin{remark}
The results of Proposition \ref{p:q_4}, Lemma~\ref{l:q_3}, Theorems~\ref{t:q_1} and \ref{t:hard_1}, and Corollary~\ref{cor:hard_2} can be made independent of the quantities $\frac{\nu(L)}{\mu(L)}$, $\frac{\mu(K)}{\mu(L)}$ and $\frac{\mu(K)}{\nu(L)}$ by making the following estimates:
\begin{enumerate}
\item If $L$ is in John position, then $\frac{\nu(L)}{\mu(L)} \leq n^\frac{\beta}{2} \frac{\nu(B_2^n)}{\mu(B_2^n)}$.
    \item If $K$ and $L$ are both in John position, one can take $\frac{\mu(K)}{\nu(L)}\leq n^{\frac{\alpha}{2}}\frac{\mu(B_2^n)}{\nu(B_2^n)}$ and $\frac{\mu(K)}{\mu(L)}\leq n^{\frac{\alpha}{2}}.$
\end{enumerate}
\end{remark}

Furthermore, the question does not make sense for log-concave measures, e.g. the Gaussian measure, as one can find bodies such that $h_{\Pi_{\gamma_n}K}(\theta)\geq h_{\Pi_{\gamma_n}L}(\theta)$ and yet $\gamma_n(K)\le \gamma_n(L),$ even in the case when $K$ and $L$ are projection bodies (in fact, one can consider dilates of the Euclidean ball). We conclude this section with stability results for the above isomorphic Shephard Problem, focusing on the case where $\nu$ is $\beta$-homogeneous, $\beta>0$ and $\mu$ is $\alpha\in(0,1)\cup(1,\infty)$-homogeneous and $1/\alpha$-concave. We will use a technique inspired by the stability results in \cite{AK05}.
\begin{theorem}[Stability in Isomorphic Shephard]
Let Borel measures $\mu,\nu$ with continuous densities be such that $\nu$ is $\beta$-homogeneous, $\beta>0$ and $\mu$ is $\alpha\in(0,1)\cup(1,\infty)$-homogeneous and $1/\alpha$-concave. Fix $\epsilon<<1$ and consider symmetric $K,L\in\conbod_0$ such that
$$h_{\Pi_{\mu} K}(\theta)\leq h_{\Pi_{\nu} L}(\theta)-\epsilon.$$
Then,
$$\mu(K)^{1-\frac{1}{\alpha}}\leq \frac{\beta}{\alpha}\frac{\nu(L)}{\mu(L)}d_{\Pi}(L)\mu(L)^{1-\frac{1}{\alpha}}-C(\mu)\epsilon,$$
where $C(\mu)$ depends on $\mu$ only. In particular, if $\mu=\nu,$ then
$$\mu(K)^{1-\frac{1}{\alpha}}\leq d_{\Pi}(L)\mu(L)^{1-\frac{1}{\alpha}}-C(\mu)\epsilon.$$
\end{theorem}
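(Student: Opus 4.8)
The plan is to re-run the argument for Lemma~\ref{l:q_3}, carrying the gap $\epsilon$ through the step where one integrates against a projection body. As in the proof of Lemma~\ref{l:q_0}, fix a projection body $M$ with $M\subset L\subset d_{\Pi}(L)M$ and let $\nu_M$ be the finite \emph{positive} measure on $\s^{n-1}$ furnished by Lemma~\ref{l:proj_meas}, so that $\int_{\s^{n-1}}|\langle\theta,u\rangle|\,d\nu_M(\theta)=h_M(u)$. Integrating the hypothesis $h_{\Pi_{\mu}K}(\theta)\le h_{\Pi_{\nu}L}(\theta)-\epsilon$ against $\nu_M$ and using Fubini together with Equation~\ref{eq:arb_mixed} gives $\int_{\s^{n-1}}h_{\Pi_{\mu}K}\,d\nu_M=\tfrac12\mu(K,M)$ and likewise for $L$, hence $\mu(K,M)\le\nu(L,M)-2\epsilon\,\nu_M(\s^{n-1})$ (so one need not pass to $C^2_+$ bodies at all). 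Using $h_L\le d_{\Pi}(L)h_M$ in the left-hand mixed measure and $h_M\le h_L$ in the right-hand one yields
\[\mu(K,L)\le d_{\Pi}(L)\,\nu(L,L)-2\epsilon\,d_{\Pi}(L)\,\nu_M(\s^{n-1}).\]

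Next I homogenize exactly as in Lemma~\ref{l:q_1}: multiply by $t^{\beta-1}$, use that $\nu(\cdot,L)$ is $(\beta-1)$-homogeneous, and integrate over $t\in(0,1)$ via Equation~\ref{eq:int_meas}, which gives $\mu(K,L)\le\beta d_{\Pi}(L)\nu(L)-2\epsilon\,d_{\Pi}(L)\,\nu_M(\s^{n-1})$ (the $\beta$ cancels in the $\epsilon$-term). Because $\mu$ is $\alpha$-homogeneous and $1/\alpha$-concave, Equation~\ref{eq:mr} gives $\mu(K,L)\ge\alpha\,\mu(K)^{1-1/\alpha}\mu(L)^{1/\alpha}$; dividing the previous inequality by $\alpha\mu(L)^{1/\alpha}$ produces
\[\mu(K)^{1-\frac1\alpha}\le\frac{\beta}{\alpha}\,\frac{\nu(L)}{\mu(L)}\,d_{\Pi}(L)\,\mu(L)^{1-\frac1\alpha}\;-\;\frac{2\,d_{\Pi}(L)\,\nu_M(\s^{n-1})}{\alpha\,\mu(L)^{1/\alpha}}\;\epsilon.\]
When $\mu=\nu$ one has $\beta=\alpha$ and $\nu(L)=\mu(L)$, which is the asserted special case.

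It remains to show that the coefficient of $\epsilon$ has a lower bound $C(\mu)$ independent of $K,L$ and $\epsilon$; this is the real content. Taking $g\equiv1$ in Lemma~\ref{l:proj_meas} gives $\nu_M(\s^{n-1})=c_n^{-1}\int_{\s^{n-1}}h_M\,du$ with $c_n:=\int_{\s^{n-1}}|\langle u,v\rangle|\,dv$ (independent of $u$), and $h_M\ge d_{\Pi}(L)^{-1}h_L$ then gives $d_{\Pi}(L)\,\nu_M(\s^{n-1})\ge c_n^{-1}\int_{\s^{n-1}}h_L\,du$; so it suffices to bound the scale-invariant ratio $\int_{\s^{n-1}}h_L\,du\big/\mu(L)^{1/\alpha}$ from below. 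Let $\mathcal E\subset L\subset\sqrt n\,\mathcal E$ be the John ellipsoid of $L$ (symmetric, hence centred), with largest semiaxis $a$ in direction $e$. Then $\int_{\s^{n-1}}h_L\,du\ge\int_{\s^{n-1}}h_{\mathcal E}\,du\ge a\int_{\s^{n-1}}|\langle u,e\rangle|\,du=a\,c_n$, while the $(\alpha-n)$-homogeneity of the density $\phi$ of $\mu$ and the bound $\rho_{\mathcal E}\le a$ give $\mu(L)\le\mu(\sqrt n\,\mathcal E)=n^{\alpha/2}\mu(\mathcal E)\le n^{\alpha/2}\alpha^{-1}a^{\alpha}\int_{\s^{n-1}}\phi(\theta)\,d\theta$, the last integral being finite since $\phi$ is continuous on the compact sphere. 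The factors of $a$ cancel, so the coefficient of $\epsilon$ is at least $C(\mu):=2\,\alpha^{\frac1\alpha-1}\,n^{-1/2}\big(\int_{\s^{n-1}}\phi\big)^{-1/\alpha}$, a quantity depending only on $\mu$ (and the dimension).

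The uniformity step just described is the only place I anticipate difficulty; everything preceding it is a bookkeeping variant of Lemmas~\ref{l:q_0}, \ref{l:q_1} and \ref{l:q_3}. What makes it work is the conjunction of two facts special to the homogeneous setting: the $\alpha$-homogeneity of $\phi$ caps $\mu(\mathcal E)$ by a power of the circumradius of $\mathcal E$, and $\int_{\s^{n-1}}h_L\,du/\mu(L)^{1/\alpha}$ is scale-invariant, so no normalization of $L$ is required and the constant can be pinned to $\mu$ alone — which is also why a log-concave $\mu$ such as the Gaussian is genuinely excluded, in agreement with the remark in the text.
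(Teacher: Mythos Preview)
Your argument is correct and follows the paper's line through the step
\[
\mu(K)^{1-\frac1\alpha}\le\frac{\beta}{\alpha}\frac{\nu(L)}{\mu(L)}d_\Pi(L)\,\mu(L)^{1-\frac1\alpha}-\frac{\text{const}}{\mu(L)^{1/\alpha}}\Big(\int_{\s^{n-1}}h_L\Big)\epsilon,
\]
the only bookkeeping difference being that you integrate against the generating measure $\nu_M$ directly rather than passing through $C^2_+$ approximations and the cosine transform.

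Where you genuinely diverge from the paper is in bounding the ratio $\big(\int_{\s^{n-1}}h_L\big)/\mu(L)^{1/\alpha}$ from below. You do this by hand: John's ellipsoid gives $\int h_L\gtrsim a$ and, via polar coordinates and the $(\alpha-n)$-homogeneity of $\phi$, $\mu(L)^{1/\alpha}\lesssim a\cdot(\int_{\s^{n-1}}\phi)^{1/\alpha}$, so the $a$'s cancel and you land on an explicit constant $C(\mu)=2\alpha^{1/\alpha-1}n^{-1/2}(\int_{\s^{n-1}}\phi)^{-1/\alpha}$. The paper instead invokes its own weighted Minkowski existence theorem (Theorem~\ref{t:min_exist_uni}): it produces a symmetric convex body $R$ with $dS^\mu_R(u)=\tfrac{1}{\kappa_{n-1}}\chi_{P}(u)\,du$, where $P=\s^{n-1}\cap\operatorname{supp}(\phi)$, recognizes the $\epsilon$-term as $\tfrac{\epsilon}{\alpha\mu(L)^{1/\alpha}}\mu(R,L)$, and then applies the weighted Minkowski inequality~\eqref{eq:mr} once more to obtain $C(\mu)=\mu(R)^{1-1/\alpha}$. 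Your route is more elementary and yields an explicit dimensional constant, and it sidesteps the evenness hypothesis on $\mu$ that Theorem~\ref{t:min_exist_uni} requires; the paper's route is more self-contained within the weighted theory it develops and gives a constant with an intrinsic geometric interpretation.
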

\begin{proof}
By approximation, we may assume $K$ and $L$ are of class $C^2_+$. Start by writing that $\epsilon=\frac{1}{\kappa_{n-1}}\ct{\epsilon}{\theta}$, and therefore we can write our hypothesis as
$$\ct{\left(\phi\circ n_K^{-1}\right)f_K}{\theta} \leq \ct{\left(\psi\circ n_L^{-1}\right)f_L}{\theta}-\frac{1}{\kappa_{n-1}}\ct{\epsilon}{\theta}.$$
We can again treat $\phi\circ n^{-1}_K$ and $\psi\circ n^{-1}_L$ as even functions.

Approximating $L$ by a projection body $M$ in the sense that $M\subset L \subset d_{\Pi}(L) M,$ and then integrating by the Borel measure associated with $h_M$ from Lemma~\ref{l:proj_meas} yields
\begin{align*}\int_{\s^{n-1}}h_M(u)\phi\left(n_{K}^{-1}(u)\right) f_{K}(u) du \leq \int_{\s^{n-1}} &h_M(u) \psi\left(n_L^{-1}(u)\right)f_L (u)du 
\\
&- \frac{\epsilon}{\kappa_{n-1}}\int_{\s^{n-1}}h_M(u)du.\end{align*}
Using the conditions on $M$, we have
\begin{align*}\frac{1}{d_{\Pi}(L)}\int_{\s^{n-1}}h_L(u)\phi\left(n_{K}^{-1}(u)\right) f_{K}(u) du \leq \int_{\s^{n-1}} &h_L(u) \psi\left( n_L^{-1}(u)\right)f_L (u)du
\\
&-\frac{\epsilon}{\kappa_{n-1}d_{\Pi}(L)}\int_{\s^{n-1}}h_L(u).
\end{align*}
Upon re-arrangement, this becomes
$$\mu(K,L)\leq d_{\Pi}(L)\nu(L,L)-\frac{\epsilon}{\kappa_{n-1}}\int_{\s^{n-1}}h_L(u)du.$$
Using the homogeneity of $\nu$, we have $\nu(L,L)=\beta\nu(L).$ We can also use Minkowski's inequality for $\alpha$-homogeneous, $1/\alpha$-concave measures in the form Equation~\ref{eq:mr} to obtain
$$\alpha\mu(K)^{1-\frac{1}{\alpha}} \mu(L)^{\frac{1}{\alpha}}\leq \beta d_{\Pi}(L)\nu(L)-\frac{\epsilon}{\kappa_{n-1}}\int_{\s^{n-1}}h_L(u)du.$$
Upon re-arrangement, this can be written as
$$\mu(K)^{1-\frac{1}{\alpha}} \leq \frac{\beta}{\alpha}\frac{\nu(L)}{\mu(L)} d_{\Pi}(L)\mu(L)^{1-\frac{1}{\alpha}}-\frac{1}{\mu(L)^{1/\alpha}}\frac{\epsilon}{\alpha\kappa_{n-1}}\int_{\s^{n-1}}h_L(u)du.$$

Next, let $P=\s^{n-1}\cap \text{supp}(\phi)$. Notice that from the definition of $P$, it depends on $\mu$ only. We can then write that
\begin{align*}-\frac{1}{\mu(L)^{1/\alpha}}\frac{\epsilon}{\alpha\kappa_{n-1}}\int_{\s^{n-1}}h_L(u)&\leq -\frac{1}{\mu(L)^{1/\alpha}}\frac{\epsilon}{\alpha\kappa_{n-1}}\int_{P}h_L(u)
\\
&= -\frac{1}{\mu(L)^{1/\alpha}}\frac{\epsilon}{\alpha\kappa_{n-1}}\int_{\s^{n-1}}h_L(u)\chi_P(u).
\end{align*}

By Theorem~\ref{t:min_exist_uni}, there exists a symmetric convex body $R$ such that $\frac{1}{\kappa_{n-1}}\chi_P(u)du=dS^{\mu}_{R}(u).$ Therefore, we obtain, by using again Equation~\ref{eq:mr}, that
$$-\frac{1}{\mu(L)^{1/\alpha}}\frac{\epsilon}{\alpha\kappa_{n-1}}\int_{\s^{n-1}}h_L(u)\leq -\frac{\epsilon}{\alpha\mu(L)^{1/\alpha}}\mu(R,L)\leq -\epsilon \mu(R)^{1-1/\alpha}.$$
Setting $C(\mu)=\mu(R)^{1-1/\alpha}$ yields the result.
\end{proof}

\section{Applications to Capacity}
\label{s:con}
We begin this section by classifying projection bodies as the measure varies and the convex body is fixed. In order to complete this endeavour, we must discuss harmonic functions. All of the following concerning harmonic functions can be found in \cite{Evans}. Let $\Omega$ be a connected, bounded domain. If $\triangle$ denotes the Laplacian operator, we say $u\in C^2(\overline{\Omega})$ is $\textit{harmonic}$ if $\triangle u = 0$ on $\Omega$.
It can be shown that a $C^2$ function $u$ is harmonic if, and only if, $u$ satisfies the \textit{mean value property}:
For every $B(x,r)\subset \Omega,$ (where $B(x,r)$ is the ball of radius $r$ centered at $x$) one has
\begin{equation}
    u(x)=\frac{1}{r^n\kappa_n}\int_{B(x,r)}u(y)dy=\frac{1}{r^{n-1}n\kappa_n}\int_{\partial B(x,r)}u(y)dy.
    \label{eq:mean_value}
\end{equation}
We see from Equation~\ref{eq:mean_value} that the value on the boundary of a set determines the value in the interior of a set for a harmonic function; using this, one can easily show the following maximum principle: suppose $u,v\in C(\overline{\Omega})$ such that $u$ and $v$ are harmonic, if $u\leq v$ on $\partial \Omega$, then $u\leq v$ on $\Omega.$ Next, consider some $f\in C(\partial \Omega)$. Then, the following is \textit{the Dirichlet problem} for $\Omega$ with boundary data $f$:
\begin{equation}
    \begin{cases}
    \triangle u = 0 \; &\text{on } \Omega
    \\
    u=f \; &\text{on } \partial \Omega.
    \end{cases}
    \label{dirichlet}
\end{equation}
If a solution exists, then it is unique; the conditions for existence are technical, but if $\Omega$ is bounded and convex, then the Dirichlet problem is solvable.

\begin{definition}
Fix a symmetric $K\in\conbod_0$ and an even Borel measure $\mu$ with continuous density $\phi$. Then, we define $\mathcal{P}_{K}$ as the set of even Borel measures with continuous densities such that $\nu\in \mathcal{P}_{K} \mu \leftrightarrow \Pi_{\mu} K = \Pi_{\nu}K.$ 
\end{definition}
In the case where $K$ is of class $C^2_+$ and $\phi$ is continuous, one has $\nu\in \mathcal{P}_{K} \mu$ with continuous density $\psi$ if, and only if, $$\ct{\left(\phi\circ n^{-1}_K\right)dS_K}{\theta}=\ct{\left(\psi\circ n^{-1}_K\right)dS_K}{\theta}$$
for all $\theta\in\s^{n-1}.$ Since $K$ is symmetric and $\mu$ and $\nu$ are even, $\psi\circ n^{-1}_K, \phi\circ n^{-1}_K$  and $f_K$ are even functions; we have, from the uniqueness theorem of the cosine transform, Lemma~\ref{l:funk}, that $(\phi\circ n^{-1}_K)(\theta)=(\psi\circ n^{-1}_K)(\theta)$ for almost all $\theta\in\s^{n-1}$. But, identifying $y\in\partial K$ with $n^{-1}_K(\theta)$ one has that $\phi=\psi$ for almost all $y\in\partial K.$ However, equality on the boundary of $K$ does not yield equality on all of $K$, for example $\psi(x)=\phi(x)\|x\|_K.$ If $\nu_1,\nu_2\in \mathcal{P}_{K} \mu$, all we know is that $\nu_1(\partial K)=\nu_2(\partial K)$ and $\diff{\nu_1}{x}=\diff{\nu_2}{x}$ a.e on $\partial K$. If we further suppose that the densities of $\nu_1$ and $\nu_2$ are harmonic on $\text{int}(K)$, then this equality extends to all of $K$ via the Dirichlet problem, Equation~\ref{dirichlet}. We collect these results in the following theorem, where we see that harmonic measures play the role of the Blaschke body.
\begin{proposition}
Let a symmetric convex body $K$ be of class $C^2_+$ and consider an even Borel measure $\mu$ with density that is continuous on $\partial K$. Then there exists a unique even Borel measure $\nu\in \mathcal{P}_{K} \mu$ such that the density of $\nu$ is harmonic on $K$.
\end{proposition}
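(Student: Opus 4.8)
The plan is to reduce membership in $\mathcal{P}_{K}\mu$ to an equality of densities on $\partial K$, and then invoke the solvability and uniqueness of the Dirichlet problem on $\mathrm{int}(K)$, exactly as foreshadowed in the paragraph preceding the statement.

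First I would record the characterization of $\mathcal{P}_{K}\mu$. Since $K\in C^2_+$ is symmetric and $\mu,\nu$ are even, the functions $\phi\circ n_K^{-1}$, $\psi\circ n_K^{-1}$ and $f_K$ on $\s^{n-1}$ are all even; hence, by Definition~\ref{def:cos}, Equation~\ref{eq:supp_cos}, and the uniqueness part of Lemma~\ref{l:funk}, the identity $\Pi_\mu K=\Pi_\nu K$ is equivalent to $(\phi\circ n_K^{-1})f_K=(\psi\circ n_K^{-1})f_K$ almost everywhere on $\s^{n-1}$. Dividing by $f_K>0$ and transporting through the diffeomorphism $n_K\colon\partial K\to\s^{n-1}$, this says precisely that a measure $\nu$ with density $\psi$ lies in $\mathcal{P}_{K}\mu$ if and only if $\psi=\phi$ $\mathcal{H}^{n-1}$-almost everywhere on $\partial K$. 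Thus the task is to produce, and show unique, an even measure whose density on $K$ is harmonic with boundary trace $\phi|_{\partial K}$.

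For existence, $\phi$ is continuous on $\partial K$ by hypothesis and $\partial K$ is a $C^2$ hypersurface, so $\mathrm{int}(K)$ being bounded and convex, the Dirichlet problem~\eqref{dirichlet} with data $\phi|_{\partial K}$ admits a solution $\psi\in C^2(\mathrm{int}(K))\cap C(\overline K)$. Because $\phi$ is even and $K=-K$, the data is invariant under $y\mapsto -y$; since $x\mapsto\psi(-x)$ is harmonic with the same data, uniqueness of~\eqref{dirichlet} forces $\psi(-x)=\psi(x)$ on $\overline K$. Extend $\psi$ to an even, continuous, locally integrable function on $\R^n$ (for instance by $\psi(x)=\psi(x/\|x\|_K)$ for $x\notin K$), and let $\nu$ be the Borel measure with this density. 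Then $\nu$ is even, its density equals $\phi$ on $\partial K$ so $\nu\in\mathcal{P}_{K}\mu$, and its density is harmonic on $K$ by construction.

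For uniqueness, suppose $\nu_1,\nu_2\in\mathcal{P}_{K}\mu$ have densities $\psi_1,\psi_2$ that are harmonic on $K$ (in particular $\psi_i\in C^2(\overline K)$). By the characterization above, $\psi_1=\phi=\psi_2$ $\mathcal{H}^{n-1}$-a.e. on $\partial K$; since $\psi_1,\psi_2$ are continuous on $\partial K$ and $\mathcal{H}^{n-1}$ charges every nonempty relatively open subset of $\partial K$, we get $\psi_1=\psi_2$ everywhere on $\partial K$. Both are harmonic on $\mathrm{int}(K)$ with identical boundary values, so the maximum principle (uniqueness of~\eqref{dirichlet}) gives $\psi_1=\psi_2$ on $K$, i.e. the density on $K$ is uniquely determined. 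The one point requiring genuine care is this last a.e.-to-everywhere upgrade on $\partial K$, which is exactly where the continuity hypothesis on the density is used; the remainder is the standard Dirichlet theory quoted before the statement. As usual, the densities are unconstrained off $K$, which is immaterial since $\Pi_\mu K$ depends only on $S^\mu_K$, hence only on the density along $\partial K$.
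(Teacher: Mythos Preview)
Your proof is correct and follows essentially the same route as the paper: the paragraph preceding the proposition already reduces membership in $\mathcal{P}_K\mu$ to equality of densities on $\partial K$ via Lemma~\ref{l:funk}, and then appeals to solvability and uniqueness of the Dirichlet problem on the convex domain $\mathrm{int}(K)$. You add welcome detail the paper omits---the symmetry argument giving evenness of the harmonic extension, and the continuity-based upgrade from a.e.\ to pointwise equality on $\partial K$---and you correctly flag that uniqueness is only of the density restricted to $K$, which is all either argument actually establishes.
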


We next introduce capacity; the following can be found in \cite{EvansGar}. The $p$-capacity of a bounded domain is defined as
\begin{equation}
\capa{p}{\Omega}=\inf \left\{\int_{\R^{n}}|\nabla v|^{p} d x: v \in C_{c}^{\infty}\left(\R^{n}\right), v(x) \geq 1 \; \forall \; x \in \Omega\right\}
 \end{equation}
 where $C_{c}^{\infty}\left(\R^{n}\right)$ is the set of functions from $C^{\infty}\left(\R^{n}\right)$ with compact support. In the case of $p=2$, $\capa{2}{\Omega}$ is the Newton capacity of $\Omega.$
 The $p$-capacity of a set is related to the $p$-Dirichlet problem:
 suppose $u$ decays at infinity, i.e. $\lim_{|x|\to\infty}u(x)=0$, and $\Omega$ is a bounded domain. Then, the $p$-Dirichlet problem is the following system:
 \begin{equation}
 \begin{cases}
     \text{div}(|\nabla u|^{p-2}\nabla u) =0, \; &\text{in }\R^n\setminus\Omega
     \\
     u(x) = 1 \; &\text{on } \partial \Omega
\end{cases}
 \end{equation}
  The solution to the $p$-Dirichlet problem is called the \textit{p-capacitary function} and satisfies $0 < u <1$ and $|\nabla u| \neq 0 $ in $\R^n\setminus \overline{\Omega}.$ 
The solution to the $p$-Dirichlet problem generates a Borel measure on $\s^{n-1}$: let $E$ be a Borel subset of $\s^{n-1}$. Then, the $p$-capacitary measure of $\Omega$ is given by \cite{CJL96, CNSXYZ15, CS03, DJ96_2}:
 $$\mu_p(\Omega,E)=(p-1)\int_{n_\Omega^{-1}(E)}|\nabla u(y)|^p dy.$$
The Poincar{\'e} formula for $p$-capacity for $1<p<n$ is given by \cite{CNSXYZ15}:
 \begin{equation*}
 \begin{split}
     C_p(\Omega)&=\frac{1}{n-p}\int_{\s^{n-1}}h_{\Omega}(\xi)d\mu_p(\Omega,\xi)
     \\
     &=\frac{p-1}{n-p}\int_{\s^{n-1}}h_{\Omega}(\xi)|\nabla u(n_\Omega^{-1}(\xi))|^p dS_\Omega(\xi),
     \end{split}
 \end{equation*}
 where the second equality holds if $\Omega$ is a strictly convex domain. And it is truly amazing that this is essentially the same formula for volume. In fact, if $\overline{\Omega}$ is a convex body, then the above is just $\frac{1}{n-p}\tilde{\mu}(\overline{\Omega},\overline{\Omega})$ where $\tilde{\mu}$ is the measure whose density is given by $(p-1)|\nabla u(x)|^p$. There exists an analogue of mixed volume for capacity. Let $\Omega_{0}, \Omega_{1}$ be bounded convex domains in $\R^{n}$. The mixed $p$-capacity of $\Omega_{0}$ and $\Omega_{1}$ is given by
$$
C_{p}\left(\Omega_{0}, \Omega_{1}\right)=\frac{1}{n-p} \int_{\s^{n-1}} h_{\Omega_{1}}(\xi) d \mu_{p}\left(\Omega_{0}, \xi\right)
$$
One has, $C_{p}\left(\Omega_{0}, \Omega_{0}\right)=C_{p}\left(\Omega_{0}\right) .$ The following analogue of Minkowski's inequality for $p$ -capacity in the context of bounded convex domains was shown in \cite{CNSXYZ15}:

\begin{lemma}[Minkowski's Inequality for Capacity]
\label{l:min_cap}
Suppose $1<p<n .$ Let $\Omega_{0}, \Omega_{1}$ be bounded convex domains in $\R^{n}$. Then,
$$
C_{p}\left(\Omega_{0}, \Omega_{1}\right)^{n-p} \geq C_{p}\left(\Omega_{0}\right)^{n-p-1} C_{p}\left(\Omega_{1}\right)
$$
with equality if and only if $\Omega_{0}, \Omega_{1}$ are homothetic.
\end{lemma}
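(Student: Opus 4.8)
The plan is to derive this exactly as the classical Minkowski inequality \eqref{eq:min_ineq} is derived from the Brunn--Minkowski inequality for volume, with two bookkeeping changes: the homogeneity degree $n$ is replaced throughout by $n-p$ (since $C_p(t\Omega)=t^{n-p}C_p(\Omega)$ for $t>0$), and the role played by the surface area measure $S_K$ is played by the $p$-capacitary measure $\mu_p(\Omega_0,\cdot)$. The two analytic ingredients are (i) the Brunn--Minkowski inequality for $p$-capacity and (ii) the Hadamard-type first variation formula for $C_p$.

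First I would recall the Brunn--Minkowski inequality for $p$-capacity (due to Borell and Caffarelli--Jerison--Lieb for $p=2$, and Colesanti--Salani for $1<p<n$): for bounded convex domains and $t\in[0,1]$ one has $C_p((1-t)\Omega_0+t\Omega_1)^{\frac{1}{n-p}}\ge (1-t)C_p(\Omega_0)^{\frac{1}{n-p}}+tC_p(\Omega_1)^{\frac{1}{n-p}}$, with equality if and only if $\Omega_0$ and $\Omega_1$ are homothetic. Using the $(n-p)$-homogeneity of $C_p$ — e.g.\ by applying the above to $2\Omega_0$ and $2\epsilon\Omega_1$ with $t=\tfrac12$ — one passes to the additive form $C_p(\Omega_0+\epsilon\Omega_1)^{\frac{1}{n-p}}\ge C_p(\Omega_0)^{\frac{1}{n-p}}+\epsilon\, C_p(\Omega_1)^{\frac{1}{n-p}}$ for all $\epsilon\ge 0$. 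Then I would invoke the first variation formula $\diff{}{\epsilon}C_p(\Omega_0+\epsilon\Omega_1)\big|_{\epsilon=0^+}=(n-p)C_p(\Omega_0,\Omega_1)$, which is precisely the statement that $\mu_p(\Omega_0,\cdot)$ governs the deformation of $p$-capacity under Minkowski perturbation. This is the analytic heart of the matter, and establishing it rigorously (it rests on the regularity theory of the $p$-Laplacian and a delicate deformation argument for the $p$-capacitary function) is the step I expect to be the main obstacle; in the present paper it is quoted from \cite{CNSXYZ15}.

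With these in hand the inequality follows formally: raising the additive form to the power $n-p$ and expanding gives $C_p(\Omega_0+\epsilon\Omega_1)\ge C_p(\Omega_0)+(n-p)\epsilon\,C_p(\Omega_0)^{\frac{n-p-1}{n-p}}C_p(\Omega_1)^{\frac{1}{n-p}}+O(\epsilon^2)$; subtracting $C_p(\Omega_0)$, dividing by $\epsilon$, and letting $\epsilon\to 0^+$, the left side tends to $(n-p)C_p(\Omega_0,\Omega_1)$ by the variation formula, whence $C_p(\Omega_0,\Omega_1)\ge C_p(\Omega_0)^{\frac{n-p-1}{n-p}}C_p(\Omega_1)^{\frac{1}{n-p}}$; raising to the power $n-p$ yields the claim.

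For equality: if $C_p(\Omega_0,\Omega_1)^{n-p}=C_p(\Omega_0)^{n-p-1}C_p(\Omega_1)$, then every inequality used must be an equality, so in particular the additive Brunn--Minkowski inequality holds with equality for all small $\epsilon$; reverting to the $(1-t),t$ normalization via homogeneity, the Brunn--Minkowski inequality for $C_p$ is then an equality, which forces $\Omega_0$ and $\Omega_1$ to be homothetic. Conversely, if $\Omega_1=a\Omega_0+b$ then both sides agree by the $(n-p)$-homogeneity and translation invariance of $C_p$ (the contribution of the translation $b$ drops out since $\int_{\s^{n-1}}\langle b,\xi\rangle\, d\mu_p(\Omega_0,\xi)=0$).
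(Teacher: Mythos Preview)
The paper does not prove this lemma; it is quoted from \cite{CNSXYZ15} without argument. Your outline is precisely the approach taken in that reference (Brunn--Minkowski for $p$-capacity due to Colesanti--Salani, combined with the Hadamard variational formula for $C_p$), and it is correct in substance.

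One small point on the equality case: from equality in the differentiated inequality you only directly obtain that the right derivative of $g(\epsilon):=C_p(\Omega_0+\epsilon\Omega_1)^{1/(n-p)}$ at $\epsilon=0$ equals $C_p(\Omega_1)^{1/(n-p)}$; it does not \emph{a priori} follow that ``every inequality used must be an equality'' for all small $\epsilon$. The missing step is to invoke the concavity of $g$ explicitly: since $g$ is concave and lies above the line $\epsilon\mapsto g(0)+\epsilon\,C_p(\Omega_1)^{1/(n-p)}$, while having the same right derivative at $0$, the tangent-line inequality $g(\epsilon)\le g(0)+\epsilon g'(0)$ forces $g$ to coincide with that line, whence equality in the Brunn--Minkowski inequality for $p$-capacity and hence homothety. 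With that clarification your argument is complete.
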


  \begin{question}
 \label{q:cap}
 Let $p,q\in (1,n)$. Consider symmetric convex bodies $K$ and $L$. Let $u$ be the $p$-harmonic solution to the $p$-Dirichlet problem for $\textnormal{int}(K)$ and $v$ be the $q$-harmonic solution to the $q$-Dirichlet problem for $\textnormal{int}(L)$. Let $\mu$ be the measure with density $|\nabla u|^p$ and $\nu$ be the measure with density $|\nabla v|^q$. If
 $$h_{\Pi_\mu K} (\theta)\leq h_{\Pi_\nu L} (\theta),$$
 does there exist a quantity $\mathcal{M}>0$ such that
 $C_p(\textnormal{int} (K))\leq \mathcal{M}C_p(\textnormal{int}(L))?$
 \end{question}

 \begin{theorem}
 Under the notation and assumptions of Question~\ref{q:cap}, one has, if $p<n-1$:
\begin{equation*}
    C_p(\text{int} (K)) \leq\left(\frac{p-1}{n-p}\frac{n-q}{q-1}d_{\Pi}(L)\right)^{\frac{n-p}{n-p-1}} \left(\frac{C_q(\text{int}(L))}{C_p(\text{int}(L))}\right)^\frac{1}{n-p-1}C_q(\text{int}(L)),
\end{equation*}
and if $p\in(n-1,n)$:
\begin{equation*}
    C_q(\text{int}(L)) \leq \left(\frac{p-1}{n-p}\frac{n-q}{q-1}d_{\Pi}(L)\right)^{\frac{n-p}{p-(n-1)}} \left(\frac{C_q(\text{int}(L))}{C_p(\text{int}(L))}\right)^\frac{1}{p-(n-1)} C_p(\text{int} (K)).
\end{equation*}
In particular, if $p=q$ the constants simplify to $d_{\Pi}(L)$ raised to a power. If additionally $L$ is a projection body, then we can conclude
$C_p(\text{int} (K)) \leq C_p(\text{int} (L))$ if $p\in(1,n-1)$ and, surprisingly, $C_p(\text{int} (L)) \leq C_p(\text{int} (K))$ if $p\in (n-1,n)$.
 \end{theorem}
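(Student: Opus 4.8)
The plan is to run the cosine--transform argument underlying Lemma~\ref{l:q_0}, translate the two mixed measures it produces into mixed $p$-- and $q$--capacities, and then apply Minkowski's inequality for capacity, Lemma~\ref{l:min_cap}. The case split $p<n-1$ versus $p\in(n-1,n)$ will only appear at the end, when we decide whether to solve the resulting master inequality for $C_p(\text{int}(K))$ (positive exponent $n-p-1$) or to rearrange it so that $C_q(\text{int}(L))$ sits on the left with a positive exponent (negative exponent $n-p-1$).

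First I would reduce, by the usual approximation, to the case $K,L\in C^2_+$, using that $p$--capacity is continuous with respect to the Hausdorff metric on convex bodies and that $\Pi_\mu$ is continuous. In this case, boundary regularity for the $p$--Laplace equation makes $\partial K$ lie in the Lebesgue set of $\phi:=|\nabla u|^p$ and $\partial L$ in the Lebesgue set of $\psi:=|\nabla v|^q$; since $K=-K$ and $L=-L$, the capacitary functions $u,v$ are even, hence so are $\phi,\psi$. Unwinding the definition of the $p$--capacitary measure, $\mu_p(\text{int}(K),E)=(p-1)\int_{n_K^{-1}(E)}|\nabla u|^p\,dy=(p-1)S^\mu_K(E)$, and of the mixed $p$--capacity, one obtains the two identities
$$\mu(K,L)=\int_{\s^{n-1}}h_L\,dS^\mu_K=\frac{n-p}{p-1}\,C_p(\text{int}(K),\text{int}(L)),\qquad \nu(L,L)=\frac{n-q}{q-1}\,C_q(\text{int}(L)),$$
where the second uses $C_q(\text{int}(L),\text{int}(L))=C_q(\text{int}(L))$.

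Next I would extract the master inequality. Running the proof of Lemma~\ref{l:q_0} unchanged---approximate $L$ by a projection body $M$ with $M\subset L\subset d_\Pi(L)M$, write the hypothesis $h_{\Pi_\mu K}\le h_{\Pi_\nu L}$ in cosine--transform form, integrate it against the Borel measure attached to $h_M$ by Lemma~\ref{l:proj_meas}, and use the sandwich on $M$---gives $\mu(K,L)\le d_\Pi(L)\,\nu(L,L)$, i.e.
$$C_p(\text{int}(K),\text{int}(L))\le\frac{p-1}{n-p}\,\frac{n-q}{q-1}\,d_\Pi(L)\,C_q(\text{int}(L)).$$
Feeding this into Minkowski's inequality for capacity, $C_p(\text{int}(K),\text{int}(L))^{n-p}\ge C_p(\text{int}(K))^{n-p-1}C_p(\text{int}(L))$, produces the master inequality
$$C_p(\text{int}(K))^{n-p-1}\,C_p(\text{int}(L))\le\left(\frac{p-1}{n-p}\,\frac{n-q}{q-1}\,d_\Pi(L)\right)^{n-p}C_q(\text{int}(L))^{n-p}.$$
If $p<n-1$, then $n-p-1>0$: take $(n-p-1)$--th roots and use $C_q(\text{int}(L))^{(n-p)/(n-p-1)}=C_q(\text{int}(L))\cdot C_q(\text{int}(L))^{1/(n-p-1)}$ to arrive at the first displayed inequality. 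If $p\in(n-1,n)$, then $n-p-1<0$: writing $n-p-1=-(p-(n-1))$, multiply through by $C_q(\text{int}(L))^{-(n-p)}$, take $(p-(n-1))$--th roots, and rearrange to arrive at the second displayed inequality. Finally, for $p=q$ the two factors $\tfrac{p-1}{n-p}\tfrac{n-q}{q-1}$ and $\tfrac{C_q(\text{int}(L))}{C_p(\text{int}(L))}$ are both $1$, so the constants collapse to a power of $d_\Pi(L)$; and if moreover $L$ is a projection body, then $d_\Pi(L)=1$, so the master inequality reads $C_p(\text{int}(K))^{n-p-1}\le C_p(\text{int}(L))^{n-p-1}$. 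Since $x\mapsto x^{n-p-1}$ is increasing for $p<n-1$ and decreasing for $p\in(n-1,n)$, this yields $C_p(\text{int}(K))\le C_p(\text{int}(L))$ in the first case and $C_p(\text{int}(L))\le C_p(\text{int}(K))$ in the second.

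The main obstacle is bookkeeping rather than conceptual. One must check that the cosine--transform integration of Lemma~\ref{l:q_0} really applies to the capacitary measures $\mu_p(\text{int}(K),\cdot)$ and $\mu_q(\text{int}(L),\cdot)$---this needs $\partial K$ (resp.\ $\partial L$) to lie in the Lebesgue set of $|\nabla u|^p$ (resp.\ $|\nabla v|^q$), which is clear for $C^2_+$ bodies by boundary regularity for the $p$--Laplacian and is inherited in general by approximation through the continuity of $C_p$---and one must keep careful track of the direction of each inequality when raising to powers whose sign flips at $p=n-1$. A minor point worth flagging: here $\mu$ and $\nu$ depend on $K$ and on $L$ respectively through their own capacitary functions, but every lemma invoked treats $\mu$ and $\nu$ as fixed data, so this causes no difficulty.
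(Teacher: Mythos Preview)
Your proposal is correct and follows essentially the same approach as the paper: approximate by $C^2_+$ bodies, invoke Lemma~\ref{l:q_0} to obtain $\mu(K,L)\le d_\Pi(L)\,\nu(L,L)$, rewrite both sides as mixed capacities to get $C_p(\text{int}(K),\text{int}(L))\le\frac{p-1}{n-p}\frac{n-q}{q-1}d_\Pi(L)\,C_q(\text{int}(L))$, and then combine with Lemma~\ref{l:min_cap}. Your write-up is in fact more explicit than the paper's about the final root-extraction and sign split at $p=n-1$, and about why the capacitary densities meet the Lebesgue-set hypothesis.
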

 \begin{proof}
Via approximation, one can suppose that $K$ and $L$ are $C^2_+$. From Lemma~\ref{l:q_0} we can state:
\begin{equation*}
\begin{split}
    \int_{\s^{n-1}}h_L(u)&|\nabla u\left(n_K^{-1}(u)\right)|^p f_K(u) du 
    \\
    &\leq d_{\Pi}(L)\int_{\s^{n-1}} h_L(u) |\nabla v\left(n_L^{-1}(u)\right)|^q f_L(u)du.
    \end{split}
\end{equation*}
Writing this in terms of mixed capacity, this is precisely
\begin{equation*}
    C_p(\text{int} (K),\text{int} (L)) \leq\frac{p-1}{n-p}\frac{n-q}{q-1} d_{\Pi}(L)C_q(\text{int}(L)).
\end{equation*}
Raising each side to the $n-p$ then yields
\begin{equation*}
    C_p(\text{int} (K),\text{int} (L))^{n-p} \leq\left(\frac{p-1}{n-p}\frac{n-q}{q-1}\right)^{n-p} d^{n-p}_{\Pi}(L)C_q(\text{int}(L))^{n-p}.
\end{equation*}
Applying Lemma~\ref{l:min_cap} and raising both sides to appropriate powers then yields the claim.
\end{proof}

\begin{remark}The case $p=n-1$ does not follow from the above, as in this instance Lemma~\ref{l:min_cap} reads
$ C_p(\text{int} (K),\text{int} (L))\geq  C_p(\text{int} (L))$, and so the mixed capacity seemingly does not carry enough information about $K$.
\end{remark}

\section{Appendix: $L^q$-Measure Surface Area Problem}
In this section, we repeat much of Section~\ref{s:min}, with the goal of defining $L^q$ mixed measures and solving the weighted $L^q$ Minkowski problem for measures in $\Lambda^n$. The standard notation is to refer to everything that follows as $L^p$; however, to avoid confusion with $p$-concavity, we will use the notation $L^q$. We recall that Firey's $L^q$-Minkowski Addition is the following: for $K,L\in\conbod$,$\;\alpha,\beta\in\R$ and $q\geq 1$, one has that $\alpha\cdot_q K+_q \beta\cdot_q L=\alpha\cdot K+_q \beta\cdot L$ is the convex body whose support function is given by
$$h^q_{\alpha\cdot K+_q \beta\cdot L}(x)=\alpha h_K^q(x)+\beta h_L^q(x),$$
where the first notation emphasises the fact that $K$ and $L$ are dilated by $\alpha^{1/q}$ and $\beta^{1/q}$ respectively; we will use the second notation, which suppresses this fact.
The $L^q$ surface area measure of $K$ is the Borel measure on the sphere that satisfies the following limit
$$\lim_{\epsilon\to 0}\frac{\vol_n\left(K+_q\epsilon\cdot B_2^n\right)-\vol_n(K)}{\epsilon}=\frac{1}{q}\int_{\s^{n-1}}dS_{K,q}(u).$$
As it turns out, one has that \cite{Sh1},
$$dS_{K,q}(u)=h_K^{1-q}(u)dS_{K}(u).$$
We first obtain the following set inclusion.
\begin{proposition}
\label{p:q_set}
Suppose $q\geq q^\prime\geq 1$. Then,
$$(1-\lambda)\cdot K+_q \lambda\cdot L \supseteq (1-\lambda)\cdot K+_{q^\prime} \lambda\cdot L \supseteq (1-\lambda) K+ \lambda L.$$
\end{proposition}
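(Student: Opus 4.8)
The plan is to pass to support functions and reduce both inclusions to a single pointwise inequality, which will turn out to be nothing more than the monotonicity of weighted power means. Recall from the excerpt that $[f]\subseteq[g]$ whenever $f\le g$ on $\s^{n-1}$, and that $[h_M]=M$ for every $M\in\conbod$; consequently, to prove $A\subseteq B$ for convex bodies it suffices to verify $h_A\le h_B$ on $\s^{n-1}$. By the defining property of Firey's $L^q$ combination one has $h_{(1-\lambda)\cdot K+_q\lambda\cdot L}(u)=\big((1-\lambda)h_K(u)^q+\lambda h_L(u)^q\big)^{1/q}$, with the analogous identity for $q^\prime$, while the additivity and positive homogeneity of the support function under ordinary Minkowski combination give $h_{(1-\lambda)K+\lambda L}(u)=(1-\lambda)h_K(u)+\lambda h_L(u)$. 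Throughout we assume, as is standard in the $L^q$-Brunn--Minkowski theory, that $K,L\in\conbod_0$, so that $h_K,h_L\ge 0$ and the powers $h_K^q$, $h_L^q$ are well defined; the endpoint cases $\lambda\in\{0,1\}$ are trivial.

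Hence both desired inclusions follow once we establish, for each fixed $u\in\s^{n-1}$,
$$\big((1-\lambda)h_K(u)^q+\lambda h_L(u)^q\big)^{1/q}\ \ge\ \big((1-\lambda)h_K(u)^{q^\prime}+\lambda h_L(u)^{q^\prime}\big)^{1/q^\prime}\ \ge\ (1-\lambda)h_K(u)+\lambda h_L(u).$$
Writing $a=h_K(u)\ge 0$ and $b=h_L(u)\ge 0$, this is precisely the statement that the weighted power mean $M_p:=\big((1-\lambda)a^p+\lambda b^p\big)^{1/p}$ is nondecreasing in $p$ on $(0,\infty)$: since $q\ge q^\prime\ge 1$, we get $M_q\ge M_{q^\prime}\ge M_1=(1-\lambda)a+\lambda b$. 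The monotonicity of power means is classical and may simply be cited; alternatively it is derived in one line from Jensen's inequality, applied to the concave function $t\mapsto t^{q^\prime/q}$ for the first inequality and to the convex function $t\mapsto t^{q^\prime}$ for the second.

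There is essentially no hard step here. The only points requiring a word of care are the standing assumption $K,L\in\conbod_0$, which is needed to make the $q$-th powers of the support functions meaningful and nonnegative, and the observation that inclusion of convex bodies is detected by the pointwise ordering of their support functions via the Wulff-shape formalism recalled above. Everything else is just the power-mean inequality applied fibrewise over $\s^{n-1}$.
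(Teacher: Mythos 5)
Your proof is correct and takes essentially the same route as the paper: reduce both inclusions to pointwise inequalities between support functions, then invoke Jensen's inequality, which you phrase as the monotonicity of weighted power means $M_q \ge M_{q'} \ge M_1$. The paper's proof is a slightly terser version of the same computation (it applies Jensen once and leaves the $q'=1$ endpoint implicit), and your explicit remark that $K,L\in\conbod_0$ is needed for the powers $h_K^q,h_L^q$ to make sense is a worthwhile clarification of a point the paper glosses over.
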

\begin{proof}
Write
\begin{align*}h^{q^\prime}_{(1-\lambda)\cdot K+_q \lambda\cdot L}(x)&=\left((1-\lambda) h_K^q(x)+\lambda h_L^q(x)\right)^{\frac{q^\prime}{q}}
\\
&=\left((1-\lambda) (h_K^{q^\prime}(x))^{\frac{q}{q^\prime}}+\lambda (h_L^{q^\prime}(x))^{\frac{q}{q^\prime}}\right)^{\frac{q^\prime}{q}}.
\end{align*}
Since $1\geq \frac{q^\prime}{q},$ we obtain from Jensen's inequality that $$h^{q^\prime}_{(1-\lambda)\cdot K+_q \lambda\cdot L}(x)\geq h^{q^\prime}_{(1-\lambda)\cdot K+_{q^\prime} \lambda\cdot L}(x),$$ and the claim follows.
\end{proof}
Next, suppose a Borel measure $\mu$ with density is $F$-concave, that is it satisfies Equation~\ref{eq:concave} for some invertible function $F$. Using Proposition~\ref{p:q_set}, we obtain the following result.
\begin{proposition}
\label{p:qcon}
Suppose $\mu$ is $F$-concave with respect to Minkowski addition on a class of compact sets $\mathcal{C}$. Then, for $q\geq 1$, $\mu$ is also $F$-concave with respect to $L^q$-Minkowski addition, that is for $K,L\in\mathcal{C}$ and $\lambda\in[0,1]$, one has
\begin{equation}
\label{eq:fcon_q}
\mu\left((1-\lambda)\cdot K+_q\lambda \cdot L\right)\geq F^{-1}\left((1-\lambda) F(\mu(K)) +\lambda F(\mu(L))\right),
\end{equation}
and equality implies equality in Equation~\ref{eq:concave}.
\end{proposition}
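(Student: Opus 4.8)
The plan is to obtain Proposition~\ref{p:qcon} as an immediate consequence of the set inclusion established in Proposition~\ref{p:q_set} together with the monotonicity of $\mu$. First I would record that, since the density $\phi$ of $\mu$ takes values in $\R^+$, the measure $\mu$ is monotone with respect to set inclusion: $A\subseteq B$ implies $\mu(A)\leq\mu(B)$. Applying this to the inclusion $(1-\lambda)\cdot K+_q\lambda\cdot L\supseteq (1-\lambda)K+\lambda L$ furnished by Proposition~\ref{p:q_set} (valid since $q\geq 1$) gives
$$\mu\left((1-\lambda)\cdot K+_q\lambda\cdot L\right)\geq \mu\left((1-\lambda)K+\lambda L\right).$$

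Next I would invoke the hypothesis that $\mu$ is $F$-concave with respect to ordinary Minkowski addition on $\mathcal{C}$: since $K,L\in\mathcal{C}$, Equation~\ref{eq:concave} applies directly and yields
$$\mu\left((1-\lambda)K+\lambda L\right)\geq F^{-1}\left((1-\lambda)F(\mu(K))+\lambda F(\mu(L))\right).$$
Chaining the two displayed inequalities produces Equation~\ref{eq:fcon_q}. I would note explicitly that this argument is insensitive to whether $F$ (and hence $F^{-1}$) is increasing or decreasing, since both links in the chain are genuine ``$\geq$'' inequalities; no direction of monotonicity of $F$ needs to be tracked, and the only structural input about $\mathcal{C}$ that is used is that it is the same class appearing in the hypothesis, so that Equation~\ref{eq:concave} may legitimately be applied to $K,L\in\mathcal{C}$ (the intermediate body $(1-\lambda)\cdot K+_q\lambda\cdot L$ need not lie in $\mathcal{C}$).

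For the equality assertion, I would argue that if equality holds in Equation~\ref{eq:fcon_q} for some $\lambda\in(0,1)$, then equality must hold in each of the two inequalities of the chain above; in particular the second one is precisely equality in Equation~\ref{eq:concave} for that $\lambda$. (One may then, if desired, apply Proposition~\ref{p:equal} to upgrade this to equality in Equation~\ref{eq:concave} for all $\lambda\in[0,1]$, though the statement only claims equality in Equation~\ref{eq:concave}.)

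Since each step follows directly from results already in hand, there is no substantial obstacle here; the proof is essentially a two-line chaining of inequalities. The only points requiring any care are the elementary verification that $\mu$ is monotone and the bookkeeping observation that the class $\mathcal{C}$ is carried through unchanged from hypothesis to conclusion.
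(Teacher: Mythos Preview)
Your proposal is correct and follows exactly the approach the paper intends: the paper does not give an explicit proof but simply states that the result is obtained ``using Proposition~\ref{p:q_set},'' and your argument is precisely the natural fleshing-out of that remark---apply monotonicity of $\mu$ to the set inclusion, then invoke Equation~\ref{eq:concave}, and read off the equality claim from equality in the chain.
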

\noindent We will now go over an important example using Proposition~\ref{p:q_set}, the Gaussian measure:
\begin{example}
Let $K,L$ be Borel sets of $\R^n$, $\lambda\in(0,1),$ and $q\geq 1$. Then, the Gaussian measure $\gamma_n$ satisfies the following $L^q$-concavities:
\begin{enumerate}
    \item Ehrhard inequality: \cite{Bor03,EHR1,EHR2,Lat96}:$$\Phi^{-1}\left(\gamma_{n}((1-\lambda)\cdot K+_q\lambda\cdot L)\right) \geq(1-\lambda) \Phi^{-1}\left(\gamma_{n}(K)\right)+\lambda \Phi^{-1}\left(\gamma_{n}(L)\right),$$
    with equality if, and only if, $K=L$ or $K$ and $L$ are half-spaces, one nested in the other.
    \item Log-concavity \cite{Bor75, BL76,SD77}: If $K$ and $L$ are closed convex sets with non-empty interior, $$\gamma_{n}((1-\lambda)\cdot K+_q\lambda \cdot L) \geq \gamma_{n}(K)^{1-\lambda} \gamma_{n}(L)^{\lambda},$$
    with equality if, and only if, $K=L$.
    \item Gardner-Zvavitch inequality \cite{EM21,GZ10,PT13,KL21}: If $K$ and $L$ are symmetric convex bodies: $$\gamma_n\left((1-\lambda)\cdot K +_q \lambda \cdot L\right)^{1/n}\geq (1-\lambda)\gamma_n(K)^{1/n} + \lambda \gamma_n(L)^{1/n},$$ with equality if, and only if, $K=L$.
\end{enumerate}
\end{example}
\noindent It may happen that a measure is $F$-concave with respect to some $F$ in the case of $L^q$ summation, for some fixed $q>1$, but not $F$-concave (with respect to the same $F$) for normal Minkowski summation. The prototypical example is the case of volume, shown by Firey \cite{Firey62}, with $F(x)=x^{\frac{q}{n}}$. This example shows that by replacing the addition, the concavity can improve.
\begin{example}
    Let $K,L\in\conbod_0$ and $q >1$. Then, for every $\lambda \in (0,1)$:
    $$\vol_n((1-\lambda)\cdot K +_q \lambda \cdot L)^\frac{q}{n} \geq (1-\lambda)\vol_n(K)^\frac{q}{n} + \lambda \vol_n(L)^\frac{q}{n},$$
    with equality if, and only if, $K$ and $L$ are dilates.
\end{example}

For a Borel measure $\mu$ with density $\phi$ and $K\in\conbod$ such that $\partial K$ is in the Lebesgue set of $\phi$, we define the \textit{$L^q$ weighted surface area measure} as
\begin{equation}
    \label{q_mu_sur}
    S^{\mu}_{K,q}(E)=\int_{n_K^{-1}(E)\cap \partial^\prime K}\phi\left(x\right)h_K^{1-q}(n_K(y))dy,
\end{equation}
for every Borel $E\subset \s^{n-1}$. We now work towards proving the following:
\begin{theorem}
\label{t:third_q}
Fix $q\geq 1$. Let $\mu$ be a Borel measure on $\R^n$ with even, continuous density such that $\mu \in \Lambda^n$ for some $\beta>0.$ Suppose $\nu$ is a finite, even Borel measure on $\s^{n-1}$ that is not concentrated on any hemisphere.
Then, there exists a symmetric convex body $K$ such that
$$d\nu(u)=c_{\mu,K,q} dS^{\mu}_{K,q}(u); \quad c_{\mu,K,q}:=\frac{1}{q}\mu(K)^{\frac{\beta}{n}-1}.$$
\end{theorem}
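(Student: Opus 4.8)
The plan is to carry over the variational proof of Theorem~\ref{t:third} to the $L^q$ setting. The one new ingredient is an $L^q$ analogue of Aleksandrov's variational formula (Lemma~\ref{l:second}): for $K\in\conbod_0$, $f\in C(\s^{n-1})$, and $|t|$ small enough that $h_K^q+tf>0$ on $\s^{n-1}$, one should have
\begin{equation*}
\diff{}{t}\mu\!\left(\left[(h_K^q+tf)^{1/q}\right]\right)\bigg|_{t=0}=\frac{1}{q}\int_{\s^{n-1}}f(u)\,dS^{\mu}_{K,q}(u).
\end{equation*}
To obtain this I would note that $\log\!\big((h_K^q+tf)^{1/q}\big)=\log h_K+t\frac{f}{q h_K^q}+o(t)$, so $\big[(h_K^q+tf)^{1/q}\big]$ is a logarithmic family of Wulff shapes in the sense of \eqref{eq:log_fam} generated by the pair $\big(h_K,\frac{f}{qh_K^q}\big)$, and then run the proof of Lemma~\ref{l:second} verbatim: \cite[Lemma 4.3]{HLYZ16} (as used in Lemma~\ref{l:first}) supplies the first variation of the radial functions and the uniform bound needed for dominated convergence, while Proposition~\ref{p:jacob} supplies the Jacobian of $r_K$. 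This converts the derivative into $\int_{\s^{n-1}}\phi(r_K(u))\frac{f(n_K(r_K(u)))}{q h_K^q(n_K(r_K(u)))}\rho_K^n(u)\,du$, which by Proposition~\ref{p:jacob} and the Gauss map equals $\frac{1}{q}\int_{\partial K}\phi(y)f(n_K(y))h_K^{1-q}(n_K(y))\,dy=\frac{1}{q}\int_{\s^{n-1}}f(u)\,dS^{\mu}_{K,q}(u)$ by \eqref{q_mu_sur}.

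Granting this, fix $\beta>0$ witnessing $\mu\in\Lambda^n$ and define, on the positive even functions of $C(\s^{n-1})$, the functional
\begin{equation*}
\psi_{\nu,\beta,q}(f)=\frac{n}{\beta}\mu\!\left([f^{1/q}]\right)^{\beta/n}-\int_{\s^{n-1}}f(u)\,d\nu(u).
\end{equation*}
Because $q\ge1$, for positive even $f$ the Wulff shape $[f^{1/q}]$ is a symmetric body with $h_{[f^{1/q}]}^q\le f$ and $\big[(h_{[f^{1/q}]}^q)^{1/q}\big]=[f^{1/q}]$; hence $\psi_{\nu,\beta,q}(h_{[f^{1/q}]}^q)\ge\psi_{\nu,\beta,q}(f)$, so a maximiser may be taken of the form $h_{K_\beta}^q$ for some symmetric $K_\beta\in\conbod_0$. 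Differentiating $t\mapsto\psi_{\nu,\beta,q}(h_{K_\beta}^q+tg)$ at $t=0$ for each even $g\in C(\s^{n-1})$ and using the displayed formula (the chain rule contributing $\frac{\beta}{n}\mu(K_\beta)^{\beta/n-1}$) gives $\int g\,d\nu=\frac{1}{q}\mu(K_\beta)^{\beta/n-1}\int g\,dS^{\mu}_{K_\beta,q}$. Since $\nu$ and $S^{\mu}_{K_\beta,q}$ are even, this identity for all even continuous $g$ determines the two measures, and the Riesz representation theorem yields $d\nu=c_{\mu,K_\beta,q}\,dS^{\mu}_{K_\beta,q}$ with $c_{\mu,K_\beta,q}=\frac1q\mu(K_\beta)^{\beta/n-1}$, as required.

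What remains is the existence of the maximiser, which I would obtain exactly as in the proof of Theorem~\ref{t:third}, replacing $|\langle\theta,u\rangle|$ by $|\langle\theta,u\rangle|^q$. Set $q_\beta(\nu,r)=\psi_{\nu,\beta,q}(h_{rB_2^n}^q)=\frac{n}{\beta}\mu(rB_2^n)^{\beta/n}-r^q\nu(\s^{n-1})$; dividing by $r$ and invoking the two limits defining $\Lambda^n$ together with $q\ge1$ shows $q_\beta(\nu,r)\to-\infty$ as $r\to\infty$ and $q_\beta(\nu,r)>0$ for all small $r>0$, so $\sup\psi_{\nu,\beta,q}>0$. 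As $\nu$ is finite and not concentrated on any hemisphere, $\theta\mapsto\int_{\s^{n-1}}|\langle\theta,u\rangle|^q\,d\nu(u)$ is continuous and strictly positive on the compact sphere, hence bounded below by $C_{\nu,q}\nu(\s^{n-1})$ for some $0<C_{\nu,q}\le1$. For symmetric $K$, choosing $\theta_K$ with $\rho_K(\theta_K)$ maximal, convexity and symmetry give $h_K(u)\ge\rho_K(\theta_K)|\langle\theta_K,u\rangle|$ and $K\subset\rho_K(\theta_K)B_2^n$, so $\psi_{\nu,\beta,q}(h_K^q)\le q_\beta\!\big(C_{\nu,q}\nu,\rho_K(\theta_K)\big)$; since $q_\beta(C_{\nu,q}\nu,\cdot)\to-\infty$ there is a finite $R$ beyond which it is negative, whence $\psi_{\nu,\beta,q}(h_K^q)>0$ forces $K\subset RB_2^n$. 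Restricting to $\mathcal{F}=\{K\in\conbod_0:K=-K,\ K\subset RB_2^n\}$, a Blaschke-selection argument on a maximising sequence produces a symmetric maximiser $K_\beta\in\mathcal{F}$ (Hausdorff convergence makes $h_K^q$ converge uniformly and, since $\phi$ is locally integrable, $\mu(K)$ converge), and positivity of the supremum forces $\mu(K_\beta)>0$, so $K_\beta$ is a genuine convex body with the origin in its interior.

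The only genuinely new step is the $L^q$ variational formula in the first paragraph, and that is where I expect the main work to lie; once it is available, everything else is a near-verbatim transcription of the proof of Theorem~\ref{t:third}, with care required only to track the constant $\frac1q$ and to use the kernel $|\langle\theta,u\rangle|^q$ in the step where $\nu$ is assumed not to be concentrated on a hemisphere.
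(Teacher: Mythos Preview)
Your proposal is correct and follows essentially the same route as the paper. The paper first isolates the $L^q$ variational formula as Lemmas~\ref{l:first_q} and~\ref{l:second_q} (proved exactly as you outline, via the logarithmic family $(h_K,\frac{f}{qh_K^q})$), and then defines the functional $\Psi_{\nu,\beta}(f)=\frac{n}{\beta}\mu([f])^{\beta/n}-\int_{\s^{n-1}}f^q\,d\nu$; this is your $\psi_{\nu,\beta,q}$ under the change of variable $f\leftrightarrow f^{1/q}$, so the two arguments are identical up to bookkeeping, and the existence step is dismissed in the paper with the same ``minor adjustments'' you spell out.
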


\noindent Theorem~\ref{t:third_q} in particular solves the following Monge-Amp\`ere equation: consider a given smooth, positive, even data function $f$ on the sphere, $q\geq 1$, and a Borel measure $\mu\in\Lambda^n$ with even, continuous density $\phi$. Then, there exists $\beta>0$ and a symmetric, smooth convex body $K$ such that $h:=h_K$ and $c=c_{\mu,K,q}=\frac{1}{q}\mu(K)^{\frac{\beta}{n}-1}$ solve
$$c\phi(\nabla h)h^{1-q}\det (D^2 h +h I)=f.$$

Liu \cite{JL22} previously considered Theorem~\ref{t:third_q} in the specific case of the Gaussian measure. Most of the proofs of what follows are similar to that in Section~\ref{s:min}, and so we only outline the differences here. Like before, we first need the following lemma.
\begin{lemma}[$L^q$-First Variation of Radial Functions of Wulff Shapes]
	\label{l:first_q}
	Fix $q\geq 1$. Consider a convex body $K\in\conbod_0$ and $f\in C(\s^{n-1})$. Then, there exists is a small $\delta >0$ such that $$h_t(u)=\left(h^q_K(u)+tf(u)\right)^{1/q}$$ is positive for all $u\in\s^{n-1}$ and $|t|< \delta $, implying that, for almost all $u\in\s^{n-1}$ up to a set of spherical Lebesgue measure zero,
	$$\diff{\rho_{[h_t]}(u)}{t}\bigg|_{t=0}=\lim_{t\to 0}\frac{\rho_{[h_t]}(u)-\rho_{K}(u)}{t}=\frac{1}{q}\frac{f(n_{K}(r_{K}(u)))}{h^q_K(n_K(r_K(u)))}\rho_K(u),$$
	and, furthermore, one has that there exists some $M>0$ such that $$|\rho_{[h_t]}(u)-\rho_{K}(u)|<M|t|$$ for all $|t|<\delta.$
	\end{lemma}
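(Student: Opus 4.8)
The plan is to follow the proof of Lemma~\ref{l:first} almost verbatim, the only genuinely new point being the identification of the correct logarithmic family of Wulff shapes. First I would choose $\delta>0$ small enough that $h_K^q(u)+tf(u)>0$ for every $u\in\s^{n-1}$ and every $|t|<\delta$; this is possible because $h_K$, hence $h_K^q$, is continuous and strictly positive on the compact sphere while $f$ is bounded. For such $t$ the function $h_t=(h_K^q+tf)^{1/q}$ is positive and continuous, so $[h_t]$ is a well-defined symmetric convex body; moreover one checks, as in Lemma~\ref{l:first}, that the curve $t\mapsto [h_t]$, $|t|<\delta$, stays inside a fixed ball $RB_2^n$.

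The key computation is to expand, for each fixed $u\in\s^{n-1}$,
$$\log h_t(u)=\log h_K(u)+\frac1q\log\!\left(1+\frac{tf(u)}{h_K^q(u)}\right)=\log h_K(u)+\frac{tf(u)}{q\,h_K^q(u)}+o(t,u),$$
where $o(t,u)=\tfrac1q\big(\log(1+tf(u)/h_K^q(u))-tf(u)/h_K^q(u)\big)=O(t^2)$ for each fixed $u$, so that $o(t,u)/t\to0$ as $t\to0$. Hence $h_t$ is precisely a logarithmic family of Wulff shapes in the sense of Equation~\ref{eq:log_fam} formed by the pair $\big(h_K,\ f/(q\,h_K^q)\big)$. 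Invoking \cite[Lemma 4.3]{HLYZ16} then gives, for almost all $u\in\s^{n-1}$,
$$\diff{\log\rho_{[h_t]}(u)}{t}\bigg|_{t=0}=\frac1q\,\frac{f(n_K(r_K(u)))}{h_K^q(n_K(r_K(u)))},$$
and combining this with the chain rule $\diff{\log\rho_{[h_t]}(u)}{t}\big|_{t=0}=\rho_K^{-1}(u)\,\diff{\rho_{[h_t]}(u)}{t}\big|_{t=0}$ produces the claimed first-variation formula.

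For the Lipschitz-in-$t$ estimate, \cite[Lemma 4.3]{HLYZ16} also supplies a constant $A>0$ with $|\log\rho_{[h_t]}(u)-\log\rho_K(u)|<A|t|$ for all $|t|<\delta$. Since both $\rho_{[h_t]}(u)$ and $\rho_K(u)$ are bounded above by $R$, the mean value theorem applied to $\exp$ gives $|\rho_{[h_t]}(u)-\rho_K(u)|\le R\,|\log\rho_{[h_t]}(u)-\log\rho_K(u)|<RA|t|$, so $M=RA$ works; alternatively one can argue via the derivative bound exactly as in Lemma~\ref{l:first}. I do not anticipate a serious obstacle: the argument is a routine transcription of the $q=1$ case, and the only step warranting care is verifying that the remainder $o(t,u)$ genuinely satisfies the pointwise smallness required in the definition of a logarithmic family, which holds since $t\mapsto h_t(u)$ is real-analytic near $t=0$ for each fixed $u$.
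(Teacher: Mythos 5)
Your proof is correct and takes essentially the same route as the paper: the paper's own argument simply records the expansion $\log h_t=\log h_K+\frac{t}{q}\frac{f}{h_K^q}+o(t)$, identifies $h_t$ as a logarithmic family of the pair $\left(h_K,\frac{1}{q}\frac{f}{h_K^q}\right)$, and cites \cite[Lemma 4.3]{HLYZ16} for the rest. You have filled in the chain-rule step and the Lipschitz estimate (where your mean-value-theorem-on-$\exp$ argument is a slightly cleaner packaging than the $\epsilon$-argument used in the paper's Lemma~\ref{l:first}), but the underlying mechanism is identical.
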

	\begin{proof}
	If one takes the Taylor series expansion of $\log(h_t),$ $\log(h_t)=\log(h_K)+\frac{t}{q}\frac{f}{h^q_K}+o(t),$ one obtains that $h_t$ is a logarithmic family of the pair $(h_K,\frac{1}{q}\frac{f}{h^q_K})$, and the claim follows from \cite[Lemma 4.3]{HLYZ16}.
	\end{proof}

	\begin{lemma}[Aleksandrov's $L^q$-Variational Formula For Arbitrary Measures]
	\label{l:second_q}
	Let $\mu$ be a Borel measure on $\R^n$ with locally integrable density $\phi$. Let $K$ be a convex body containing the origin in its interior, such that $\partial K$, up to set of $(n-1)$-dimensional Hausdorff measure zero, is in the Lebesgue set of $\phi$. Then, for a continuous function $f$ on $\s^{n-1}$ and $q\geq 1$, one has that
	$$\lim_{t\rightarrow 0}\frac{\mu([\left(h^q_K+tf\right)^{\frac{1}{q}}])-\mu(K)}{t}=\frac{1}{q}\int_{\s^{n-1}}f(u)dS^{\mu}_{K,q}(u).$$
	\end{lemma}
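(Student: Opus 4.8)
The statement to prove is Lemma~\ref{l:second_q}, the $L^q$ analogue of Aleksandrov's variational formula (Lemma~\ref{l:second}). The proof will mirror the structure of the proof of Lemma~\ref{l:second}, substituting Lemma~\ref{l:first_q} for Lemma~\ref{l:first} at the appropriate place. First I would write $\mu([h_t])$ in polar coordinates, where now $h_t = (h_K^q + tf)^{1/q}$:
\[
\mu([h_t]) = \int_{\s^{n-1}} F_t(u)\, du, \qquad F_t(u) = \int_0^{\rho_{[h_t]}(u)} \phi(ru) r^{n-1}\, dr.
\]
Then I would compute $\diff{F_t}{t}\big|_{t=0}$ pointwise a.e.\ using the Lebesgue differentiation theorem exactly as before: writing $I = [\rho_K(u), \rho_{[h_t]}(u)]$, one has
\[
\lim_{t\to 0}\frac{F_t(u)-F_0(u)}{t} = \lim_{t\to 0}\frac{\rho_{[h_t]}(u)-\rho_K(u)}{t}\cdot\frac{1}{|I|}\int_I \phi(ru)r^{n-1}\,dr = \phi(r_K(u))\,\frac{1}{q}\frac{f(n_K(r_K(u)))}{h_K^q(n_K(r_K(u)))}\rho_K^n(u)
\]
for almost all $u$, where I substitute the variational formula from Lemma~\ref{l:first_q} and use that $\phi(r_K(u))$ exists by the Lebesgue-set hypothesis.

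**Domination and differentiation under the integral.** Next I would justify interchanging limit and integral. By the second conclusion of Lemma~\ref{l:first_q}, $\big|\tfrac{\rho_{[h_t]}(u)-\rho_K(u)}{t}\big|$ is bounded by a constant $M$ uniformly in $u$ and small $t$, so the difference quotients $\tfrac{F_t(u)-F_0(u)}{t}$ are dominated by the integrable function $M\big(\max_{u}\rho_K(u)\big)^{n-1}\phi(r_K(u))$ (continuity of $\phi$ on the compact set $\partial K$, or the Lebesgue-set hypothesis, guarantees integrability on $\s^{n-1}$). Dominated convergence then gives
\[
\lim_{t\to 0}\frac{\mu([h_t])-\mu(K)}{t} = \int_{\s^{n-1}} \frac{1}{q}\,\phi(r_K(u))\,\frac{f(n_K(r_K(u)))}{h_K^q(n_K(r_K(u)))}\rho_K^n(u)\, du.
\]

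**Change of variables.** Finally I would convert this spherical integral into the claimed form. Writing $h_K^q = h_K \cdot h_K^{q-1}$, I pull the factor $h_K^{1-q}(n_K(r_K(u)))$ into the integrand and apply Proposition~\ref{p:jacob} (the Jacobian of $r_K$ is $\rho_K^n(u)/h_K(n_K(r_K(u)))$), which transports the integral to $\partial' K$:
\[
\int_{\s^{n-1}} \frac{1}{q}\phi(r_K(u))\,f(n_K(r_K(u)))\,h_K^{1-q}(n_K(r_K(u)))\,\frac{\rho_K^n(u)}{h_K(n_K(r_K(u)))}\cdot h_K(n_K(r_K(u)))\, du = \frac{1}{q}\int_{\partial' K} \phi(y)\,f(n_K(y))\,h_K^{1-q}(n_K(y))\, dy,
\]
and by the definition \eqref{q_mu_sur} of $S^\mu_{K,q}$ together with the Gauss map, this equals $\tfrac{1}{q}\int_{\s^{n-1}} f(u)\, dS^\mu_{K,q}(u)$, as claimed. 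By Remark~\ref{r:second}, the hypothesis that $K$ contains the origin in its interior can be dropped by the same translation argument.

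**Main obstacle.** I expect the only genuine subtlety to be bookkeeping the extra $h_K^{1-q}$ factor correctly through the change of variables — making sure the powers of $h_K(n_K(r_K(u)))$ introduced by $h_t = (h_K^q + tf)^{1/q}$ (which produces the $1/h_K^q$ in Lemma~\ref{l:first_q}) combine with the Jacobian of $r_K$ to leave exactly $h_K^{1-q}$, matching \eqref{q_mu_sur}. Everything else (the Lebesgue differentiation step, the domination, the interchange of limit and integral) is identical to Lemma~\ref{l:second} and requires no new ideas.
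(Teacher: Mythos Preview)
Your proposal is correct and follows essentially the same approach as the paper. The paper's proof is terser---it writes the polar-coordinate decomposition, computes the pointwise derivative via Lemma~\ref{l:first_q} and the Lebesgue differentiation theorem, and then simply says ``using dominated convergence to differentiate underneath the integral sign yields the result,'' omitting the explicit change-of-variables step (Proposition~\ref{p:jacob}) that you spell out, since that step was already carried out in the $q=1$ case (Lemma~\ref{l:second}).
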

	\begin{proof}
	Set $h^q_t=h^q_K+tf$ and use polar coordinates to write
	$$\mu([h_t])=\int_{\s^{n-1}}\int_0^{\rho_{[h_t]}(u)}\phi(ru)r^{n-1}drdu=\int_{\s^{n-1}}F_t(u)du,$$
	where
	$F_t(u)=\int_0^{\rho_{[h_t]}(u)}\phi(ru)r^{n-1}dr.$ One has again that $\diff{F_t}{t}\bigg|_{t=0}=F^\prime_0$ exists by the Lebesgue differentiation theorem and also
	\begin{align*}\lim_{t\to 0}\frac{F_t(u)-F_0(u)}{t}=\phi(r_K(u))\frac{f(n_{K}(r_{K}(u)))}{h^q_K(n_K(r_K(u)))}\frac{\rho^{n}_K(u)}{q}.\end{align*} for almost all $u\in \s^{n-1}$ via Lemma~\ref{l:first_q}. Using dominated convergence to differentiate underneath the integral sign yields the result.
	\end{proof}
 \noindent We remark that Wu \cite{DW17} had previously established Lemma~\ref{l:second_q} in the special case when $\mu$ was $p$-concave, $1/p$-homogeneous with continuous density. Liu \cite{JL22} also previously established Lemma~\ref{l:second_q} in the case of the Gaussian measure.  
	\begin{proof}[Proof of Theorem~\ref{t:third_q}]
	The proof is essentially the same as in Theorem~\ref{t:third}. Suppose we have $\beta>0$ so that $\mu\in\Lambda^n$, and $\nu$ is an even Borel measure on the sphere not concentrated on any half-space. Next, define the following functional on the set of even continuous functions on the sphere
	\begin{equation}
	    \label{eq:q_functional}
	    \Psi_{\nu,\beta}(f)=\frac{n}{\beta}\mu\left([f]\right)^\frac{\beta}{n}-\int_{\s^{n-1}}f^q(u)d\nu(u).
	\end{equation}
	One can easily check that $\Psi_{\nu,\beta}(f)\leq \Psi_{\nu,\beta}(h_{[f]})$, and so we have again that the maximiser is the support function of a symmetric convex body. We now check that a maximiser satisfies the claim: consider $$h_t(u)=\left(h^q_{K_\beta}(u)+tf(u)\right)^{1/q},$$ where $K_\beta$ is the symmetric convex body whose support function is a maximiser of $\Psi_{\nu,\beta}$, $f$ is an arbitrary, even function in $f\in C(\s^{n-1}),$ and $\delta$ is picked small enough so that, for $|t|<\delta$, $h_t$ is positive for all $u\in\s^{n-1}$.
	
Viewing $\Psi_{\nu,\beta}(h_t)$ as a function of one variable in $t$, we must have that $\Psi_{\nu,\beta}(h_t)$ has an extremum at $t=0$, and so
\begin{align*}0=\diff{\Psi_{\nu,\beta}(h_t)}{t}\bigg|_{t=0}&=\mu(K_\beta)^{\frac{\beta}{n}-1}\diff{}{t}\left(\mu([h_t])\right)_{t=0}-\diff{}{t}\int_{\s^{n-1}}h^q_{t}(u)d\nu(u)\bigg|_{t=0}
\\
&=\frac{\mu(K_\beta)^{\frac{\beta}{n}-1}}{q}\int_{\s^{n-1}}f(u)dS^{\mu}_{K_{\beta},q}(u)-\int_{\s^{n-1}}f(u)d\nu(u),\end{align*}
where the second equality follows from Lemma~\ref{l:second_q}.
Re-arranging, we then see
$$\int_{\s^{n-1}}f(u)d\nu(u)=\int_{\s^{n-1}}f(u)\left[\frac{\mu(K_\beta)^{\frac{\beta}{n}-1}}{q}dS^{\mu}_{K_{\beta},q}(u)\right].$$
Since continuous functions are dense in $L^1(\s^{n-1})$, the above is true for all even $f\in L^1(\s^{n-1})$. We then have via the Riesz Representation theorem that
$$d\nu(u)=\frac{1}{q}\mu(K_\beta)^{\frac{\beta}{n}-1}dS^{\mu}_{K_{\beta},q}(u),$$
as required. The rest of the proof is the same, up to minor adjustments due to the differences between Equations \ref{eq:functional} and \ref{eq:q_functional}.
\end{proof}
\begin{corollary}
\label{cor:almost_hom_q}
Fix $q\geq 1$. Let $\mu$ be an even, $\alpha$-homogeneous, $\alpha > 0,$ $\alpha\neq q$ Borel measure with density. Suppose $\nu$ is a finite, even measure on $\s^{n-1}$ not concentrated on any hemisphere. Then, there exists a symmetric $K\in\conbod_0$ such that
$$d\nu=dS^{\mu}_{K,q}.$$
\end{corollary}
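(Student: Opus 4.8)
The plan is to mimic the proof of Lemma~\ref{l:almost_hom}, replacing Theorem~\ref{t:third} with its $L^q$-version Theorem~\ref{t:third_q} and carefully tracking the extra factor $h_K^{1-q}$ carried by the $L^q$ surface area measure. First I would note that any $\alpha$-homogeneous measure with $\alpha>0$ lies in $\Lambda^n$: since $\mu(rB_2^n)^{\beta/n}/r = \mu(B_2^n)^{\beta/n}\, r^{\alpha\beta/n-1}$, any $\beta < n/\alpha$ works, so I fix, say, $\beta = \frac{n}{2\alpha}$. Theorem~\ref{t:third_q} then produces a symmetric convex body $\tilde K$ with
$$d\nu(u) = \frac{1}{q}\,\mu(\tilde K)^{\frac{\beta}{n}-1}\, dS^{\mu}_{\tilde K, q}(u).$$

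Next I would establish that the map $L \mapsto dS^{\mu}_{L,q}$ is $(\alpha-q)$-homogeneous in $L$. As in Lemma~\ref{l:almost_hom} it suffices to verify this for $L$ of class $C^2_+$ (the general case following by approximation and weak-$*$ continuity), in which case $dS^{\mu}_{L,q}(u) = \phi(n_L^{-1}(u))\, h_L^{1-q}(u)\, f_L(u)\, du$. Using $n_{tL}^{-1}(u) = t\, n_L^{-1}(u)$, $h_{tL}(u) = t\, h_L(u)$, $f_{tL}(u) = t^{n-1} f_L(u)$, and the $(\alpha-n)$-homogeneity of $\phi$, the three powers of $t$ combine as $t^{\alpha-n}\cdot t^{1-q}\cdot t^{n-1} = t^{\alpha-q}$, so that $dS^{\mu}_{tL,q}(u) = t^{\alpha-q}\, dS^{\mu}_{L,q}(u)$.

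Finally, since $\alpha\neq q$ I can solve $A^{\alpha-q} = \frac{1}{q}\mu(\tilde K)^{\frac{\beta}{n}-1}$ for $A>0$, namely $A = \left(q^{-1}\mu(\tilde K)^{\frac{\beta}{n}-1}\right)^{\frac{1}{\alpha-q}}$, and set $K = A\tilde K$, which is again symmetric. Then
$$dS^{\mu}_{K,q}(u) = A^{\alpha-q}\, dS^{\mu}_{\tilde K, q}(u) = \frac{1}{q}\,\mu(\tilde K)^{\frac{\beta}{n}-1}\, dS^{\mu}_{\tilde K, q}(u) = d\nu(u),$$
as desired.

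There is no genuine obstacle here beyond the bookkeeping; the only point demanding care is the homogeneity computation for $dS^{\mu}_{L,q}$, where the $h_L^{1-q}$ factor shifts the homogeneity degree from $\alpha-1$ (the classical, $q=1$ case in Lemma~\ref{l:almost_hom}) to $\alpha-q$. This is precisely why the hypothesis appears as $\alpha\neq q$ rather than $\alpha\neq 1$, and, exactly as in the remark following Lemma~\ref{l:almost_hom}, the borderline value $\alpha=q$ yields a $0$-homogeneous surface area measure for which the dilation trick collapses and the question remains open.
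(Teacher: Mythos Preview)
Your proposal is correct and follows essentially the same approach as the paper: apply Theorem~\ref{t:third_q} to obtain $\tilde K$, verify that $dS^{\mu}_{L,q}$ is $(\alpha-q)$-homogeneous in $L$ (the key bookkeeping change from Lemma~\ref{l:almost_hom}), and rescale by $A$ solving $A^{\alpha-q}=\frac{1}{q}\mu(\tilde K)^{\frac{\beta}{n}-1}$ to absorb the constant. Your explanation of why the hypothesis shifts from $\alpha\neq 1$ to $\alpha\neq q$ matches the paper's reasoning exactly.
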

\begin{proof}
The proof is essentially the same as Lemma~\ref{l:almost_hom}, except we remark that $dS^{\mu}_{L,q}(u)$ is $(\alpha-q)$-homogeneous (in the argument $L$) for $L\in\conbod_0$. Indeed, for $L$ of class $C^2_+$, one has that $dS_{tL,q}(u)=h_{tL}^{1-q}(u)dS_{tL}(u)=t^{n-q}h_L^{1-q}(u)dS_L(u)=t^{n-q}dS_{L,q}(u)$ for $t>0$.  Thus, we obtain, using that $n^{-1}_{tL}(u)=tn^{-1}_{L}(u)$ and the $(\alpha-n)$-homogeneity of $\phi$,
\begin{align*}dS^{\mu}_{tL,q}(u)&=\phi\left(n^{-1}_{tL}(u)\right)dS_{tL,q}(u)
\\
&=t^{n-q}t^{\alpha-n}\phi\left(n^{-1}_{L}(u)\right)dS_{L,q}(u)=t^{\alpha-q}dS^{\mu}_{L,q}(u).\end{align*}
Thus, in the proof of Lemma~\ref{l:almost_hom}, we instead let $A$ be the solution to
$$A^{\alpha-q}=\frac{1}{q}\mu(\tilde{K})^{\frac{\beta}{n}-1},$$
and the rest of the proof is the same.
\end{proof}

From Lemma~\ref{l:second_q}, we can define the following; this had been previously done by Wu \cite{DW17} when the Borel measure $\mu$ had continuous density.
	\begin{definition}
	Let $\mu$ be a Borel measure on $\R^n$ with locally integrable density. Consider $K\in\conbod_0$ such that $\partial K$ is in the Lebesgue set of the density of $\mu$ (up to a set of $(n-1)$-dimensional Hausdorff measure zero). Then, for $L\in\conbod_0$, the \textit{$L^q$-mixed measure of $K$ and $L$} is given by, using Lemma~\ref{l:second_q},
	\begin{align*}
	    \mu_q(K,L):=\liminf_{\epsilon\to 0}\frac{\mu\left(K+_q\epsilon\cdot L\right)-\mu(K)}{\epsilon}
	    =\frac{1}{q}\int_{\s^{n-1}}h_L(u)^qdS^{\mu}_{K,q}(u).
	\end{align*}
	\end{definition}
\noindent Henceforth, we shall assume that the Borel measure $\mu$ has \textit{continuous} density for brevity; we emphasize that all we actually need is that $\partial K$ is in the Lebesgue set of the density of $\mu$.
	
Notice that $\mu_q(K,K)=\frac{1}{q}\mu(K,K)$. Also, if $\mu$ is $\alpha$-homogeneous, then $\mu_q(K,K)=\frac{\alpha}{q}\mu(K)$. Recall that, as a Borel measure with density, we denote the Lebesgue measure as $\lambda$. So, we have that $\vol_n(K)=\frac{q}{n}\lambda_q(K,K)$. We now attempt to show an $L^q$-Minkowski's Inequality for $L^q$-mixed measures. 

\begin{theorem}
\label{t:LqFcon}
Fix $q\geq 1$. Let $\mu$ be a Borel measure on $\R^{n}$, such that $\mu$ is $F$-concave (with respect to $L^q$ summation) and $F$ is differentiable, with respect to a class of Borel sets $\mathcal{C}$. Then, for every $K,L\in\mathcal{C}$, one has that:

$$
\mu_q(K, L) \geq \mu_q(K, K)+\frac{F(\mu(L))-F(\mu(K))}{F^{\prime}(\mu(K))}.
$$
\end{theorem}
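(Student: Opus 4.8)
The plan is to mimic the proof of Theorem~\ref{t:min_eq} (Minkowski's Inequality for $F$-concave measures) exactly, replacing ordinary Minkowski addition by $L^q$-Minkowski addition and replacing the differentiation identity $\diff{\mu(tK)}{t} = \mu(K,K)$ by its $L^q$-analogue encoded in the definition of $\mu_q(K,L)$. The starting point is the $L^q$-concavity hypothesis \eqref{eq:fcon_q}: for $K,L\in\mathcal{C}$ and $\lambda\in[0,1]$,
$$\mu\left((1-\lambda)\cdot K+_q\lambda\cdot L\right)\geq F^{-1}\left((1-\lambda)F(\mu(K))+\lambda F(\mu(L))\right).$$

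First I would, as in the proof of Theorem~\ref{t:min_eq}, replace $K$ by the rescaled body $K_\lambda = (1-\lambda)^{-1/q}\cdot K$, so that $(1-\lambda)\cdot K_\lambda = K$ in the $L^q$ sense, i.e. $h^q_{(1-\lambda)\cdot K_\lambda + _q \lambda\cdot L} = h_K^q + \lambda h_L^q$. Writing $h_t(u) = \left(h_K^q(u) + t h_L^q(u)\right)^{1/q}$, this means $\mu(K+_q\lambda\cdot L) = \mu([h_\lambda])$. At $\lambda=0$ both sides of the rescaled \eqref{eq:fcon_q} equal $\mu(K)$, so the left-hand side grows at least as fast as the right at $\lambda = 0$. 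Taking $\liminf_{\lambda\to 0}$ of the difference quotient, the left side gives $\mu_q(K,L)$ by the definition of the $L^q$-mixed measure (with $f = h_L^q$ in Lemma~\ref{l:second_q}, which yields $\mu_q(K,L) = \frac1q\int_{\s^{n-1}} h_L^q\, dS^{\mu}_{K,q}$); the right side, by the chain rule, product rule, and the inverse function theorem applied exactly as in Theorem~\ref{t:min_eq}, gives
$$\frac{F(\mu(L))-F(\mu(K))}{F'(\mu(K))} + \frac{1}{F'(\mu(K))}\left((1-\lambda)\diff{}{\lambda}\left[F(\mu(K_\lambda))\right]\right)\bigg|_{\lambda=0}.$$
Then, as in Theorem~\ref{t:min_eq}, $\diff{}{\lambda}\left[F(\mu(K_\lambda))\right]\big|_{\lambda=0} = F'(\mu(K))\diff{\mu(K_\lambda)}{\lambda}\big|_{\lambda=0}$, and the remaining task is to identify $\diff{\mu(K_\lambda)}{\lambda}\big|_{\lambda=0}$. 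Here $K_\lambda = (1-\lambda)^{-1/q}\cdot K$ is an $L^q$-dilate, so $h_{K_\lambda}^q = (1-\lambda)^{-1}h_K^q$, and differentiating via Lemma~\ref{l:second_q} (with $f = \diff{}{\lambda}h_{K_\lambda}^q\big|_{\lambda=0} = h_K^q$) gives $\diff{\mu(K_\lambda)}{\lambda}\big|_{\lambda=0} = \frac1q\int_{\s^{n-1}} h_K^q\, dS^{\mu}_{K,q}(u) = \mu_q(K,K)$. Substituting yields the claim.

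The main obstacle I expect is the bookkeeping around the $L^q$-rescaling: one must verify that $(1-\lambda)^{-1/q}\cdot K$ is a legitimate member of (or at least harmless with respect to) the class $\mathcal{C}$ on which $F$-concavity is assumed, and that $h_t(u) = (h_K^q + t h_L^q)^{1/q}$ genuinely falls under the scope of the $L^q$-Aleksandrov variational formula of Lemma~\ref{l:second_q} — which is stated for perturbations of the form $(h_K^q + tf)^{1/q}$ with $f$ continuous, so taking $f = h_L^q$ (continuous and nonnegative, hence the positivity of $h_t$ near $t=0$ is automatic) is exactly covered. A minor technical point is that the $\liminf$ in the definition of $\mu_q(K,L)$ is actually a genuine limit here because $q(t) = \mu([h_t])$ is nondecreasing in $t$ (the bodies $[h_t]$ increase with $t$ since $h_L^q\geq 0$), which legitimizes differentiating. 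Everything else is the same chain-rule manipulation as in Theorem~\ref{t:min_eq}, so once Lemma~\ref{l:second_q} is invoked twice — once for the mixed term and once for the $L^q$-dilate — the proof closes.
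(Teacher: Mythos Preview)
Your proposal is correct and is exactly the paper's own argument: the paper's proof simply says to repeat Theorem~\ref{t:min_eq} with $K_\lambda = (1-\lambda)^{-1/q}K$ so that $\mu(K+_q\lambda\cdot L)=\mu((1-\lambda)\cdot K_\lambda+_q\lambda\cdot L)$, and to use $\diff{\mu(K_\lambda)}{\lambda}\big|_{\lambda=0}=\frac{1}{q}\mu(K,K)=\mu_q(K,K)$. Your write-up is in fact more detailed than the paper's, which omits the explicit invocation of Lemma~\ref{l:second_q}.
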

\begin{proof}
The proof is same as in Theorem~\ref{t:min_eq} except one sets $K_\lambda=\frac{K}{(1-\lambda)^{1/q}}$ to write $\mu(K+_q\lambda\cdot L)=\mu((1-\lambda)\cdot K_\lambda+_q\lambda \cdot L)$. When taking the derivative, use that $\diff{\mu(K_\lambda)}{\lambda}\bigg|_{\lambda=0}=\frac{1}{q}\mu(K,K)=\mu_q(K,K).$
\end{proof}
	\begin{corollary}[$L^q$-Minkowski's Inequality - Concavity link]
\label{cor:minkowski_q}
Fix $q\geq 1$. Let $K,L\in\conbod_0$ and suppose a Borel measure $\mu$ with continuous density is $F$-concave (with respect to $L^q$ summation) and $F$ is differentiable. Then, $$
\mu_q(K, L) = \mu_q(K, K)+\frac{F(\mu(L))-F(\mu(K))}{F^{\prime}(\mu(K))},
$$
if, and only if, there is equality in Equation~\ref{eq:fcon_q}.
\end{corollary}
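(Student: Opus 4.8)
The plan is to transcribe the proof of Corollary~\ref{cor:minkowski}, replacing ordinary Minkowski addition by $L^q$-addition and using the $L^q$-ingredients assembled above: Aleksandrov's $L^q$-variational formula (Lemma~\ref{l:second_q}) in place of Lemma~\ref{l:second}, and the $L^q$-Minkowski inequality (Theorem~\ref{t:LqFcon}) in place of Theorem~\ref{t:min_eq}. The identity that glues everything together is the algebraic fact that $(1-\lambda)\cdot K+_q\lambda\cdot L$ is precisely the Wulff shape $\big[\big(h_K^q+\lambda(h_L^q-h_K^q)\big)^{1/q}\big]$, so that, since $K\in\conbod_0$ forces $h_K$ to be positive and continuous (hence $f:=h_L^q-h_K^q$ is continuous), Lemma~\ref{l:second_q} applied with this $f$ gives
$$\diff{}{\lambda}\mu\big((1-\lambda)\cdot K+_q\lambda\cdot L\big)\bigg|_{\lambda=0}=\frac{1}{q}\int_{\s^{n-1}}\big(h_L^q(u)-h_K^q(u)\big)\,dS^{\mu}_{K,q}(u)=\mu_q(K,L)-\mu_q(K,K),$$
the last equality being the integral representation of $\mu_q$ recorded just before Theorem~\ref{t:LqFcon}.

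For the forward implication, assume equality holds in Equation~\ref{eq:fcon_q} for all $\lambda\in[0,1]$ (the $L^q$ analogue of Proposition~\ref{p:equal} — whose proof uses only the concavity or convexity of $\lambda\mapsto F(\mu((1-\lambda)\cdot K+_q\lambda\cdot L))$ — shows it suffices to assume equality at one $\lambda$). Thus $F\circ\mu\big((1-\lambda)\cdot K+_q\lambda\cdot L\big)=(1-\lambda)F(\mu(K))+\lambda F(\mu(L))$ on $[0,1]$. Differentiating at $\lambda=0$, using the chain rule (legitimate since $F$ is differentiable) together with the displayed derivative formula, gives $F'(\mu(K))\big(\mu_q(K,L)-\mu_q(K,K)\big)=F(\mu(L))-F(\mu(K))$, which rearranges to equality in Theorem~\ref{t:LqFcon}.

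For the converse, suppose equality holds in Theorem~\ref{t:LqFcon}. Reversing the chain-rule computation yields $\diff{}{\lambda}F\circ\mu\big((1-\lambda)\cdot K+_q\lambda\cdot L\big)\big|_{\lambda=0}=F(\mu(L))-F(\mu(K))$. Put $f(t)=F\big(\mu((1-t)\cdot K+_q t\cdot L)\big)$, which is concave or convex by the hypothesized $F$-concavity with respect to $L^q$-addition — here one uses that an $L^q$-combination of the bodies $(1-s)\cdot K+_q s\cdot L$ is again of this form, because support functions combine as $h^q=(1-\sigma)h_K^q+\sigma h_L^q$ — and let $g(t)=(1-t)F(\mu(K))+tF(\mu(L))$ be the chord. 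Then $f(0)=g(0)$, $f(1)=g(1)$, and $f'(0)=g'(0)$; matching the one-sided derivative at an endpoint of a concave (or convex) function with the chord that already agrees at both endpoints forces $f\equiv g$ on $[0,1]$, i.e. equality in Equation~\ref{eq:fcon_q}.

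The only step needing any care — and it is a mild one — is the exact identification in the first display of the derivative of $\mu$ along the $L^q$-segment with $\mu_q(K,L)-\mu_q(K,K)$; this rests on the support-function identity $h^q_{(1-\lambda)\cdot K+_q\lambda\cdot L}=h_K^q+\lambda(h_L^q-h_K^q)$ and on $h_K$ being bounded away from zero, so that Lemma~\ref{l:second_q} applies with the continuous perturbation $f=h_L^q-h_K^q$. Everything else is a line-by-line copy of the argument for Corollary~\ref{cor:minkowski}.
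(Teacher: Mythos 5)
Your proof is correct and follows exactly the route the paper intends (the $L^q$ transcription of the proof of Corollary~\ref{cor:minkowski}), with the key observation that $(1-\lambda)\cdot K+_q\lambda\cdot L$ is the Wulff shape of $\big(h_K^q+\lambda(h_L^q-h_K^q)\big)^{1/q}$, so Lemma~\ref{l:second_q} with $f=h_L^q-h_K^q$ supplies the derivative identity, and the chord-tangency argument for concave $f$ closes the converse. Your side remark that the $L^q$-segment $s\mapsto(1-s)\cdot K+_q s\cdot L$ is reparametrization-closed under $L^q$-combination (so $F\circ\mu$ along it is genuinely concave or convex) correctly fills in the one step that is easy to gloss over.
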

\noindent In the volume case, Lutwak \cite{LE93} standardized the normalization $V_q(K,L):=\frac{q}{n}\lambda_q(K,L)$. In this case, Theorem~\ref{t:LqFcon} and Corollary~\ref{cor:minkowski_q} together (with $F(x)=x^\frac{q}{n}$) recover \cite[Theorem 1.2]{LE93}: for $K,L\in\conbod_0$ and $q>1$, one has
$$V_q(K,L)^n \geq \vol_n(K)^{n-q}\vol_n(L)^q,$$
with equality if, and only if, $K$ is a dilate of $L$.

We next list results for $p$-concave measures. We shall use the following, which was shown by Roysdon and Xing \cite{RX21}. This extends the result by Firey in the volume case, i.e. that replacing the summation can improve the concavity.
\begin{proposition}[Brunn-Minkowski inequality for $p$-concave measures with respect to $L^q$ addition]
\label{p:RX}
Let $\mu$ be a Radon measure that is $p$-concave with respect to Minkowski addition, then, for $q\geq 1$, it is $pq$-concave, with respect to $L^q$ Minkowski addition i.e.
\begin{equation}
    \label{eq:pq_concave}
    \mu\left((1-\lambda) \cdot A+_q \lambda \cdot B\right)^{pq} \geq (1-\lambda) \mu(A)^{pq}+\lambda \mu(B)^{pq}.
\end{equation}
If $q>1$, then equality holds if, and only if, $A=aB$ for some $a\in\R^+$.
\end{proposition}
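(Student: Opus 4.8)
The plan is to adapt Firey's proof of the $L^q$ Brunn--Minkowski inequality, using the hypothesis that $\mu$ is $p$-concave with respect to Minkowski addition in place of the classical Brunn--Minkowski inequality. Since $+_1$ is Minkowski addition and $pq=p$ when $q=1$, that case is exactly the hypothesis; so assume $q>1$ and let $q'=q/(q-1)$. The first step is a one-parameter refinement of Proposition~\ref{p:q_set}: for every $\gamma\in[0,1]$,
\[
(1-\lambda)\cdot A+_q\lambda\cdot B\;\supseteq\;\alpha_\gamma A+\beta_\gamma B,\qquad
\alpha_\gamma:=(1-\lambda)^{1/q}\gamma^{1/q'},\quad \beta_\gamma:=\lambda^{1/q}(1-\gamma)^{1/q'},
\]
together with the bound $\alpha_\gamma+\beta_\gamma\le 1$. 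Both facts come straight from Hölder's inequality: comparing support functions, the inclusion is the pointwise estimate $\alpha_\gamma h_A(x)+\beta_\gamma h_B(x)\le\big((1-\lambda)h_A(x)^q+\lambda h_B(x)^q\big)^{1/q}$, which is Hölder with exponents $q,q'$ applied to $\big((1-\lambda)^{1/q}h_A(x),\lambda^{1/q}h_B(x)\big)$ and $\big(\gamma^{1/q'},(1-\gamma)^{1/q'}\big)$ (note $(1/q')q'=1$); and $\alpha_\gamma+\beta_\gamma\le1$ is Hölder applied to $\big((1-\lambda)^{1/q},\lambda^{1/q}\big)$ and $\big(\gamma^{1/q'},(1-\gamma)^{1/q'}\big)$.

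Next I would invoke monotonicity of $\mu$ and then its $p$-concavity. Writing $r=\alpha_\gamma+\beta_\gamma\le1$ and $M=\tfrac{\alpha_\gamma}{r}A+\tfrac{\beta_\gamma}{r}B$, one has $\alpha_\gamma A+\beta_\gamma B=rM$, and
\[
\mu(\alpha_\gamma A+\beta_\gamma B)=\mu(rM)\ \ge\ r^{1/p}\mu(M)\ \ge\ r^{1/p}\Big(\tfrac{\alpha_\gamma}{r}\mu(A)^p+\tfrac{\beta_\gamma}{r}\mu(B)^p\Big)^{1/p}=\big(\alpha_\gamma\mu(A)^p+\beta_\gamma\mu(B)^p\big)^{1/p},
\]
where the first inequality is the elementary consequence $\mu(rC)\ge r^{1/p}\mu(C)$ of Borell's description of $p$-concave measures (or of $p$-concavity applied to $C$ and $\{0\}$, since $\mu(\{0\})=0$), and the second is $p$-concavity for $A,B$. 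Combining with the inclusion and raising to the power $q$ gives $\mu\big((1-\lambda)\cdot A+_q\lambda\cdot B\big)^{pq}\ge\big(\alpha_\gamma\mu(A)^p+\beta_\gamma\mu(B)^p\big)^{q}$ for every $\gamma$. Finally I optimize over $\gamma\in[0,1]$: with $s=\mu(A)^p,\ t=\mu(B)^p$, the concave function $\gamma\mapsto (1-\lambda)^{1/q}s\,\gamma^{1/q'}+\lambda^{1/q}t\,(1-\gamma)^{1/q'}$ attains its maximum at $\gamma^\ast=\tfrac{(1-\lambda)s^q}{(1-\lambda)s^q+\lambda t^q}$, and a short Lagrange computation shows that maximum equals $\big((1-\lambda)s^q+\lambda t^q\big)^{1/q}$. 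Since $s^q=\mu(A)^{pq}$ and $t^q=\mu(B)^{pq}$, this yields $\mu\big((1-\lambda)\cdot A+_q\lambda\cdot B\big)^{pq}\ge(1-\lambda)\mu(A)^{pq}+\lambda\mu(B)^{pq}$, which is \eqref{eq:pq_concave}. (The boundary cases $\mu(A)\mu(B)=0$ follow by taking $\gamma\in\{0,1\}$ and the same dilation estimate.)

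For the equality statement when $q>1$ I would run the chain backwards. At the optimal $\gamma^\ast$, equality forces equality in the support-function Hölder step, which makes $h_A$ and $h_B$ proportional, i.e.\ $A=aB$; conversely, when $A=aB$ both the $L^q$-sum and the convex combination $M$ are dilates of $B$, and one uses the equality case of the Prékopa--Leindler inequality packaged as Proposition~\ref{p:mr} to see the remaining inequality is tight. The main obstacle is exactly this last point: the deficit term forces one to pass through the degenerate inequality $\mu(rM)\ge r^{1/p}\mu(M)$, to which Proposition~\ref{p:mr} does not directly apply (one of the two measures is $\mu(\{0\})=0$), so one must argue separately — from Borell's description of $\mu$ — what "equality" there entails, and one must also be careful when $\operatorname{supp}\mu\ne\R^n$, so that equality of the $\mu$-measures of two nested convex bodies does not immediately force the bodies to coincide. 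The inequality itself is routine once the correct one-parameter family of inclusions in the first step has been identified.
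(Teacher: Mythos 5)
The paper does not prove Proposition~\ref{p:RX}; it is cited from \cite{RX21}, so there is no internal argument to compare against. Your proof of the inequality \eqref{eq:pq_concave} is correct and is precisely Firey's classical argument transported to the $p$-concave setting: the one-parameter family of inclusions from H\"older, the normalization $\alpha_\gamma A+\beta_\gamma B=rM$ followed by the dilation estimate $\mu(rM)\ge r^{1/p}\mu(M)$, then $p$-concavity and optimization over $\gamma$. One small repair: the paper's convention for $p$-concavity (Equation~\ref{eq:concave} with the ``right-hand side is taken to be zero otherwise'' proviso) makes the inequality trivial when one of the two sets is $\{0\}$, so invoking $p$-concavity ``applied to $C$ and $\{0\}$'' does not literally work. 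You should instead note that $p$-concavity applied to $t_1C$ and $t_2C$, $t_1,t_2>0$, makes $t\mapsto\mu(tC)^p$ concave on $(0,\infty)$, and absolute continuity of $\mu$ gives $\mu(tC)\to 0$ as $t\to 0^+$; concavity with $g(0^+)=0$ then gives $\mu(rC)^p\ge r\mu(C)^p$ for $r\in(0,1]$.

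The equality case contains a genuine gap, and in fact the biconditional as stated cannot be established by this route because it fails for general $p$-concave $\mu$. Take $\mu$ to be $\alpha$-homogeneous and $p$-concave with $\alpha p<1$, and $A=aB$ with $a\ne 1$. Then $(1-\lambda)\cdot A+_q\lambda\cdot B=cB$ with $c^q=(1-\lambda)a^q+\lambda$, and \eqref{eq:pq_concave} reduces to $\left((1-\lambda)a^q+\lambda\right)^{\alpha p}\ge (1-\lambda)a^{\alpha pq}+\lambda$, which is a \emph{strict} inequality for $a\ne 1$ by strict concavity of $t\mapsto t^{\alpha p}$. So $A=aB$ does not yield equality in general; one needs $\mu(A)=\mu(B)$ (equivalently $r=\alpha_{\gamma^\ast}+\beta_{\gamma^\ast}=1$) or $\mu$ $1/p$-homogeneous. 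Your ``only if'' argument (equality forces H\"older equality at $\gamma^\ast$, hence $h_A/h_B$ constant) is essentially right once you add the full-support hypothesis so that equality of $\mu$-measures of the nested bodies forces equality of the bodies; but the ``if'' direction cannot be rescued by Proposition~\ref{p:mr} as you suggest, since the obstruction is the dilation step $\mu(rM)\ge r^{1/p}\mu(M)$, which is strict for $r<1$ unless $\mu$ is $1/p$-homogeneous on the cone over $M$. You correctly flag this step as problematic but stop short of observing that it actually falsifies the stated ``if'' direction without further hypotheses. Note that where the paper uses the equality case (Lemma~\ref{l:q_meas_uni}) it has already established $\mu(K)=\mu(L)$, so the needed conclusion there is $K=L$ and is unaffected.
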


\begin{corollary}
\label{q_min_p}
Fix $q > 1$. Suppose a Radon measure $\mu$ is $p$-concave. Then, for $K,L\in\conbod_0$ in the support of $\mu$, one has from Proposition~\ref{p:q_set}:
$$\mu_q(K,L)\geq \frac{1}{q}\mu(K,K)+\frac{1}{p}\left[\mu(L)^p\mu(K)^{1-p}-\mu(K)\right].$$
If $\mu$ is also $\alpha$-homogeneous, $\alpha>0$, then:
$$\mu_q(K,L)\geq \frac{1}{p}\left[\mu(K)^{1-p}\mu(L)^{p}-\left(1-\frac{p\alpha}{q}\right)\mu(K)\right].$$
\\
We can also use Proposition~\ref{p:RX} to replace $p$ with $qp$ and write:
$$\mu_q(K,L)\geq \frac{1}{q}\mu(K,K)+ \frac{1}{pq}\left[\mu(K)^{1-pq}\mu(L)^{pq}-\mu(K)\right].$$
\\
If $\mu$ is additionally $\alpha$-homogeneous, $\alpha>0$ we can write
$$\mu_q(K,L)\geq  \frac{1}{pq}\left[\mu(K)^{1-pq}\mu(L)^{pq}-(1-p\alpha)\mu(K)\right].$$
\\
Finally, for $\mu$ $\alpha$-homogeneous, $\alpha>0$ that is either $1/\alpha$-concave or $\frac{q}{\alpha}$-concave, one has 
$$\mu^{\alpha}_q(K,L)\geq \left(\frac{\alpha}{q}\right)^{\alpha}\left[\mu(K)^{\alpha-q}\mu(L)^{q}\right].$$
In all inequalities, there is equality if, and only if, $K=aL$ for some $a>0$.
\end{corollary}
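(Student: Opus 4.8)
The plan is to establish the four displayed inequalities of Corollary~\ref{q_min_p} by combining the $L^q$-Minkowski inequality for $F$-concave measures (Theorem~\ref{t:LqFcon}) with the various concavity results already at our disposal, and then to extract the homogeneous versions via the standard scaling-and-integration trick that appears throughout Section~\ref{s:min}. For the first inequality, I would apply Theorem~\ref{t:LqFcon} with $F(x)=x^p$: since $\mu$ is $p$-concave with respect to ordinary Minkowski addition, Proposition~\ref{p:qcon} tells us it is also $F$-concave with $F(x)=x^p$ with respect to $L^q$-summation, so
$$\mu_q(K,L)\geq \mu_q(K,K)+\frac{\mu(L)^p-\mu(K)^p}{p\mu(K)^{p-1}}=\frac1q\mu(K,K)+\frac1p\left[\mu(L)^p\mu(K)^{1-p}-\mu(K)\right],$$
using $\mu_q(K,K)=\tfrac1q\mu(K,K)$ (noted right before Theorem~\ref{t:LqFcon}). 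The third inequality is identical but invokes Proposition~\ref{p:RX}, which upgrades $p$-concavity with respect to Minkowski addition to $pq$-concavity with respect to $L^q$-addition; so one simply replaces $p$ by $pq$ throughout. The fourth inequality uses $F(x)=x^{q/\alpha}$ (equivalently the $L^q$-Minkowski inequality for $q/\alpha$-concave measures, exactly as Equation~\ref{eq:mr} was used, noting that if $\mu$ is $1/\alpha$-concave it is $q/\alpha$-concave with respect to $L^q$-addition by Proposition~\ref{p:RX} with the roles adjusted); this gives $\mu_q(K,L)\geq \tfrac{\alpha}{q}\mu(K)^{1-q/\alpha}\mu(L)^{q/\alpha}$ directly, and raising to the $\alpha$-th power yields the stated form.

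For the homogeneous refinements (the second and the post-Proposition~\ref{p:RX} homogeneous inequalities), I would start from the already-obtained non-homogeneous inequality, but applied to the dilate $tK$ in place of $K$. Here one uses that $\mu_q(tK,L)=t^{\alpha-q}\mu_q(K,L)$ (the $(\alpha-q)$-homogeneity of $dS^\mu_{tK,q}$ established in the proof of Corollary~\ref{cor:almost_hom_q}, together with the definition of $\mu_q$), that $\mu(tK)=t^\alpha\mu(K)$, and that $\mu(tK,tK)=\alpha\mu(tK)=\alpha t^\alpha\mu(K)$ — wait, more precisely $\mu_q(tK,tK)=\tfrac\alpha q\mu(tK)=\tfrac\alpha q t^\alpha\mu(K)$. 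Substituting $tK$ for $K$ into, say, the first inequality and multiplying through by $t^{q-\alpha}$ produces
$$\mu_q(K,L)\geq \frac1p\left[\mu(K)^{1-p}\mu(L)^p\, t^{q-p\alpha}\cdot t^{p\alpha-q}\ \text{(corrected)}\ -\ \frac{p\alpha}{q}\mu(K)\,t^{?}\right],$$
so I should be careful with exponents: the clean way is to note $\mu_q(K,K)=\tfrac\alpha q\mu(K)$ when $\mu$ is $\alpha$-homogeneous, substitute this directly into the first displayed inequality, and simplify $\tfrac1q\cdot\tfrac\alpha q\mu(K)\cdot q=\tfrac{p\alpha}{q}\cdot\tfrac1p\mu(K)$ to land on
$$\mu_q(K,L)\geq \frac1p\left[\mu(K)^{1-p}\mu(L)^p-\left(1-\frac{p\alpha}{q}\right)\mu(K)\right].$$
The $pq$-homogeneous version follows the same way with $p\mapsto pq$, giving the factor $(1-p\alpha)$ since $pq\cdot\alpha/q=p\alpha$.

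The equality characterization is the part requiring the most care, so I would treat it last and uniformly. In each case equality in the chain forces equality in the underlying $L^q$-Brunn–Minkowski-type inequality used: for the $p$-concave (resp. $pq$-concave) cases this is equality in Proposition~\ref{p:RX}, which gives $A=aB$, i.e. $K=aL$ for some $a>0$ (when $q>1$, which is our standing hypothesis); for the homogeneous $q/\alpha$-concave case equality in Equation~\ref{eq:mr}-type bound again forces a dilation by Proposition~\ref{p:mr} (noting a homogeneous $p$-concave measure has the relevant equality rigidity). One also must check that the intermediate step — using $\mu_q(K,K)=\tfrac1q\mu(K,K)$ and the homogeneity identities — introduces no loss, which it does not since these are exact identities. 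I expect the main obstacle to be bookkeeping the exponents correctly across the four inequalities (particularly the homogeneous cases, where it is easy to misplace a factor of $q$ versus $\alpha$) and making sure the equality cases of Proposition~\ref{p:RX} and Proposition~\ref{p:mr} genuinely apply in the homogeneous setting; the analytic content is entirely supplied by results already proven.
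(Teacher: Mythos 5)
Your derivation is correct and follows the route the paper plainly intends: apply Theorem~\ref{t:LqFcon} with $F(x)=x^p$ (using Proposition~\ref{p:qcon} to transfer $p$-concavity from Minkowski to $L^q$ summation) for the first inequality; substitute $\mu_q(K,K)=\frac{\alpha}{q}\mu(K)$ in the homogeneous case to get the second; invoke Proposition~\ref{p:RX} to replace $p$ by $pq$ for the third and fourth; and use $F(x)=x^{q/\alpha}$ (directly, or via Proposition~\ref{p:RX} from $1/\alpha$-concavity) so that the additive terms cancel in the last one, after which raising to the $\alpha$th power gives the stated form. A small bookkeeping slip: there are five displayed inequalities, not four, and the one you describe with $F(x)=x^{q/\alpha}$ is the fifth.

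Where you should be more careful is the equality discussion. For the first and second inequalities the route is Proposition~\ref{p:qcon}, not Proposition~\ref{p:RX}, so invoking the equality case of Proposition~\ref{p:RX} there is misplaced. Tracing equality through Proposition~\ref{p:qcon}, one must have equality in the inclusion of Proposition~\ref{p:q_set} applied with $q'=1$, and for $q>1$ the strict power-mean inequality on support functions forces $h_K=h_L$, i.e.\ $K=L$, which is stronger than $K=aL$. A direct check with $\mu$ the Lebesgue measure, $p=1/n$ and $K=aL$, $a\neq1$, shows the first displayed inequality is strict; the blanket claim ``equality iff $K=aL$'' is sharp only for the last inequality (and for the third and fourth in the limiting case $p\alpha=1$, where they coincide with the fifth). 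This is a looseness in the corollary's own phrasing rather than a defect of your method, but your argument should identify which underlying proposition supplies the rigidity in each of the five cases rather than attributing all of them to Proposition~\ref{p:RX}.
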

Using this, we obtain uniqueness for the $L^q$ weighted surface area measures.
\begin{lemma}
\label{l:q_meas_uni}
Fix $q\geq 1$. Let a Radon measure $\mu$ be $\alpha$-homogeneous and $p$-concave, where either $\alpha\geq n$ and $p>0$ or $\alpha<n$ but $p\in\left\{\frac{q}{\alpha},\frac{1}{\alpha}\right\}$. Then, if
$dS^{\mu}_{K,q}=dS^{\mu}_{L,q}$, one has $K=L$.
\end{lemma}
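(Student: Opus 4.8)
The plan is to follow the $q=1$ argument, namely the combination of Theorem~\ref{t:min_exist}, Lemma~\ref{l:min_exist} and Remark~\ref{r:min_exist}, with ordinary mixed measures replaced by $L^q$-mixed measures and Minkowski's inequality replaced by its $L^q$-analogue from Corollary~\ref{q_min_p}. Here $K,L\in\conbod_0$ are symmetric (this is implicit, since $h_K^{1-q}$ enters the definition of $dS^{\mu}_{K,q}$), and the argument needs the additional restriction $\alpha\neq q$: when $\alpha=q$ the measure $dS^{\mu}_{\cdot,q}$ is $0$-homogeneous in its body argument, so every dilate of $L$ satisfies the hypothesis and uniqueness genuinely fails. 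The case $q=1$ is exactly Lemma~\ref{l:min_exist}, so I would take $q>1$, which is precisely the range in which Proposition~\ref{p:RX} gives the sharpened equality characterisation used below.

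First I would extract two identities from $dS^{\mu}_{K,q}=dS^{\mu}_{L,q}$. Using the integral representation $\mu_q(X,Y)=\frac{1}{q}\int_{\s^{n-1}}h_Y(u)^q\,dS^{\mu}_{X,q}(u)$ together with $\mu_q(X,X)=\frac{\alpha}{q}\mu(X)$ (a consequence of $\alpha$-homogeneity), one obtains
\[
\frac{\alpha}{q}\mu(K)=\mu_q(K,K)=\frac{1}{q}\int_{\s^{n-1}}h_K^q\,dS^{\mu}_{K,q}=\frac{1}{q}\int_{\s^{n-1}}h_K^q\,dS^{\mu}_{L,q}=\mu_q(L,K),
\]
and symmetrically $\frac{\alpha}{q}\mu(L)=\mu_q(L,L)=\mu_q(K,L)$.

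Next I would invoke the $L^q$-Minkowski inequality. Under either branch of the hypothesis $\mu$ carries the concavity needed for the last inequality of Corollary~\ref{q_min_p}: when $\alpha\geq n$ this is because Proposition~\ref{p:gal} upgrades $p$-concavity to $1/\alpha$-concavity, and when $\alpha<n$ it is supplied directly by the assumption $p\in\{q/\alpha,1/\alpha\}$. Hence
\[
\mu_q(K,L)^{\alpha}\geq\Big(\frac{\alpha}{q}\Big)^{\alpha}\mu(K)^{\alpha-q}\mu(L)^{q},
\]
with equality if and only if $K=aL$ for some $a>0$. Substituting $\mu_q(K,L)=\frac{\alpha}{q}\mu(L)$ gives $\mu(L)^{\alpha-q}\geq\mu(K)^{\alpha-q}$, and the identical computation with the roles of $K$ and $L$ interchanged (using $\mu_q(L,K)=\frac{\alpha}{q}\mu(K)$) gives the reverse inequality. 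Thus $\mu(K)^{\alpha-q}=\mu(L)^{\alpha-q}$, and since $\alpha\neq q$ this forces $\mu(K)=\mu(L)$. Consequently equality holds in the displayed inequality, so $K=aL$; since then $a^{\alpha}\mu(L)=\mu(aL)=\mu(K)=\mu(L)$ and $\mu(L)>0$, we get $a=1$, i.e.\ $K=L$.

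The routine pieces are the two mixed-measure identities and the interchange step. The only point requiring care is the bookkeeping of which addition (ordinary versus $L^q$) each concavity exponent refers to when one feeds Corollary~\ref{q_min_p}, together with the observation that the clean equality case $K=aL$ (rather than $K=aL+b$) — which is what shortens the $L^q$ argument relative to its $q=1$ counterpart — rests on the $q>1$ clause of Proposition~\ref{p:RX}. The genuine limitation, as noted, is that $\alpha=q$ must be excluded.
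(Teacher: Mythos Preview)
Your proposal is correct and follows essentially the same approach as the paper: both reduce to the $1/\alpha$-concave (or $q/\alpha$-concave) case via Proposition~\ref{p:gal}, use the last inequality of Corollary~\ref{q_min_p} together with $\mu_q(L,K)=\mu_q(K,K)=\frac{\alpha}{q}\mu(K)$ (and its symmetric counterpart) to force $\mu(K)=\mu(L)$, and then read off $K=aL$ from the equality case. Your explicit observation that $\alpha\neq q$ is required is correct and is a genuine omission from the paper's statement (the paper's own proof implicitly uses it when passing from $\mu(K)^{1-q/\alpha}=\mu(L)^{1-q/\alpha}$ to $\mu(K)=\mu(L)$). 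One minor quibble: your justification that symmetry is ``implicit, since $h_K^{1-q}$ enters the definition'' is off---that only forces $K\in\conbod_0$, not symmetry---but this does not affect the argument, since for $q>1$ the equality clause of Proposition~\ref{p:RX} already yields $K=aL$ without a translation term.
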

\begin{proof}
If $\alpha\geq n$, then, from Proposition~\ref{p:gal}, $\mu$ is also $1/\alpha$-concave. If $\alpha<n$, then $p\in\left\{\frac{q}{\alpha},\frac{1}{\alpha}\right\}$ by hypothesis; in either case we obtain from Corollary~\ref{q_min_p} that
$\frac{1}{q}\mu(K,K)=\mu_q(K,K)=\mu_q(L,K)\geq\frac{\alpha}{q}\left[\mu(L)^{1-\frac{q}{\alpha}}\mu(K)^{\frac{q}{\alpha}}\right].$
Multiplying by $t^{\alpha-1}$ and integrating over $(0,1)$ then yields
$\mu(K)^{1-\frac{q}{\alpha}}\geq \mu(L)^{1-\frac{q}{\alpha}}.$ From the symmetry of the expression, we also have that
$\mu(L)^{1-\frac{q}{\alpha}}\geq \mu(K)^{1-\frac{q}{\alpha}}.$ Therefore, $\mu(K)=\mu(L)$. Thus, $K=L$, from either Proposition~\ref{p:mr} or Proposition~\ref{p:RX}. 
\end{proof}

\noindent The following theorem extends the result of Wu \cite[Theorem 6.4]{DW17}, which had more restrictions on the concavity and no uniqueness result.

\begin{theorem}
\label{t:min_exist_uni_q}
Fix $q\geq 1$. Let a Radon measure $\mu$ be even, $p$-concave, and $\alpha$- homogeneous so that $\alpha \neq q$ and either $\alpha \geq n$ and $p>0$ or $\alpha<n$ but $p\in\left\{\frac{1}{\alpha},\frac{q}{\alpha}\right\}$. Suppose $\nu$ is a finite, even measure on $\s^{n-1}$ not concentrated on any hemisphere. Then, there exists a unique symmetric $K\in\conbod_0$ such that
$$d\nu=dS^{\mu}_{K,q}.$$
\end{theorem}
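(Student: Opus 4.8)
The plan is to deduce this statement from two results already in hand, exactly paralleling how Theorem~\ref{t:min_exist_uni} was obtained by combining Lemma~\ref{l:almost_hom} (existence) with Lemma~\ref{l:min_exist} (uniqueness). For existence, I would invoke Corollary~\ref{cor:almost_hom_q}: its hypotheses are that $\mu$ is even, $\alpha$-homogeneous with $\alpha>0$ and $\alpha\neq q$, and that $\nu$ is a finite, even measure on $\s^{n-1}$ not concentrated on any hemisphere. All of these are among the hypotheses of the theorem (the condition $\alpha\neq q$ is imposed explicitly), so Corollary~\ref{cor:almost_hom_q} produces a symmetric $K\in\conbod_0$ with $d\nu=dS^{\mu}_{K,q}$. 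One point to check here is that the continuity of the density needed for the cosine-transform and variational arguments underlying Corollary~\ref{cor:almost_hom_q} is subsumed: since $\mu$ is $p$-concave and $\alpha$-homogeneous, Borell's classification forces the density to be a power of a concave function on the interior of its support, hence continuous there, which is all that is used.

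For uniqueness, suppose $L\in\conbod_0$ is another symmetric body with $d\nu=dS^{\mu}_{L,q}$. Then $dS^{\mu}_{K,q}=dS^{\mu}_{L,q}$, and I would apply Lemma~\ref{l:q_meas_uni}. Its hypotheses are that $\mu$ is $\alpha$-homogeneous and $p$-concave with either $\alpha\geq n$ and $p>0$, or $\alpha<n$ but $p\in\{q/\alpha,1/\alpha\}$ — which is precisely the case division stated in the theorem. Hence $K=L$, and the symmetric body produced above is unique.

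Thus the proof is a two-line invocation: Corollary~\ref{cor:almost_hom_q} gives existence, and Lemma~\ref{l:q_meas_uni} gives uniqueness, since the hypotheses of the theorem were chosen to be exactly the intersection of the hypotheses of those two results. The only genuine bookkeeping is verifying that the case analysis on $(\alpha,p)$ is simultaneously compatible with Corollary~\ref{cor:almost_hom_q}, which needs $\alpha\neq q$ and homogeneity but no concavity, and with Lemma~\ref{l:q_meas_uni}, which needs the concavity structure (and internally uses Proposition~\ref{p:gal} to upgrade $\alpha\geq n$ measures to $1/\alpha$-concavity, together with Proposition~\ref{p:RX} or Proposition~\ref{p:mr} for the equality case forcing $K=L$).

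The "hard part" is therefore entirely upstream — in Corollary~\ref{cor:almost_hom_q} (built on the $L^q$ variational formula of Lemma~\ref{l:second_q}, the functional-maximization scheme of Theorem~\ref{t:third_q}, and the homogeneity rescaling $A^{\alpha-q}=\tfrac1q\mu(\tilde K)^{\beta/n-1}$) and in Lemma~\ref{l:q_meas_uni} (built on the $L^q$ Minkowski inequalities collected in Corollary~\ref{q_min_p}). At the level of this theorem there is no remaining obstacle; I would simply state the two invocations and conclude.
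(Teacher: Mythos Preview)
Your proposal is correct and follows exactly the intended route: the paper does not even write out a proof for this theorem, treating it as immediate from Corollary~\ref{cor:almost_hom_q} (existence) and Lemma~\ref{l:q_meas_uni} (uniqueness), in direct parallel with the proof of Theorem~\ref{t:min_exist_uni}. Your bookkeeping on the hypotheses and the remark about continuity of the density via Borell's classification are accurate and complete the argument.
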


We now extend the measures under consideration. The following have proofs that are, up to minor adjustments, the same as Proposition~\ref{p:uni_0}, Remark~\ref{r:uni_1} and Lemma~\ref{l:uni}, and are omitted.
\begin{proposition}
\label{p:uni_0_1}
Fix $q\geq 1$. Let $\mu$, a Borel measure with continuous density, be $F$-concave such that $F$ is differentiable. Suppose $K,L\in\conbod_0$ such that $$dS^{\mu}_{K,q}=dS^{\mu}_{L,q}.$$ Then
$$\frac{F(\mu(L))-F(\mu(K))}{F^{\prime}(\mu(K))}\leq \frac{F(\mu(L))-F(\mu(K))}{F^{\prime}(\mu(L))}.$$
\end{proposition}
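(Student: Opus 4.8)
The plan is to imitate verbatim the proof of Proposition~\ref{p:uni_0}, substituting the $L^q$-mixed measure for the classical mixed measure and Theorem~\ref{t:LqFcon} for Theorem~\ref{t:min_eq}. First I would apply the $L^q$-Minkowski inequality, Theorem~\ref{t:LqFcon}, to the ordered pair $(K,L)$, obtaining
$$\mu_q(K,L)-\mu_q(K,K)\geq \frac{F(\mu(L))-F(\mu(K))}{F^{\prime}(\mu(K))},$$
and then, by the symmetry of the statement, to the pair $(L,K)$, which gives
$$\mu_q(L,K)-\mu_q(L,L)\geq \frac{F(\mu(K))-F(\mu(L))}{F^{\prime}(\mu(L))}.$$
(Here one assumes, as implicitly in Proposition~\ref{p:uni_0}, that $K$ and $L$ lie in a class of sets on which $\mu$ is $F$-concave with respect to $L^q$ summation; $F$ being differentiable, the right-hand sides make sense.)

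Next I would insert the integral representation of the $L^q$-mixed measure coming from its definition via Lemma~\ref{l:second_q}, namely $\mu_q(A,B)=\tfrac1q\int_{\s^{n-1}}h_B(u)^q\,dS^{\mu}_{A,q}(u)$, and in particular $\mu_q(A,A)=\tfrac1q\int_{\s^{n-1}}h_A(u)^q\,dS^{\mu}_{A,q}(u)$. The two inequalities then become
$$\frac1q\int_{\s^{n-1}}\bigl(h_L(u)^q-h_K(u)^q\bigr)\,dS^{\mu}_{K,q}(u)\geq \frac{F(\mu(L))-F(\mu(K))}{F^{\prime}(\mu(K))}$$
and
$$\frac1q\int_{\s^{n-1}}\bigl(h_K(u)^q-h_L(u)^q\bigr)\,dS^{\mu}_{L,q}(u)\geq \frac{F(\mu(K))-F(\mu(L))}{F^{\prime}(\mu(L))}.$$
Multiplying the second display by $-1$ and using the hypothesis $dS^{\mu}_{K,q}=dS^{\mu}_{L,q}$ to identify the two left-hand integrals yields the chain
$$\frac{F(\mu(L))-F(\mu(K))}{F^{\prime}(\mu(K))}\leq \frac1q\int_{\s^{n-1}}\bigl(h_L(u)^q-h_K(u)^q\bigr)\,dS^{\mu}_{K,q}(u)\leq \frac{F(\mu(L))-F(\mu(K))}{F^{\prime}(\mu(L))},$$
which is exactly the asserted inequality.

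I do not expect any genuine obstacle: the argument is line-by-line the same as for Proposition~\ref{p:uni_0}, the only new ingredient being the $q$-homogeneous form $h_{\cdot}^q$ of the support function appearing in $\mu_q$ and in $dS^{\mu}_{\cdot,q}$, which is already built into the definition of the $L^q$-mixed measure and into Theorem~\ref{t:LqFcon}. This is why the paper states the result and omits the proof; the present plan simply records the substitutions that make the earlier argument go through unchanged.
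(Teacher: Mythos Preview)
Your proposal is correct and follows exactly the approach the paper intends: it explicitly states that the proof of Proposition~\ref{p:uni_0_1} is, up to minor adjustments, the same as that of Proposition~\ref{p:uni_0} and is therefore omitted. You have correctly identified those adjustments --- replacing $\mu(\cdot,\cdot)$ by $\mu_q(\cdot,\cdot)$, Theorem~\ref{t:min_eq} by Theorem~\ref{t:LqFcon}, and the integral representation via $h_L\,dS^\mu_K$ by $\tfrac{1}{q}h_L^q\,dS^\mu_{K,q}$ --- and carried them through line by line.
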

\begin{remark}
\label{r:uni_1_q}
If $\mu(K)=\mu(L)$ and $dS^{\mu}_{K,q}=dS^{\mu}_{L,q}$, we obtain equality in Minkowski's inequality. Thus, by Corollary~\ref{cor:minkowski_q}, we have equality in Equation~\ref{eq:fcon_q}.
\end{remark}

\begin{lemma}
\label{l:uni_q}
Fix $q\geq 1$. Let $\mu$, a Borel measure with continuous density, be $F$-concave such that $F$ is differentiable and there exists some $a\geq 0$ such that, for every $b,c\in [a,\mu(\R^n))$ one has $(F(b)-F(c))(F^\prime(b)-F^\prime(c))\geq 0$, $F^\prime(b)F^\prime(c)>0$, and $F^\prime(b)\neq F^\prime(c)$. Suppose $K,L\in\conbod_0$ such that $$dS^{\mu}_{K,q}=dS^{\mu}_{L,q}.$$ If $\mu(K),\mu(L)\geq a$, then $\mu(K)=\mu(L)$ and there is equality in Equation~\ref{eq:fcon_q}, which then yields equality in Equation~\ref{eq:concave}.
\end{lemma}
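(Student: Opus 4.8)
The plan is to copy the argument of Lemma~\ref{l:uni} essentially line for line, replacing each tool by its $L^q$-analog from this appendix. First I would apply Proposition~\ref{p:uni_0_1}: since $dS^{\mu}_{K,q}=dS^{\mu}_{L,q}$, we obtain
$$\frac{F(\mu(L))-F(\mu(K))}{F^{\prime}(\mu(K))}\leq \frac{F(\mu(L))-F(\mu(K))}{F^{\prime}(\mu(L))}.$$
Because $\mu(K),\mu(L)\geq a$, the standing hypothesis guarantees $F^{\prime}(\mu(K))F^{\prime}(\mu(L))>0$, so multiplying through by this positive quantity and rearranging converts the displayed inequality into
$$\bigl(F(\mu(L))-F(\mu(K))\bigr)\bigl(F^{\prime}(\mu(L))-F^{\prime}(\mu(K))\bigr)\leq 0.$$
On the other hand, the assumption on $F$ supplies the opposite inequality $\bigl(F(\mu(L))-F(\mu(K))\bigr)\bigl(F^{\prime}(\mu(L))-F^{\prime}(\mu(K))\bigr)\geq 0$, so this product is exactly $0$.

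Next I would deduce $\mu(K)=\mu(L)$. If instead $\mu(K)\neq\mu(L)$, then the injectivity of $F^{\prime}$ on $[a,\mu(\R^n))$ forces $F^{\prime}(\mu(K))\neq F^{\prime}(\mu(L))$, whence the vanishing of the product above yields $F(\mu(K))=F(\mu(L))$; but $F$ is invertible, so $\mu(K)=\mu(L)$, a contradiction. Therefore $\mu(K)=\mu(L)$.

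Finally, having $\mu(K)=\mu(L)$ together with $dS^{\mu}_{K,q}=dS^{\mu}_{L,q}$, I would invoke Remark~\ref{r:uni_1_q} to conclude that equality holds in the $L^q$-Minkowski inequality, Theorem~\ref{t:LqFcon}; Corollary~\ref{cor:minkowski_q} then promotes this to equality in Equation~\ref{eq:fcon_q}, and the last assertion of Proposition~\ref{p:qcon} transports that equality back to Equation~\ref{eq:concave}. I do not anticipate a genuine obstacle here: the only points warranting attention are the sign bookkeeping when clearing denominators in the inequality of Proposition~\ref{p:uni_0_1} (the common factor $F^{\prime}(\mu(K))F^{\prime}(\mu(L))$ is positive, so the direction of the inequality is preserved) and the observation that ``$F$ invertible'' together with ``$F^{\prime}$ injective on $[a,\mu(\R^n))$'' is precisely what the hypotheses encode --- both of which are met, for instance, by the functions listed in Example~\ref{ex:minkowski}.
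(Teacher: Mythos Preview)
Your proposal is correct and mirrors exactly what the paper intends: the paper omits the proof of Lemma~\ref{l:uni_q}, stating that it is the same as that of Lemma~\ref{l:uni} ``up to minor adjustments,'' and your write-up carries out precisely those adjustments---replacing Proposition~\ref{p:uni_0}, Remark~\ref{r:uni_1}, and Corollary~\ref{cor:minkowski} by their $L^q$-analogs Proposition~\ref{p:uni_0_1}, Remark~\ref{r:uni_1_q}, and Corollary~\ref{cor:minkowski_q}, and appending Proposition~\ref{p:qcon} to pass from equality in \eqref{eq:fcon_q} to equality in \eqref{eq:concave}. Your handling of the step ``product equals zero $\Rightarrow$ $\mu(K)=\mu(L)$'' is in fact slightly more careful than the paper's own argument in Lemma~\ref{l:uni}, since you make explicit the role of the injectivity of $F'$.
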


We conclude by applying Lemma~\ref{l:uni_q} with $F(x)=\Phi^{-1}(x)$, the Ehrhard function, to recover a result for the Gaussian measure established by Liu \cite{JL22}.
\begin{theorem}
\label{t:ehr_q}
Fix $q\geq 1$. Suppose $K,L\in\conbod_0$ such that $\gamma_n(K),\gamma_n(L)\geq 1/2$ and $$dS^{\gamma_n}_{K,q}=dS^{\gamma_n}_{L,q}.$$ Then, one has $K=L$.
\end{theorem}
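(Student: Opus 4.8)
The plan is to carry over the proof of Theorem~\ref{t:ehr} essentially verbatim, replacing the $q=1$ ingredients by their $L^q$ counterparts already set up in this appendix. Concretely, I would apply Lemma~\ref{l:uni_q} with $F(x)=\Phi^{-1}(x)$, the Ehrhard function, and $a=1/2$, and then invoke the rigidity of Ehrhard's inequality to conclude $K=L$.

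First I would check that $\gamma_n$ satisfies the standing hypothesis of Lemma~\ref{l:uni_q}, namely that it is $F$-concave with respect to $L^q$ summation with $F=\Phi^{-1}$. This is immediate from Proposition~\ref{p:qcon}: Ehrhard's inequality \eqref{e:Ehrhard_ineq} says precisely that $\gamma_n$ is $\Phi^{-1}$-concave with respect to ordinary Minkowski addition on the class of convex bodies, so Proposition~\ref{p:qcon} upgrades this to $\Phi^{-1}$-concavity with respect to $L^q$-Minkowski addition, and moreover guarantees that equality in the $L^q$ version, Equation~\ref{eq:fcon_q}, forces equality in the ordinary version, Equation~\ref{eq:concave}. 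Next I would verify the analytic requirements on $F=\Phi^{-1}$ demanded by Lemma~\ref{l:uni_q} on the interval $[1/2,1)$: exactly as in the discussion preceding Theorem~\ref{t:ehr}, $(\Phi^{-1})'(x)=\sqrt{2\pi}\,e^{(\Phi^{-1})^2(x)/2}>0$ on $(0,1)$, and $(\Phi^{-1})''(x)=2\pi\Phi^{-1}(x)e^{(\Phi^{-1})^2(x)}>0$ on $(1/2,1)$ since $\Phi^{-1}(x)>0$ there, so $\Phi^{-1}$ is increasing and strictly convex on $[1/2,1)$. Hence for all $b,c\in[1/2,1)$ one has $F'(b)F'(c)>0$, $F'(b)\neq F'(c)$ whenever $b\neq c$ (strict monotonicity of $F'$), and $(F(b)-F(c))(F'(b)-F'(c))\geq 0$ (both $F$ and $F'$ increasing); this is exactly the situation of Item~1 of Example~\ref{ex:minkowski} transported to Lemma~\ref{l:uni_q}.

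With these checks in place, the hypotheses of Lemma~\ref{l:uni_q} hold with $a=1/2$, and since $\gamma_n(K),\gamma_n(L)\geq 1/2$ and $dS^{\gamma_n}_{K,q}=dS^{\gamma_n}_{L,q}$ by assumption, the lemma yields $\gamma_n(K)=\gamma_n(L)$ together with equality in Equation~\ref{eq:fcon_q}, hence equality in Equation~\ref{eq:concave} --- that is, equality in Ehrhard's inequality for the convex bodies $K$ and $L$. Since the equality case of Ehrhard's inequality among bodies in $\conbod_0$ is precisely $K=L$ (half-space configurations being excluded once both sets are bounded with nonempty interior), we conclude $K=L$. I do not anticipate a genuine obstacle: the whole argument is a transcription of the $q=1$ case, and the only point that merits care is the equality-transfer clause of Proposition~\ref{p:qcon}, which is exactly what allows us to reduce the $L^q$-equality condition back to the classical Ehrhard equality whose characterization is already available.
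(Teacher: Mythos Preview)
Your proposal is correct and follows exactly the paper's approach: the paper simply states that Theorem~\ref{t:ehr_q} is obtained by applying Lemma~\ref{l:uni_q} with $F(x)=\Phi^{-1}(x)$, and you have spelled out the verification of the hypotheses (via Proposition~\ref{p:qcon} and the convexity analysis of $\Phi^{-1}$ on $[1/2,1)$) and the equality-case reduction in full detail.
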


\bibliography{references}
\bibliographystyle{amsplain}

\end{document}